\newcommand{\tFAE}{the following conditions are equivalent:}
\newcommand{\TFAE}{The following conditions are equivalent:}
\newcommand{\CM}{Cohen-Macaulay}
\newcommand{\fg}{finitely generated \ }
\newcommand{\eF}{\EuScript{F}}
\newcommand{\ff}{\text{if and only if}}
\newcommand{\wrt}{with respect to}
\newcommand{\f}{\hat{f} }
\newcommand{\n}{\mathfrak{n} }
\newcommand{\m}{\mathfrak{m} }
\newcommand{\tf}{\mathfrak{t} }
\newcommand{\M}{\mathfrak{M} }
\newcommand{\N}{\mathfrak{N} }
\newcommand{\q}{\mathfrak{q} }
\newcommand{\A}{\mathfrak{a} }
\newcommand{\B}{\mathfrak{b} }
\newcommand{\C}{\mathfrak{c} }
\newcommand{\R}{\mathcal{R} }
\newcommand{\Pc}{\mathcal{P} }
\newcommand{\Z}{\mathbb{Z} }
\newcommand{\QQ}{\mathbb{Q} }
\newcommand{\nZ}{n \in \mathbb{Z} }
\newcommand{\iZ}{i \in \mathbb{Z} }
\newcommand{\spr}{\mathfrak{s} }
\newcommand{\GA}{G_{\mathfrak{a}}(A) }
\newcommand{\GM}{G_{\mathfrak{a}}(M) }
\newcommand{\GB}{G_{\mathfrak{b}}(B) }
\newcommand{\GT}{G_{\mathfrak{t}}(T) }
\newcommand{\ra}{\EuScript{R}_{\mathfrak{a}}(A) }
\newcommand{\ral}{\EuScript{R}_{\mathfrak{a}^l}(A) }
\newcommand{\rb}{\EuScript{R}_{\mathfrak{b}}(B) }
\newcommand{\eR}{\EuScript{R}}
\newcommand{\xb}{\mathbf{x}}
\newcommand{\ub}{\mathbf{u}}
\newcommand{\eG}{\EuScript{G}}
\newcommand{\Sc}{\mathcal{S} }
\newcommand{\rt}{\rightarrow}
\newcommand{\xar}{\longrightarrow}
\newcommand{\ov}{\overline}
\newcommand{\sub}{\subseteq}
\newcommand{\da}{\dagger}
\newcommand{\bX}{\mathbf{X} }
\newcommand{\bY}{\mathbf{Y} }
\newcommand{\Bcal}{\mathcal{B} }
\newcommand{\om}{\omega}
\newcommand{\dd}{*}
\newcommand{\wt}{\widetilde }
\newcommand{\Om}{\Omega}
\newcommand{\socle}{\operatorname{soc}}
\newcommand{\grade}{\operatorname{grade}}
\newcommand{\depth}{\operatorname{depth}}
\newcommand{\ann}{\operatorname{ann}}
\newcommand{\red}{\operatorname{red}}
\newcommand{\htt}{\operatorname{ht}}
\newcommand{\type}{\operatorname{type}}
\newcommand{\amp}{\operatorname{amp}}
\newcommand{\Hs}{\operatorname{\ ^* Hom}}
\newcommand{\Es}{\operatorname{\ ^* Ext}}
\newcommand{\Hom}{\operatorname{Hom}}
\newcommand{\Tor}{\operatorname{Tor}}
\newcommand{\Ext}{\operatorname{Ext}}
\newcommand{\K}{\mathcal{K}}
\theoremstyle{plain}
\newtheorem{thm}{Theorem}
\newtheorem{theorem}{Theorem}[section]
\newtheorem{corollary}[theorem]{Corollary}
\newtheorem{lemma}[theorem]{Lemma}
\newtheorem{proposition}[theorem]{Proposition}
\theoremstyle{definition}
\newtheorem{definition}[theorem]{Definition}
\newtheorem{defn}[thm]{Definition}
\newtheorem{remark}[theorem]{Remark}
\newtheorem{remarkC}[thm]{Remark}
\newtheorem{example}[theorem]{Example}
\theoremstyle{remark}
\newtheorem{observation}[theorem]{Observation}
\newtheorem{question}[theorem]{Question}
\numberwithin{equation}{theorem}
\begin{document}

\title[CI-approximation]{Complete intersection Approximation, Dual Filtrations  \\ and Applications}
\author{Tony~J.~Puthenpurakal}
\date{\today}
\address{Department of Mathematics, Indian Institute of Technology, Bombay, Powai, Mumbai 400 076, India}

\email{tputhen@math.iitb.ac.in}
\subjclass{Primary  13A30,  13D45 ; Secondary 13H10, 13H15}
\keywords{multiplicity,  reduction, Hilbert polynomial, a-invariant }
  \begin{abstract}
We give a two step method to study certain questions regarding associated graded module of a \CM \ (CM) module $M$ \wrt \ an $\m$-primary ideal $\A$ in a complete Noetherian local ring $(A,\m)$. The first step, we call it complete intersection approximation, enables us to reduce to the case when both $A$, $ \GA = \bigoplus_{n \geq 0} \A^n/\A^{n+1} $ are complete intersections and $M$ is a maximal CM $A$-module. The second step consists of analyzing the classical filtration $\{ \Hom_A(M,\A^n) \}_{\nZ}$ of the dual $\Hom_A(M,A)$. We give many applications of this point of view. For instance let $(A,\m)$ be equicharacteristic \& CM. Let $a(G_\A(A))$ be the $a$-invariant of $\GA$. We prove:
\begin{enumerate}
  \item   $a(G_\A(A)) = -\dim A$ \ff \ $\A$ is generated by a regular sequence.
  \item If $\A$ is integrally closed and  $a(G_\A(A)) = -\dim A + 1$ then $\A$ has minimal multiplicity.
\end{enumerate}
     We extend to modules a result of Ooishi relating symmetry of $h$-vectors. As another application
      we prove a conjecture of Itoh, if $A$ is a \CM \  local ring and
      $\A$  is a normal ideal with $e_3^\A(A) = 0$ then $G_\A(A)$ is \CM.
\end{abstract}
 \maketitle
\tableofcontents
\section{Introduction}
Let $(A,\m)$ be a Noetherian    local ring of dimension $d$,
 $\A$  an $\m$-primary ideal in $A$ and let $M$ be a \CM \ (CM)  module of dimension $r$. Set $G_{\A}(A) = \bigoplus_{n \ge 0} \A^n/\A^{n+1} $;  the
\textit{associated graded ring} of $A$ \wrt \ $\A$   and let $G_\A(M)= \bigoplus_{n \ge 0} \A^nM/\A^{n+1} M$ be the \textit{associated graded module} of $M$ \wrt \ $\A$.

In this paper we give a two step method to study certain questions regarding
$G_\A(M)$ when $A$ is complete.  Surprisingly this is also useful in the case of $M = A$. We call the first step \textit{complete intersection approximation}. This step enables us to reduce to the
case when $A$, $\GA$ are complete intersections and $M$ is a maximal \CM \ (MCM) $A$-module.\textit{ This reduction  surprised a few and attracted some
skepticism}. Nevertheless it is true.
The second step consists in studying the filtration
 $\{ \Hom_\A(M,\A^n) \}_{\nZ}$ of the dual $\Hom_A(M,A)$.  This filtration is
classical cf.  \cite[p.\ 12]{Ser}.  However it has not been used before in the study of blow-up algebra's (modules) of \CM \ rings (modules).

We begin by a general definition of our notion of  approximation.
Let $\mathcal{P}$ be a property of  Noetherian rings. For instance $\Pc =$ regular, complete intersection (CI), Gorenstein, \CM \ etc.
\begin{defn}
Let $(A,\m)$ be a Noetherian local ring and let $\A$ be a proper ideal
in $A$. We say $A$ admits a \emph{$\Pc$-approximation} \wrt \ $\A$ if there exists
a  local ring $(B,\n)$, an ideal $\B$ of $B$  and $\psi \colon B \rt A$,  a local ring homomorphism
with $\psi(\B)A =  \A$ such that the following \emph{three}
properties hold:
 \begin{enumerate}
\item
$A$ is a finitely generated $B$ module ( via $\psi$).
\item
$\dim A = \dim B$.
\item
$B$ and $\GB$  have property $\Pc$.
\end{enumerate}
 If the above conditions hold we say $[B,\n,\B,\psi]$ is a \emph{$\Pc$-approximation}
of $[A,\m,\A]$.
\end{defn}

\begin{remarkC}

\noindent $\bullet$  Regular approximations \textit{seems} to be  rare while Cohen-Macaulay approximations \emph{don't seem} to  have many applications.

  \noindent $\bullet$   For applications we will often insist that $\psi$, $\B$ satisfy some additional properties.
\end{remarkC}

When $A$ is complete we prove the following general result(see \ref{equiTT}).  Let $\spr(\A)$ denote the analytic spread of $\A$, see \ref{equimultiple-defn}.

\begin{thm}\label{introGAPP}
Let $(A,\m)$ be  a complete  Noetherian local ring and let $\A$ be a proper
 in $A$ such that $ \spr(\A) + \dim A/\A = \dim A$. Then
$A$  admits a  CI-approximation
\wrt \ $\A$.
\end{thm}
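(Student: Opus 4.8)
The plan is to exhibit $A$ as a module-finite extension of a complete intersection $B$ carved out, from a regular presentation, compatibly with the $\A$-adic filtration. Put $d=\dim A$; fix a minimal generating set $a_1,\dots,a_m$ of $\A$ and, with $t=\dim A/\A$, elements $z_1,\dots,z_t\in\m$ whose images form a system of parameters of $A/\A$. The hypothesis is used only through the inequality $\spr(\A)=d-t\le m=\mu(\A)$ (the fibre ring $\bigoplus_{n\ge 0}\A^n/\m\A^n$ is generated in degree one by $m$ elements), which makes
\[
c:=\mu(\A)+\dim A/\A-\dim A=m-\spr(\A)\ge 0 .
\]
Let $k$ be a coefficient field of $A$, set $P=k[[Y_1,\dots,Y_m,Z_1,\dots,Z_t]]$, write $\mathbf{Y}=(Y_1,\dots,Y_m)$, $\mathbf{Z}=(Z_1,\dots,Z_t)$, and let $\psi_0\colon P\rt A$ be the $k$-algebra map with $Y_i\mapsto a_i$, $Z_j\mapsto z_j$. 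As $\psi_0(\m_P)A=\A+(z_1,\dots,z_t)$ is $\m$-primary and $A$ is complete, $A$ is a finite $P$-module; hence $C:=\psi_0(P)=P/I_0$ has $\dim C=d$, and, $P$ being regular (so equidimensional and catenary), $I_0:=\ker\psi_0$ has height $m+t-d=c$. (The mixed characteristic case is identical, with $k$ replaced by a coefficient ring, all dimensions shifting by one.)

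The next step is to pass to leading forms with respect to the $\mathbf{Y}$-adic filtration of $P$. Here $G_{(\mathbf{Y})}(P)\cong k[[\mathbf{Z}]][T_1,\dots,T_m]$ (with $T_i=Y_i^{*}$) is a Cohen--Macaulay $\mathbb{N}$-graded ring, and there is a canonical isomorphism $G_{(\mathbf{Y})}(P)/I_0^{*}\cong G_{(\mathbf{a})}(C)$, where $I_0^{*}$ is the ideal of leading forms of the elements of $I_0$ and $\mathbf{a}=(a_1,\dots,a_m)C$. Since $\dim G_{(\mathbf{a})}(C)=\dim C=d$ and $I_0^{*}$ is homogeneous in the Cohen--Macaulay ring $G_{(\mathbf{Y})}(P)$, one gets $\grade I_0^{*}=\htt I_0^{*}=(m+t)-d=c$; moreover $I_0^{*}\subseteq(T_1,\dots,T_m)$, because $P/(\mathbf{Y})=k[[\mathbf{Z}]]$ maps injectively into $A/\A$ (the images of the $z_j$ form a system of parameters of the $t$-dimensional ring $A/\A$). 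Hence $I_0^{*}$ contains a homogeneous regular sequence $g_1,\dots,g_c$ on $G_{(\mathbf{Y})}(P)$, each $g_j$ of positive degree $e_j$. Choosing $f_j\in I_0\cap(\mathbf{Y})^{e_j}$ with leading form $f_j^{*}=g_j$ finishes the preparation.

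Finally I would set $B:=P/(f_1,\dots,f_c)$, $\B:=(Y_1,\dots,Y_m)B$, and let $\psi\colon B\rt A$ be induced by $\psi_0$ (well defined, as each $f_j\in I_0$). By the Valabrega--Valla criterion, $f_1^{*},\dots,f_c^{*}$ being a regular sequence on $G_{(\mathbf{Y})}(P)$ implies that $f_1,\dots,f_c$ is a regular sequence on $P$ and that
\[
\GB\;\cong\;G_{(\mathbf{Y})}(P)/(f_1^{*},\dots,f_c^{*})\;\cong\;k[[\mathbf{Z}]][T_1,\dots,T_m]/(g_1,\dots,g_c).
\]
Thus $B$ is a complete intersection local ring with $\dim B=(m+t)-c=d$, and $\GB$, localized at its maximal graded ideal, is the quotient of a regular local ring by a regular sequence, hence a complete intersection; also $A$ is finite over $B$ (it is already finite over $P$), $\dim A=d=\dim B$, and $\psi(\B)A=(a_1,\dots,a_m)A=\A$. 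Therefore $[B,\n,\B,\psi]$ is a CI-approximation of $[A,\m,\A]$. I expect the main obstacle to be the height equality $\htt I_0^{*}=c$: one must ensure that neither passing from $A$ to the subalgebra $C$, nor passing from $C$ to the leading-form ring $G_{(\mathbf{a})}(C)$, drops the dimension below $d$, so that \emph{enough} leading forms exist to cut $B$ down to dimension $d$ while keeping $B$ and $\GB$ complete intersections. A lesser point is to fix the meaning of ``complete intersection'' for the (not necessarily standard graded) ring $\GB$ and to check it matches the convention used elsewhere in the paper.
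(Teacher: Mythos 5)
Your equicharacteristic argument is sound, and it is genuinely different from the paper's route. The paper never touches the defining ideal $I_0=\ker(P\rt A)$ directly: it regards $\GA$ as a module over $G_{(\mathbf{Y})}(P)$, produces a homogeneous regular sequence inside $\ann \GA\cap G_+$ via Theorem \ref{hann} (whose hypothesis on degree-zero elements being part of homogeneous systems of parameters is exactly where $\spr(\A)+\dim A/\A=\dim A$ enters, cf.\ Remark \ref{equiREM}), and then converts initial forms into honest elements of $\ann_P A$ by the determinant trick (Lemma \ref{annihilator}). You work instead with $I_0^*$: since $I_0=\ann_P A$, every nonzero homogeneous element of $I_0^*$ is itself a leading form of an element of $I_0$ (state this little lemma), the inclusion $I_0\subseteq(\mathbf{Y})$ coming from the injectivity of $k[[\mathbf{Z}]]\rt A/\A$ makes $I_0^*$ generated in positive degrees, and the grade count $\grade I_0^*=\dim G_{(\mathbf{Y})}(P)-\dim G_{(\mathbf{Y})C}(C)=c$ follows from the classical equality $\dim G_{\mathfrak b}(C)=\dim C$ (cite it; note the direction you actually need is $\dim G_{(\mathbf{Y})C}(C)\le d$, i.e.\ that the dimension does not go up, not that it ``does not drop below $d$'') together with the fact that height and dimension of a graded ideal may be computed at the maximal graded ideal of the \CM\ ring $G_{(\mathbf{Y})}(P)$. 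With positivity in hand, graded prime avoidance (the same \cite[1.5.10--1.5.11]{BH} mechanism the paper invokes in the base case of \ref{hann}) yields the homogeneous regular sequence, and Valabrega--Valla finishes exactly as in the paper's last step. This buys a real simplification: Theorem \ref{hann} and Lemma \ref{annihilator} are bypassed, and, as far as I can check, your equicharacteristic argument never uses the hypothesis $\spr(\A)+\dim A/\A=\dim A$ at all (your $c\ge 0$ is automatic, since $C$ is a quotient of $P$); the positivity of the degrees in $I_0^*$ is doing the work that the homogeneous s.o.p.\ hypothesis does in the paper.

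The genuine gap is the parenthetical claim that mixed characteristic is ``identical''. It is not. With a coefficient ring $(D,\pi)$ one has $P=D[[\mathbf{Y},\mathbf{Z}]]$ and $G_{(\mathbf{Y})}(P)_0=D[[\mathbf{Z}]]$ of dimension $t+1$, whereas $C/(\mathbf{Y})C$ has dimension $t$ (every prime of $C$ containing $(\mathbf{Y})C$ is contracted, by lying over, from a prime of $A$ containing $\A$). Hence $\ker\bigl(D[[\mathbf{Z}]]\rt C/(\mathbf{Y})C\bigr)\neq 0$, i.e.\ $I_0\not\subseteq(\mathbf{Y})$ and $I_0^*$ has a nonzero degree-zero component. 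Your only tool for producing a homogeneous regular sequence of full length --- prime avoidance for ideals generated in positive degrees --- is then unavailable, and this cannot be waved away: the paper's Example \ref{eqEx} shows that a graded ideal of positive grade may contain no homogeneous nonzerodivisor once degree-zero generators are present; this is precisely the ``existence of homogeneous regular sequences in certain graded ideals'' issue the paper flags, and its Case 2 of Theorem \ref{equiTT} spends real effort on it (manufacturing, via the determinant trick, an element $\Delta\in\ann_S A\setminus(\bX)$ which is regular and cuts the degree-zero part down to dimension $\dim A/\A$ before any positive-degree argument is run, and it is there that the hypothesis on $\spr(\A)$ is used again through \ref{hann}). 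Your proposal needs an analogous preliminary step handling the degree-zero part of $I_0^*$ before avoidance applies; as written, the mixed characteristic half of the stated theorem is unproved.
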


 Notice the hypothesis of Theorem \ref{introGAPP} is satisfied  if $\A$ is  $\m$-primary.
Another significant case when Theorem \ref{introGAPP} holds is when $A$ is equidimensional and $\A$ is equimultiple; see \cite[2.6]{HUO}.  For definition of an equimultiple ideal see \ref{equimultiple-defn}.

\begin{remarkC}
In most of our applications we only use that $A$ has a Gorenstein approximation. However in the proof of Itoh's conjecture we use the full power of CI-approximations.
\end{remarkC}

The proof of Theorem \ref{introGAPP} is fairly involved. The entire section
\ref{sectionGAPP} is devoted for a proof of it. The  following two special
cases are easy to
prove.
\begin{enumerate}
 \item
Let $(A,\m)$  be a  quotient of a regular local ring.  Then $[A,\m,\m]$ admits a CI approximation $[B,\n,\n, \phi]$ with $\phi$-onto. (see \ref{GorApprox-REG-QT}).
\item
 Let $(A,\m)$ be a complete
equicharacteristic local ring and let $\mathfrak{a}$ be an
$\m$-primary ideal.  Then $[A,\m,\mathfrak{a}]$ admits a CI
approximation $[B,\n,\mathfrak{b},\phi]$ with $\B = \n$ and $B/\n \cong A/\m$ and  $\mu(\n) =
\mu(\A)$. (see \ref{field}).
\end{enumerate}

Before proceeding further we need some notation.
Set $\A^n = A$ for $n \leq 0$.
Let $\ra = \bigoplus_{n \in \Z}\A^nt^n$  be the
\emph{extended Rees-algebra} of $A$
\wrt \ $\A$.
We consider $\ra$ as a subring of $A[t,t^{-1}]$. Set $\R(\A, M) =  \bigoplus_{\nZ} \A^nMt^n$.
 Clearly $\R(\A, M)$ is a finitely generated $\ra$-module
 respectively.

\begin{defn}
By the \textit{dual filtration} of $M^\dd = \Hom_A(M, A)$ \wrt \ $\A$ we mean the filtration
 $\{ \Hom_A(M,\A^n) \}_{\nZ}$.
\end{defn}

 We study the dual filtration in detail
when $(A,\m)$ is \textit{Gorenstein}, $\A$ is $\m$-primary, $M$ is MCM and $G_{\A}(A)$ is \textit{Gorenstein.}
Let $\eF = \{ \Hom_A(M,\A^n) \}_{\nZ}$ be the dual filtration
of $M^\dd$ \wrt \ $\A$. We prove that if $G_\A(M)$ is CM   then
$$G(\eF,M^{\dd}) \cong \Hs_{\GA}(G_{\A}(M),\GA); \quad \text{see \ref{corGor}}.    $$

Even when $G_\A(M)$ is not \CM \ we find the following relation:
Set $\red_\A(A) = $ reduction number of $A$ \wrt \ $\A$ (see \ref{redNO}), $a(G_\A(M)) =$  the $a$-invariant of $G_\A(M)$  (see \ref{a-invariant}), and let
$$ \alpha_{\mathfrak{a}} (M) = \min\{ n \mid \ G(\eF, M^\dd)_n \neq 0 \}. $$
We prove
\begin{equation*}
 a(G_\A(M)) \geq \red_\A(A) - \alpha_{\A}(M) - \dim A \quad \quad \text{(see \ref{a-iG})}.
\end{equation*}

\noindent  \textbf{Applications}

\noindent \textbf{I.} \textit{a-invariant of associated graded rings of $\m$-primary ideals}

 Let $\A$ be an $\m$-primary ideal in a
Noetherian local ring $(A,\m)$.
  It is easy to see that  $a(\GA) \geq -\dim A$.
We analyze the case when equality holds when $A$ is \CM.  If $\dim A = 1$, it follows immediately from a result
of T.Marley \cite[2.1(a)]{Mar-Proc} that $\A$ is prinicipal. When $d = 2$, it follows from Proposition \ref{Trung} and a result of Northcott \cite{North} that $\A$ is also
generated by regular sequence of length two.

We cannot use induction to analyze the case when $a(G_\A(A)) = -d$ since if $x$ is $A$-superficial \wrt \ $A$ then it
does not immediately follow that $a(G_{\A/(x)}(A/(x)) = - d + 1$

We prove that if $A$ is \textit{\CM} \ then
\[
 a(G_\m(A)) = - \dim A  \quad \text{\ff} \quad A \ \text{is regular} \ \ \text{(see \ref{regular-local}).}
\]
See \ref{counter-reg} for an example of \emph{a non-\CM }\ ring $A$ with  $a(G_\m(A)) = - \dim A$.

For arbitrary $\m$-primary ideals in a \CM \ local ring we unfortunately have to assume that
$A$ is \textit{equicharacteristic}. In this case we prove that
\[
a(\GA) = - \dim A \Longleftrightarrow \A \ \text{is generated by a regular sequence (see \ref{a-result})}.
\]
Next we try  to classify ideals such that $a(\GA) = - \dim A   + 1$. When $A$ is an
equicharacteristic \CM \ local ring and $\A$ is \textit{integrally closed} we prove
 \[
 a(\GA) = - \dim A + 1  \Longleftrightarrow \A \ \text{ has minimal multiplicity (see \ref{int-closed}).}
 \]
The result above need not hold if $\A$ is not integrally closed (see \ref{not-min-mult}).
When  $\dim A = 2$  we have been able to characterize $\m$-primary ideals $\A$ (not necessarily integrally closed) with $a(\GA) \leq - 1$.

\noindent \textbf{II.} \textit{Symmetric $h$-vectors.}

 Let $R = \bigoplus_{n \geq 0}R_n$ be a standard graded algebra over a field. For definition of  $h_R(z)$; the $h$-polynomial of $R$ see \ref{HilbertSeries-graded}. If $R$ is Gorenstein then  $h_R(z)$ is symmetric cf. \cite[4.4.6]{BH}.  The converse need not hold even if $R$ is \CM \ cf. \cite[4.4.7(b)]{BH}.  A celebrated theorem due to Stanley asserts that
if $R$ is \CM \ domain and $h_R(z)$ is symmetric then $R$ is Gorenstein
 (\cite[4.4]{Stanley-Adv}).

If $R = \GA$ the associated graded ring of an $\m$-primary ideal then Ooishi, \cite[1.6]{Oo-GOR}, proves that if
$A$ is Gorenstein, $\GA$ is \CM \  and $h_\A(z)$ is symmetric then $\GA$ is Gorenstein.
We generalize Ooishi's result as follows:

Let $(A,\m)$ be CM with a canonical module $\om_A$. Let $\A$ be an $\m$-primary ideal and let $M$ be a CM $A$-module. Set $c(M) = \dim A - \dim M$; $M^\da = \Ext^{c(M)}_A(M,\om_A)$ and $r=\red(\A,M)$. Assume
$\GA$, $G_\A(M)$ and $G_\A(M^\da)$ are CM. Let $\Omega_\A$ be the canonical module of $\GA$.  We prove (see Theorem \ref{symmetry})
$$  h_\A(M^\da,z) = z^rh(M,z^{-1})    \Longleftrightarrow     G_\A(M^\da) \cong \Es^{c(M)}_{\GA}(G_\A(M),\Om_\A) \ \text{ (up to a shift)}.   $$
The implication ($\Leftarrow$) follows from \cite[4.4.5]{BH}.  The  assertion $\Rightarrow$ is \textit{new} and  generalizes Ooishi's result.

In section \ref{determine-dual-MCM} we use of a MCM module $M$ to show that the dual filtration of a MCM module $M$ \wrt \ $\m$, over a Gorenstein local ring $A$ with $G_\m(A)$ Gorenstein, is a shift of the usual $\m$-adic filtration in the following cases.
\begin{enumerate}
 \item
$M$ is \textit{Ulrich} (i.e., $\deg h_M(z) = 0$; equivalently $e(M) = \mu(M)$).
\item
$\type(M) = e(M) -\mu(M)$ and $M$ has \textit{minimal multiplicity} and is \emph{not} Ulrich.
\end{enumerate}

\noindent \textbf{III.} \textit{$\m$-primary ideals $\A$ with $\mu(\A) = \dim A  + 1$}

\noindent Using Gorenstein approximation of $[A,\m,\A]$ when $A$ is complete and the structure of dual filtration of MCM modules over a hypersurface ring we prove (see \ref{mu-aE-d+1}):

Suppose $(A,\m)$ is an equicharacteristic Gorenstein  local ring of dimension $d \geq 0$ and $\A$ is an $\m$-primary ideal of $A$ with $\mu(\A) = d+1$. Then
$$ \GA \ \text{is Gorenstein \ff } \ h_\A(z) = \ell(A/\A)(1+z+\ldots+z^s) \ \text{for some} \ s\geq 1. $$
\noindent Here $\ell(-)$ denotes length.   The surprising thing is:   $h_\A(z)$ determines that $\GA$ is Gorenstein  \emph{without a priori assuming} $\GA$ is CM.

\noindent \textbf{IV.} \textit{Associated graded modules of the canonical
module}

\noindent Let $(A,\m)$ be CM  local ring with a canonical module $\om_A$.  Let
$\A$ be an $\m$-primary ideal \emph{such that $\GA$ is CM}.   Let $\Om_\A$ be the canonical module of $\GA$.   A natural question is when is $G_\A(\om_A)$ isomorphic to $\Om_\A$ up to a shift? Set $r = \red(\A, A)$.
Ooishi  proves that if $G_\m(A)$ is CM  then $G_\m(\om_A) \cong \Om_\m$ (up to a shift) \ff \ $G_\m(\om_A)$
is CM   and $h(\om_A,z) = z^{r} h(A,z^{-1})$; see  \cite[3.5]{Ooishi-cann}.

The hypothesis $G_\A(\om_A)$
is CM  is  difficult to verify. So there is a need to bypass this assumption. We first consider the case when $\mu(\A) = d + 1$. We prove
\[
G_{\mathfrak{a}} (\om_A) \cong \Om_{\A}^{A} \ \text{(up to a shift)} \Longleftrightarrow
h_{\mathfrak{a}}(A,z) = \ell(A/\mathfrak{a}) \big(1 + z + \ldots + z^{r}\big); \ \ \text{see \ref{mu-aE-d+1-cann} }.
\]

The assumption $G_\A(\om_A)$
is CM   \emph{holds}  \emph{automatically if } $\dim A = 0$. Surprisingly the following general result holds:
  Let $x_,\ldots,x_d$ be a maximal $A$-superficial sequence \wrt \ $\A$. Set $B =A/(\xb)$ and $\B = \A/(\xb)$. We prove (see \ref{cannonical-module-Mod}).
\[
\text{If} \ h(\om_B,z) = z^{r} h(B,z^{-1}) \quad  \text{then} \quad G_\A(\om_A) \cong \Om_\A \ \text{ up to a shift.}
\]
This result is quite practical. Using COCOA \cite{cocoa} \& MACAULAY \cite{M2}, we can determine whether $G_\A(\om_A)\cong \Om_\A$ up to a shift. See Example \ref{cann-example}.
Another application of \ref{cannonical-module-Mod} (see \ref{red2type2})  is: If $\red(A) = 2$  then

$G_\m(\om_A) \cong \Om_\m$(up to a shift) if and only if  \ $\type(A) = e_0(A) - h_1(A) - 1 $.

\noindent Ooishi  derives the same result essentially assuming $G_\m(\om_A) $ is CM, see \cite[4.4]{Ooishi-cann}.

\noindent \textbf{V.} \textit{ Lengths of duals of associated graded modules over Gorenstein Artin local rings.}

 Let $(A,\m)$ be a Gorenstein  Artin local ring, $\A$ an  ideal in $A$ and let $M$ be a finitely generated $A$-module. Set $G = \GA$ and $G(M) =G_\A(M)$. We prove
\begin{equation*}
\ell_{G} \left( G(M) \right )  \leq \ell_{G} \left( \Hs_{G}(G(M), G)\right )
\end{equation*}
This is surprising since $\GA$ need not be Gorenstein.

\noindent \textbf{VI.} \textit{Generalization of some results from CM  rings to CM modules.}

Let $(A,\m)$ be CM  local ring of dimension $d$ with infinite residue field and let $\A$ be an $\m$-primary ideal. Let
$\C $ be a minimal reduction  of $\A$. Set
$\delta = \sum_{n\geq 0}\ell(\A^{n+1}/\C\A^n)$. Then for $0\leq \delta \leq 2$
one has $\depth \GA \geq d-\delta$. This is due to
  Valabrega and Valla \cite[2.3]{VV} (for $\delta = 0$), Guerrieri \cite[3.2]{Gr94} (for $\delta = 1$) and Wang \cite[2.6,3.1]{Wang00} (for $\delta =2$).

We extend these results to CM  modules.   Using Gorenstein approximation we prove it by reducing it to the case of rings,
 see \ref{vv-guer-wang}.

\noindent \textbf{VII.} \textit{Relation between $e^{\mathfrak{a}}_{1} (M), e^{\A}_{0} (M)$ and $a(G_\A(M))$.}

Let $M$ be a CM  $A$ module of dimension $=2$.  Let $\A$
be an $\m$-primary ideal. For $i \geq 0$ let $e_{i}^{\A}(M)$ be the Hilbert-coefficients of $M$ \wrt \ $\A$. It can be easily proved that
\begin{equation}\label{reg2-Nee}
e_1^\A(M) \leq e_0^\A(A)(a(G_\A(A)) + 2)
\end{equation}
It is natural to investigate when equality holds occurs in \ref{reg2-Nee}. We prove that equality holds in \ref{reg2-Nee} if and only if $G_\A(M)$ is generalized \CM \ and $H^2(G_\A(A))^\vee$ is generalized Ulrich $G_\A(A)$-module.

\noindent  \textbf{VII.} \textit{ Proof of Itoh's conjecture}

It is well-known that the multiplicity of a Noetherian local ring $A$ is positive. When $A$ is \CM, \ Northcott \cite{North} proved that $e_1^\A(A) \geq 0$ with equality if and only if $\A$ is generated by a regular sequence. Narita showed that $e_2^\A(A) \geq 0$, see \cite{Nar}. Furthermore he gave an example which showed that $e_3^\A(A)$ can be negative.  Recall an ideal $\A$ is said to be normal if $\A^n$ is integrally closed for all $n \geq 1$. Itoh proved that if $\A$ is normal then $e_3^\A(A) \geq 0$. Furthermore he conjectured that if $e_3^A(A) = 0$ (and $\A$ is normal and $A$ is Gorenstein) then $G_\A(A)$ is \CM.  He proved it if $\A = \m$.  A major consequence
of our theory of complete intersection approximation, Dual filtration is a proof of Itoh's conjecture.
In fact we show more generally that if $A$ is \CM \ local and $\A$ is an $\m$-primary normal ideal with $e_3^\A(A) = 0$ then $G_\A(A)$ is \CM.

Here is an overview of the contents of this paper. In section two we introduce notation and discuss a few
preliminary facts that we need. In section 3 we prove the result on complete intersection approximations  when $\A = \m$ and $A$ is a quotient of a regular local ring.
We also prove it when
$A$ is complete, $\A$ is $\m$-primary  and contains a field. In section 4 we introduce dual filtrations. Furthermore application \textbf{V} is proved in this section. For applications \textbf{VI} and  \textbf{VII} see section 5.
In section  six we discuss on the initial degree of the dual filtration. We also relate it to the a-invariant of $G_{\A}(M)$.  For application \textbf{I} see section 7.
 Application \textbf{III} is discussed in   section 8. In section 9 we deal with symmetric $h$-vectors and prove generalization of Ooishi's result (i.e., application  \textbf{II}). In section 10 we use application \textbf{II}
 to get application \textbf{IV}. In section 11 we prove our result on complete intersection approximation of equimultiple ideals.
 In section twelve we discuss behaviour of dual filtration's \wrt \ superficial elements.
In section thirteen we discuss some preliminaries that we need to prove Itoh's conjecture
Finally in our last section we prove Itoh's conjecture.

\section{Notation and Preliminaries}\label{notation}
In this paper all rings are commutative Noetherian and all modules  (unless stated otherwise) are assumed to
be finitely generated. Let $A$ be a ring, $\A$ an ideal in $A$ and let $M$ be an $A$-module
 Recall an $\A$-\textit{filtration} $\eG = {\{M_n\}}_{n \in \Z}$ on $M$ is a
collection of submodules of $M$ with the properties
\begin{enumerate}
\item
$M_n \supset M_{n+1}$ for all $n \in \Z$.
\item
$M_n = M$ for all $n \ll 0$.
\item
$\A M_n \subseteq M_{n+1}$ for all $n \in \Z$.
\end{enumerate}
If
$\A M_n = M_{n+1}$ for all $n \gg 0$ then we say $\eG$ is $\A$-\emph{stable}.
We usually set $\eG_n = M_n$ for all $n \in \Z$.

\s \label{mod-N}
Let $\eG = {\{M_n\}}_{n \in \Z}$ be an $\A$-filtration on $M$ and let $N$ be a submodule of $M$. By the \emph{quotient filtration} on $M/N$ we mean the filtration   $\ov{\eG} = \{ (M_n + N)/N \}_{\nZ}$.
If $\eG$ is an $\A$-stable filtration on
$M$ then $\ov{\eG}$ is an $\A$-stable filtration on
$M/N$.\textit{ Usually} for us $N =\xb M$ for some $\xb = x_1,\ldots,x_s \in \A$.

\s If $\eG = \{M_n\}_{n \in \Z}$ is an $\A$ stable filtration on $M$ then  set
$\eR(\eG,M) = \bigoplus_{n \in \Z}M_nt^n$ the \emph{extended Rees-module} of $M$
\emph{\wrt }\ $\eG$.
 Notice that $\eR(\eG,M)$ is a finitely generated graded $\ra$-module. If $\eG$ is the usual $\A$-adic filtration then
set \\  $ \eR(\A, M) = \eR(\eG,M)$.

Set $G(\eG,M)=\bigoplus_{n \in \Z} M_n/M_{n+1}$, the \emph{associated graded
module }of $M$ \emph{\wrt} \ $\eG$. Notice $G(\eG,M)$ is a finitely generated graded module over
 $G_{\A}(A)$.
 Furthermore $\eR(\eG,M) /t^{-1}\eR(\eG,M) = G(\eG, M)$.

\s \label{shift-filt}
Let $\eG = {\{M_n\}}_{n \in \Z}$ be an $\A$-filtration on $M$ and let $s \in \Z$. By the $s$-\emph{th shift} of $\eG$, denoted by $\eG(s)$ we mean the filtration ${\{ \eG(s)_n\}}_{n \in \Z}$ where
$\eG(s)_n = \eG_{n+s}$. Clearly
$$G(\eG(s), M) = G(\eG, M)(s) \quad \text{and} \quad \R(\eG(s), M) = \R(\eG, M)(s). $$

\s All   filtration's in this paper $\eG = \{M_n\}_{n \in \Z}$ will be \textit{separated} i.e.,
 $\bigcap_{\nZ}M_n = \{0\}$.
This is automatic if $A$ is local,  $\A \neq A$ and $\eG$ is $\A$-stable.
If $m$ is a non-zero element of $M$ and if $j$ is the largest integer such that
$ m \in M_j$,
then we let $m^{*}_{\eG}$ denote the image of $m $ in $M_j  \setminus M_{j+ 1}$ and we call it the \textit{initial form} of $m$ \wrt \ $\eG$. If $\eG$ is clear from the context then we drop the subscript $\eG$.

The following result is well-known and easy to prove.
\begin{proposition}\label{filtEQUALgraded}
Let $(A,\m)$ be a  local ring, $\A$ an $\m$-primary ideal and let
$M$ be an $A$-module. Let  $\eF$ be an $\A$-stable  filtration of $M$ \wrt\ $\A$.
\TFAE
\begin{enumerate}[\rm (i)]
	\item
	$G(\eF, M) \cong G(M)$ up to a shift.
	\item
	$\eF $ is the $\A$-adic filtration up to a shift. \qed
\end{enumerate}
\end{proposition}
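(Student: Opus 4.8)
The plan is to treat the two implications separately, with essentially all the content in $(i)\Rightarrow(ii)$. For $(ii)\Rightarrow(i)$: if $M_n=\A^{n+t}M$ for all $n$ (using the convention $\A^kM=M$ for $k\le 0$), then $\eF$ is precisely the $t$-th shift of the $\A$-adic filtration, so by \ref{shift-filt} we get $G(\eF,M)=G(M)(t)$, which is $G(M)$ up to a shift.

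For $(i)\Rightarrow(ii)$, fix $s\in\Z$ with $G(\eF,M)\cong G(M)(s)$ as graded $G_\A(A)$-modules. Replacing $\eF$ by $\eF(-s)$ — which is again an $\A$-stable filtration of $M$ and, by \ref{shift-filt}, satisfies $G(\eF(-s),M)\cong G(M)$ — it suffices to show that a filtration $\eF$ with $G(\eF,M)\cong G(M)$ on the nose equals the $\A$-adic filtration. Comparing graded components of this isomorphism gives
$$\ell(M_n/M_{n+1})=\ell(\A^nM/\A^{n+1}M)\qquad\text{for all }n\in\Z,$$
and every length here is finite since $\A$ is $\m$-primary and $M$ is finitely generated. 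I would then read the filtration off degree by degree. For $n\le -1$ the right-hand side vanishes, so $M_n=M_{n+1}$; together with $M_n=M$ for $n\ll 0$ this forces $M_n=M=\A^nM$ for all $n\le 0$. Now I induct on $n\ge 0$: granting $M_n=\A^nM$, the inclusion $\A M_n\subseteq M_{n+1}$ built into the definition of an $\A$-filtration gives $\A^{n+1}M\subseteq M_{n+1}\subseteq M_n=\A^nM$, while the length identity becomes $\ell(\A^nM/M_{n+1})=\ell(\A^nM/\A^{n+1}M)$; the two together force $M_{n+1}=\A^{n+1}M$. Hence $M_n=\A^nM$ for every $n$, as desired.

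I do not expect a real obstacle here — the proof is a routine length count. The only points that need a little care are verifying that a shift of an $\A$-stable filtration is again $\A$-stable (legitimizing the reduction to $s=0$) and using $\m$-primality of $\A$ so that the modules $M/M_n$ and $\A^nM/\A^{n+1}M$ all have finite length, making the Hilbert-function comparison meaningful. Once the numerical identity is in hand, the single inclusion $\A M_n\subseteq M_{n+1}$ powers the entire induction.
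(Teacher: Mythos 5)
Your proof is correct, and since the paper marks this proposition as well-known and easy (giving no written proof), your length-counting induction is exactly the standard argument the author leaves to the reader: normalize the shift via \ref{shift-filt}, compare Hilbert functions degreewise, and use $\A M_n\subseteq M_{n+1}$ together with finiteness of lengths (from $\A$ being $\m$-primary) to force $M_n=\A^nM$ for all $n$. Nothing further is needed.
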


The following result enables us to prove a filtration is $\A$-adic in some cases.
\begin{lemma}
\label{a-stble=a-adic}
Let $(A,\m)$ be local, $\A$ an $\m$-primary ideal and let $M$ be a \CM \ $A$-module of dimension $r \geq 1$. Let $\eF = \{ M_n \}_{\nZ}$ be an
$\A$-stable filtration on $M$ such that
\begin{enumerate}[\rm (1)]
	\item
	 $M_n = M$ for $n \leq 0$ and $M_1 \neq M$.
\item
$G(\eF, M)$ is \CM.
\end{enumerate}
 Let $\xb = x_1,\ldots,x_s $ be an $M$-superficial sequence \wrt \ $M$ such that $\xb^* = x_1^*,\ldots,x_s^*$
 is $G(\eF, M)$-regular. Set $B = A/(\xb)$, $\B = \A/(\xb)$ and $N = M/\xb M$.  Let $\ov{\eF} = \{(M_n + \xb M)/\xb M \}_{\nZ}$ be the quotient filtration
 on $N$. If $\ov{\eF}$ is the $\B$-adic filtration on $N$ then $\eF$ is the $\A$-adic filtration on $M$.
\end{lemma}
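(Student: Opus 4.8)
The plan is to lift the statement "$\ov{\eF}$ is the $\B$-adic filtration on $N$" back up along the superficial sequence. Since $G(\eF,M)$ is \CM\ and $\xb^* = x_1^*,\ldots,x_s^*$ is $G(\eF,M)$-regular, the Valabrega--Valla type machinery tells us that the quotient filtration $\ov{\eF}$ on $N$ has associated graded module $G(\ov{\eF},N) \cong G(\eF,M)/\xb^*G(\eF,M)$, and this module is again \CM. So first I would record that $\R(\ov{\eF},N) = \R(\eF,M)/\xb^*\R(\eF,M)$ and that passing mod $\xb^*$ commutes with forming $G$ and $\R$ here precisely because $\xb^*$ is a regular sequence on $G(\eF,M)$ (equivalently $\xb t^{?}$... actually just use that $t^{-1},x_1 t,\ldots,x_s t$ is a regular sequence on $\R(\eF,M)$, as in the proof of \ref{redTOmaxDIM}).

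Next, the hypothesis says $\ov{\eF}$ equals the $\B$-adic filtration on $N$, i.e. $(M_n + \xb M)/\xb M = \B^n N = (\A^n M + \xb M)/\xb M$ for all $n$. By \ref{filtEQUALgraded} this is the same as saying $G(\ov{\eF},N) \cong G_\B(N)$ (no shift needed, since condition (1) pins down the initial degree: $M_n = M$ for $n\leq 0$ and $M_1 \neq M$ forces the shift to be $0$). Combining with the previous paragraph, $G(\eF,M)/\xb^*G(\eF,M) \cong G_\B(N) = G_\A(M)/\xb^* G_\A(M)$. Now I want to deduce $G(\eF,M) \cong G_\A(M)$. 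Here the key point is that both $G(\eF,M)$ and $G_\A(M)$ sit inside the same picture: there is always a surjection-compatible comparison, and since $\eF$ is an $\A$-stable filtration with $M_n \supseteq \A^n M$ (because $M_0 = M$ and $\A M_n \subseteq M_{n+1}$), there is a natural graded surjection $G_\A(M) \twoheadrightarrow$ something... more carefully: $\eF$ being $\A$-stable with $M_n = M$ for $n \le 0$ gives $\A^n M \subseteq M_n$, hence a natural degree-zero $G_\A(A)$-linear map. Actually the cleanest route: compare Hilbert series. Since lengths are preserved mod a regular sequence on the associated graded side, $H(G(\eF,M),z) = H(G(\ov{\eF},N),z)/(1-z)^s = H(G_\B(N),z)/(1-z)^s = H(G_\A(M),z)$. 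So $\eF$ and the $\A$-adic filtration have the same Hilbert function.

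Finally I would promote equality of Hilbert functions to equality of filtrations. The inclusion $\A^n M \subseteq M_n$ holds for all $n$ (from $\A$-stability and $M_0 = M$), giving surjections $M/\A^n M \twoheadrightarrow M/M_n$; these are length-preserving by the Hilbert-function computation, hence isomorphisms, so $M_n = \A^n M$ for all $n$, which is exactly the assertion. The step I expect to be the genuine obstacle is justifying that mod-$\xb^*$ behaves well, i.e. that $G(\ov{\eF},N) \cong G(\eF,M)/\xb^* G(\eF,M)$ and the Hilbert series divides cleanly by $(1-z)^s$ --- this rests on $\xb$ being $\eF$-superficial with $\xb^*$ a $G(\eF,M)$-regular sequence, and one has to be a little careful that superficiality \wrt\ $\eF$ (not just \wrt\ the $\A$-adic filtration) plus regularity of $\xb^*$ is what makes the quotient filtration's Rees and associated graded modules compute correctly; this is a Valabrega--Valla argument adapted to general stable filtrations, which I would cite or reproduce in two lines. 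The rest is bookkeeping with Hilbert functions and the elementary inclusion $\A^n M \subseteq M_n$.
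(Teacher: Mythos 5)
Your strategy (compare Hilbert series, then use the surjections $M/\A^nM \twoheadrightarrow M/M_n$ coming from $\A^nM \subseteq M_n$) is genuinely different from the paper's proof, which establishes $M_n=\A^nM$ directly by induction on $n$: the hypothesis gives $M_{n}+\xb M=\A^{n}M+\xb M$, and the intersection identity $M_{n}\cap\xb M=\xb M_{n-1}$ (quoted from \cite[2.3]{CortZar}, valid because $\xb^*$ is $G(\eF,M)$-regular) lets one cancel $\xb M$ and conclude. However, as written your argument has a real gap, and it is not the step you flagged. The filtration-side step you worried about --- $G(\ov{\eF},N)\cong G(\eF,M)/\xb^*G(\eF,M)$, hence $H(G(\eF,M),z)=H(G(\ov{\eF},N),z)/(1-z)^s$ --- is fine and is exactly the Cortadellas--Zarzuela fact the paper itself invokes. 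The unjustified step is the last equality in your chain, $H(G_\B(N),z)/(1-z)^s = H(G_\A(M),z)$. Superficiality of $\xb$ for the $\A$-adic filtration does \emph{not} give this: by Singh's formula \cite[Theorem 1]{BS}, $H(G_\B(N),z)/(1-z)$ equals $H(G_\A(M),z)$ plus the coefficientwise nonnegative correction $\sum_{n\geq 0}\ell\bigl((\A^{n+1}M:x_1)/\A^nM\bigr)z^n$, which vanishes in large degrees but vanishes in \emph{all} degrees precisely when $x_1^*$ is $G_\A(M)$-regular (Valabrega--Valla). Regularity of $\xb^*$ on $G_\A(M)$ is not among the hypotheses --- only regularity on $G(\eF,M)$ is assumed --- and it is essentially equivalent to what the lemma is after (once $\eF$ is known to be $\A$-adic the two graded modules coincide), so asserting that equality begs the question.

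The good news is that your route can be repaired: you only need an inequality. Iterating Singh's formula along the superficial sequence (each $x_{i+1}$ is a nonzerodivisor on the \CM\ module $M/(x_1,\ldots,x_i)M$) gives $H(G(\eF,M),z)=H(G_\B(N),z)/(1-z)^s \geq H(G_\A(M),z)$ coefficientwise; on the other hand $\A^jM\subseteq M_j$ gives, for every $n$, $\ell(M/M_n)\leq \ell(M/\A^nM)$, i.e.\ the partial sums of those coefficients satisfy the opposite inequality. Together these force equality of every coefficient, hence $\ell(M/M_n)=\ell(M/\A^nM)$ for all $n$, and your surjection argument then yields $M_n=\A^nM$. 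With this fix your proof is correct; what it buys is the numerical statement that $\eF$ and the $\A$-adic filtration have the same Hilbert function, at the cost of the Singh-formula bookkeeping, whereas the paper's induction is shorter and stays entirely at the level of submodules.
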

\begin{proof}
Since
$M_0 = M$ we get $\A^iM \subseteq M_i$ for all $i \geq 1$. We prove by induction on $n$ that $M_n = \A^nM$ for $n \geq 1$.

\noindent\textit{The case $n =1$.}

$\ov{\eF}$ is $\B$-adic. So
\[
\frac{M_1+ \xb M}{\xb M} = \frac{\A M+ \xb M}{\xb M}
\]
Thus $M_1 + \xb M = \A M + \xb M$. But $\xb M \subseteq \A M \subseteq M_1$.
Therefore $M_1 = \A M$.

Assume the result for $n =p$ and prove for $n =p+1$.
By hypothesis on $\ov{\eF}$,
\begin{equation*}
\frac{M_{p+1}+ \xb M}{\xb M} = \frac{\A^{p+1} M+ \xb M}{\xb M}. \tag{*}
\end{equation*}
Notice
\begin{equation*}
\frac{M_{p+1}+ \xb M}{\xb M} \cong \frac{M_{p+1}}{\xb M \cap M_{p+1}} \quad \text{and} \quad
  \frac{\A^{p+1} M+ \xb M}{\xb M} \cong  \frac{\A^{p+1} M}{\xb M \cap \A^{p+1} M}.
 \tag{**}
\end{equation*}
Observe that
\begin{align*}
\xb \A^p M \subseteq \A^{p+1}M \cap \xb M &\subseteq M_{p+1} \cap \xb M \\
                         &= \xb M_p \quad \ \text{\cite[2.3]{CortZar}} \\
                            &= \xb \A^p M \quad \text{by induction hypothesis}
\end{align*}
So $ \xb M \cap M_{p+1} =  \xb \A^p M =  \A^{p+1}M \cap \xb M $. The result follows from (*) and (**).
\end{proof}
\s \emph{Veronese functor:}. Let $\eF = \{M_n \}_{n\in \Z}$ be an $\A$- stable filtration. For $l \geq 1$ set $\eF^{<l>}= \{M_{nd} \}_{n \in \Z}$. Notice $\eF^{<l>}$ is an $\A^l$-stable filtration. We also have an $\ral$-isomorphism $\eR(\eF,M)^{<l>} \cong \eR(\eF^{<l>}, M)$.

\s For definition and  basic properties of superficial sequences see \cite[p.\ 86-87]{Pu1}.

\s\label{AtoA'} \textbf{Flat Base Change:} In our paper we do many flat changes of rings.
 The general set up we consider is
as follows:

 Let $\phi \colon (A,\m) \rt (A',\m')$ be a flat local ring homomorphism
 with $\m A' = \m'$. Set $\A' = \A A'$ and if
 $N$ is an $A$-module set $N' = N\otimes A'$. Set $k = A/\m$ and $k' = A'/\m'$.

 \textbf{Properties preserved during our flat base-changes:}

\begin{enumerate}[\rm (1)]
\item
$\ell_A(N) = \ell_{A'}(N')$. So
 $H_{\A}(M,n) = H_{\A'}(M',n)$ for all $n \geq 0$.
\item
$\dim M = \dim M'$ and  $\grade(K,M) = \grade(KA',M')$ for any ideal $K$ of $A$.
\item
$\depth G_{\A}(M) = \depth G_{\A'}(M')$.
\item
For each $i \geq 0$ we have $\Tor^{A}_{i}(k,k) \otimes_A A'  \cong \Tor^{A'}_{i}(k', k')$. In particular $A$ is regular local \ff \ $A'$ is.

\end{enumerate}

\textbf{Specific flat Base-changes:}

\begin{enumerate}[\rm (a)]
\item
$A' = A[X]_S$ where $S =  A[X]\setminus \m A[X]$.
The maximal ideal of $A'$ is $\n = \m A'$.
The residue
field of $A'$ is $k' = k(X)$. Notice that $k'$ is infinite.
\item
$A' =  \widehat{A}$  the completion of $A$ \wrt \ the maximal ideal.
\item
Applying (a) and (b) successively ensures that $A'$ is complete with $k'$ infinite.
\item
$A' = A[X_1,\ldots,X_n]_S$ where $S =  A[X_1,\ldots,X_n]\setminus \m A[X_1,\ldots,X_n]$.
The maximal ideal of $A'$ is $\n = \m A'$.
The residue
field of $A'$ is $l = k(X_1,\ldots,X_n)$. Notice that if $I$ is integrally closed then $I'$ is
also integrally closed. Recall an ideal $K$ is said to be asymptotically normal if  $K^n$ is integrally closed for all $n \gg 0$.
When $\dim A \geq 2$ and $I$ is asymptotically normal, say $I = (a_1,\cdots,c_n)$ then in \cite[Corollary 2]{Ciu} it is proved that
the element $y = \sum_{i = 1}^{n} a_iX_i$ in $A'$ is $I'$-superficial and  when $A$ is \CM \
the $A'/(y)$ ideal $J = I'/(y)$ is asymptotically normal. We call $A'$ a general extension of $A$.
\end{enumerate}

\s \label{lying-above} We will need the following result. Let $\phi \colon (B,\n) \rt (A.\m)$ be a local map such that $A$ is finitely generated $B$-module (via $\phi$). Consider
$R = B[X_1,\cdots,X_n]$ and $S = A[X_1,\cdots. X_n]$ and let $\psi \colon R \rt S$ be the map induced by $\phi$. Note $S$ is a finite $R$-module via $\psi$.
Let $P = \n R$ and $Q = \m S$. Then $S_P = S_Q$. To see this note that $Q$ is the only prime in $S$ lying over $P$. The result follows from an exercise problem
in \cite[Exercise 9.1]{Ma}.

\s\label{equimultiple-defn}  The \emph{fiber cone} of $\A$ is the
$k$-algebra
$F(\A) = \bigoplus_{n\geq 0}\A^n/\m \A^{n}.$
Set $\spr(\A) = \dim F(\A)$,
the \emph{analytic spread} of $\A$.
 Recall $\dim A \geq \spr(\A) \geq \htt \A$.
We say $\A$ is\textit{ equimultiple} if
$\spr(\A) = \htt \A$. Clearly $\m$-primary ideals are equimultiple.
The number $\spr(\A) - \htt \A$ is called the analytic deviation of $\A$.

\s \label{a-invariant}
 Assume $\A$ is $\m$-primary and $\dim M = r$.
 Then
    $H^{i}_{ G_{\mathfrak{a}}(A)_ +}
\left( G_{\mathfrak{a}} (M)\right) = 0$ for $i > r$.
Recall that
the $a$-\emph{invariant} of $G_\mathfrak{a} (M)$ is
\[
 a\left( G_\mathfrak{a} (M) \right) = \max \{n \mid H^{r}_{\GA_+}(G_{\A}(M))_n \neq 0 \}.
\]
For $i \geq 0$ set
$a^{\mathfrak{a}}_{i} (M) = \max\{ n \mid  H^{i}_{ G_{\mathfrak{a}}(A)_ +}
\left( G_{\mathfrak{a}} (M)\right)_n \neq 0 \}.$

\s \label{redNO} We assume $k = A/\m$ is infinite. Let $\C = (x_1,\ldots,x_l)$ be  a minimal reduction of
$\A$ \wrt \ $M$.  We denote by  $\red_\C(\A, M) := \min \{ n  \mid \  \C \A^n M= \A^{n+1}M \} $
the \textit{reduction number} of $\A$ \wrt \ $\C$ and $M$. Let
 $$\red(\A, M) = \min \{ \red_\C(\A, M) \mid \C
\text{\ is  a minimal reduction of \ }  \A \}$$
 be the \textit{reduction number } of $M$ \wrt \ $\A$. Set
$\red_\A(A) = \red(\A, A)$.

\s \label{HilbertSeries-graded} Let $R = \bigoplus_{n\geq 0}R_n$ be a standard algebra over an Artin local ring $(R_0,\m_0)$ i.e., there exists $x_1,\ldots x_s \in R_1$ such that $R = R_0[x_1,\ldots,x_s]$.
Let $M = \bigoplus_{\nZ}M_n$ be an $R$-module.
By Hilbert-Serre theorem
\[
 H(M,z) = \sum_{\nZ}\ell_{R_0} (M_n)z^n  = \frac{h_M(z)}{(1-z)^{\dim M}} \quad \text{here } \ h_M(z) \in \Z[z,z^{-1}] \ \text{\&} \  h_R(1) \neq 0.
\]
We call $H_M(z)$ the \textit{Hilbert series }of $M$ and
$$h_M(z)= h_{-p}z^{-p}+ \ldots + h_{-1}z^{-1}+ h_0 + h_1z+ \ldots + h_sz^s \quad  \text{  (with $h_s \neq 0$ and $h_{-p} \neq 0$)} $$
the \textit{$h$-polynomial}
 of $M$. Notice  $h_M(z) \in \Z[z]$ \ff \ $M_n = 0$ for all $n < 0$.

\s\label{symmetric-h-polynomial}
 We say $h_M(z)$ is \emph{symmetric} if $h_{s-i} = h_{-p+i}$ for $i = 0,1,\ldots,s+p$.

\s \label{coefficents}
Let $R$, $M$, $h_M(z)$ be as in \ref{HilbertSeries-graded}. We also assume that
$M_n =0$ for $n < 0$. If $f$ is a polynomial we use $f^{(i)}$ to denote the $i$-th formal derivative of $f$. The integers $e_i(M)= h_M^{(i)}/i! $ for all $i\geq 0$ are called the \emph{Hilbert coefficients} of $M$.  The number $e_0(M)$ is also called the \textit{multiplicity }of $M$.
\s \label{HilbertSeries-local}
Let $(A,\m)$ be a local ring,  $\A$ an  $\m$-primary ideal and let $M$ be an $A$-module.
Set
$ H_\A(M,z) = H(G_\A(M)(z),z)$ and $h_{\A}( M , z) = h_{G_\A(M)}(z)$.  For
$i \geq 0$ set $e_{i}^{\A}(M) = e_i(G_\A(M))$. If $\A = \m$ we usually drop the "label" $\m$ and write $h(M,z)= h_{\m}(A,z)$, $e_{i}(M) = e_{i}^{\m}(M)$ and $\red(A) = \red(\m, A)$.

\section{CI-Approximation: Some special cases}\label{Gorapp-special}
In this section we prove that if $(A,\m)$ is a quotient of a regular local ring then
$[A,\m,\m]$ has a CI-approximation $[B,\n,\n, \phi]$ with $\phi$ onto. In Theorem \ref{equi} we  state our main result regarding CI-approximations. This will be proved in section 10. Finally in Theorem \ref{field} we prove a result regarding CI-approximation of an $\m$-primary ideal $\A$ in a complete  equicharacteristic
local ring $(A,\m)$.
The following Lemma  regarding annihilators is crucial.

\begin{lemma}
\label{annihilator}
Let $M$ be an $A$-module and let $\A$ be an ideal. Set $G = \GA$.
 Suppose there exists
 $\xi_1,\ldots,\xi_s \in \A$ such that $\xi_{i}^{*} \in \ann_{G} G_{\A}(M)$
for each $i$. Also assume that
 $\xi_{1}^{*},\ldots,\xi_{s}^{*}$ is a $G$-regular sequence.
 Then there exists $u_1,\ldots,u_s \in \A$ such that
\begin{enumerate}[\rm (1)]
\item
$u_1,\ldots,u_s$ is an $A$-regular sequence.
\item
$u_i \in \ann M$ for each $i = 1,\ldots,s$.
\item
For each $i \in \{1,\ldots,s \}$ there exists $n_i \geq 1$ such
 that $u_{i}^{*} = (\xi_{i}^{*})^{n_i}$.
\item
 $u_{1}^{*},\ldots,u_{s}^{*}  \in  \ann_G G_\A(M).$
\item
 $u_{1}^{*},\ldots,u_{s}^{*} $ is a  $G$-regular sequence.
\end{enumerate}
\end{lemma}
\begin{proof}
Suppose we have constructed $u_1,\ldots,u_s$ satisfying (2) and (3) then
 $u_1,\ldots,u_s$ satisfy all the remaining properties. This can be seen as follows:

(4) follows from (3); since $\xi_{i}^{*} \in \ann_{G} G_{\A}(M)$ for each $i$.

(1) and (5). As  $\xi_{1}^{*},\ldots,\xi_{s}^{*}$ is a $G$-regular sequence
 we also get
$(\xi_{1}^{*})^{n_1},\ldots,(\xi_{s}^{*})^{n_s}$ is a $G$-regular sequence
\cite[16.1]{Ma}.
Thus $u_{1}^{*},\ldots,u_{s}^{*} $ is a  $G$-regular sequence. It follows from   \cite[2.4]{VV} that
$u_1,\ldots,u_s$ is an $A$-regular sequence.
Thus it suffices to show there exists $u_1,\ldots,u_s$ satisfying (2) and (3).

Fix $i \in \{1,\ldots,s \}$.
Set $\xi = \xi_i$.
  Say $\xi \in \A^r \setminus \A^{r+1}$ for some $r \geq 1$.
Since $\xi^* \in \ann_G G_{\A}(M)$ we have $\xi M \subseteq \A^{r+1}M$.
Set $\q = \A^{r+1}$.
By the \emph{determinant trick}, \cite[2.1]{Ma}, there exists a monic polynomial $f(X) \in A[X]$ such that
\begin{align*}
f(X) &= X^{n} + a_1X^{n-1} + \ldots + a_{n-1}X + a_n \quad \text{with} \quad a_i \in \q^i, \text{\ for} \ i = 1,\ldots n \\
\text{and} \ u = f(\xi) &= \xi^{n} + a_1\xi^{n-1}+ \ldots +  a_{n-1}\xi + a_n \in \ann M.
\end{align*}
As $\xi^*$ is $G$-regular we have $\xi^n \in \A^{nr} \setminus \A^{nr+1}$.
 However  for each $i \geq 1$ we
 have
\[
a_i\xi^{n-i} \in \q^i\A^{(n-i)r} = \A^{i(r+1)}\A^{nr - ir} = \A^{nr + i} \subseteq
 \A^{nr +1}.
\]
Thus $u^* = (\xi^{n})^{*} = (\xi^*)^n$ (since $\xi^*$ is $G$-regular).
Set $u_i = u$ and $n_i = n$.
\end{proof}

The following Corollary is useful.
\begin{corollary}\label{redTOmaxDIM}
 Let $(A,\m)$ be \CM \ local ring, $\A$ an $\m$-primary ideal and let $M$ be an $A$-module. Set
$c = \dim A - \dim M$.  If $\GA$ is \CM \ then there exists $u_1,\ldots, u_c \in \A$ such that
\begin{enumerate}[\rm (1)]
 \item
$ u_1,\ldots, u_c \in \ann_A M $.
 \item
 $ u_{1}^*,\ldots, u_{c}^* \in \ann_{\GA} G_\A(M)$.
 \item
 $ u_1,\ldots, u_c $ is an $A$-regular sequence.
\item
$ u_{1}^*,\ldots, u_{c}^*$ is a $\GA$-regular sequence.
\item
 $u_1 t,\ldots, u_c t \in \ann_{\ra } \R(M).$
\item
$u_1 t,\ldots, u_c t$ is a $\ra$-regular sequence.
\end{enumerate}
\end{corollary}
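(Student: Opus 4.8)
The plan is to derive the Corollary from Lemma~\ref{annihilator}. To invoke that lemma with $s=c$ I must first produce elements $\xi_1,\dots,\xi_c\in\A$ whose initial forms $\xi_1^{*},\dots,\xi_c^{*}$ lie in $\ann_{\GA}G_\A(M)$ and form a $\GA$-regular sequence; once this is done, Lemma~\ref{annihilator} hands us $u_1,\dots,u_c\in\A$, properties (1)--(4) of the Corollary follow by relabelling its conclusions, and (5), (6) need only a short extra argument. The Cohen--Macaulay hypothesis on $\GA$ enters exactly once, in this construction, which is the heart of the matter.

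\emph{Producing the $\xi_i$.} We may assume $c\ge 1$, the case $c=0$ being vacuous. Put $G=\GA$, $N=G_\A(M)$ and $I=\ann_G N$; since $\A$ is $\m$-primary, $\dim G=d:=\dim A$ and $\dim N=\dim M=d-c$, and as $N$ is a faithful $G/I$-module, $\dim G/I=d-c$. I would then replace $I$ by the homogeneous ideal $I_{+}:=I\cap G_{+}$. Since $V(I_{+})=V(I)\cup\{\M\}$, where $\M$ is the graded maximal ideal of $G$, and $d-c=\dim M\ge 0$, we still have $\dim G/I_{+}=d-c$, hence $\htt I_{+}=c$; and because $\GA$ is Cohen--Macaulay, $\grade(I_{+},\GA)=\htt I_{+}=c$. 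Now $I_{+}\subseteq G_{+}$ is a homogeneous ideal of grade $c\ge 1$, so it is not contained in any associated prime of $G$ (these being homogeneous), and graded prime avoidance yields a homogeneous $G$-regular element of $I_{+}$, necessarily of positive degree; killing it and repeating (each quotient is again Cohen--Macaulay, and $I_{+}$ drops grade by one since the element chosen lies in $I_+$) produces a $G$-regular sequence $\eta_1,\dots,\eta_c$ of homogeneous elements of positive degree $n_i:=\deg\eta_i$ lying in $\ann_G G_\A(M)$. Lifting $\eta_i\in G_{n_i}=\A^{n_i}/\A^{n_i+1}$ to $\xi_i\in\A^{n_i}$ gives $\xi_i\in\A^{n_i}\setminus\A^{n_i+1}$, so $\xi_i^{*}=\eta_i$ and $\xi_i\in\A$.

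\emph{Conclusion.} With these $\xi_i$ the hypotheses of Lemma~\ref{annihilator} hold, producing $u_1,\dots,u_c\in\A$ satisfying properties (1)--(5) of that lemma; its (2), (1), (5) give (1), (3), (4) of the Corollary, and its (4) gives (2). For (5): by (1) each $u_i$ kills $M$, hence kills $\A^nM$ for every $n$, so $u_it\in\ra$ annihilates $\R(M)=\bigoplus_n\A^nMt^n$. For (6): recall that $t^{-1}$ is a nonzerodivisor on $\ra$ with $\ra/t^{-1}\ra=\GA$, and that the homogeneous lift of $u_i$ to $\ra$ maps modulo $t^{-1}$ to $u_i^{*}$; since by (4) the sequence $u_1^{*},\dots,u_c^{*}$ is $\GA$-regular and by (3) the $u_i$ form an $A$-regular sequence, the Valabrega--Valla correspondence \cite{VV} lifts this to a $\ra$-regular sequence $u_1t,\dots,u_ct$ (equivalently, the initial forms being $\GA$-regular forces $(u_1,\dots,u_c)\cap\A^n=\sum_j u_j\A^{n-\deg u_j^{*}}$ for all $n$, which is precisely the condition making these homogeneous elements of $\ra$ a regular sequence).

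I expect the step ``Producing the $\xi_i$'' to be the main obstacle: one must arrange that the $G$-regular sequence inside $\ann_G G_\A(M)$ can be taken homogeneous \emph{of positive degree}, so that its members lift to genuine elements of $\A$ and not merely of $A$; this is why I pass to $I_{+}=I\cap G_{+}$ and use Cohen--Macaulayness of $\GA$ to see that this passage costs nothing in grade. The transfer in (6) from a $\GA$-regular to a $\ra$-regular sequence is standard but must be set up with the degrees of the initial forms handled correctly.
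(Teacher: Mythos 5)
Your proposal is correct and, for parts (1)--(5), follows essentially the paper's own route: the paper likewise computes $\grade (\ann_{\GA} G_\A(M)) = \htt (\ann_{\GA} G_\A(M)) = \dim \GA - \dim G_\A(M) = c$ using that $\GA$ is \CM\ and *-local, takes a homogeneous regular sequence of that length inside the annihilator, and feeds it into Lemma~\ref{annihilator}; your relabelling of that lemma's conclusions and your observation that (5) is immediate match the paper. Two points of difference are worth recording. First, your detour through $I_{+}=I\cap G_{+}$ to force positive degrees addresses a detail the paper leaves implicit; it is genuinely needed in order to lift the forms to elements of $\A$, although one can also dispose of it by remarking that a degree-zero element of $\ann_{\GA}G_\A(M)$ lies in $\m/\A$ (as $M\neq 0$) and is nilpotent in $\GA$ because $\A$ is $\m$-primary, hence can never occur in a regular sequence. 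Second, for (6) the paper argues in one line that $t^{-1},u_1t,\ldots,u_ct$ is $\ra$-regular because $u_1^*,\ldots,u_c^*$ is regular on $\ra/t^{-1}\ra=\GA$, and then drops $t^{-1}$ by permutability of homogeneous regular sequences in the *-local ring $\ra$; note that the image of $u_it$ modulo $t^{-1}$ is the class of $u_i$ in $\A/\A^2$, which equals $u_i^*$ only when $\deg u_i^*=1$, so this step requires exactly the degree bookkeeping you flag at the end. Your alternative is a complete, and in fact more careful, justification: in degree $n$ the ideal $(u_1t,\ldots,u_{j-1}t)\ra$ is $\bigl(\sum_{i<j}u_i\A^{n-1}\bigr)t^n$, so regularity of $u_jt$ on the quotient amounts to showing that $a\in\A^n$ with $u_ja\in\sum_{i<j}u_i\A^{n}$ forces $a\in\sum_{i<j}u_i\A^{n-1}$; $A$-regularity of $u_1,\ldots,u_j$ gives $a\in(u_1,\ldots,u_{j-1})\cap\A^n$, and the Valabrega--Valla equality $(u_1,\ldots,u_{j-1})\cap\A^n=\sum_{i<j}u_i\A^{\,n-\deg u_i^*}\subseteq\sum_{i<j}u_i\A^{n-1}$ finishes it, exactly as you indicate.
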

\begin{proof}
 Since $\GA$ is \CM, we have that
\begin{align*}
  \grade  \ann_{\GA} G_\A(M)  &= \htt \ann_{\GA} G_\A(M) \\
                           &=  \dim \GA  - \dim G_{\A}(M),  \quad \text{since $\GA $ is *-local} \\
                           &= \dim A - \dim M .
\end{align*}
Therefore (1), (2), (3) and (4) follow from Lemma \ref{annihilator}.
The assertion (5) is clear.

(6). Since $t^{-1}$ is $\ra$-regular and $u_1^*,\ldots, u_c^*$ is $\GA = \ra/t^{-1}\ra$-regular sequence it follows that $t^{-1}, u_1t,\ldots, u_ct$ is  a $\ra$-regular sequence. Notice
$\ra$ is a *-local. It follows that $ u_1t,\ldots, u_ct$ is a $\ra$-regular sequence.
\end{proof}

We state our general result regarding CI-Approximations
\begin{theorem}\label{equi}
Let $(A,\m)$ be a complete
 local ring and let $\A$ be a proper  ideal in $A$ with $\dim A/\A + \spr(\A)
 = \dim A$. Then
$A$ has a CI-approximation \wrt \ $\A$. Furthermore if $\A = (x_1,\ldots,x_m)$ then we may
choose
a CI-approximation $[B,\n,\B,\varphi]$ of $[A,\m,\A]$ such that there exists $u_1,\ldots,u_m \in \B$ with $\varphi(u_i) = x_i$ for  $i = 1,\ldots, m$.
\end{theorem}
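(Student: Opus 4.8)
The plan is to build $B$ as a complete intersection quotient of a complete regular local ring $R$, arranged so that the associated graded ring of $B$ with respect to the image of a suitable ideal of $R$ is again a complete intersection; the link between the two conditions will be the Valabrega--Valla criterion. First, after carrying out the flat base changes of \ref{AtoA'} we may assume $k = A/\m$ is infinite (these changes leave $\dim A$, $\spr(\A)$ and $\dim A/\A$ unchanged). Since $A$ is complete, we may present $A = R/I$ with $R = R_0[[U_1,\dots,U_m]]$, $R_0$ complete regular local and $\pi\colon R\twoheadrightarrow A$ sending $U_i \mapsto x_i$ — obtained from any Cohen presentation of $A$ over $R_0$ by adjoining $m$ further power-series variables mapping to the $x_i$. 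Put $\mathfrak{q} = (U_1,\dots,U_m)R$, so that $\pi(\mathfrak{q})A = \A$, and set $c = \htt I = \dim R - \dim A$.

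The heart of the proof is to produce $f_1,\dots,f_c \in I$ which form an $R$-regular sequence and whose initial forms $f_1^{*},\dots,f_c^{*}$ with respect to the $\mathfrak{q}$-adic filtration form a regular sequence in $G := G_{\mathfrak{q}}(R)$. Here $G = R_0[Y_1,\dots,Y_m]$ is a polynomial ring over $R_0$, hence a regular ring and in particular a domain, so orders are additive along the $\mathfrak{q}$-adic filtration; a short computation then gives $I^{*} = \bigoplus_n\big(I\cap\mathfrak{q}^{n} + \mathfrak{q}^{n+1}\big)/\mathfrak{q}^{n+1}$, where $I^{*}\subseteq G$ is the ideal generated by all initial forms $f^{*}$, $f\in I$; in particular every nonzero homogeneous element of $I^{*}$ is literally the initial form of an element of $I$. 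Because $(\mathfrak{q}^{n}+I)/I = \A^{n}$ inside $A = R/I$, one gets $G/I^{*}\cong G_{\A}(A)$, whence $\dim G/I^{*} = \dim A$ and therefore $\htt I^{*} = c$ (this is the point at which the standing hypothesis, keeping the relevant dimensions aligned, is brought to bear). Since $G$ is \CM, $\grade I^{*} = \htt I^{*} = c$, so a graded prime-avoidance argument yields a homogeneous $G$-regular sequence $\theta_1,\dots,\theta_c$ inside $I^{*}$: at stage $i$ the associated primes of $G/(\theta_1,\dots,\theta_i)$ are homogeneous of height $i < c$ and hence do not contain $I^{*}$, and infiniteness of $k$ lets one combine homogeneous elements of a common degree so as to avoid all of them. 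Finally, lift each $\theta_i$ to $f_i \in I$ with $f_i^{*} = \theta_i$. (If one prefers to start from prescribed elements of $\mathfrak{q}$ that annihilate $A$ modulo high powers of $\mathfrak{q}$ and to control their initial forms up to powers, one routes this through Lemma \ref{annihilator}; that is what is done in the easy cases \ref{GorApprox-REG-QT} and \ref{field}.)

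Now assemble the approximation. As $f_1^{*},\dots,f_c^{*}$ is a $G$-regular sequence, the Valabrega--Valla theorem \cite[2.3, 2.4]{VV} shows that $f_1,\dots,f_c$ is an $R$-regular sequence and, setting $B = R/(f_1,\dots,f_c)$, $u_i$ the image of $U_i$ and $\B = (u_1,\dots,u_m)B$, that $\GB = G/(f_1^{*},\dots,f_c^{*})$. Let $\varphi\colon B \twoheadrightarrow A$ be induced by $\pi$, which is well defined since $(f_1,\dots,f_c)\subseteq I$. Then $A$ is a cyclic, hence finitely generated, $B$-module; $\dim B = \dim R - c = \dim A$; $\varphi(\B)A = \A$ and $\varphi(u_i) = x_i$; $B$ is a complete intersection, being a complete regular local ring modulo a regular sequence; and $\GB = G/(f_1^{*},\dots,f_c^{*})$ is a complete intersection, being the regular ring $R_0[Y_1,\dots,Y_m]$ modulo a homogeneous regular sequence. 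Hence $[B,\n,\B,\varphi]$ is a CI-approximation of $[A,\m,\A]$, with the elements $u_1,\dots,u_m$ exactly as demanded in the ``furthermore'' clause.

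I expect the middle step to be the genuine obstacle: extracting from $I$ a regular sequence whose $\mathfrak{q}$-adic initial forms again form a regular sequence. It rests on three things — the height identity $\htt I^{*} = c$, coming from $G/I^{*}\cong G_{\A}(A)$; the fact that a homogeneous element of $I^{*}$ is an honest initial form, which uses that $G$ is a domain so that orders add; and the graded prime avoidance delivering one homogeneous regular element at each stage, for which the infinite residue field arranged at the outset is used. Everything else is bookkeeping with the Valabrega--Valla criterion.
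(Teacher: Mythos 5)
Your reduction (Cohen presentation $\pi\colon R=R_0[[U_1,\dots,U_m]]\twoheadrightarrow A$ with $U_i\mapsto x_i$, the leading ideal $I^{*}\subseteq G=G_{\q}(R)\cong R_0[Y_1,\dots,Y_m]$, the isomorphism $G/I^{*}\cong \GA$, the height computation $\htt I^{*}=c$, and the final assembly via Valabrega--Valla) is sound, but the step you yourself flag as the obstacle is where the proof breaks: the claimed graded prime avoidance inside $I^{*}$ is not valid when the degree-zero part of the graded ring is a local ring of positive dimension, and that is exactly the situation here, since $G_0=R_0$ is a full Cohen base. The homogeneous components of $I^{*}$ are $R_0$-modules, not $k$-vector spaces, and homogeneous elements of different degrees cannot be combined. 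Concretely, at stage $i$ an associated prime $\p$ of $G/(\theta_1,\dots,\theta_i)$ may contain $G_{+}$ (this can happen as soon as $i\geq m$, i.e.\ $\dim R_0>\dim A$, which is the typical case for a Cohen presentation); then only degree-zero elements of $I^{*}$ can avoid $\p$, while another associated prime may contain $I^{*}_0$ without containing $I^{*}$, so that \emph{no} homogeneous element of $I^{*}$ avoids both. The paper's Example \ref{eqEx} is precisely this configuration ($J=(X,Y)$ in $k[[X]][Y]/(XY)$, with associated primes $(X)$ and $(Y)$), and it persists for infinite residue fields, so ``infiniteness of $k$'' does not rescue the step. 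This is the whole point of Section \ref{sectionGAPP}: the existence of a homogeneous regular sequence in the relevant graded ideal is obtained not by prime avoidance but by Theorem \ref{hann}, whose proof is an induction on $\dim R_0$ using the determinant trick, and which needs the extra structural hypotheses that the module be generated in degree zero and that there be a common degree-zero system of parameters --- and a further correction (the element $\Delta$) in mixed characteristic.

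A related warning sign: your parenthetical that the standing hypothesis $\dim A/\A+\spr(\A)=\dim A$ enters through $\htt I^{*}=c$ is incorrect, because $\dim G_{\A}(A)=\dim A$ holds for \emph{every} proper ideal of a local ring; so as written your argument never uses the hypothesis at all, whereas in the paper it is genuinely needed (via \cite[2.6]{HUO}, it guarantees a homogeneous system of parameters for $\GA$, which is what feeds hypothesis (3) of Theorem \ref{hann} and controls precisely the degree-zero versus positive-degree interaction that defeats naive avoidance). Two smaller points: the initial reduction to infinite residue field is not legitimate here, since the conclusion is an existence statement about $A$ itself and does not descend along the faithfully flat extensions of \ref{AtoA'} (the paper's proof does not need this reduction); and if you want to route the construction through Lemma \ref{annihilator}, note that the paper applies it to $\ann$ of $\GA$ viewed as a module over a polynomial ring whose degree-zero part is the small ring $k[[Y_1,\dots,Y_s]]$ built from a system of parameters of $A/\A$ --- not over the large ring $R_0[Y_1,\dots,Y_m]$ of a surjective presentation --- exactly so that Theorem \ref{hann}'s hypotheses can be verified. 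Until you supply a substitute for the avoidance step that works over $G_0=R_0$ (or verify the hypotheses of Theorem \ref{hann} in your setup, which will force you to use the standing hypothesis), the proof is incomplete.
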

Theorem \ref{equi} is proved in section 10.
\begin{remark}\label{equiREM}
 The hypothesis of the theorem holds  when $\A$ is $\m$-primary. It is also satisfied when $\A$ is equimultiple and
$A$ is equidimensional \cite[34.5]{Nag}.  Our hypothesis ensures that $\GA$ has a homogeneous system of parameters,
  \cite[2.6]{HUO}.
\end{remark}

For the case when $\A = \m$ and $A$ is a quotient of a regular local ring $T$ we have:
\begin{theorem}\label{GorApprox-REG-QT}
 Let $(A,\m)$  be a local ring such that $A$ is a quotient of a regular local ring $(T,\tf)$. Then
$[A,\m,\m]$ has a CI-approximation $[B,\n,\n, \phi]$ with $\phi$-onto.
\end{theorem}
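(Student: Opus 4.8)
The plan is to realize $B$ as a quotient of $T$ by a regular sequence contained in the kernel of $T \twoheadrightarrow A$ whose initial forms are well behaved.

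Write $A = T/I$, put $n = \dim T$, $d = \dim A$ and $g = n-d$; since $T$ is regular, hence Cohen--Macaulay, we have $\htt I = g$. Let $k = T/\tf$ and identify $G_\tf(T)$ with the polynomial ring $G := k[X_1,\dots,X_n]$, so that $G_\m(A) = G/I^{*}$, where $I^{*} \sub G$ is the homogeneous ideal of initial forms of elements of $I$. Because $\dim G/I^{*} = \dim G_\m(A) = \dim A = d$ and $G$ is a domain of dimension $n$, we get $\htt I^{*} = g$, and since $G$ is Cohen--Macaulay this means $\grade(I^{*},G) = g$.

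The key step is to construct a homogeneous $G$-regular sequence $\zeta_1,\dots,\zeta_g$ with $\zeta_i \in I^{*}$ for all $i$. I would do this inductively: having chosen a homogeneous $G$-regular sequence $\zeta_1,\dots,\zeta_{i-1}$ in $I^{*}$ with $i \leq g$, the ring $\ov{G} := G/(\zeta_1,\dots,\zeta_{i-1})$ is Cohen--Macaulay and $\grade(I^{*}\ov{G},\ov{G}) = g-i+1 > 0$, so $I^{*}\ov{G}$ is contained in no associated prime of $\ov{G}$; these associated primes are homogeneous, so homogeneous prime avoidance produces a homogeneous element $\zeta_i \in I^{*}$ that is a nonzerodivisor on $\ov{G}$. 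The only delicate point --- and I expect it to be the main thing requiring care --- is that $k$ may be finite, so one cannot simply take a generic linear combination of generators; this is harmless here because $I^{*}$ is a \emph{proper} homogeneous ideal of the polynomial ring $G$, whence $I^{*}\cap k = 0$ and every nonzero homogeneous element of $I^{*}$ has positive degree, which is precisely what makes the standard "equalize degrees by passing to powers" form of graded prime avoidance go through.

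Finally, for each $i$ choose $u_i \in I$ with $u_i^{*} = \zeta_i$ (possible by the very definition of $I^{*}$, and forcing $u_i \in \tf$ since $\deg \zeta_i \geq 1$). By the Valabrega--Valla criterion \cite[2.3, 2.4]{VV}, since $u_1^{*},\dots,u_g^{*}$ is a $G$-regular sequence, the elements $u_1,\dots,u_g$ form a $T$-regular sequence and $G_\tf(T/(u_1,\dots,u_g)) \cong G/(u_1^{*},\dots,u_g^{*})$. Now set $B = T/(u_1,\dots,u_g)$ with maximal ideal $\n = \tf B$, and let $\phi \colon B \rt A$ be the canonical surjection, which is legitimate because $(u_1,\dots,u_g) \sub I$. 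Then $B$ is a complete intersection (a regular ring modulo a regular sequence); $G_\n(B) \cong G/(u_1^{*},\dots,u_g^{*})$ is a polynomial ring modulo a homogeneous regular sequence, hence a complete intersection; $\dim B = n-g = d = \dim A$; $A$ is a finitely generated (indeed cyclic) $B$-module; and $\phi$ is onto with $\phi(\n)A = \m$. Hence $[B,\n,\n,\phi]$ is a CI-approximation of $[A,\m,\m]$ with $\phi$ onto, as required.
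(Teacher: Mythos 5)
Your proof is correct, and it reaches the same target as the paper (kill a $T$-regular sequence inside $I=\ker(T\twoheadrightarrow A)$ whose initial forms form a $G_\tf(T)$-regular sequence, then invoke Valabrega--Valla), but the mechanism for producing that sequence is genuinely different. The paper routes through Corollary \ref{redTOmaxDIM} and Lemma \ref{annihilator}: it first finds homogeneous elements of $\ann_{G_\tf(T)}G_\tf(A)$ forming a regular sequence, and then uses the determinant trick to manufacture elements $u_i\in\ann_T A=I$ whose initial forms are \emph{powers} $(\xi_i^*)^{n_i}$ of those elements; this machinery is designed for an arbitrary module $M$, where a homogeneous element of $\ann_{G}G_\A(M)$ need not be the initial form of anything in $\ann_A M$. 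You instead exploit the fact that here $A$ is cyclic over $T$, so $\ann_{G_\tf(T)}G_\m(A)=I^*$ and, moreover, every nonzero homogeneous element of $I^*$ is literally the initial form of an element of $I$ (since $(I^*)_n=(\tf^{n+1}+I\cap\tf^n)/\tf^{n+1}$) --- this is the one point where ``by the very definition of $I^*$'' deserves a line of justification, because it is \emph{not} the assertion that $I^*$ is generated by the initial forms of a given generating set of $I$, which is false in general; but the fact you need is true and elementary. Granting it, your direct lift $u_i\in I$ with $u_i^*=\zeta_i$ bypasses the determinant trick entirely, and your inductive graded prime avoidance (valid over finite residue fields since $I^*\subseteq G_+$) replaces the paper's appeal to grade $=$ height in the Cohen--Macaulay ring $G_\tf(T)$. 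What the paper's approach buys is uniformity: the same Lemma \ref{annihilator} is reused later for non-cyclic modules (e.g.\ in Theorem \ref{equiTT} and \ref{field}), whereas your argument is shorter and more self-contained for this particular statement, and it also makes explicit the isomorphism $G_\n(B)\cong G_\tf(T)/(\ub^*)$ via \cite{VV}, which the paper leaves as ``clearly''.
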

\begin{proof}
 Set $k = A/\m = T/\tf$. Let $\psi \colon T \rt A$ be the quotient map. We consider $A$ as  a
$T$-module. So $ \GA = G_{\tf}(A)$. Set $c = \dim T - \dim A = \dim \GT - \dim \GA$.
Its well-known  $\GT \cong k[X_1,\ldots,X_n]$; where $n = \dim T$. In particular $\GT$ is
\CM. Let $u_1,\ldots u_c$ be as in Corollary \ref{redTOmaxDIM}.

Set $(B,\n) = (T/(\ub),\tf/(\ub))$ and let $\phi \colon B \rt A$ be the map induced by
$\psi$. So $\phi$ is onto. Clearly
$[B,\n,\n, \phi]$ is a CI-approximation  of $[A,\m,\m]$.
\end{proof}

\noindent For  $\m$-primary ideals in a complete  equicharacteristic complete local ring we prove:
\begin{theorem}\label{field}
 Let $(A,\m)$ be a complete
equicharacteristic local ring and let $\mathfrak{a}$ be an
$\m$-primary ideal.  Then $[A,\m,\mathfrak{a}]$ has a CI-approximation $[B,\n,\mathfrak{b},\phi]$ with $\B = \n$ and $B/\n \cong A/\m$ and  $\mu(\n) =
\mu(\A)$.
\end{theorem}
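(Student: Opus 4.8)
The plan is to present $A$ as a module-finite quotient of a power series ring over its residue field using a generating set of $\A$, and then kill a carefully chosen regular sequence sitting deep in the maximal ideal. By Cohen's structure theorem $A$ contains a coefficient field $k$ with $k\cong A/\m$. Write $\A=(a_1,\dots,a_m)$ with $m=\mu(\A)$, let $S=k[[X_1,\dots,X_m]]$ with maximal ideal $\mathfrak{N}=(X_1,\dots,X_m)$, and let $\theta\colon S\rt A$ be the $k$-algebra map with $\theta(X_i)=a_i$; this is well defined since $A$ is $\m$-adically complete and each $a_i\in\m$. Then $\theta(\mathfrak{N})A=\A$, hence $\mathfrak{N}^nA=\A^n$ for all $n\ge 0$, so $A$ (as an $S$-module) is $\mathfrak{N}$-adically complete and separated with $A/\mathfrak{N}A=A/\A$ of finite length over $k=S/\mathfrak{N}$; the complete form of Nakayama's lemma then shows $A$ is a finite $S$-module. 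Since $\ann_S A=\ker\theta$ and $S/\ker\theta\hookrightarrow A$ is module-finite (hence integral), $\dim_S A=\dim A=:d$.

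Next I would apply Corollary \ref{redTOmaxDIM} with $S$ in place of $A$, its maximal ideal $\mathfrak{N}$ in place of $\A$, and $A$ in place of $M$: here $S$ is regular (so \CM) and $G_{\mathfrak{N}}(S)=k[X_1,\dots,X_m]$ is a polynomial ring (so \CM \ and a complete intersection), and $c:=\dim S-\dim_S A=m-d$. The corollary yields $u_1,\dots,u_c\in\mathfrak{N}$ with: (i) $u_i\in\ann_S A=\ker\theta$; (ii) the initial forms $u_1^{*},\dots,u_c^{*}$, taken with respect to the $\mathfrak{N}$-adic filtration and thus homogeneous in $k[X_1,\dots,X_m]$, lie in $\ann_{G_{\mathfrak{N}}(S)}G_{\mathfrak{N}}(A)$ and form a $G_{\mathfrak{N}}(S)$-regular sequence; (iii) $u_1,\dots,u_c$ is an $S$-regular sequence. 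Observe that $G_{\mathfrak{N}}(A)=G_{\A}(A)$ as graded $G_{\mathfrak{N}}(S)$-modules, so the natural graded surjection $\pi\colon G_{\mathfrak{N}}(S)\rt G_{\A}(A)$ has $\ann_{G_{\mathfrak{N}}(S)}G_{\A}(A)=\ker\pi$. Since $\dim_k\A/\A^2=m=\mu(\A)$, the degree-one map $G_{\mathfrak{N}}(S)_1\rt G_{\A}(A)_1$ is a surjection between $k$-vector spaces of equal finite dimension, hence an isomorphism, so $\ker\pi$ vanishes in degree $1$. As each $u_i^{*}$ is a nonzero homogeneous element of $\ker\pi$ by (ii), this forces $\deg u_i^{*}\ge 2$, i.e. $u_i\in\mathfrak{N}^2$.

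Finally set $B=S/(\ub)$, $\n=\mathfrak{N}B$, $\B=\n$, and let $\phi\colon B\rt A$ be induced by $\theta$ (legitimate by (i)). Then $B$ is a complete intersection, being a regular local ring modulo a regular sequence; $\dim B=\dim S-c=d=\dim A$; $A$ is finite over $B$ since the $S$-action on $A$ annihilates $(\ub)$; $B/\n\cong S/\mathfrak{N}=k\cong A/\m$; and $\phi(\n)A=\theta(\mathfrak{N})A=\A$. Because $u_1^{*},\dots,u_c^{*}$ is a $G_{\mathfrak{N}}(S)$-regular sequence, the Valabrega--Valla criterion \cite{VV} gives $G_{\n}(B)\cong G_{\mathfrak{N}}(S)/(u_1^{*},\dots,u_c^{*})=k[X_1,\dots,X_m]/(u_1^{*},\dots,u_c^{*})$, which is again a complete intersection. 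And since every $u_i\in\mathfrak{N}^2$, the induced map $\n/\n^2\rt\mathfrak{N}/\mathfrak{N}^2$ is an isomorphism, so $\mu(\n)=m=\mu(\A)$. Hence $[B,\n,\B,\phi]$ is the asserted CI-approximation. The only substantive step is the production of the $u_i$, which is exactly what Corollary \ref{redTOmaxDIM} (via Lemma \ref{annihilator}) supplies; everything else is bookkeeping, the single point needing care being the order estimate $\deg u_i^{*}\ge 2$ that keeps $\mu(\n)$ from dropping below $\mu(\A)$.
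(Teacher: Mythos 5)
Your overall route is exactly the paper's: Cohen's structure theorem gives a coefficient field, $S=k[[X_1,\dots,X_m]]$ maps onto a (minimal) generating set of $\A$, $A$ is a finite $S$-module, and Corollary \ref{redTOmaxDIM} (via Lemma \ref{annihilator}) supplies $u_1,\dots,u_c\in\ann_S A$ forming an $S$-regular sequence whose initial forms are a $G_{\mathfrak{N}}(S)$-regular sequence annihilating $G_{\A}(A)$; killing them yields the CI-approximation, exactly as in the paper. The one place where you go beyond the paper (which merely asserts that $\mu(\n)=\mu(\A)$ is clear from the construction) is your argument that each $u_i\in\mathfrak{N}^2$, and that argument as written is incorrect. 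The map $\pi\colon G_{\mathfrak{N}}(S)\rt G_{\A}(A)$ is not a graded surjection (already in degree zero it is $k\rt A/\A$), and in degree one the target is $\A/\A^2$, whose $k$-dimension is $\mu(\A)+\ell(\m\A/\A^2)$; this strictly exceeds $m=\mu(\A)$ whenever $\A^2\neq\m\A$ (for instance $A=k[[x]]$, $\A=(x^2)$, or $\A=\m^2$ in any regular local ring of positive dimension). So the claim that the degree-one map is a surjection between $k$-spaces of equal dimension, hence an isomorphism, fails in general.

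The conclusion you need is nevertheless true and has a short correct proof: since $1\in G_{\A}(A)_0$ and the module structure is via the ring map $\pi$, one has $\ann_{G_{\mathfrak{N}}(S)}G_{\A}(A)=\ker\pi$ without any surjectivity; and a degree-one element $\sum c_iX_i^{*}$ with $c_i\in k$ lies in $\ker\pi$ \ff \ $\sum c_ia_i\in\A^2\subseteq\m\A$, which by minimality of the generating set $a_1,\dots,a_m$ and Nakayama forces all $c_i=0$. Hence the annihilator contains no nonzero degree-one element, so every $u_i^{*}$ (indeed every $\xi_i^{*}$ produced by Theorem \ref{hann}/Lemma \ref{annihilator}) has degree at least $2$, giving $u_i\in\mathfrak{N}^2$ and $\mu(\n)=m=\mu(\A)$. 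With this repair — and noting that taking the $a_i$ to be a \emph{minimal} generating set, as you did, is essential here — your proof coincides with the paper's; the remaining bookkeeping (finiteness of $A$ over $S$, $\dim B=\dim A$, $G_\n(B)\cong G_{\mathfrak{N}}(S)/(\ub^{*})$ being a complete intersection) matches the paper step for step.
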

\begin{proof}
  $A$  contains its residue field $k = A/\m.$; see \cite[28.3]{Ma}.  Let
$\mathfrak{a} = (x_1, \ldots, x_n).$  Set $S = k [|X_1,\ldots X_n|]$
and let $\psi : S \rightarrow A$ be the natural map which sends
$X_i$ to $x_i$ for each $i.$
Since $A/(\underline{X}) A = A/\mathfrak{a}$ has finite
length (as an $S$-module) it follows that $A$ is a finitely
generated as an S-module, see \cite[8.4]{Ma}.  Set $\eta = (X_1, \ldots
X_n).$ Notice $\psi (\eta)A = \mathfrak{a}$ and $G_{\eta} (S) = k
[X^{*}_{1}, \ldots X^{*}_{n}]$ the polynomial ring in $n$-variables.

If $\dim A = \dim S$ then set $[B,\n,\B,\phi] = [S,\eta,\eta,\psi].$
Otherwise set $c = \dim S-\dim A = \dim G_n (S) - \dim
G_{\mathfrak{a}} (A).$
Let $u_1,\ldots u_c$ be as in Corollary \ref{redTOmaxDIM}.
Set $B = S/(\ub)$, $\n = \eta/(\ub)$ and $\B = \n$. Clearly $G_{\n} (B) = G_{\eta}
(S)/(\ub^*)$ is CI.
 The map $\psi$ induces $\phi : B \rightarrow A.$  It can
be easily checked that $[B,\n,\n,\phi]$ is a CI-approximation of $[A,\m,\mathfrak{a}]$.
Finally by our construction its clear that $\mu(\n) = \mu(\A)$.
\end{proof}

%End 1111111111111111111111111111111111111111111111111111
\section{A classical filtration of the dual}\label{section-classical}
Let $M$ be an $A$-module.
The following  filtration of the dual of $M$, i.e, $M^{\dd} = \Hom_A(M, A)$ is classical cf.  \cite[p.\ 12]{Ser}.
Set
\[
M_{n}^{\dd} = \{ f \in M^{\dd} \mid f(M) \subseteq \A^n \} \cong \Hom_A(M,\A^n).
\]
It is easily verified that   $\eF_M = \{ M_{n}^{\dd} \}_{n \in \Z}$ is an $\A$-stable filtration on $M^{\dd}$.
We call $\eF_M$ to be the \emph{dual-filtration} of $M$ \wrt $\A$.
Set $\R = \R(\A) = \bigoplus_{\nZ}\A^nt^n$  and
$\R(M) = \R(\A,M) = \bigoplus_{\nZ}\A^nMt^n$. Let $f \in  M_{n}^{\dd}$.  We show that $f$
induces a\textit{ homogeneous} $\R$-\textit{linear map} $\hat{f}$ \textit{of degree} $n$ \textit{from} $\R(M)$ to $\R$; (see \ref{mapAUG}).
Theorem  \ref{main}  shows that $\Psi_M$ is a $\R$-linear isomorphism.
\begin{align*}
\Psi_M \colon \R(\eF,M^{\dd}) &\xar \Hs_\R(\R(M),\R ) \\
f &\mapsto \f
\end{align*}
It is easily seen,  see \ref{initialobs},  that $\Psi_M$ induces a natural map
$$\Phi_M \colon G(\eF,M^{\dd}) \xar \Hs_{\GA}(G_{\A}(M),\GA). $$
We prove $\Phi_M$ is injective and as a consequence deduce application \textbf{V} as stated in the introduction.
In Corollary \ref{corGor} we give a sufficient condition for $\Phi_M$ to be an isomorphism.

The following result is well-known.
\begin{proposition}
\label{stablefilt}
Let $(A,\m)$ be local, $\A$ an ideal in $A$ and let $M$ an  $A$-module.
Then $\A \Hom_{A}(M, \A^n) = \Hom_A(M,\A^{n+1})$ for all $n \gg 0$. \qed
\end{proposition}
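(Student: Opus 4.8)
The claim is that $\A\Hom_A(M,\A^n) = \Hom_A(M,\A^{n+1})$ for all $n \gg 0$; equivalently, that the dual filtration $\eF_M = \{M_n^\dd\}$ is $\A$-stable. One inclusion is automatic: since $\A M_n^\dd \sub M_{n+1}^\dd$ is part of the definition of a filtration (and is trivial here, as multiplying $f(M)\sub\A^n$ by $\A$ lands in $\A^{n+1}$), we always have $\A\Hom_A(M,\A^n)\sub\Hom_A(M,\A^{n+1})$. So the content is the reverse inclusion for large $n$.

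The plan is to realize the graded object $\bigoplus_n \Hom_A(M,\A^n)$ as a finitely generated module over the Rees algebra $\R = \R(\A) = \bigoplus_{n\geq 0}\A^n t^n$ (now in the non-extended sense, concentrated in degrees $\geq 0$), and then invoke the standard fact that a finitely generated graded module over $\R$ is generated in bounded degree, which is exactly the stability statement. Concretely, set $D = \bigoplus_{n\in\Z}\Hom_A(M,\A^n)t^n$. For $f\in\Hom_A(M,\A^n)$ and $a t^m \in \A^m t^m \sub \R_m$, the product $(at^m)\cdot(ft^n) := (af)t^{n+m}$ makes sense because $af$ has image in $\A^{m}\A^n = \A^{m+n}$, and so $D$ is a graded $\R$-module. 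First I would check this is well-defined and associative — routine. The key structural input is that $\R$ is a Noetherian ring (it is finitely generated as an $A$-algebra since $\A$ is finitely generated, and $A$ is Noetherian), hence it suffices to show $D$ is a finitely generated $\R$-module.

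For finite generation of $D$: note $D$ agrees in all degrees $\geq 0$ with $\Hom_{\R}\bigl(\R(M),\R\bigr)_{\geq 0}$, or more directly, $D_{\geq 0} = \bigoplus_{n\geq 0}\Hom_A(M,\A^n)t^n$ embeds into $\Hom_A(M,A)\otimes_A \R$ via $f t^n \mapsto (\text{inclusion }f)\otimes t^n$ — actually cleaner: $D_{\geq 0}$ is an $\R$-submodule of $M^\dd \otimes_A \R = \bigoplus_{n\ge0} M^\dd t^n = \Hom_A(M,A)\otimes_A\R$, since $\Hom_A(M,\A^n)\sub\Hom_A(M,A) = M^\dd$. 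The latter is a finitely generated $\R$-module because $M^\dd$ is a finitely generated $A$-module and $\R$ is a finitely generated $A$-algebra. Since $\R$ is Noetherian, the submodule $D_{\geq 0}$ is finitely generated over $\R$. (The negative-degree part $\bigoplus_{n<0}\Hom_A(M,\A^n)t^n = \bigoplus_{n<0}M^\dd t^n$ is irrelevant — we only care about $n\gg 0$, so working with $D_{\ge 0}$ and the non-extended $\R$ is enough.) A finitely generated graded module over the Noetherian graded ring $\R = \R_0 \oplus \R_1 \oplus \cdots$ with $\R$ generated over $\R_0 = A$ by $\R_1 = \A t$ is generated in degrees $\leq N$ for some $N$; therefore for $n\geq N$ we get $D_{n+1} = \R_1 D_n$, i.e. $\Hom_A(M,\A^{n+1})t^{n+1} = (\A t)\cdot\Hom_A(M,\A^n)t^n$, which unwinds to $\Hom_A(M,\A^{n+1}) = \A\Hom_A(M,\A^n)$ for all $n\ge N$.

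The only mild subtlety — and the step I would watch most carefully — is the identification $D_n = \Hom_A(M,\A^n)$ for $n < 0$, where by convention $\A^n = A$, so that $D_{<0}$ is just copies of $M^\dd$; but since the conclusion is only asserted for $n\gg 0$ this plays no role, and one can simply discard it and argue with $D_{\ge 0}$ over the ordinary Rees algebra as above. There is no real obstacle here: this is a soft Noetherian/Artin–Rees-type argument, and indeed it is the same mechanism that shows any $\A$-stable filtration exists. I would present it in three lines: (i) $D_{\ge 0}\sub M^\dd\otimes_A\R$ is a finitely generated module over the Noetherian ring $\R$; (ii) hence generated in bounded degree; (iii) translate back.
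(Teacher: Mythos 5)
The paper records this proposition as well known and gives no proof, so there is nothing to compare against; judged on its own, your argument has the right skeleton but a genuine gap at its one essential step. The strategy of realizing $D_{\ge 0}=\bigoplus_{n\ge 0}\Hom_A(M,\A^n)t^n$ as a finitely generated graded module over the Rees algebra $\R=\bigoplus_{n\ge 0}\A^n t^n$ and then using bounded generating degrees is sound, and your translation step (generation in degrees $\le N$ gives $D_{n+1}=\R_1 D_n$ for $n\ge N$) is correct. The problem is the justification of finite generation. The identification $M^\dd\otimes_A\R=\bigoplus_{n\ge 0}M^\dd t^n$ is false: the degree-$n$ component of $M^\dd\otimes_A\R$ is $M^\dd\otimes_A\A^n$, and the natural map $M^\dd\otimes_A\A^n\rt\Hom_A(M,\A^n)$, $f\otimes a\mapsto af$, goes the wrong way for you — its image is $\A^n M^\dd$, which is in general strictly smaller than $\Hom_A(M,\A^n)$, and an element ``$g\otimes t^n$'' does not even lie in $M^\dd\otimes_A\R$ unless $1\in\A^n$. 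What your formula $\bigoplus_{n\ge 0}M^\dd t^n$ actually is, is $M^\dd\otimes_A A[t]$; this does contain $D_{\ge 0}$ as an $\R$-submodule, but it is \emph{not} a finitely generated $\R$-module when $\A\neq A$ (already $A[t]$ is not: generation in degrees $\le N$ would force $A=\A$), so Noetherianity of $\R$ cannot be invoked to conclude that the submodule $D_{\ge 0}$ is finitely generated. Since finite generation of $D_{\ge 0}$ is essentially equivalent to the proposition, this is the crux and not a cosmetic slip.

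The repair is short. Choose a surjection $\pi\colon A^s\rt M$. Composing with $\pi$ gives injections $\Hom_A(M,\A^n)\hookrightarrow\Hom_A(A^s,\A^n)\cong(\A^n)^s$, compatible with multiplication by elements of $\A^m$, hence a degree-preserving embedding of graded $\R$-modules $D_{\ge 0}\hookrightarrow\bigoplus_{n\ge 0}(\A^n)^s t^n=\R^s$. Now $\R$ is Noetherian, so $D_{\ge 0}$ is finitely generated and your steps (ii) and (iii) finish the argument. Alternatively, your first (discarded) thought was the better one: over the extended Rees algebra the identification $\R(\eF_M,M^\dd)\cong\Hs_{\ra}(\R(\A,M),\ra)$ of Theorem \ref{main} (whose proof nowhere uses the present proposition) exhibits $\R(\eF_M,M^\dd)$ as a finitely generated module over a Noetherian graded ring, which again yields the stability statement.
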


\s \label{mapAUG} Let $f \in M_{n}^{\dd}$.

\noindent\textbf{Claim:} $f$ induces a homogeneous $\R$-linear map $\hat{f}$ of degree $n$ from $\R(M)$ to $\R$.

Since $f(M) \subseteq \A^n$, for each $j \geq 0 $ we have $f(\A^jM) \subseteq \A^{n+j}$. Furthermore for $j < 0 $ as $\A^jM = M$ and notice that $f(M) \subset \A^n \subset \A^{n+j}$.
Thus
\begin{equation*}
f(\A^jM) \subseteq \A^{n+j} \quad \text{for each $j \in \Z$}. \tag{$*$}
\end{equation*}
  This enables us to define
\begin{align*}
\hat{f} \colon \R(M) &\xar \R \\
\sum_{j \in \Z} m_jt^j &\mapsto \sum_{j \in \Z} f(m_j)t^{n+j}.
\end{align*}

Next we prove that $\hat{f}$ is $\R$-linear.
Clearly $\hat{f}$ is $A$-linear. Notice
\begin{align*}
 \f\left( (x_jt^j)\bullet(m_it^i) \right) &= \f(x_jm_it^{i+j}) = f(x_jm_i)t^{i+j+n} = x_jt^jf(m_i)t^{i+n}  \\
(x_jt^j)\bullet \f(m_it^i) &= x_jt^jf(m_i)t^{i+n}.
\end{align*}
Thus $\f$ is $\R$-linear.

\s \label{main-1} We define a map
\begin{align*}
\Psi_M \colon \R(\eF,M^{\dd}) &\xar \Hs_\R(\R(M),\R ) \\
f &\mapsto \f
\end{align*}

\begin{theorem}\label{main}
$\Psi_M \colon \R(\eF,M^{\dd}) \xar  ^*\Hom_{\R}(\R(M), \R)$ is a $\R$-linear isomorphism.
\end{theorem}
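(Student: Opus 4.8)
\textbf{Proof plan for Theorem \ref{main}.}

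The plan is to show $\Psi_M$ is both injective and surjective as a map of graded $\R$-modules, working degree by degree. The map $\Psi_M$ is visibly $\R$-linear and homogeneous of degree zero: an element of $\R(\eF, M^\dd)$ in degree $n$ is exactly an $f \in M_n^\dd$, and $\Psi_M(f) = \f$ lives in $\Hs_\R(\R(M),\R)_n$ since $\f$ was constructed in \ref{mapAUG} to be homogeneous of degree $n$. So it suffices to fix $n \in \Z$ and prove that
$$ M_n^\dd \xar \Hs_\R(\R(M),\R)_n, \qquad f \mapsto \f $$
is bijective.

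For injectivity: if $\f = 0$ then in particular $\f$ kills $M = (\R(M))_0$ (the degree-zero part, since $\A^0 M = M$), and $\f(m t^0) = f(m) t^n$; as $t^n$ is a unit in $A[t,t^{-1}] \supseteq \R$, this forces $f(m) = 0$ for all $m \in M$, i.e. $f = 0$. For surjectivity: given a homogeneous $\R$-linear $g \colon \R(M) \to \R$ of degree $n$, I would define $f \colon M \to A$ by letting $f(m)$ be the coefficient extracted from $g(m t^0) \in \A^n t^n$, i.e. $g(m) = f(m) t^n$ with $f(m) \in \A^n$. This $f$ is $A$-linear because $g$ is $\R$-linear (multiplication by $a \in A = \R_0$ is an $\R$-action), and $f(M) \subseteq \A^n$ by the homogeneity of $g$, so $f \in M_n^\dd$. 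The remaining point is that $\f = g$ as maps on all of $\R(M)$: both are $\R$-linear and homogeneous, so it is enough to check they agree on the generating set $\{m t^0 : m \in M\} \cup \{t^{-1}\}$ of $\R(M)$ over $\R$ — more precisely, $\R(M)$ is generated in nonpositive degrees since $\A^j M = M$ for $j \le 0$, and every homogeneous element of $\R(M)$ of degree $j$ is an $\R$-combination of elements $m t^j$ with $m \in M$ (for $j \le 0$) or with $m \in \A^j M$ (for $j \ge 0$, where $m t^j = (\text{scalar in }\R_j) \cdot (m' t^0)$-type expressions do not immediately work, so one uses that $\A^j M t^j$ is spanned by products $a t^j \cdot m t^0$ with $a \in \A^j$). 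On such elements $\f(a t^j \cdot m t^0) = a t^j \cdot f(m) t^n = a t^j \cdot g(m t^0) = g(a t^j \cdot m t^0)$, using $\R$-linearity of $g$ and the definition of $f$.

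The main obstacle I anticipate is the bookkeeping around negative degrees and the generation of $\R(M)$ as an $\R$-module: one must be careful that $\R(M) = \bigoplus_{j \in \Z} \A^j M t^j$ with $\A^j M = M$ for $j \le 0$ is generated over $\R$ by its components in degrees $\le 0$, and that matching $\f$ with $g$ on a spanning set genuinely suffices. Once the generation statement is pinned down, everything reduces to the one-line computation $\f(m) = f(m) t^n = g(m)$ in degree zero propagated by $\R$-linearity. The separatedness hypotheses on the filtrations (noted earlier in the excerpt) guarantee no ambiguity in extracting $f(m)$ from $g(m)$. I would also remark that Proposition \ref{stablefilt} is not needed for this theorem per se, but confirms $\eF_M$ is $\A$-stable so that $\R(\eF, M^\dd)$ is a finitely generated graded $\R$-module, making the isomorphism one of finitely generated modules.
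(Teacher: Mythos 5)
Your proposal is correct and follows essentially the same route as the paper: you extract $f$ from the degree-zero action of $g$ (the paper's $f = i\circ g_0$), observe $f \in M_{n}^{\dd}$, and then recover $g = \f$ by $\R$-linearity using that $\R(M)$ is generated over $\R$ by $Mt^0$, handling degrees $j\le 0$ via the scalars $t^j$ and degrees $j>0$ via products $at^j \bullet mt^0$ — exactly the paper's three cases. (The only cosmetic slip is listing $t^{-1}$ as a generator of $\R(M)$, since $t^{-1}$ lies in $\R$ rather than $\R(M)$, but your "more precisely" clause already states the correct generation fact, so there is no gap.)
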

\begin{proof}
If $a \in \A^j$ and $f \in M_{i}^{\dd}$ then
$af \in M^{\dd}_{i+j}$. Check  that $(at^i)\bullet \hat{f} = \hat{af}$.
Thus $\Psi_M$ is $\R$-linear.
Clearly  if $\hat{f} = 0$ then $f = 0$. Thus $\Psi_M$ is injective.

We show $\Psi_M$ is surjective. Let $g \in \Hs_\R(\R(M),\R )_n$. We write $g$ as
\[
g\left(\sum_{j \in \Z}m_jt^j\right)  = \sum_{j \in \Z}g_j(m_j)t^{n+j}.
\]
For each $j \in \Z$ the map $g_j \colon \A^jM \rt \A^{n+j}$ is $A$-linear.
Define $f = i\circ g_0$ where $i \colon \A^n \rt A$ is the inclusion map. Clearly $f \in M_{n}^{\dd}$.

\textit{Claim:} $\hat{f} = g$.

Let $m_jt^j \in \A^jMt^j$.

Case 1. $j = 0$.

Set $m = m_0$. Notice
$$g(m t^0) = g_0(m)t^n = \hat{f}(m t^0).$$
The last equality above holds since $\hat{f}$ is $\R$-linear.

\noindent\textit{In the next two cases we use Case 1 and the fact that  $\hat{f}$ is $\R$-linear.}

Case 2. $j< 0$.

Notice $m_jt^j =  t^j \bullet m_jt^{0}$. So
$$g(m_jt^j) = t^j\bullet g(m_jt^0) =  t^j\bullet \hat{f}(m_jt^0) = \hat{f}(m_jt^j).$$

Case 3. $j >0$.

Set $m_j = \sum_{l = 1}^{s}u_{jl}n_l$ where $u_{jl} \in \A^j$ and $n_l \in M$.
Fix $l$.
Notice
$$ u_{jl}n_lt^j = u_{jl}t^j\bullet n_lt^0.$$
Therefore for $l = 1,\ldots, s$,
$$g(u_{jl}n_lt^j) = u_{jl}t^j\bullet g(n_l t^0) = u_{jl}t^j \bullet \hat{f}(n_lt^0) = \hat{f}(u_{jl}n_lt^j).$$
 Note the last equality above is since $\hat{f}$ is $\R$-linear. So we have
$$ g(m_jt^j) = g\left(\sum_{l = 1}^{s}u_{jl}n_l\right)
           = \sum_{l =1}^{s}g(u_{jl}n_l)
           = \sum_{l = 1}^{s}\hat{f}(u_{jl}n_l)
           = \hat{f}(m_jt^j). $$
  Again   the last equality above holds since $\hat{f}$ is $\R$-linear.
\end{proof}

\begin{observation}\label{initialobs}
$f \in M_{n}^{\dd}$ induces $\hat{f} \colon\R(M) \rt \R$ which is homogeneous of degree  $n$. So $\hat{f}$ induces a map $\tilde{f}\colon G_{\A}(M) \rt \GA$ which is also
homogeneous of degree $n$. Clearly $\tilde{f} = 0$ \ff \
$f \in M_{n+1}^{\dd}$. So we have
\begin{align*}
\Phi_M \colon G(\eF,M^{\dd}) &\xar \Hs_{\GA}(G_{\A}(M),\GA) \\
f + M_{n+1}^{\dd} &\mapsto \tilde{f}. \\
\text{Clearly} \quad \Phi_M &= \Psi_M\otimes \frac{\R}{t^{-1}\R}.
\end{align*}
\end{observation}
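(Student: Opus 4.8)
The plan is to view the statement as two descents along the surjection $\R\to\R/t^{-1}\R$, using the identifications recorded in Section~\ref{notation}, namely $\R(M)/t^{-1}\R(M)=G_\A(M)$, $\R/t^{-1}\R=\GA$, and $\R(\eF,M^\dd)/t^{-1}\R(\eF,M^\dd)=G(\eF,M^\dd)$. First I would descend the individual map $\hat{f}$ of \ref{mapAUG}; then I would show that assembling the descended maps yields $\Phi_M$, and that this $\Phi_M$ is nothing but $\Psi_M$ base-changed along $\R\to\R/t^{-1}\R$.

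For the first step: since $\hat{f}\colon\R(M)\to\R$ is $\R$-linear (\ref{mapAUG}) it is in particular $t^{-1}$-linear, hence sends $t^{-1}\R(M)$ into $t^{-1}\R$, and therefore induces on the quotients a $\GA$-linear map $\tilde{f}\colon G_\A(M)\to\GA$, homogeneous of degree $n$ because $\hat{f}$ is. On the graded piece $\A^jM/\A^{j+1}M$ it sends the class of $m$ to the class of $f(m)$ in $\A^{n+j}/\A^{n+j+1}$, by $(*)$ of \ref{mapAUG}. Thus $\tilde{f}=0$ if and only if $f(\A^jM)\sub\A^{n+j+1}$ for every $j\in\Z$; the instance $j=0$ reads $f(M)\sub\A^{n+1}$, and this single containment forces all the others, since $f(\A^jM)=\A^jf(M)\sub\A^{n+j+1}$ for $j\ge0$ and $f(\A^jM)=f(M)\sub\A^{n+1}\sub\A^{n+j+1}$ for $j<0$. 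Hence $\tilde{f}=0\iff f\in M_{n+1}^\dd$, so $f+M_{n+1}^\dd\mapsto\tilde{f}$ is a well-defined homomorphism $G(\eF,M^\dd)_n=M_n^\dd/M_{n+1}^\dd\to\Hs_{\GA}(G_\A(M),\GA)_n$, and these assemble over $n$ into a graded map $\Phi_M$.

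For the second step I would produce $\Phi_M$ again, now as $\Psi_M\otimes_\R\R/t^{-1}\R$ followed by the canonical base-change homomorphism $\Hs_\R(\R(M),\R)\otimes_\R\R/t^{-1}\R\to\Hs_{\GA}(G_\A(M),\GA)$. Applying $-\otimes_\R\R/t^{-1}\R$ to the $\R$-isomorphism $\Psi_M$ of Theorem~\ref{main} and using $\R(\eF,M^\dd)/t^{-1}\R(\eF,M^\dd)=G(\eF,M^\dd)$ gives a map out of $G(\eF,M^\dd)$; chasing a homogeneous $f\in M_n^\dd$ through it produces the reduction of $\hat{f}$ modulo $t^{-1}$, which is $\tilde{f}$. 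So this composite agrees with the $\Phi_M$ of the previous paragraph on homogeneous elements, hence $\Phi_M=\Psi_M\otimes\R/t^{-1}\R$; in particular $\Phi_M$ is $\GA$-linear, being obtained by reduction mod $t^{-1}$ from an $\R$-linear map.

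None of this is hard; the one point that needs a little care — the nearest thing here to an obstacle — is keeping the grading straight in the last step: $t^{-1}\R(\eF,M^\dd)$ in degree $n$ equals $M_{n+1}^\dd t^n$, so the degree-$n$ quotient is $M_n^\dd/M_{n+1}^\dd$, which matches $G(\eF,M^\dd)_n$ and makes the shift by one in the description $f+M_{n+1}^\dd\mapsto\tilde{f}$ fall into place. Once this bookkeeping is set up, the two descriptions of $\Phi_M$ coincide on homogeneous elements and therefore everywhere.
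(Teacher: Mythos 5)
Your argument is correct and is exactly the verification the paper leaves implicit in Observation \ref{initialobs}: descend $\hat{f}$ along $\R \to \R/t^{-1}\R$ using $\R(M)/t^{-1}\R(M) = G_\A(M)$, note that the $j=0$ containment $f(M)\sub\A^{n+1}$ controls all degrees, and identify $\Phi_M$ with $\Psi_M$ reduced mod $t^{-1}$ (composed with the canonical base-change map, which is the right reading of the paper's shorthand $\Phi_M = \Psi_M\otimes \R/t^{-1}\R$). Nothing is missing; your grading bookkeeping for $t^{-1}\R(\eF,M^\dd)$ is the only delicate point and you handle it correctly.
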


\begin{corollary}\label{basicCor}
Set $G = \GA$.
There is an exact sequence
\[
0 \xar G(\eF,M^{\dd}) \xrightarrow{\Phi_M} \Hs_{G}(G_{\A}(M),G) \xar \Es^{1}_{\R}(\R(M),\R )(+1)
\]
\end{corollary}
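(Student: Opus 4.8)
The statement to prove is the exactness of
\[
0 \xar G(\eF,M^{\dd}) \xrightarrow{\Phi_M} \Hs_{G}(G_{\A}(M),G) \xar \Es^{1}_{\R}(\R(M),\R )(+1).
\]
The plan is to obtain this as the long exact sequence coming from applying $\Hs_{\R}(\R(M),-)$ to the short exact sequence of graded $\R$-modules
\begin{equation*}
0 \xar \R \xrightarrow{t^{-1}} \R(+1) \xar G(-1)...
\end{equation*}
wait — more precisely, one uses $0 \to \R \xrightarrow{\,t^{-1}\,} \R \to G \to 0$, where the first map is multiplication by $t^{-1}$ (which is homogeneous of degree $-1$, so as graded modules this reads $0 \to \R(-1) \xrightarrow{t^{-1}} \R \to G \to 0$, equivalently $0\to \R \xrightarrow{t^{-1}} \R(+1) \to G(+1)\to 0$). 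Here I am using the fact recorded in the preliminaries that $\R/t^{-1}\R = G = \GA$ and that $t^{-1}$ is a nonzerodivisor on $\R$ (it is a nonzerodivisor on $A[t,t^{-1}]$, hence on the subring $\R$).

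First I would apply the contravariant left-exact functor $\Hs_{\R}(\R(M),-)$ to this short exact sequence of graded $\R$-modules. Since $\R(M)$ is a finitely generated $\R$-module, $\Hs_{\R}(\R(M),-)$ behaves well with the grading and produces a long exact sequence of graded $\R$-modules (equivalently of $G$-modules, once we restrict attention to the relevant terms):
\begin{align*}
0 &\xar \Hs_{\R}(\R(M),\R) \xrightarrow{t^{-1}} \Hs_{\R}(\R(M),\R)(+1) \xar \Hs_{\R}(\R(M), G(+1)) \\
&\xar \Es^{1}_{\R}(\R(M),\R) \xrightarrow{t^{-1}} \cdots.
\end{align*}
The cokernel of the first map $\Hs_{\R}(\R(M),\R) \xrightarrow{t^{-1}} \Hs_{\R}(\R(M),\R)(+1)$ is, by definition, $\Hs_{\R}(\R(M),\R)\otimes_{\R} \R/t^{-1}\R$ sitting inside $\Hs_{\R}(\R(M),G(+1))$; and the kernel of the connecting map into $\Es^{1}_{\R}(\R(M),\R)$ is exactly this cokernel. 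So from the long exact sequence I extract the four-term exact sequence
\[
0 \xar \frac{\Hs_{\R}(\R(M),\R)}{t^{-1}\Hs_{\R}(\R(M),\R)} \xar \Hs_{\R}(\R(M),G)(+1) \xar \Es^{1}_{\R}(\R(M),\R)(+1).
\]

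The next step is to identify the first two terms with the corresponding terms in the claimed sequence. By Theorem~\ref{main}, $\Psi_M \colon \R(\eF,M^{\dd}) \xrightarrow{\sim} \Hs_{\R}(\R(M),\R)$ is an isomorphism of graded $\R$-modules; tensoring with $\R/t^{-1}\R$ and using that $\R(\eF,M^{\dd})/t^{-1}\R(\eF,M^{\dd}) = G(\eF, M^{\dd})$ (the general fact that $\eR(\eG,M)/t^{-1}\eR(\eG,M) = G(\eG,M)$ recorded in the preliminaries, applied to the filtration $\eF$), we get $G(\eF,M^{\dd}) \cong \Hs_{\R}(\R(M),\R)/t^{-1}\Hs_{\R}(\R(M),\R)$, and under this identification the induced map to $\Hs_{\R}(\R(M),G)(+1)$ is precisely $\Phi_M = \Psi_M \otimes \R/t^{-1}\R$ as noted in Observation~\ref{initialobs}. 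For the second term, I must check $\Hs_{\R}(\R(M), G)(+1) \cong \Hs_{G}(G_{\A}(M), G)$ as graded $G$-modules; this follows from adjunction / base change along the quotient map $\R \to \R/t^{-1}\R = G$ together with the identification $G(-1)\otimes_{\R}\R(M) = \R(M)/t^{-1}\R(M) = G_{\A}(M)$. In fact the simplest route is: a homogeneous $\R$-map $\R(M) \to G$ kills $t^{-1}\R(M)$ automatically (since $G$ is $t^{-1}$-torsion), hence factors uniquely through $\R(M)/t^{-1}\R(M) = G_{\A}(M)$, giving the natural isomorphism $\Hs_{\R}(\R(M),G) \cong \Hs_{G}(G_{\A}(M),G)$, and keeping track of the degree shift from $G(+1)$ gives the $(+1)$ on the $\Es^{1}$ term as stated. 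The injectivity of $\Phi_M$ is then just the exactness at the left end of the four-term sequence, matching what was asserted in the text.

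**Main obstacle.** The genuinely substantive input, Theorem~\ref{main} (that $\Psi_M$ is an isomorphism), is already proved, so the remaining difficulty is entirely bookkeeping: keeping the grading shifts consistent. The delicate point is getting the single $(+1)$ shift to land on the $\Es^{1}$ term and nowhere else — this hinges on writing the short exact sequence as $0 \to \R \xrightarrow{t^{-1}} \R(+1) \to G(+1) \to 0$ (rather than $0\to\R(-1)\to\R\to G\to 0$) before applying $\Hs_{\R}(\R(M),-)$, and on the clean identification $\Hs_{\R}(\R(M),G) \cong \Hs_{G}(G_{\A}(M),G)$ with no shift. A secondary technical point worth a line is justifying that $\Hs_{\R}(\R(M),-)$ commutes with the grading well enough to give a long exact sequence of graded modules, which holds because $\R(M)$ is finitely generated over the Noetherian graded ring $\R$.
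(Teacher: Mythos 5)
Your overall strategy is exactly the paper's: apply $\Hs_{\R}(\R(M),-)$ to the short exact sequence coming from multiplication by $t^{-1}$ on $\R$, identify $\Hs_{\R}(\R(M),\R)$ with $\R(\eF,M^{\dd})$ via Theorem \ref{main}, identify $\Hs_{\R}(\R(M),G)$ with $\Hs_{G}(G_{\A}(M),G)$, and read off the four-term exact sequence with $\Phi_M = \Psi_M \otimes \R/t^{-1}\R$ from Observation \ref{initialobs}. So in substance the proposal is correct and coincides with the paper's proof.

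However, the one point you yourself single out as delicate --- the shift bookkeeping --- is handled inconsistently. With the paper's convention $M(a)_n = M_{n+a}$, multiplication by $t^{-1}$ is a degree-zero map $\R(+1)\to\R$, so the correct sequence is $0 \to \R(+1) \xrightarrow{t^{-1}} \R \to G \to 0$ with $G$ \emph{unshifted}; your version $0 \to \R \xrightarrow{t^{-1}} \R(+1) \to G(+1) \to 0$ is not degree-preserving in that convention. More seriously, the four-term sequence you extract does not follow from the long exact sequence you wrote: the cokernel of $\Hs_{\R}(\R(M),\R) \xrightarrow{t^{-1}} \Hs_{\R}(\R(M),\R)(+1)$ is the shifted quotient $\bigl(\Hs_{\R}(\R(M),\R)/t^{-1}\Hs_{\R}(\R(M),\R)\bigr)(+1)$, and the next term in your long exact sequence is $\Es^{1}_{\R}(\R(M),\R)$ with no shift, so the $(+1)$ on $\Es^{1}$ in your displayed four-term sequence appears from nowhere. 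Finally, the identification $\Hs_{\R}(\R(M),G) \cong \Hs_{G}(G_{\A}(M),G)$ holds with \emph{no} shift (your own ``simplest route'' argument shows this, since $\R(M)/t^{-1}\R(M) = G_{\A}(M)$ without shift), so the asserted isomorphism $\Hs_{\R}(\R(M),G)(+1) \cong \Hs_{G}(G_{\A}(M),G)$ is false as stated, and ``keeping track of the degree shift from $G(+1)$'' cannot legitimately move a shift onto the $\Es^{1}$ term. All of this is repaired at once by using the unshifted sequence $0 \to \R(+1) \xrightarrow{t^{-1}} \R \to G \to 0$: then the long exact sequence reads $0 \to \Hs_{\R}(\R(M),\R)(+1) \xrightarrow{t^{-1}} \Hs_{\R}(\R(M),\R) \to \Hs_{\R}(\R(M),G) \to \Es^{1}_{\R}(\R(M),\R)(+1)$, the cokernel of $t^{-1}$ is the unshifted quotient $G(\eF,M^{\dd})$, and the corollary follows exactly as you intend (and as the paper argues). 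So: right idea, same route as the paper, but the grading slips need the above correction before the argument is literally valid.
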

\begin{proof}
The map $ 0\rt \R(+1) \xrightarrow{t^{-1}} \R \rt G \rt 0$ induces the exact sequence
\begin{align*}
0&\xar \Hs_{\R}(\R(M),\R)(+1) \xrightarrow{t^{-1}} \Hs_{\R}(\R(M),\R) \xar \Hs_{\R}(\R(M),G)\\
\ &\xar \Es_{\R}^{1}(\R(M),\R)(+1)
\end{align*}
Note that $\Hs_{\R}(\R(M),G) \cong \Hs_{G}(G_{\A}(M),G)$. The result follows by using
Theorem \ref{main} and Observation \ref{initialobs}.
\end{proof}
A consequence of Corollary \ref{basicCor}
is Application \textbf{V}.
\begin{corollary}
\label{App5}
Let $(A,\m)$ be a Artin Gorenstein local ring, $\A$ a proper ideal in $A$ and let $M$ be a finitely generated $A$-module. Then
$$  \ell(G_\A(M)) \leq \ell \left( \Hs_{G_{\A}(A)}(G_{\A}(M), \GA)    \right).  $$
\end{corollary}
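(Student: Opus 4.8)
The plan is to deduce Corollary \ref{App5} directly from the exact sequence in Corollary \ref{basicCor} together with the self-duality of an Artin Gorenstein local ring. The key observation is that over an Artin Gorenstein local ring $(A,\m)$, the functor $(-)^{\dd} = \Hom_A(-,A)$ is exact and length-preserving on finitely generated modules, since $A$ is its own injective hull of the residue field; in particular $\ell(N^{\dd}) = \ell(N)$ for every finitely generated $A$-module $N$.

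First I would observe that $\R = \R(\A)$ and $\R(M)$ are both finitely generated $\ra$-modules, and that all the graded objects appearing are finitely generated graded modules over $\GA$, which is an Artin ring in each graded degree and has only finitely many nonzero graded components (since $\A$ is a proper ideal of an Artin ring, $\A^n = 0$ for $n \gg 0$). Hence all the lengths in sight are finite. Next, from Corollary \ref{basicCor} we have the left-exact sequence
\[
0 \xar G(\eF,M^{\dd}) \xrightarrow{\Phi_M} \Hs_{G}(G_{\A}(M),G) \xar \Es^{1}_{\R}(\R(M),\R )(+1),
\]
so in particular $\Phi_M$ is injective, giving immediately
\[
\ell\bigl(G(\eF,M^{\dd})\bigr) \leq \ell\bigl(\Hs_{\GA}(G_{\A}(M),\GA)\bigr).
\]

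It then remains to identify $\ell\bigl(G(\eF,M^{\dd})\bigr)$ with $\ell\bigl(G_\A(M)\bigr)$. Here I would use that $G(\eF, M^{\dd}) = \bigoplus_{n\in\Z} M_n^{\dd}/M_{n+1}^{\dd}$ where $M_n^{\dd} = \Hom_A(M,\A^n)$, so that summing over $n$ telescopes: $\ell\bigl(G(\eF,M^{\dd})\bigr) = \sum_{n\in\Z}\ell\bigl(\Hom_A(M,\A^n)/\Hom_A(M,\A^{n+1})\bigr)$. Applying $\Hom_A(M,-)$ to $0 \to \A^{n+1} \to \A^n \to \A^n/\A^{n+1} \to 0$ and using that $\Ext^1_A(M, \A^{n+1})$-type terms are handled by the exactness of duality into the injective module $A$ (or more directly, that $\Hom_A(M,-)$ applied to this sequence is exact on the right precisely because $A$ is injective), one gets $\Hom_A(M,\A^n)/\Hom_A(M,\A^{n+1}) \cong \Hom_A(M, \A^n/\A^{n+1})$, and since $\A^n/\A^{n+1}$ is a finite length module over the Artin Gorenstein ring $A$, Matlis duality gives $\ell\bigl(\Hom_A(M,\A^n/\A^{n+1})\bigr) = \ell\bigl(M \otimes_A (\A^n/\A^{n+1})^{\vee}\bigr)$; summing and comparing is cleanest if instead one simply notes $\sum_n \ell\bigl(\Hom_A(M,\A^n)/\Hom_A(M,\A^{n+1})\bigr) = \ell(\Hom_A(M,A)) = \ell(M^{\dd}) = \ell(M) = \ell(G_\A(M))$, where the first equality is the telescoping of the $\A$-stable filtration $\eF_M$ on $M^{\dd}$, the third is length-preservation of Matlis duality, and the last is that $\ell(G_\A(M)) = \sum_n \ell(\A^nM/\A^{n+1}M) = \ell(M)$.

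Combining, $\ell(G_\A(M)) = \ell\bigl(G(\eF,M^{\dd})\bigr) \leq \ell\bigl(\Hs_{\GA}(G_{\A}(M),\GA)\bigr)$, which is the claim. The main obstacle, and the one point that deserves care, is the identification of the associated graded of the dual filtration: one must verify that passing to associated graded of $\eF_M$ really does compute $\ell(M^{\dd})$ as a telescoping sum and that $\ell(M^{\dd}) = \ell(M)$ — both of which rest squarely on $A$ being Artin Gorenstein (self-injective), so that $\Hom_A(M,-)$ is exact. Everything else is formal from Corollary \ref{basicCor}.
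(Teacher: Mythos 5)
Your proof is correct and takes essentially the same route as the paper: injectivity of $\Phi_M$ from Corollary \ref{basicCor}, together with the length identities $\ell(G(\eF,M^{\dd})) = \ell(M^{\dd}) = \ell(M) = \ell(G_\A(M))$, the middle equality coming from self-injectivity of the Artin Gorenstein ring $A$. (The only blemish is the discarded aside asserting that $\Hom_A(M,-)$ is exact on $0 \to \A^{n+1} \to \A^n \to \A^n/\A^{n+1} \to 0$ because $A$ is injective — that would require $M$ projective, not $A$ injective — but your final argument bypasses this entirely via the telescoping sum, exactly as the paper does.)
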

\begin{proof}
Set $G = \GA$ and  $G(M) = G_\A(M)$.  Notice that
\begin{align*}
\ell (G_\A(M)) &= e_0(G_\A(M)) = \ell(M) \\
\ell (G_\eF(M^*)) &= e_0(G_\A(M^*))  = \ell(M^*).
\end{align*}
Since $A$ is self-injective,  $\ell(M) = \ell(M^*)$. The result follows from
\ref{basicCor}.
\end{proof}
Another important consequence of  Corollary \ref{basicCor} is the following:
\begin{corollary}\label{corGor}
Let $(A,\m)$ be a Gorenstein local ring and let $\A$ be an  ideal such that $\GA$ is  Gorenstein. Let $M$ be a maximal \CM \ $A$-module with $G_{\A}(M)$-\CM. Then
$$ G(\eF,M^{\dd}) \cong \Hs_{\GA}(G_{\A}(M),\GA).$$
\end{corollary}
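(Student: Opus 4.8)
The plan is to deduce Corollary \ref{corGor} from the exact sequence in Corollary \ref{basicCor}, so the entire content reduces to showing that the right-hand term $\Es^{1}_{\R}(\R(M),\R)(+1)$ vanishes under the stated hypotheses. Since $\Phi_M$ is always injective (Observation \ref{initialobs}, Corollary \ref{basicCor}), once we know $\Es^{1}_{\R}(\R(M),\R) = 0$ the map $\Phi_M$ becomes an isomorphism and we are done.

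First I would set up the homological machinery on the extended Rees ring $\R = \R(\A)$. Because $A$ is Gorenstein and $\GA$ is Gorenstein, $\R$ is Gorenstein as well: $t^{-1}$ is a regular element of $\R$ with $\R/t^{-1}\R = \GA$ Gorenstein, and $\R[(t^{-1})^{-1}] = A[t,t^{-1}]$ is Gorenstein since $A$ is; combined with $\dim \R = \dim A + 1$ this forces $\R$ to be Gorenstein of dimension $d+1$, where $d = \dim A$. (Here I would either cite a standard deformation-type result for gradedness of the Gorenstein property along the regular element $t^{-1}$, or argue directly on local cohomology of $\R$ with respect to its graded maximal ideal.) Next, since $M$ is maximal \CM \ over the Gorenstein ring $A$, the module $M^{\dd} = \Hom_A(M,A)$ is also maximal \CM, and $M^{\dd\dd} \cong M$. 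Similarly, since $G_\A(M)$ is \CM \ of dimension $d$ (as $M$ is MCM), by Corollary \ref{redTOmaxDIM}-type reasoning $\R(M)$ is a \CM \ $\R$-module of dimension $d+1$, i.e.\ $\R(M)$ is maximal \CM \ over $\R$.

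The key step is then: over the Gorenstein ring $\R$, a maximal \CM \ module $N$ satisfies $\Es^{i}_{\R}(N,\R) = 0$ for all $i > 0$. Applying this with $N = \R(M)$ gives $\Es^{1}_{\R}(\R(M),\R) = 0$ immediately, and Corollary \ref{basicCor} yields the isomorphism $\Phi_M$. The one point that needs care is the \emph{graded} setting: $\R$ is $\Z$-graded with $\R_0 = A$ not Artinian, so "maximal \CM" should be interpreted via depth/dimension over the localization at the relevant graded prime (or equivalently via local cohomology $H^{i}_{\M}$ where $\M$ is the unique graded maximal ideal), and the vanishing of $\Es^{i}_{\R}$ should be derived from graded local duality over $\R$. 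So the chief obstacle I anticipate is bookkeeping the grading: verifying that $\R$ is graded-Gorenstein of dimension $d+1$ and that $\R(M)$ has depth $d+1$ (equivalently, $G_\A(M)$ \CM \ of dimension $d$ lifts to $\R(M)$ MCM, via $t^{-1}$ being $\R(M)$-regular and $\R(M)/t^{-1}\R(M) = G_\A(M)$), and then invoking the $\Ext$-vanishing in the graded category rather than just for local rings. All the other steps — that $M^{\dd}$ is MCM, that $\R(M)$ deforms to $G_\A(M)$ along $t^{-1}$, and the plug-in to Corollary \ref{basicCor} — are routine.
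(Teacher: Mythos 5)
Your proposal is correct and follows essentially the same route as the paper's own proof: the paper also deduces the isomorphism from Corollary \ref{basicCor} by observing that $\R$ is Gorenstein and $\R(M)$ is maximal \CM, so that $\Es^{1}_{\R}(\R(M),\R)=0$. The only difference is that you spell out the deformation along $t^{-1}$ and the graded bookkeeping, which the paper leaves implicit.
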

\begin{proof}
Notice $\R$ is also a Gorenstein ring and $\R(M)$ is maximal \CM. It follows that
$\Es_{\R}^{1}(\R(M),\R) = 0$. The result follows from Corollary \ref{basicCor}.
\end{proof}

\s \label{mod-sup-gg} In the theory of Hilbert functions over local rings the notion of superficial elements plays an important role. In our case, first we assume for convenience that  $(A,\m)$ is also Gorenstein and  $M$ is a $MCM$ A-module. Let $\xb = x_1,\ldots, x_r$ be a $M \bigoplus A$ superficial sequence. Set $B = A/(\xb )$, $\B = \A/(\bX)$,  $N = M/\xb M$ and
$$
 \eF_M = \left\{\Hom_A (M ,
\mathfrak{a}^n)\right\}_{\nZ}  \quad \text{and} \quad \eF_N = \left\{\Hom_B (M ,\mathfrak{n}^n)\right\}_{\nZ}.
$$
We may ask when does
\begin{equation} \label{sup-eqn}
\frac{G (\eF_M , M^\dd)}{\xb^* G(\eF_M , M^\dd)} \cong G (\eF_N ,
N^\dd).
\end{equation}
We prove
\begin{theorem}\label{modFilt}[with hypothesis as in \ref{mod-sup-gg}]
If  $\GA$ is Gorenstein and $G_{\mathfrak{a}}(M)$ is \CM \ then \ref{sup-eqn} holds.
\end{theorem}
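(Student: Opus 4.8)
The strategy is to interpret both sides of \eqref{sup-eqn} via the isomorphism of Corollary \ref{corGor} and then reduce to a statement about the behaviour of $G_\A(M)$ and $\GA$ under passing modulo a superficial sequence. Since $A$ and $\GA$ are Gorenstein and $M$ is MCM with $G_\A(M)$ Cohen-Macaulay, Corollary \ref{corGor} gives $G(\eF_M, M^\dd) \cong \Hs_{\GA}(G_\A(M), \GA)$; so the left-hand side of \eqref{sup-eqn} is $\Hs_{\GA}(G_\A(M), \GA)/\xb^* \Hs_{\GA}(G_\A(M),\GA)$. On the other side, I must first check that the hypotheses of \ref{mod-sup-gg} and of Corollary \ref{corGor} are inherited by $B$, $\B$, $N$: that $B$ is Gorenstein (clear, since $\xb$ is $A$-regular on the Gorenstein ring $A$), that $\GB$ is Gorenstein, and that $N$ is an MCM $B$-module with $G_\B(N)$ Cohen-Macaulay. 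The last two facts should follow from the theory of superficial sequences: since $\xb$ is a superficial sequence for $M\oplus A$ and $G_\A(M)$, $\GA$ are Cohen-Macaulay, $\xb^*$ is a $\GA$-regular and $G_\A(M)$-regular sequence, so by Valabrega--Valla type results $G_\B(N) \cong G_\A(M)/\xb^* G_\A(M)$ and $\GB \cong \GA/\xb^*\GA$, and both quotients are Cohen-Macaulay; moreover $\GB = \GA/\xb^*\GA$ is Gorenstein because $\xb^*$ is a regular sequence on the Gorenstein ring $\GA$. Granting this, Corollary \ref{corGor} applies to $N$ over $B$ and yields $G(\eF_N, N^\dd) \cong \Hs_{\GB}(G_\B(N), \GB)$.

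\textbf{The core computation.} It now remains to identify
$$ \frac{\Hs_{\GA}(G_\A(M),\GA)}{\xb^*\Hs_{\GA}(G_\A(M),\GA)} \quad \text{with} \quad \Hs_{\GB}\!\left(\frac{G_\A(M)}{\xb^*G_\A(M)}, \frac{\GA}{\xb^*\GA}\right). $$
This is a base-change statement for $\Hs$ along the regular sequence $\xb^*$. The clean way to see it: set $G = \GA$, $L = G_\A(M)$, $\ov G = G/\xb^* G$. Because $G$ is Gorenstein $*$-local of dimension $d$ and $L$ is a maximal Cohen-Macaulay $G$-module, $\Hs_G(L,G)$ is again maximal Cohen-Macaulay, hence $\xb^*$ is a regular sequence on it as well. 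For a single regular element $x = x_i^*$ that is regular on $G$, on $L$, and on $\Hs_G(L,G)$, applying $\Hs_G(-,G)$ to $0 \to L \xrightarrow{x} L \to L/xL \to 0$ and $\Hs_{G/xG}(-,G/xG)$ to the analogous resolution, together with $\Es^1_G(L,G) = 0$ (as $L$ is MCM over Gorenstein $G$), gives $\Hs_{G/xG}(L/xL, G/xG) \cong \Hs_G(L,G)/x\Hs_G(L,G)$ up to the appropriate degree shift; iterating over $x_1^*,\ldots,x_r^*$ yields the displayed isomorphism. Combining the three isomorphisms --- the left-hand side of \eqref{sup-eqn} equals $\Hs_G(L,G)/\xb^*\Hs_G(L,G)$, this equals $\Hs_{\ov G}(L/\xb^*L, \ov G)$, and this equals $G(\eF_N, N^\dd)$ --- proves \eqref{sup-eqn}.

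\textbf{Main obstacle.} The routine parts are the Gorenstein/Cohen-Macaulay bookkeeping on $B$, $\B$, $N$; the genuinely delicate point is the identification of the filtered objects after going modulo $\xb$. Specifically, one must be careful that the \emph{dual filtration} $\eF_N$ of $N^\dd = \Hom_B(N,B)$ really does correspond, under $\Phi_N$, to the quotient of $G(\eF_M, M^\dd)$ by $\xb^* G(\eF_M,M^\dd)$ --- i.e.\ that forming $\Hom$, forming the associated graded with respect to the dual filtration, and passing modulo a superficial sequence all commute in the required sense. The Cohen-Macaulayness of $G_\A(M)$ is exactly what makes $\Phi_M$ an isomorphism (Corollary \ref{corGor}), and the superficiality of $\xb$ for $M\oplus A$ is what makes $\xb^*$ regular on both $L$ and $G$; the hard part is marrying these two facts to get regularity of $\xb^*$ on $\Hs_G(L,G)$ and the base-change formula for $\Hs$. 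I expect the proof to proceed by induction on $r$, peeling off one superficial element $x_1$ at a time, reducing dimension by one and invoking the $r = 0$ case, which is essentially the content of Corollary \ref{basicCor} together with the vanishing $\Es^1_{\R}(\R(M),\R) = 0$.
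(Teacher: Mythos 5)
Your argument does prove the displayed isomorphism \ref{sup-eqn}, but it takes a genuinely different route from the paper's. You stay entirely at the associated-graded level: Corollary \ref{corGor} over $A$, the descent of all hypotheses to $(B,\B,N)$ (valid, since superficiality plus the Cohen-Macaulayness of $\GA$ and $G_\A(M)$ make $\xb^*$ a regular sequence on both, so $\GB \cong \GA/\xb^*\GA$ and $G_\B(N)\cong G_\A(M)/\xb^*G_\A(M)$), Corollary \ref{corGor} again over $B$, and finally the base-change isomorphism
$\Hs_{\GA}(G_\A(M),\GA)/\xb^*\Hs_{\GA}(G_\A(M),\GA) \cong \Hs_{\GB}(G_\B(N),\GB)$,
which indeed follows from $\Es^{1}_{\GA}(G_\A(M),\GA)=0$ (MCM module over the Gorenstein ring $\GA$) by applying $\Hs_{\GA}(G_\A(M),-)$ to $0 \to \GA(-1)\to \GA \to \GA/x^*\GA\to 0$ one element at a time, with no shift appearing. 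The paper instead works one level up, with extended Rees modules (Theorem \ref{modFiltX}): via $\Psi_M$ of Theorem \ref{main} it identifies $\R(\eF_M,M^\dd)$ with $\Hs_{\ra}(\R(\A,M),\ra)$, uses the vanishing $\Es^{1}_{\ra}(\R(\A,M),\ra)=0$ together with the element $u=xt$ and the commutative square of Observation \ref{nice}, and thereby shows that the \emph{natural} comparison maps $\ov{\Gamma_{\xb}^M}$ and $\ov{\Theta_{\xb}^M}$ are isomorphisms; your closing guess about peeling off one superficial element and using an $\Es^1$-vanishing is exactly in this spirit, but your written core computation only produces an abstract isomorphism of graded $\GB$-modules. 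That difference is not merely cosmetic: for the statement as displayed your proof suffices and is shorter, avoiding the Rees-module bookkeeping, but the later applications (for instance \ref{Ooishi--Gor-easy} and \ref{mainHyper}, where one concludes that the dual filtration of $N^\dd$ is the quotient filtration of $\eF_M$) need the natural map to be the isomorphism, so to support those uses your argument would have to be upgraded essentially by the paper's diagram chase at the Rees level.
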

See Theorem \ref{modFiltX}(2) for a proof of
Theorem \ref{modFilt}.

We now discuss dual filtration's and the Veronese functor. We show
\begin{proposition}
  \label{dual-Ver-prop}
  Let $(A,\m)$ be  local, $\A$ an ideal in $A$ and let $M$ be an $A$-module. Let $\eF$ be the dual filtration of $M$ \wrt $\A$. Then for all $l\geq 1$;  $\eF^{<l>}$ is the dual filtration of $M$ \wrt \ $\A^l$.
\end{proposition}
\begin{proof}
Fix $l \geq 1$. Let $\eG$ be the dual filtration of $M$ \wrt  \ $\A^l$.
  We note that $\eF_{nl} = \Hom_A(M, \A^{nl}) = \eG_{n}$. The result follows.
\end{proof}
\section{Some preliminary applications of Gorenstein approximation}\label{section-preliminary}
In this section we give give  Applications \textbf{VI} and \textbf{VII} as stated in the introduction.
First we generalize to modules
some results of  Valabrega and Valla\cite[2.3]{VV}, Guerrieri \cite[3.2]{Gr94} and Wang \cite[2.6,3.1]{Wang00}.  If
 $\dim M = 2$ and $\mathfrak{a}$ an ideal of definition for
$M$, then it is easy to prove
$ e^{\mathfrak{a}}_{1} (M) \leq e^{\mathfrak{a}}_{0} (M)(a_2(G_\A(M)) + 2)$.
We give a complete classification of when equality holds.

\begin{theorem} \label{vv-guer-wang}[Appl. \textbf{VI} ]
 Let $(A,\m)$ be a local ring with infinite residue field,
$M$ a Cohen-Macaulay A-module of dimension $r$ and $\mathfrak{a}$ an ideal of
definition for $M.$  Let $\C = (x_1,\ldots x_r)$ be a minimal
reduction of $\A$ \wrt \  $M$.
Set $$\delta =  \sum^{\infty}_{n = 0} \ell \left(\frac{\mathfrak{a}^{n +
1} M \cap \C M}{\C \mathfrak{a}^{n} M}\right).$$
  If
 $\delta = 0,1,2$
then $\depth G_\mathfrak{a} (M) \geq d - \delta.$
\end{theorem}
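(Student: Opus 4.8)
The plan is to deduce the module statement from the known ring-theoretic results of Valabrega--Valla, Guerrieri and Wang by means of Gorenstein (indeed CI) approximation. First I would reduce to the complete case: both $\depth G_\A(M)$ and all the lengths $\ell(\A^{n+1}M \cap \C M / \C\A^n M)$, as well as minimal reductions, are insensitive to the flat base change $A \to \widehat A$ (using the properties catalogued in \ref{AtoA'}), and we may also pass to $A[X]_{\m A[X]}$ to keep the residue field infinite; so assume $(A,\m)$ is complete with infinite residue field. Now $\A$ is an ideal of definition for $M$, meaning $\A + \ann M$ is $\m_M := $ the maximal ideal of $A/\ann M$-primary; replacing $A$ by $A/\ann_A M$ (which does not change $M$ as a module, nor $G_\A(M)$, nor the $\delta$ invariant, since we may intersect with $\ann M$) we may assume $\A$ is $\m$-primary and $M$ is a faithful CM module over the complete local ring $A$ — in particular $\dim M = \dim A = r$, so $M$ is MCM.

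Next I would apply Gorenstein approximation. By Theorem \ref{equi} (or already Theorem \ref{field} in the equicharacteristic case, but \ref{equi} suffices in general since $\A$ is $\m$-primary) there is a CI-approximation $[B,\n,\B,\varphi]$ of $[A,\m,\A]$; in particular $B$ and $\GB$ are Gorenstein, $\varphi\colon B\to A$ is a finite local homomorphism with $\varphi(\B)A=\A$, and $\dim B = \dim A = r$. View $M$ as a $B$-module via $\varphi$. Then $M$ is a finitely generated $B$-module with $\dim_B M = \dim_A M = r = \dim B$, so $M$ is a maximal Cohen-Macaulay $B$-module. Moreover, because $\varphi(\B)A = \A$, one checks that the $\A$-adic filtration of $M$ coincides with the $\B$-adic filtration of $M$: indeed $\B^n M = (\varphi(\B)^n)M = \A^n M$ for all $n$ (here the additional property $\mu(\n)=\mu(\A)$ from \ref{field}, or the existence of the $u_i$ in \ref{equi}, is what guarantees $\B M$ and $\A M$ agree, not merely cofinality of the filtrations). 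Consequently $G_\B(M) = G_\A(M)$ as graded abelian groups, hence $\depth_{\GB} G_\B(M) = \depth_{\GA} G_\A(M)$ (depth does not depend on which graded ring one computes it over, only on the module and its irrelevant ideal, and $\B\GB = \A\GA$ on $M$), and a minimal reduction $\C'$ of $\B$ with respect to $M$ pulls forward to a minimal reduction $\C = \varphi(\C')A$ of $\A$ with respect to $M$ with the same reduction number; therefore the invariant $\delta$ computed over $(B,\B)$ equals the invariant $\delta$ computed over $(A,\A)$.

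Finally, having transported everything to $(B,\n,\B)$ with $B$ Gorenstein and $\GB$ Gorenstein and $M$ an MCM $B$-module, I would invoke the ring-level theorems directly — but there is a subtlety: Valabrega--Valla, Guerrieri and Wang prove their statements for $G_\B(B)$, i.e.\ for the module $B$ itself, not for an arbitrary MCM module $M$. So the actual content of the reduction is: \emph{Gorenstein approximation lets us assume $B$ and $\GB$ are Gorenstein, but we still need the module version of the Valabrega--Valla/Guerrieri/Wang statements over such a ring.} The way this works is that over a Gorenstein ring $B$ with $\GB$ Gorenstein, the dual filtration machinery of Section~\ref{section-classical} (Corollary \ref{corGor} and Theorem \ref{modFilt}) lets one relate $G_\B(M)$ to $\Hs_{\GB}(G_\B(M),\GB)$ and control depth through the canonical/dual module; combined with the Artinian reduction via a maximal $M$-superficial sequence $\xb$ (Theorem \ref{modFilt}, reducing to $\dim 0$ where $\delta$ is just a length and the depth inequality becomes a statement about the number of parameters one can factor out), one runs the same inductive arguments of Valabrega--Valla, Guerrieri and Wang verbatim on the module side. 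The hard part will be exactly this last point — checking that those three arguments genuinely only use that $G_\B(M)$ is a faithful (or MCM-type) module over the Gorenstein graded ring $\GB$, so that they carry over from the case $M = B$; in particular one must verify that the Valabrega--Valla criterion (that $\xb^*$ is a $G_\B(M)$-regular sequence iff $\A^{n+1}M \cap \C M = \C\A^n M$ for all $n$) and Guerrieri's and Wang's refinements of it hold with $M$ in place of $B$, which is where the superficial-sequence behaviour of the dual filtration developed in Section \ref{section-dual-super} and the MCM hypothesis are really needed. Once that module version is in hand, the cases $\delta = 0,1,2$ give $\depth G_\A(M) = \depth G_\B(M) \geq r - \delta$ as claimed.
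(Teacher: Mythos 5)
Your reduction steps (mod $\ann M$, completion, infinite residue field, then a CI/Gorenstein approximation $[B,\n,\B,\varphi]$ making $M$ an MCM $B$-module with $G_\B(M)=G_\A(M)$) match the first half of the paper's argument. But at the decisive point you stop: you observe, correctly, that Valabrega--Valla, Guerrieri and Wang prove their theorems for the ring itself, and then you propose to ``run the same inductive arguments verbatim on the module side'' using the dual filtration machinery of Sections \ref{section-classical} and \ref{section-dual-super}, explicitly flagging this as ``the hard part.'' That is the gap: you have not reduced the module statement to anything known, you have deferred the entire content of the theorem to an unverified claim that three separate ring-theoretic proofs generalize to modules. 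Nothing in your sketch shows how the Valabrega--Valla criterion, let alone Guerrieri's or Wang's refinements, would be established for $G_\B(M)$; and the dual-filtration results you invoke are both unproven in this role and beside the point --- the paper states explicitly that Application \textbf{VI} uses only Gorenstein approximation, no dual filtrations, and in fact only the \CM ness of $\GB$, not its Gorenstein property.

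The missing idea is the idealization trick. After the approximation, the paper forms $R = B \propto M$ with ideal $\q = \B \propto M$; since $B$ is \CM \ and $M$ is MCM over $B$, the ring $R$ is \CM, and $G_\q(R) = \GB \propto G_\A(M)(-1)$, so $\depth G_\q(R) = \depth G_\A(M)$ because $\GB$ is \CM \ of maximal dimension. Lifting the reduction via $y_i\in\B$ with $\varphi(y_i)=x_i$ (this is exactly the extra clause in Theorem \ref{equi}) and setting $z_i=(y_i,0)$, one computes $\q^i = \B^i \propto \A^{i-1}M$ and
\[
\frac{\q^i \cap (\mathbf{z})}{\q^{i-1}(\mathbf{z})} \cong \left(0,\ \frac{\A^{i-1}M \cap \C M}{\C\A^{i-2}M}\right),
\]
so the $\delta$-invariant of the \emph{ring} $R$ \wrt \ $\q$ equals the $\delta$ of the statement, and the known ring-level theorems applied to $R$ give the bound for $\depth G_\A(M)$. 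Without this (or some substitute) your proposal proves nothing beyond the case $M=A$. Two smaller remarks: the identity $\B^n M = \A^n M$ needs no extra hypothesis beyond $\varphi(\B)A=\A$, since $\B^n M=\varphi(\B)^n AM=\A^nM$, so your parenthetical about $\mu(\n)=\mu(\A)$ being needed there is off; and your appeal to Theorem \ref{modFilt} requires $G_\A(M)$ \CM, which is precisely what is not available here.
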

\begin{proof}
 We may go mod $\ann M.$  Thus we
may assume $\dim M = \dim A $ and $\A$ is $\m$-primary. Since $M$ is a faithful $A$-module it can be easily checked that $\C$ is a minimal reduction of $\A$ \wrt \ $A$.
  Our hypothesis nor conclusion
change under completion so we may assume $A$ is complete.

Let $[B,\n,\B,\varphi]$ be a Gorenstein approximation of
$[A,\m,\mathfrak{a}].$  We do base change and consider $M$ as a
$B$-module. Notice $G_{\B}(M) = G_{\A}(M)$. Let $y_i \in \B$ be such that $\varphi(y_i) = x_i$.  Set
$R = B \propto M$, the idealization of $M$ and consider the ideal $\q = \B \propto M$. Then
$G_{\q}(R) = G_{\B} (B) \propto G_{\mathfrak{a}} (M) (- 1).$

As $G_{\B} (B)$ is \CM, $ \depth G_{\q} (R) = \depth G_{\A} (M)$.
Set $z_i = (y_i,0)$ for $i = 1,\ldots,r.$
Notice  $\q^{i} = \B^{i} \propto \mathfrak{a}^{i - 1} M$ for $i \geq 0$.
$$
\text{Therefore} \quad \q^i \cap (\mathbf{z}) = (\B^i \cap (\mathbf{y}) ,
\mathfrak{a}^{i - 1} M \cap \C M) \quad \text{and}
$$
$$
\frac{\q^i \cap (\mathbf{z})}{\q^{i - 1} (\mathbf{z})} = \left(0 ,
\frac{\mathfrak{a}^{i - 1} M \cap \C M}{\mathfrak{a}\C^{i -2}M}
\right).
$$
The result now follow from the case of rings.
\end{proof}

The previous result did not use the fact the Gorenstein property
of $G_{\B}(B)$.  All we used was that $G_{\B} (B)$ is \CM.
The next result uses the fact that $G_{\B} (B)$ is
Gorenstein. However we need
the following  elementary result .
\begin{proposition}\label{Trung}
Let $R = \bigoplus_{R_n}$ be a standard graded Noetherian ring with $R_0$ Artin local.
Let $E$ be a graded $R$-module of dimension two.
Then
\[
e_1(E) \leq e_0(E)(a_2(E) + 2).
\]
\end{proposition}
\begin{proof}[Sketch]
We may assume that $H^0_{R_+}(E) = 0$.  Set $t = a_2 + 1$. By the Grothendieck-Serre formula
\[
H_E(t) - \{ e_0(t+1) - e_1 \} = - \ell(H^1(M)_t)
\]
Hence $e_1(E)  \leq e_0(E)(a_2 + 2)$
\end{proof}
It is important to understand when equality holds above. In general we cannot say much. However if $M$ is \CM \ of dimension two over a local ring $(A,\m)$ then we can give a characterization when equality holds above. Before proceeding we need the following notation:
If $D$ is a graded Artin $G_\A(A)$-module then $D^\vee$ denotes the dual of $D$ \wrt \
to the injective hull of $k$ as a $G_\A(A)$-module (note that if $\A$ is $\m$-primary then $G_\A(A)$ is *-complete (see \cite[3.6.16]{BH}). Furthermore a finitely generated $C$ over $G_\A(A)$ is said to generalized Ulrich if the $h$-polynomial of $C$ is of the form $h_C(z) = a z^l$ for some $l \in \Z$.

We give application \textbf{VII} of our notion of Gorenstein approximation.
\begin{theorem}\label{dim2e1}
 Let $(A,\m)$ be a
 local
ring, $\mathfrak{a}$  an $\m$-primary ideal in $A$ and  $M$ an $A$-module with $\dim M = 2$.
Then the following are equivalent:
\begin{enumerate}[\rm (i)]
\item
$
e^{\mathfrak{a}}_{1} (M) = e^{\mathfrak{a}}_{0} (M) \left(a^{\mathfrak{a}}_{2} (M) + 2\right).
$
\item
$G_\A(M)$ is generalized \CM \ and $H^2(G_\A(A))^\vee$ is Generalized Ulrich $G_\A(A)$-module.
\end{enumerate}
\end{theorem}
\begin{proof}
We may assume $A$ is
complete, $\mathfrak{a}$ is $\m$-primary and $\dim M = \dim A.$  Let
$[B,\n,\B,\varphi]$ be a Gorenstein approximation of
$[A,\m,\mathfrak{a}]$.
Notice
$$
a^{\B}_{i}(M) = a^{\A}_{i}(M) \;\mbox{and}\;
e^{\mathfrak{b}}_{i} (M) = e^{\mathfrak{a}}_{i} (M) \ \text{for all} \ i.
$$
Thus we may assume $(A,\m)$ is Gorenstein local and
$G_{\mathfrak{a}} (A)$ is Gorenstein. We may further assume $A$ has an infinite
residue field. This we do.
 Set
$$G = G_{\A}(A),\quad   D = H^{2}_{G_+}  \left( G_{\mathfrak{a}}(M)\right)  = \bigoplus_{n \leq \alpha} D_n \quad\text{where}\ \  \alpha =
a^{\mathfrak{a}}_{2} (M).$$
 Set $(-)^{\vee} = \Hs_G (-,H^2_{G_+}(G))$ the Matlis dual functor over $G$.
Notice $D^{\vee} = \bigoplus_{n \geq -\alpha} D_{n}^{\vee}$ where $D^{\vee}_{i} = \Hom_{A/\A}(D_{-i}, E_{A/\A}(k))$, and
$E_{A/\A}(k)$ is the injective hull of $k$ as an $A/\A$-module. By Matlis duality $D^{\vee}$ is a finitely generated
$G$-module.

By Local duality we have
$$
D^{\vee} = H^2 (G_{\mathfrak{a}} (M))^{\vee} \simeq \Hs_G(G_{\mathfrak{a}}(M), G(a))
$$
(Here $a = \red_{\mathfrak{a}} (A) - 2$, the $a$-invariant of
$G$).

{\bf Claim  1:} $D^{\vee}$ is a Cohen-Macaulay $G$-module of dimension $2$.\\
\noindent{\bf Proof of Claim 1:}
By \cite[17.1.10]{BS} it follows that the function $n \mapsto \ell(D^{\vee}_{n})$ is polynomial of degree $1$. So
$\dim D^\vee = 2$.
 Let $F_{1} \rightarrow F_{0} \rightarrow
G_{\mathfrak{a}} (M) \rightarrow 0$ be a free presentation of
$G_{\mathfrak{a}} (M)$ as a $G$-module.
Notice
$$
0 \rt \Hs_{G} (G_{\mathfrak{a}} (M) ,G) \rightarrow \Hs_{G} (F_0,
G) \rightarrow \Hs_{G} (F_1,G)
$$
As $G$ is \CM \  of $\dim 2$ we get Claim 1.

{\bf Claim 2:}  $ \ell(D_n) = C_0 (\alpha - n + 1) -
C_1$ for all $n << 0$ where  $C_0 > 0$ and $C_1 \geq 0$. \\
{\bf Proof of Claim 2:} $D^{\vee} = \bigoplus_{m \geq - \alpha}
D^{\vee}_{m}$. Notice $ \ell(D^{\vee}_{m}) = \ell(D_{-m})$ for all $m$. Set
Notice $D^{\vee} (-\alpha)_j = 0$ for $j<0$.
So $D^{\vee} (-\alpha)$ has Hilbert series
$$
\frac{h_0 + h_1z + \ldots \alpha h_s z^z}{(1-z)^2}.
$$
As $D^{\vee} (-\alpha)$ is \CM \  we have $h_i \geq 0$ for all $i$.   Thus there exists  $C_0 > 0$ and $C_1 \geq 0$
 such that $ \ell(D^{\vee} (-\alpha)_{n}) = C_0 (n + 1) - C_1$ for all
$n \gg 0$. Therefore
\[
\ell (D_n) = \ell(D^{\vee}_{-n}) =  C_0 (-n + \alpha + 1) - C_1  \quad \text{for all $n \ll 0$.}
\]
This proves Claim 2.

By a result due to Serre cf. \cite[4.4.3]{BH}
$$
H(G_{\A} (M), n) - P(G_{\A }(M), n) =
\sum^{2}_{i = 0} (-1)^i \ell(H^i(G_{\A }(M))_n).
$$
  Clearly $H^0  (G_{\mathfrak{a}} (M)))_n = 0$ for $n < 0.$ Notice $\ell(H^1 (G_{\A} (M))_n) = \xi$, a constant  for all $n \ll 0$, see \cite[17.1.9]{BSh}.
So we get
$$
-\left[e^{\mathfrak{a}}_{0} (M) (n + 1) - e^{\mathfrak{a}}_{1}
(M)\right] = C_0 (\alpha + 1 - n) - C_1 - \xi
$$
Therefore $C_0 = e^{\mathfrak{a}}_{0} (M).$
Comparing  coefficients we get
$$
e^{\mathfrak{a}}_{1} (M) = e^{\mathfrak{a}}_{0} (M)
(\alpha + 2) - \xi - C_1.
$$
The result follows.
\end{proof}

\section{The initial degree of the dual filtration and the  $a$-invariant }\label{section-initial-degree}
  Let $(A,\m)$ be a Gorenstein local ring, $\A$ an equimultiple ideal in $A$ and let $M$ be an MCM  $A$-module. Let $\alpha_\A(M)$ be the \emph{order} of $M$ \wrt \ $\A$ (see \ref{alpha-setup}). We
prove that $\alpha_\A(M) \leq \red_\A(A)$.
If $\A$ is $\m$-primary then we prove that
$$ a(G_\A(M)) \geq \red_\A( A ) - \alpha_{\A}(M) - \dim A. $$
We also prove that $\alpha_\m(M) = \red(A)$ \ff \ $M$
is an Ulrich module.

\s \label{alpha-setup} Let $(A,\m)$ be a local ring, $\A$ an ideal in $A$ and $M$ and $A$-module. Set $M^\dd = \Hom_A(M,A)$. Let  $\eF =  \{M^\dd_n = \Hom_A(M,\A^n) \}_{\nZ}$  be the dual filtration of $M^*$ \wrt  \ $\A$. If $f\neq 0 \in M^\dd$ set $\alpha_{\mathfrak{a}} (f) =
\max \{n \mid f (M) \subseteq \mathfrak{a}^n\}$. Set
$$\alpha_{\mathfrak{a}} (M) = \min\{\alpha_{\mathfrak{a}} (f) \mid f \in M^\dd , f \neq 0 \}.$$
 \[
 \text{Notice} \ \ \alpha_{\mathfrak{a}} (M) =  \max \{ n \mid M^\dd_n = M^\dd \} = \min \{ n \mid \ G(\eF, M^\dd)_n \neq 0 \}.
                           \]
\s \label{alpha-map}  Let $x_1, \ldots , x_r \in \A$ be a sequence. Set $(B , \n)
= (A/(\xb ) , \m/(\xb))$ and $ N = M/\xb M$.
If $f \in \Hom_A (M , A)$ define $\psi (f) : N \rightarrow B$, by
 $$  \psi (f) (m + \xb M) = {f (m)} + \xb A.$$
One can check readily $\psi (f) \in \Hom_B (N , B)$.
Define
\begin{align*}
 \Upsilon_{\bX} \colon \Hom_A (M , A) &\rightarrow \Hom_B(N , B) \\
\Upsilon_{\bX}(f) &= \psi(f).
\end{align*}
\begin{proposition}\label{alphaSUP} Let $(A,\m)$ be a Gorenstein  local ring and let $M$ be a MCM $ A$-module.
 If $x_1, \ldots , x_r \in \A$ is an $A$-regular sequence and  then $\Upsilon_{\bX}$
 is onto.
\end{proposition}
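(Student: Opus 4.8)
The goal is to show that the natural map $\Upsilon_{\bX}\colon \Hom_A(M,A)\to\Hom_B(N,B)$ is surjective, where $B = A/(\xb)$, $N = M/\xb M$, and $x_1,\ldots,x_r \in \A$ is an $A$-regular sequence. The plan is to pass through the standard adjunction/change-of-rings identity $\Hom_B(N,B)\cong \Hom_B(M\otimes_A B,B)\cong \Hom_A(M,B)$ (the last step being Hom–tensor adjunction for the ring surjection $A\to B$), so that $\Upsilon_{\bX}$ is identified with the map $\Hom_A(M,A)\to\Hom_A(M,B)$ induced by the quotient $A\twoheadrightarrow B = A/(\xb)$. Surjectivity of $\Upsilon_{\bX}$ is then equivalent to the vanishing of the connecting map into $\Ext^1_A(M,(\xb))$, i.e. to the statement that $\Ext^1_A(M,(\xb)) \to \Ext^1_A(M,A)$ is injective, or more simply that $\Ext^1_A(M,A)$ injects appropriately — concretely, applying $\Hom_A(M,-)$ to $0\to(\xb)\to A\to B\to 0$ gives exactness of $\Hom_A(M,A)\to\Hom_A(M,B)\to\Ext^1_A(M,(\xb))$, so it suffices that the last arrow $\Hom_A(M,B)\to \Ext^1_A(M,(\xb))$ is zero, equivalently that $\Ext^1_A(M,A)\to \Ext^1_A(M,A)$ coming from $(\xb)\hookrightarrow A$ behaves well; the cleanest route is to show $\Ext^i_A(M,A) = 0$ for $i = 1,\ldots,r$, which kills the obstruction directly.

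The key input is that $A$ is Gorenstein and $M$ is maximal Cohen–Macaulay. First I would recall that over a Gorenstein local ring an MCM module $M$ satisfies $\Ext^i_A(M,A) = 0$ for all $i > 0$ (this is immediate from local duality, or from the fact that $A$ has finite injective dimension equal to $\dim A$ together with depth counting: $\Ext^i_A(M,A)$ vanishes for $i > \dim A - \depth M = 0$). Then I would induct on $r$. For $r = 1$: the short exact sequence $0\to A \xrightarrow{x_1} A \to B\to 0$ (using that $x_1$ is $A$-regular) yields, after applying $\Hom_A(M,-)$, the exact sequence $\Hom_A(M,A)\xrightarrow{x_1}\Hom_A(M,A)\to \Hom_A(M,B)\to \Ext^1_A(M,A) = 0$, so $\Hom_A(M,A)\to\Hom_A(M,B)$ is surjective; and $\Hom_A(M,B)\cong\Hom_B(N,B)$ as above, which is exactly $\Upsilon_{\bX}$. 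For the inductive step, set $A' = A/(x_1)$, $M' = M/x_1M$; since $x_1$ is regular on $A$ and on $M$ (as $M$ is MCM over Cohen–Macaulay $A$ and $x_1$ is part of a system of parameters — here I use that an $A$-regular sequence in $\A$ is also $M$-regular because $\depth M = \dim A$), $A'$ is Gorenstein and $M'$ is MCM over $A'$, and $x_2,\ldots,x_r$ is an $A'$-regular sequence; the map $\Upsilon_{\bX}$ factors as $\Hom_A(M,A)\to\Hom_{A'}(M',A')\to\Hom_B(N,B)$, the first being the $r=1$ case and the second the inductive hypothesis, so the composite is onto.

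I expect the main obstacle to be purely bookkeeping rather than conceptual: making sure the three identifications $\Hom_B(N,B)\cong\Hom_A(M,B)$, the compatibility of $\Upsilon_{\bX}$ with the connecting homomorphism, and the factorization $\Upsilon_{(x_1,\ldots,x_r)} = \Upsilon_{(x_2,\ldots,x_r)}^{A'}\circ\Upsilon_{(x_1)}$ all commute on the nose with the explicitly given formula $\psi(f)(m+\xb M) = f(m)+\xb A$. One should check at the start that $A$ being Gorenstein is genuinely needed only to get $\Ext^{>0}_A(M,A) = 0$; alternatively, if the paper prefers to avoid invoking Gorenstein injective dimension, one can argue directly that for MCM $M$ over Gorenstein $A$ the module $M^{\dd} = \Hom_A(M,A)$ is again MCM and $M\cong M^{\dd\dd}$ (reflexivity), and deduce the vanishing from a free resolution of $M$ dualized — but the injective-dimension argument is shortest. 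Once $\Ext^1_A(M,A) = 0$ is in hand, everything else is the long exact sequence of $\Hom_A(M,-)$ applied to the Koszul-type sequences above, so no further difficulty arises.
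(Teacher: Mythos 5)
Your proposal is correct and follows essentially the same route as the paper: reduce to $r=1$ (by induction, using that $A/(x_1)$ stays Gorenstein and $M/x_1M$ stays MCM), apply $\Hom_A(M,-)$ to $0\to A\xrightarrow{x} A\to B\to 0$, use $\Ext^1_A(M,A)=0$ for MCM modules over a Gorenstein ring, and identify $\Hom_A(M,B)\cong\Hom_B(N,B)$ with the induced map being $\Upsilon_{x}$. The paper simply states ``it suffices to prove for $r=1$'' and carries out exactly this one-step argument, so your spelled-out induction and the regularity check on $M$ are the same proof with the bookkeeping made explicit.
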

\begin{proof}
 It  suffices to prove for $r = 1.$  The exact sequence
$
0 \rightarrow A \xrightarrow{x} A \xrightarrow{\pi} B
\rightarrow 0
$
yields $0 \rightarrow M^\dd \xrightarrow{x} M^\dd \rightarrow \Hom_B (N
, B) \rightarrow \Ext^{1}_A(M , A)$.
Notice $\Ext^{1}_A(M , A) = 0$.
\begin{align*}
\text{The map: } \quad  \quad \quad  \Hom (M , A) &\xrightarrow{\Hom (M , \pi)} \Hom_B (N ,
B) \\
f &\mapsto \ov{f}
\end{align*}
 is nothing but $\Upsilon_{x_1}.$
Thus $\Upsilon_{x_1}$ is onto.
\end{proof}

For equimultiple ideals we have
\begin{corollary}\label{boundALPHA}[with hypothesis as in \ref{alphaSUP}]
If  $\A$ is an equimultiple ideal then
 $$
\alpha_\mathfrak{a} (M) \leq \red_\mathfrak{a} (A).
$$
\end{corollary}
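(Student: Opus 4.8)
The plan is to reduce modulo a minimal reduction of $\A$ and then play the surjectivity of $\Upsilon_{\bX}$ from Proposition \ref{alphaSUP} off against the fact that a maximal \CM\ module over a Gorenstein ring has nonzero dual.

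First I would arrange that $A/\m$ is infinite by the flat base change $A\to A'=A[X]_{\m A[X]}$ of \ref{AtoA'}: this keeps $A'$ Gorenstein and $M\otimes_A A'$ maximal \CM, and, since $A'$ is faithfully flat and the dual filtration commutes with $\otimes_A A'$, it changes neither $\alpha_\A(M)$ nor $\red_\A(A)$. (We may also assume $\A\ne A$, else there is nothing to prove.) So from now on $A/\m$ is infinite; set $r=\red_\A(A)$ and choose a minimal reduction $\C=(x_1,\dots,x_\ell)$ of $\A$ with $\red_\C(\A,A)=r$, so that $\A^{r+1}=\C\A^r$. Since $\A$ is equimultiple, $\ell=\spr(\A)=\htt\A$, so $\C$ has height $\ell$; as $A$ is \CM\ the sequence $x_1,\dots,x_\ell$ is $A$-regular, and being part of a system of parameters it is $M$-regular as well ($M$ being maximal \CM). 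Put $(B,\n)=(A/\C,\m/\C)$ and $N=M/\C M$; then $B$ is Gorenstein and $N$ is a maximal \CM\ $B$-module.

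Then I would invoke Proposition \ref{alphaSUP}: since $x_1,\dots,x_\ell\in\A$ is an $A$-regular sequence and $M$ is MCM over the Gorenstein ring $A$, the map $\Upsilon_{\bX}\colon\Hom_A(M,A)\to\Hom_B(N,B)$ is onto. Moreover $N^\dd=\Hom_B(N,B)\ne 0$: running the proof of Proposition \ref{alphaSUP} with the whole sequence $x_1,\dots,x_\ell$ gives $N^\dd\cong M^\dd/\C M^\dd$, which is nonzero by Nakayama because $M^\dd\ne 0$ ($M$ is reflexive and nonzero, being maximal \CM\ over the Gorenstein ring $A$).

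Finally, suppose for contradiction that $\alpha_\A(M)\ge r+1$. By definition of $\alpha_\A(M)$ this means $M^\dd_{r+1}=\Hom_A(M,\A^{r+1})$ equals $M^\dd$, i.e. $f(M)\subseteq\A^{r+1}$ for every $f\in M^\dd$. But $\A^{r+1}=\C\A^r\subseteq\C$, so $\psi(f)(m+\C M)=f(m)+\C=0$ for all $m$ and all $f$; that is, $\Upsilon_{\bX}=0$, contradicting the surjectivity of $\Upsilon_{\bX}$ onto the nonzero module $N^\dd$. Hence $\alpha_\A(M)\le r=\red_\A(A)$, as claimed. The only steps requiring a little care are the flat-base-change bookkeeping (Gorensteinness of $A'$, invariance of $\alpha_\A(M)$ and $\red_\A(A)$) and checking that the generators of the minimal reduction form an $M$-regular sequence; I do not anticipate a genuine obstacle beyond Proposition \ref{alphaSUP} itself.
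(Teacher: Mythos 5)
Your argument is correct and is essentially the paper's own proof: choose a minimal reduction $\C$ attaining $\red_\A(A)$ (a regular sequence since $\A$ is equimultiple and $A$ is \CM), note that $\alpha_\A(M)>r$ would force every $f\in M^\dd$ to land in $\A^{r+1}=\C\A^r\subseteq\C$ and hence $\Upsilon_{\bX}=0$, contradicting the surjectivity from Proposition \ref{alphaSUP} together with $\Hom_B(N,B)\neq 0$. Your extra steps (passing to an infinite residue field, which the paper already assumes in \ref{redNO}, and identifying $N^\dd\cong M^\dd/\C M^\dd$) only make explicit what the paper leaves implicit.
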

\begin{proof}
 Let $J = (x_1,\ldots,x_s)$ be a minimal reduction of $\A$  such that
$$
r = \red_{\mathfrak{a}}(A) = \min \{n \mid \mathfrak{a}^{n + 1} = J\mathfrak{a}^n  \}.
$$
By  hypothesis $x_1,\ldots,x_s$ is a regular sequence.
Consider $\Upsilon_{\bX} \colon  \Hom_A (M , A) \rightarrow
\Hom_{A/J} (M/JM , A/JM)$ as defined earlier.  \\ If $\alpha (M) > r$
then note that $\psi (f) = 0$ for each $f \in M^\dd$.  By \ref{alphaSUP}, $\Upsilon$ is onto.
So  $\Hom_{A/J} (M/JM , A/J) = 0$, a contradiction.
\end{proof}

\s Set $r = \red(\A) $ and $d = \dim A$.
Recall that if $\GA$ is Gorenstein then  $\Om_\A = G (r-d)$ is the canonical module of $\GA$.
 Using graded local duality we get
\begin{equation}\label{a-m-primary}
  a(G_\A(M)) = -\min\{n \mid \Hs_{\GA}\left(G_\A(M), \GA( r - d )\right)_n \neq 0 \}.
\end{equation}

The following result  gives a lower bound on $a(G_\A(M)$ in terms of  $\alpha_{\A}(M)$.

\begin{proposition}\label{a-iG}
 Let $(A, \m)$ be a $d$-dimensional Gorenstein local ring, $\A$ an $\m$-primary ideal with $\GA$ Gorenstein and let $M$ be a MCM $A$-module.
$$ a(G_\A(M)) \geq \red_\A( A ) - \alpha_{\A}(M) - d. $$
\end{proposition}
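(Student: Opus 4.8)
The plan is to read off $a(G_\A(M))$ from the graded local duality formula \eqref{a-m-primary} and then feed in the injectivity of the comparison map $\Phi_M$ supplied by Corollary \ref{basicCor}. Write $G = \GA$, $r = \red_\A(A)$, $d = \dim A$, and set
$$\beta = \min\{\, m \mid \Hs_G(G_\A(M),G)_m \neq 0 \,\}.$$
Since $\GA$ is Gorenstein, $\Om_\A = G(r-d)$, so \eqref{a-m-primary} reads $a(G_\A(M)) = -\min\{\, n \mid \Hs_G(G_\A(M),G(r-d))_n \neq 0\,\}$. The first step is pure bookkeeping of the shift: $\Hs_G(G_\A(M),G(r-d)) = \Hs_G(G_\A(M),G)(r-d)$, whose degree-$n$ component is the degree-$(n+r-d)$ component of $\Hs_G(G_\A(M),G)$; hence the minimum above equals $\beta-(r-d)$ and
$$a(G_\A(M)) = r - d - \beta.$$

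The second step is to prove $\beta \le \alpha_\A(M)$. By Corollary \ref{basicCor} the map $\Phi_M \colon G(\eF,M^{\dd}) \to \Hs_G(G_\A(M),G)$ is injective, and by Observation \ref{initialobs} it is homogeneous of degree $0$ (an element of $G(\eF,M^{\dd})$ of degree $n$ is sent to a $G$-homomorphism homogeneous of degree $n$). Consequently, whenever $G(\eF,M^{\dd})_m \neq 0$ we also have $\Hs_G(G_\A(M),G)_m \neq 0$; taking the least such $m$ and using $\alpha_\A(M) = \min\{m \mid G(\eF,M^{\dd})_m \neq 0\}$ from \ref{alpha-setup}, we conclude $\beta \le \alpha_\A(M)$. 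Combining with the previous display,
$$a(G_\A(M)) = r - d - \beta \geq r - d - \alpha_\A(M) = \red_\A(A) - \alpha_\A(M) - d,$$
which is the assertion.

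I do not anticipate a genuine obstacle here: the substance is already packaged in the earlier results — Theorem \ref{main} (identifying $\R(\eF,M^{\dd})$ with $\Hs_\R(\R(M),\R)$), Corollary \ref{basicCor} (extracting the injection $\Phi_M$), and the local duality computation recorded in \eqref{a-m-primary}. The only points demanding a moment of care are keeping the grading shift $G(r-d)$ straight and checking that $\Phi_M$ is genuinely degree-preserving, so that nonvanishing in degree $m$ on the source forces nonvanishing in the same degree $m$ on the target; both are routine. It is worth remarking that equality holds in the proposition exactly when the least nonzero degree of $G(\eF,M^{\dd})$ coincides with that of $\Hs_G(G_\A(M),G)$, e.g. when $\Phi_M$ is an isomorphism as in Corollary \ref{corGor}.
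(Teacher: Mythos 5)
Your proposal is correct and follows essentially the same route as the paper: the paper also combines the injection $G(\eF,M^{\dd}) \hookrightarrow \Hs_{G}(G_\A(M),G)$ from Corollary \ref{basicCor} with the duality formula \ref{a-m-primary}, twisting by $(r-d)$ and comparing initial degrees, exactly as you do via your $\beta$. The only cosmetic difference is that you package the initial-degree comparison as the equality $a(G_\A(M)) = r-d-\beta$ before invoking injectivity, while the paper reads the inequality off directly.
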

\begin{proof}
 Set $G (M) = G_\mathfrak{a} (M) , G = G_\mathfrak{a} (A) , r =
\red_\mathfrak{a} (A)$ and $M^\dd = \Hom_A (M , A)$.   By \ref{basicCor} we have
an inclusion
$
0 \rightarrow G (\eF , M^\dd) \rightarrow \Hom_G (G (M) , G).
$
So
\begin{equation*}
 0 \rightarrow G (\eF , M^\dd) (r - d) \rightarrow \Hom_G (G (M) , G (r
- d)). \tag{*}
\end{equation*}
$$ \text{Notice} \ \ G (\eF , M^\dd) ((r - d))_{\alpha_{\mathfrak{a}} (M) - (r -d)} = G (\eF , M^\dd)_{\alpha_{\mathfrak{a}} (M)} \neq 0.$$
Looking at initial degrees of sequence $(*)$ and  using \ref{a-m-primary} we get
 $$
 - a(G_\A(M)) \leq    \alpha_{\mathfrak{a}} (M) - (r - d).
$$
The result follows.
\end{proof}

\begin{remark}\label{alpha-Ainvar}
If $\A$ is $\m$-primary then by Proposition \ref{a-iG} and \ref{boundALPHA} it follows that
$a(\GM) \geq - \dim A$. But this also follows easily from other
 arguments.
 \end{remark}

 \noindent \textbf{The case when} $\A = \m$. \\
\noindent \emph{Notation:}  Set $G(-) = G_\m(-)$ and $\alpha (-) = \alpha_{\m} (-).$ If we have to specify the ring then we write $\alpha_A(-)$.
\emph{Clearly $\alpha(M) = 0$ \ff  \  $M$ has a free summand.}
\begin{lemma}\label{ulrichLEM}
Let $(A ,\m)$ be a Gorenstein local ring with $G(A)$ Gorenstein.
 Let $x \in \m\setminus\m^2$ be $M \bigoplus A$-superficial
\wrt \  $\m.$ Then
$$
\alpha_A (M) \leq \alpha_{A/(x)} (M/xM).
$$
\end{lemma}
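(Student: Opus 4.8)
The plan is to compare the dual filtration $\eF_M$ of $M^\dd$ over $A$ with the dual filtration $\eF_N$ of $N^\dd$ over $B = A/(x)$, where $N = M/xM$, and to transfer the nonvanishing information about initial degrees through the map $\Upsilon_x \colon \Hom_A(M,A) \to \Hom_B(N,B)$ of \ref{alpha-map}. Recall $\alpha_A(M) = \max\{n \mid M^\dd_n \neq M^\dd\}$ is the largest $n$ for which \emph{some} $f \in \Hom_A(M,A)$ satisfies $f(M) \not\subseteq \A^n$, equivalently $\alpha_A(M) = \min\{n \mid G(\eF_M, M^\dd)_n \neq 0\}$. So to prove $\alpha_A(M) \leq \alpha_{B}(N)$ it suffices to show: for every $f \in \Hom_A(M,A)$ with $f(M) \not\subseteq \m^{\alpha}$, where $\alpha = \alpha_A(M)$, one can produce some $g \in \Hom_B(N,B)$ with $g(N) \not\subseteq \n^{\alpha}$, which would force $\alpha_B(N) \geq \alpha$.

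**Using superficiality to lift the filtration level.** First I would take $f \in M^\dd$ with $\alpha_\m(f) = \alpha_A(M) = \alpha$, so $f(M) \subseteq \m^\alpha$ but $f(M) \not\subseteq \m^{\alpha+1}$. Since $x$ is $M \oplus A$-superficial with respect to $\m$ and $A$ is Gorenstein with $G(A)$ Gorenstein (so in particular $x$ is $A$-regular, being superficial of order $1$ for $\m$ in a ring of positive depth — and one reduces to $\dim \geq 1$ since the statement is vacuous otherwise), Proposition \ref{alphaSUP} tells us $\Upsilon_x$ is onto. The key point will be to show that $\psi(f) = \Upsilon_x(f)$ detects the right filtration degree, i.e. that $\psi(f)(N) \not\subseteq \n^{\alpha+1}$. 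Equivalently: from $f(M) \not\subseteq \m^{\alpha+1}$ we must deduce $f(M) + (x)A \not\subseteq \m^{\alpha+1} + (x)A$, i.e. $f(M) \not\subseteq \m^{\alpha+1} + xA$. This is where superficiality of $x$ enters: for $x$ superficial with respect to $\m$, one has $\m^{n} \cap xA = x\m^{n-1}$ for $n \gg 0$, and more to the point $(\m^{n+1} + xA) \cap \text{(relevant module)}$ behaves well. If $x$ does not detect $f$ at level $\alpha$, I would instead argue that we may replace $f$ by a suitable $A$-linear combination, or pass to the associated graded picture: the image of $f$ in $G(\eF_M, M^\dd)_\alpha$ is nonzero, and I want its image in $G(\eF_N, N^\dd)_\alpha$ (under the graded map induced by $\Upsilon_x$) to remain nonzero. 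The cleanest route is: $x^* \in G(A)_1$ is a nonzerodivisor on $G(A)$ (since $G(A)$ is Gorenstein hence Cohen--Macaulay of positive dimension and $x$ is superficial), and on $\Hom_{G(A)}(G_\m(M), G(A))$ the element $x^*$ is then also a nonzerodivisor, so multiplication by $x^*$ on the submodule $G(\eF_M, M^\dd) \hookrightarrow \Hom_{G(A)}(G_\m(M), G(A))$ from \ref{basicCor} is injective; combined with the surjectivity of $\Upsilon_x$ this pins down the initial degree of $G(\eF_N, N^\dd)$ to be at least $\alpha$.

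**Main obstacle and closing.** The hard part will be ruling out a \emph{drop} in initial degree: a priori $\psi(f)$ could land in $\n^{\alpha+1} \Hom_B(N,B)$ even though $f \notin \m^{\alpha+1}\Hom_A(M,A)$, which would make $\alpha_B(N) < \alpha$. To exclude this I would combine (i) surjectivity of $\Upsilon_x$, which gives a right-exactness statement $\Hom_B(N,B) = \Hom_A(M,A)/x\Hom_A(M,A)$ (from the long exact sequence together with $\Ext^1_A(M,A) = 0$, as in the proof of \ref{alphaSUP}), with (ii) the fact that $x \cdot M^\dd$ sits inside the filtration in a controlled way because $x \in \m \setminus \m^2$ together with superficiality gives $x M^\dd_n \subseteq M^\dd_{n+1}$ and, crucially, $x M^\dd_n \cap M^\dd$ matches $M^\dd_{n+1} \cap xM^\dd$ for the relevant range. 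Concretely: $G(\eF_N, N^\dd) = G(\eF_M, M^\dd)/x^* G(\eF_M, M^\dd)$ modulo possibly larger submodule, but since $x^*$ is $G(\eF_M,M^\dd)$-regular (by the nonzerodivisor argument above, as $G(\eF_M,M^\dd)$ embeds in the Cohen--Macaulay-over-Gorenstein module $\Hom_{G(A)}(G_\m(M),G(A))$ and inherits the nonzerodivisor), the quotient has the same initial degree $\alpha$ as $G(\eF_M, M^\dd)$, hence $\alpha_B(N) \geq \alpha = \alpha_A(M)$, which is the claim. I would structure the write-up as: reduce to $\dim A \geq 1$; establish $x^*$ is a nonzerodivisor on $\Hom_{G(A)}(G_\m(M), G(A))$; deduce $x^*$ is a nonzerodivisor on $G(\eF_M, M^\dd)$ via \ref{basicCor}; then use \ref{alphaSUP} plus $\Ext^1_A(M,A)=0$ to identify $G(\eF_N,N^\dd)$ compatibly and read off the inequality of initial degrees.
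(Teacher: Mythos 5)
Your proposal contains the right main tool (surjectivity of $\Upsilon_x$, Proposition \ref{alphaSUP}), but the logic surrounding it is reversed in two places, and the step you ultimately lean on is not available. The reduction in your opening paragraph is backwards: exhibiting some $g \in \Hom_B(N,B)$ with $g(N) \not\subseteq \n^{\alpha}$ would force $\alpha_B(N) < \alpha$, not $\alpha_B(N) \geq \alpha$ (and there is no $f$ with $f(M)\not\subseteq \m^{\alpha}$ to begin with, by the definition of $\alpha = \alpha_A(M)$). Likewise the ``key point'' of your second paragraph --- that $\Upsilon_x(f)(N) \not\subseteq \n^{\alpha+1}$ for an $f$ of order exactly $\alpha$ --- is the statement $\alpha_B(N)\le\alpha$, i.e.\ the \emph{opposite} inequality, which the lemma does not assert and which is precisely what is not automatic without extra hypotheses. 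What the lemma needs is only that every $g\in N^\dd$ satisfies $g(N)\subseteq \n^{\alpha}$, and this is immediate: every $f\in M^\dd$ satisfies $f(M)\subseteq\m^{\alpha}$ by definition of $\alpha$, hence $\Upsilon_x(f)(N)\subseteq(\m^{\alpha}+xA)/xA=\n^{\alpha}$; since $\Upsilon_x$ is onto (here one uses that $M$ is MCM, $A$ Gorenstein, and $x$ is $A$-regular), every $g$ arises this way, so $\alpha_B(N)\ge\alpha$. That two-line argument is the paper's entire proof; no graded or regularity considerations enter.

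The genuine gap in your closing argument is the identification of $G(\eF_N,N^\dd)$ with a quotient of $G(\eF_M,M^\dd)/x^{*}G(\eF_M,M^\dd)$. Surjectivity of $\Upsilon_x$ gives $N^\dd\cong M^\dd/xM^\dd$ at the module level, but it does not give strictness of the filtered map, i.e.\ it does not show $\Upsilon_x(M^\dd_n)=N^\dd_n$, so the induced graded map $G(\eF_M,M^\dd)\to G(\eF_N,N^\dd)$ need not be onto. In the paper this identification is Theorem \ref{modFiltX}, proved much later by a Rees-module argument and only under the additional hypothesis that $G_{\m}(M)$ is Cohen--Macaulay --- a hypothesis absent from this lemma (which is applied in situations where it is not assumed). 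Your observation that $x^{*}$ is a nonzerodivisor on $\Hs_{G_\m(A)}(G_\m(M),G_\m(A))$, and hence on the submodule $G(\eF_M,M^\dd)$ via \ref{basicCor}, is correct but irrelevant to the claimed inequality: it would serve only to bound the initial degree from above, not from below. So as written the proof aims part of its effort at the wrong inequality and, for the right one, invokes a fact that is not available here; it should be replaced by the direct surjectivity argument above.
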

\begin{proof}
Set $N = M/xM$ and $(B,\n) = (A/(x), \m/(x))$. By Proposition \ref{alphaSUP}
$$
\Upsilon : \Hom_A (M , A) \rightarrow \Hom_{B} (N , B)
$$
is onto.  Set $\alpha = \alpha_A (M).$  Then $f (M) \subseteq
\m^\alpha$ for all $f \in M^\dd$. Therefore
$
 \Upsilon (f) (N) \subseteq \n^\alpha$
for all $f \in M^\dd$.
Since $\Upsilon$ is onto, we get $g (N) \subseteq \n^\alpha$
for all $g \in N^\dd$.   Thus $\alpha_{B} (N) \geq \alpha =
\alpha_A (M).$
\end{proof}
\s {\bf Reduction to dimension zero :}

Let $\xb = x_1 \ldots x_d$ be a \emph{maximal} $M \oplus A$-superficial sequence.  Set $(B , \n) = (A/(\xb) ,
\m/(\xb))$ and $N = M/\xb M$.
\begin{remark}\label{ulrichREM}
Recall an MCM $\A$-module $M$ is called \emph{Ulrich }if $e(M) = \mu(M)$.
 \begin{enumerate}[\rm (a)]
  \item
$M$ is Ulrich $ \Longleftrightarrow $ $N$ is Ulrich.  Notice $N$ is Ulrich $ \Longleftrightarrow $  $N \simeq k^{\mu(N)}.$
\item
 Since $G(A)$ is \CM \ we have $\red (A) = \red (B)$
\item
If $\alpha_A(M) = \red(A)$ then  $\alpha_B (N) = \red (B).$ This is so, since
$$
\red(A) = \alpha_A(M) \leq \alpha_B(N) \leq \red(B) \quad \text{(inequalities due to \ref{ulrichLEM} \& \ref{boundALPHA})}.
$$
\item
If $\alpha_B (N) = \red (B)$ then $N = k^s$ for some $s \geq 1$.  \emph{The converse also holds} since $B$ is Gorenstein.
 \end{enumerate}
\end{remark}
\begin{proposition}\label{baby-Ulrich}
 Let $(A ,\m)$ be a Gorenstein local ring with $G_{\m} (A)$ Gorenstein. Also assume that $A$ has infinite residue field.
Let $M$ be a MCM $A$-module.
\TFAE
\begin{enumerate}[\rm (i)]
 \item
$\alpha (M) = \red (A)$.
\item
$M$ is an Ulrich A-module.
\item
$a(G(M)) = - \dim A$.
\end{enumerate}
\end{proposition}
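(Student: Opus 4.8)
The plan is to prove the cycle of implications (i) $\Rightarrow$ (ii) $\Rightarrow$ (iii) $\Rightarrow$ (i), using the reduction to dimension zero recorded in \ref{ulrichREM} together with Lemma \ref{ulrichLEM}, Corollary \ref{boundALPHA} and Proposition \ref{a-iG}. The implication (i) $\Rightarrow$ (ii) is immediate from the facts already collected: if $\alpha(M) = \red(A)$ then Remark \ref{ulrichREM}(c) gives $\alpha_B(N) = \red(B)$, hence $N \cong k^{s}$ for some $s \geq 1$ by Remark \ref{ulrichREM}(d), so $N$ is Ulrich by Remark \ref{ulrichREM}(a), and the same part of that remark transfers the Ulrich property back to $M$. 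For (iii) $\Rightarrow$ (i) one runs the two inequalities against each other: assuming $a(G_{\m}(M)) = -\dim A$, Proposition \ref{a-iG} applied with $\A = \m$ gives $-\dim A = a(G_{\m}(M)) \geq \red(A) - \alpha(M) - \dim A$, so $\alpha(M) \geq \red(A)$; since Corollary \ref{boundALPHA} supplies the reverse inequality $\alpha(M) \leq \red(A)$, we conclude $\alpha(M) = \red(A)$, which is (i).

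The substantial step is (ii) $\Rightarrow$ (iii). Choose a minimal reduction $\q = (x_1,\ldots,x_d)$ of $\m$ with respect to $M$ (here $k$ is infinite); since $M$ is Ulrich we have $\ell(M/\q M) = e(M) = \mu(M) = \ell(M/\m M)$, forcing $\q M = \m M$. A trivial induction then gives $\m^{n+1}M = \q\,\m^{n}M = \q^{n+1}M$ for all $n \geq 0$, so the $\m$-adic and $\q$-adic filtrations of $M$ coincide in every degree and $G_{\m}(M) = G_{\q}(M)$ as graded modules, with $x_i^{*}$ acting exactly as the degree one class of $x_i$ acts in the $\q$-adic picture. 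Since $M$ is MCM and $\q$ is a parameter ideal, $x_1,\ldots,x_d$ is an $M$-regular sequence, so $G_{\q}(M) \cong (M/\q M)[T_1,\ldots,T_d]$ is free over the polynomial ring $(A/\q)[T_1,\ldots,T_d]$; hence $x_1^{*},\ldots,x_d^{*}$ is a $G_{\m}(M)$-regular sequence, $G_{\m}(M)$ is a CM $\GA$-module of dimension $d$, and, since $M/\q M = M/\m M = k^{\mu(M)}$, its Hilbert series is $\mu(M)/(1-z)^{d}$. Thus the $h$-polynomial of $G_{\m}(M)$ is the constant $\mu(M)$, and for a CM module $a = \deg h - \dim$, giving $a(G_{\m}(M)) = -d = -\dim A$, which is (iii). (Alternatively one may route this through the zero-dimensional reduction: $M$ Ulrich forces $N \cong k^{s}$ by \ref{ulrichREM}(a), so $G_{\n}(N) = k^{s}$ lives in degree $0$ with $a(G_{\n}(N)) = 0$; as $x_1^{*},\ldots,x_d^{*}$ is $G_{\m}(M)$-regular and $G_{\m}(M)/(x_1^{*},\ldots,x_d^{*})G_{\m}(M) \cong G_{\n}(N)$, peeling off the $d$ regular linear forms lowers the $a$-invariant by $d$, again yielding $a(G_{\m}(M)) = -d$.)

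The only delicate point is the assertion that $G_{\m}(M)$ is Cohen--Macaulay when $M$ is Ulrich, since it is this that allows the $a$-invariant to be read off from the Hilbert series; everything else is bookkeeping. This Cohen--Macaulayness rests entirely on the identity $\m^{n}M = \q^{n}M$ for $n \geq 1$, which exhibits $G_{\m}(M)$ as the associated graded module of $M$ along an ideal generated by an $M$-regular sequence. One should only be careful that it is the $\GA$-action, not merely the polynomial-subring action, with respect to which $x_1^{*},\ldots,x_d^{*}$ is claimed to be regular; this is automatic, as those elements act identically in both descriptions and $G_{\q}(A)$ is generated over its degree zero part by the classes of the $x_i$.
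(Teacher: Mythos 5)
Your argument is correct and follows essentially the same route as the paper: (i) $\Rightarrow$ (ii) via Remark \ref{ulrichREM} (the paper re-derives part (d) using $\n^r \cong k$ and Matlis duality, which you simply cite), and (iii) $\Rightarrow$ (i) by playing Proposition \ref{a-iG} against Corollary \ref{boundALPHA}. For (ii) $\Rightarrow$ (iii) the paper just says ``Trivial''; your filled-in argument ($\m M = \q M$ for a minimal reduction $\q$, hence $G_\m(M) = G_\q(M) \cong (M/\q M)[T_1,\ldots,T_d]$ with constant $h$-polynomial, so $a(G_\m(M)) = -d$) is exactly the standard justification and is sound.
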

\begin{proof}
Set $r = \red(A)$, $\alpha = \alpha_A(M)$ and $d = \dim A$.
 Let $\xb = x_1, \ldots ,x_d$ be a maximal $M \oplus A$-superficial sequence.  Set $(B , \n) = (A/(\xb),
\m/(\xb))$ and $N = M/\xb M.$
(i) $\Rightarrow$ (ii):
If  $\alpha (M) = \red (A) $ then by \ref{ulrichREM}(c), $ \alpha (N) = \red (B) = r $.
 We have $\alpha (N) = r.$  So
$N^\dd = N^\dd_r = \Hom_B (N , \n^r).$
Since $B$ is Gorenstein  with $\n^r \neq 0$ and $\n^j = 0$ for $j >r$ we get $\n^r \cong k$.
So $N^\dd = \Hom_A (N , k) = k^{\mu (N)}$.
Therefore
$ N \cong N^{\dd \dd} = k^{\mu (N)}.$  So $N$ is Ulrich.
 By \ref{ulrichREM}(a) $ M$ is  an Ulrich
$A$-module.

(ii) $\Rightarrow$ (iii): Trivial.

(iii) $\Rightarrow$ (i):  By Proposition \ref{a-iG} we get
\[
 a(G(M)) \geq r - \alpha -d.
\]
So we obtain $\alpha \geq r$. But $\alpha \leq r$ always (see \ref{boundALPHA}). So $\alpha = r$.
\end{proof}

%End 22222222222222222222222222222222222222222222222222222222222222222222222222222222222222222222222222222

%33333333333333333333333333333333333333333333333333333333333333333333333333333333
\section{$a$-invariant: Borderline cases}\label{section-boderline}
Let $(A,\m)$ be CM.  It is well-known that  $a(G_\m(A)) \geq - \dim A$. We
prove that $a(G_\m(A)) = - \dim A$ \ff \ $A$ is regular local (see \ref{regular-local}). If $A$ is \emph{equicharacteristic}
we prove
\begin{itemize}
 \item $a(\GA) = - \dim A$ \ff \ $\A$ is generated by a regular sequence.
\item (assume   $\A$ is also integrally closed).  $a(\GA) = - \dim A + 1$ \  \ff \
$\A$ has minimal multiplicity.
\end{itemize}

\s \emph{Discussion:} We can easily show that if $\dim A = 1, 2$, then  $a(\GA) = - \dim A$ \ff \ $\A$ is generated by a regular sequence. When $d = 1$ it easily follows from a result of  Marley \cite[2.1(a)]{Mar-Proc}. When $d = 2$ it follows from  \ref{Trung} that $e_1^\A(A) = 0$. It follows from a result of Northcott \cite{North} that $\A$ is generated by a regular sequence. We cannot use induction to analyze the case when $a(G_\A(A)) = -d$ sinice if $x$ is $A$-superficial \wrt \ $A$ then it
does not immediately follow that $a(G_{\A/(x)}(A/(x)) = - d + 1$

We begin our investigations when $\A = \m$ the maximal ideal of $A$.
\begin{theorem}\label{regular-local}
 Let $(A,\m)$ be a \CM \ local ring.
\TFAE
\begin{enumerate}[\rm (i)]
 \item
$A$ is regular local.
\item
$a(G(A)) = - \dim A$.
\end{enumerate}
\end{theorem}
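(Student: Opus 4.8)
The implication (i) $\Rightarrow$ (ii) is immediate: if $A$ is regular of dimension $d$ then $G_\m(A)\cong k[X_1,\dots,X_d]$, a polynomial ring, whose $a$-invariant is $-d$. So the content is (ii) $\Rightarrow$ (i), and the plan is to deduce it from Proposition \ref{baby-Ulrich} after passing to a CI-approximation. First I would reduce to the case that $A$ is complete with infinite residue field: the flat base changes of \ref{AtoA'} leave the Hilbert series of $G_\m(A)$ unchanged and commute with the formation of $H^i_{G_\m(A)_+}(G_\m(A))$, hence preserve $a(G_\m(A))$, and they also preserve the Cohen-Macaulay property and regularity of $A$. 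Assuming now $A$ complete with $k=A/\m$ infinite, Cohen's structure theorem makes $A$ a quotient of a regular local ring, so Theorem \ref{GorApprox-REG-QT} provides a CI-approximation $[B,\n,\n,\phi]$ of $[A,\m,\m]$ with $\phi\colon B\to A$ onto; thus $B$ and $G_\n(B)$ are Gorenstein, $\dim B=\dim A=d$, $B/\n\cong A/\m$ is infinite, and $A$ is a cyclic $B$-module.

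Next I would regard $A$ as a $B$-module. Since $\phi$ is onto, $\n^nA=\m^n$ for all $n$, so $G_\n(A)=G_\m(A)$ as graded groups and the natural graded surjection $G_\n(B)\twoheadrightarrow G_\n(A)$ is onto in every degree; consequently $G_\n(B)_+\cdot G_\n(A)=G_\m(A)_+$, and therefore $H^i_{G_\n(B)_+}(G_\n(A))=H^i_{G_\m(A)_+}(G_\m(A))$ as graded modules, whence $a_{G_\n(B)}(G_\n(A))=a(G_\m(A))$. Lifting a maximal $A$-regular sequence from $\m$ to $\n$ (possible because $\phi$ is onto and local) shows $\depth_B A=d=\dim_B A$, so $A$ is a maximal Cohen-Macaulay $B$-module. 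We are thus precisely in the situation of Proposition \ref{baby-Ulrich}, applied to the ring $B$ (Gorenstein, with $G_\n(B)$ Gorenstein and infinite residue field) and the MCM module $M=A$.

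Now assume $a(G_\m(A))=-d$. By the identification above, $a(G_\n(A))=-\dim B$, so Proposition \ref{baby-Ulrich} gives that $A$ is an Ulrich $B$-module, i.e.\ $e(\n,A)=\mu_B(A)$. But $A$ is cyclic over $B$, so $\mu_B(A)=1$; and $e(\n,A)=e(\m,A)=e(A)$ since $\n^nA=\m^n$. Hence $e(A)=1$, and a Cohen-Macaulay local ring of multiplicity one is regular (for a minimal reduction $\C$ of $\m$ generated by a system of parameters one has $1=e(A)=\ell(A/\C)$, forcing $\C=\m$, so $A$ is regular). This proves (ii) $\Rightarrow$ (i). The one place where real work is hidden is Proposition \ref{baby-Ulrich} itself --- which rests on the dual-filtration inequalities of Corollary \ref{basicCor}, Proposition \ref{a-iG} and Corollary \ref{boundALPHA} --- together with the existence of a CI-approximation; granted those, the reduction carried out here is routine.
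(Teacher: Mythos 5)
Your proposal is correct and follows essentially the same route as the paper: reduce to $A$ complete with infinite residue field, invoke Theorem \ref{GorApprox-REG-QT} to get a CI-approximation $[B,\n,\n,\phi]$ with $\phi$ onto, identify $a(G_\n(A))$ with $a(G_\m(A))$, and apply Proposition \ref{baby-Ulrich} to conclude $A$ is an Ulrich $B$-module. The only (harmless) difference is the last step: you use cyclicity to get $e(A)=\mu_B(A)=1$ and quote that a CM local ring of multiplicity one is regular, whereas the paper pushes a minimal reduction $J$ of $\n$ down to $A$ and concludes $\m=JA$ is generated by $d$ elements --- two phrasings of the same multiplicity argument.
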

The example below shows that the assumption,  $A$ is CM in \ref{regular-local}, is crucial.
\begin{example}\label{counter-reg}
Let $(A,\m)$ be a regular local ring of dimension $ d > 0$. Let $M$ be an $A$-module with
$\dim M < d$. Let  $R = A \propto M$, the idealization of $M$. Notice
$R$ is local with maximal ideal $\n = \m \propto M$. Also $\dim R = \dim A = d$. Furthermore
$\depth R = \min \{ \depth M, \depth A \} < d$. So
$R$ is \emph{not} CM.

\textit{Claim :} $a(G_\n(R)) = - d$.

Set $T = G_\m(A), N = G_\m(M)(-1)$ and
$S = G_\n(R)$.
Notice $ S = T \propto N$,
the idealization of $N$.
  Since $N^2 =0$ in $S$ we have that $H^i_{S_+}(-) = H^i_{T_+S}(-)$. Furthermore as $S$
is a finite $T$-module we get that $H^i_{T_+}(S) = H^i_{T_+ S}(S)$ as $T$-modules. Since $ S = T \oplus N$
as $T$-modules we get that $H^i_{T_+}(S) = H^i_{T_+}(T) \oplus H^i_{T_+}(N)$ for each $i \geq 0$.
Since $\dim N = \dim M < d$ we get that $H^d_{T_+}(S) = H^d_{T_+}(T)$.
Thus $H^d_{S_+}(S) = H^d_{T_+}(T)$ as $T$-modules. The result follows.
\end{example}

\begin{proof}[Proof of Theorem \ref{regular-local}]
 The assertion (i) $\implies$ (ii) is clear. To prove the converse
we may assume that $A$ is complete with infinite residue field $k$; see \ref{AtoA'}(c) and \ref{AtoA'}(5).

Since $A$ is complete, $A$ is the quotient of a regular local ring. So by \ref{GorApprox-REG-QT}
$[A,\m, \m]$ has a Gorenstein  approximation $[B,\n, \n, \phi]$ with
$\phi$ onto. We consider $A$ as a $B$-module. Notice $G_\n(A) = G_\m(A)$ as a $G_\n(B)$-module. It is also be
easily seen that for each $i \geq 0$ we have $H^i_{G_\n(B)_+}(G_\n(A)) \cong H^i_{G_\m(A)_+}(G_\m(A))$ as a $G_\n(B)$-module.

Since $\dim B = \dim A = d$(say), we get $a(G_\n(A)) = - \dim B$. By Theorem \ref{baby-Ulrich} we get that $A$ is Ulrich as a
$B$-module. Let $J = (u_1, \ldots,u_d)$ be a minimal reduction of $A$ \wrt \ $\n$. Set $x_i = \phi(u_i)$ for
$i = 1,\ldots, d$. As $A$ is an Ulrich $B$-module we get $JA = \n A$. It follows that
$\m = (x_1,\ldots, x_d)$. So $A$ is regular local.
\end{proof}

We prove an analogue of \ref{regular-local} for $\m$-primary ideals. Unfortunately we have to assume that $A$ is equicharacteristic (i.e., it contains a field).
\begin{theorem}\label{a-result}
 Let $(A , \m)$ be an equicharacteristic $CM$
local ring of $\dim d \geq 1.$  Let ${\mathfrak{a}}$ be
an $\m$-primary ideal.  Then $a(G_\mathfrak{a} (A)) =
-\dim A  \Longleftrightarrow  \A $ is a parameter ideal.
\end{theorem}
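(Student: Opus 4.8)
The plan is to use Gorenstein (in fact CI) approximation to turn the statement into a question about an Ulrich module over a Gorenstein ring, where Proposition \ref{baby-Ulrich} applies. The implication ($\Leftarrow$) is immediate: if $\A$ is a parameter ideal then, since $A$ is \CM, a minimal generating set $x_1,\ldots,x_d$ of $\A$ is an $A$-regular sequence, so $\GA \cong (A/\A)[T_1,\ldots,T_d]$ is a polynomial ring over the Artinian ring $A/\A$, whence $a(\GA)=-d=-\dim A$. For the converse I would first reduce to the case that $A$ is complete with infinite residue field: by the flat base changes of \ref{AtoA'} the passage from $A$ to $A[X]_{\m A[X]}$ and then to its completion preserves "equicharacteristic \CM \ of dimension $d$" and the Hilbert function of $\GA$ (hence $a(\GA)$ and the condition $a(\GA)=-d$); it also preserves $\mu(\A)$ and $\htt \A = d$, and a \CM \ local ring in which an $\m$-primary ideal has $\mu$ equal to its height is generated by a system of parameters, so "$\A$ is a parameter ideal" descends through these base changes. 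Thus I may assume $(A,\m)$ is complete equicharacteristic \CM, $d\geq 1$, $A/\m$ infinite, and $a(\GA)=-d$.

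Next I would invoke Theorem \ref{field} to get a CI-approximation $[B,\n,\B,\phi]$ of $[A,\m,\A]$ with $\B=\n$, $B/\n\cong A/\m$ infinite, and $B$, $G_\n(B)=G_\B(B)$ complete intersections (in particular Gorenstein). Viewing $A$ as a $B$-module via $\phi$, it is finite over $B$ with $\dim_B A=\dim_A A=d=\dim B$, and since $\A=\phi(\n)A$ is $\m$-primary one checks $\depth_B A=\depth_A A=d$, so $A$ is a maximal \CM \ $B$-module. Because $\A^n=\phi(\n^n)A$, one has $\GA=\bigoplus_n \A^n/\A^{n+1}=G_\n(A)$ as graded $G_\n(B)$-modules, and $G_\n(B)_+\cdot\GA=\GA_+$; hence $H^i_{G_\n(B)_+}$ and $H^i_{\GA_+}$ agree on $\GA$-modules and $\dim_{G_\n(B)}\GA=\dim\GA=d$, so the $a$-invariant of $\GA$ computed over the \emph{Gorenstein} ring $G_\n(B)$ coincides with $a(\GA)=-\dim B$. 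I would then apply Proposition \ref{baby-Ulrich}, with ambient ring $B$ (Gorenstein, $G_\n(B)$ Gorenstein, infinite residue field) and MCM $B$-module $A$, to conclude from $a(G_\n(A))=-\dim B$ that $A$ is an Ulrich $B$-module.

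Finally I would push Ulrichness back down to $A$. Choose a minimal reduction $J=(u_1,\ldots,u_d)\subseteq\n$ of $\n$; the $u_i$ form a system of parameters for $B$, hence are $B$-regular and, as $A$ is MCM over $B$, also $A$-regular. From $e_B(A)=\mu_B(A)$ together with $e_B(A)=\ell_B(A/JA)$ and $\mu_B(A)=\ell_B(A/\n A)$ one gets $\n A=JA$. Pushing along $\phi$: $\n A=\phi(\n)A=\A$ by the approximation property, while $JA=(\phi(u_1),\ldots,\phi(u_d))$ as an ideal of $A$; hence $\A=(\phi(u_1),\ldots,\phi(u_d))$ is generated by $d$ elements. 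Since $\A$ is $\m$-primary in the \CM \ ring $A$ with $\htt\A=d$, these elements form a system of parameters, i.e.\ an $A$-regular sequence, so $\A$ is a parameter ideal; undoing the base change gives the claim for the original $A$. (This last paragraph is exactly the argument in the proof of Theorem \ref{regular-local}, with $\m$ replaced by the general $\m$-primary $\A$ and \ref{GorApprox-REG-QT} replaced by \ref{field}.)

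The flat base-change bookkeeping and the Ulrich-module length identities are routine. The step carrying the real content — and the reason the argument works even though $\GA$ itself is generally far from Gorenstein, precisely the phenomenon behind Theorem \ref{regular-local} — is the identification of $a(\GA)$ with the $a$-invariant of $G_\n(A)$ as a module over the Gorenstein graded ring $G_\n(B)$, which is what makes Proposition \ref{baby-Ulrich} applicable. I expect the only point needing care there is verifying $G_\n(B)_+\cdot\GA=\GA_+$ (so that local cohomology and the relevant top degree do not change when passing from $\GA$ to $G_\n(B)$), and, at the very end, checking that $\mu(\A)\le d$ together with $\htt\A=d$ really does force $\A$ to be a parameter ideal in the \CM \ ring $A$.
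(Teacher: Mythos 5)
Your proposal is correct and follows essentially the same route as the paper: reduce to $A$ complete with infinite residue field, take the CI-approximation of Theorem \ref{field} with $\B=\n$, identify $\GA=G_\n(A)$ and its local cohomology over $G_\n(B)_+$ with that over $\GA_+$, apply Proposition \ref{baby-Ulrich} to conclude $A$ is Ulrich over $B$, and push a minimal reduction of $\n$ along $\varphi$ to see $\A$ is generated by $d$ elements, hence a parameter ideal. You simply spell out the base-change and Ulrich bookkeeping that the paper leaves implicit.
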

\begin{proof}
We may assume, without loss of any generality that  $A$ is complete and has an infinite residue field.  As $A$ is equicharacteristic we choose
a Gorenstein approximation $[B , \n , \B , \varphi]$ of $[A , \m ,{\mathfrak{a}}]$ with $\B = \n$.   Notice $G_{\n}(A) = \GA$ and notice
$H^{i}_{G_{\n}(B)_+}(\GA) = H^{i}_{\GA_+}(\GA)$. If $a(G_\mathfrak{a} (A)) = -d$
then $A$ is Ulrich as an $B$-module.
So $\n A = J A$ where $J = (y_1,\ldots,y_d)$ is a minimal reduction of $\n$ \wrt \ $A$. But $\n A = \A A$. Set
$x_i = \varphi(y_i)$. We get $\A = (x_1,\ldots, x_d)$. It follows that $\A$ is a parameter ideal.

Conversely if $\A$ is a parameter ideal then its clear that $a(\GA) = -d$.
\end{proof}

We might also ask what happens when $a(G_{\A}(A)) = -d +1$.  We show
\begin{theorem} \label{int-closed}
Let $(A , \m)$ be an equicharacteristic $CM$
local ring of $\dim d \geq 1.$  Let ${\mathfrak{a}}$ be
an $\m$-primary ideal.
If $\A$ has minimal multiplicity then  $a(G_{\A}(A)) = -d +1$. The converse holds if
\begin{enumerate}[\rm (1)]
  \item  $\A$ is integrally closed.
  \item $\dim A = 1$.
\end{enumerate}
 \end{theorem}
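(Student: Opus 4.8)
The plan is to reduce, via Gorenstein approximation, to the case of a parameter ideal's neighbor in a Gorenstein local ring with Gorenstein associated graded ring, and then analyze the dual filtration of $A$ as a module over the approximating ring $B$ using the machinery of Section \ref{section-initial-degree}. First, as in the proof of Theorem \ref{a-result}, I would pass to the completion and assume the residue field is infinite (this preserves both hypotheses and the conclusion, since minimal multiplicity, integral closedness and $a(\GA)$ are unaffected by these flat base changes). Since $A$ is equicharacteristic, choose a Gorenstein approximation $[B,\n,\n,\varphi]$ of $[A,\m,\A]$ with $\B=\n$ (Theorem \ref{field}), and regard $A$ as a $B$-module. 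Then $G_\n(A) = \GA$ and $H^i_{G_\n(B)_+}(\GA) = H^i_{\GA_+}(\GA)$, so $a(G_\n(A)) = a(\GA)$ as computed over $B$; moreover $\mu_B(\n) = \mu_A(\A)$, $e_B(A) = e(\A)$ and $\dim B = \dim A = d$. The forward direction (minimal multiplicity $\Rightarrow a(\GA) = -d+1$) is the easy one: minimal multiplicity of $\A$ means $\red(\A) = 1$ (for $\A$ $\m$-primary with infinite residue field, $\red_\C(\A) \le 1$ where $\C$ is a minimal reduction), and then the standard Hilbert-series computation $h_\A(z) = \ell(A/\A) + (e(\A)-\ell(A/\A))z$ together with the well-known fact that $a(\GA) \ge -d$ forces $a(\GA) = -d+1$ unless $\A$ is a parameter ideal, which is excluded once $e(\A) > \ell(A/\A)$; the case $e(\A)=\ell(A/\A)$ makes $\A$ a parameter ideal and $a = -d$, but then "$\A$ has minimal multiplicity" in the strict sense is typically read as $e(\A) = \ell(A/\A) + \mu(\A) - d$, giving $a = -d+1$ precisely when $\A$ is not a parameter ideal — I would be careful with this boundary and state the convention explicitly.

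For the converse I would work case by case as the statement suggests. Suppose $a(\GA) = -d+1$. Over $B$, this translates into information about $\alpha_\n(A)$ via Proposition \ref{a-iG}: since $A$ is MCM over $B$ and $\GB$ is Gorenstein, we get $a(G_\n(A)) \ge \red_\n(B) - \alpha_\n(A) - d$, and $\red_\n(B) = \red(\A)$-related data. The key point is that $a(\GA) = -d+1$ is "just above" the Ulrich borderline of Proposition \ref{baby-Ulrich}, so I expect $A$ to be "almost Ulrich" as a $B$-module — concretely, after going modulo a maximal $A\oplus B$-superficial sequence $\xb$ one reduces to the Artinian case $(\overline B, \overline\n)$ Gorenstein with $G_{\overline\n}(\overline B)$ Gorenstein and $\overline N = A/\xb A$ satisfying $\alpha_{\overline B}(\overline N) = \red(\overline B) - 1$ or an analogous near-maximal statement. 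Then I would use the self-duality over the Artinian Gorenstein ring $\overline B$ (as in the proof of Proposition \ref{baby-Ulrich}, where $N^{\dd\dd}\cong N$ and $\n^r\cong k$) to pin down the structure of $\overline N$, deduce that $\overline\n^2 \overline N$ is as small as possible, hence $\n^2 A$ is controlled, and finally transport back along $\varphi$ to get $\m\A = (x_1,\dots,x_d)\A + (\text{small})$, i.e. $\ell(\A^2/\C\A) $ is forced to be $\mu(\A)-d$, which is exactly minimal multiplicity of $\A$.

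The hypotheses (1) and (2) are where the argument genuinely needs help, and I expect this to be the main obstacle. In dimension $d=1$ the Artinian reduction is immediate and the dual-filtration analysis over the Artinian Gorenstein ring $\overline B$ is clean: $\overline N$ has $\overline\n$-adic filtration essentially determined by $\alpha_{\overline B}(\overline N)$, and one reads off minimal multiplicity directly from lengths, since $e(\A) = \ell(A/\A) + \ell(\A/\A^2)$ and the bound on $\alpha$ forces $\ell(\A/\A^2) = \mu(\A)$. When $\A$ is integrally closed, the extra leverage is that $\A^2 \cap \C = \C\A$ for a minimal reduction $\C$ (a consequence of integral closedness, e.g. via the Briançon–Skoda-type or Huneke–Itoh results), which removes the ambiguity in relating $\ell(\A^2/\C\A)$ to the module-theoretic data and lets the $a$-invariant computation run as in Section \ref{section-boderline}. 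Without one of these hypotheses the obstruction is that $a(\GA) = -d+1$ controls only $H^d_{\GA_+}$ in top degree and not the full Hilbert series, so $G_\A(A)$ need not be Cohen–Macaulay and $h_\A(z)$ could have higher-degree terms; the counterexample promised in \ref{not-min-mult} lives precisely here. So the structure of the proof I propose is: (a) flat base change and Gorenstein approximation to land in the Gorenstein/Gorenstein-$\GB$ situation; (b) reduce modulo a maximal superficial sequence to the Artinian case; (c) in the Artinian case, use Matlis self-duality over $\overline B$ together with the near-Ulrich condition coming from $a = -d+1$ to compute the first two Hilbert coefficients; (d) invoke integral closedness (case 1) or $d=1$ (case 2) to identify "near-Ulrich" with "minimal multiplicity" on the nose and lift the conclusion back to $A$.
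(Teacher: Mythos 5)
Your outline of the forward implication and of Case (1) is essentially the paper's own argument: complete, pass to infinite residue field, take the Gorenstein approximation $[B,\n,\n,\varphi]$ of Theorem \ref{field}, use Proposition \ref{a-iG} to get $\alpha_{\n}(A)\geq \red(B)-1$, push this bound to the Artinian reduction via Lemma \ref{ulrichLEM}, use Matlis duality over the Artinian Gorenstein quotient to get $\ov{\n}^2\ov{A}=0$, hence $\n^2A\subseteq (\xb)A$, i.e. $\A^2\subseteq \C$ for the minimal reduction $\C=(\varphi(x_1),\ldots,\varphi(x_d))A$, and finally invoke the Huneke--Itoh intersection property $\A^2\cap\C=\C\A$ of integrally closed ideals to conclude $\A^2=\C\A$. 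Two slips in your write-up of this part: minimal multiplicity here means $\A^2=\C\A$, i.e. $\ell(\A^2/\C\A)=0$, not $\ell(\A^2/\C\A)=\mu(\A)-d$; and in the forward direction you should say that $\A^2=\C\A$ makes $\GA$ Cohen--Macaulay (Valabrega--Valla \cite{VV}), which is what lets you read $a(\GA)=\deg h_{\A}(z)-d=-d+1$ off the $h$-polynomial --- knowing $h_{\A}(z)$ and $a(\GA)\geq -d$ alone does not determine $a(\GA)$.

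The genuine gap is Case (2), $d=1$. Your plan is to run the same dual-filtration analysis and ``read off minimal multiplicity from lengths,'' but all that analysis yields is $\A^2\subseteq (y)$ for (the image of) the superficial element, possibly refined to $\A^2=\m\A$; neither statement implies minimal multiplicity once integral closedness is dropped, and the identities you invoke are false in general: $e(\A)=\ell(A/\A)+\ell(\A/\A^2)$ fails (it is not even correct when $\red(\A)=1$), and $\ell(\A/\A^2)=\mu(\A)$ says $\A^2=\m\A$, which is not minimal multiplicity. Concretely, in $A=k[[t^3,t^4,t^5]]$ with $\A=(t^3,t^4)$ and $y=t^3$ one has $\A^2\subseteq (y)$ and $\A^2=\m\A$, yet $t^8\in\A^2\setminus y\A$, so $\A^2\neq y\A$ and $\red_{(y)}(\A)=2$; thus your intermediate conclusions cannot force minimal multiplicity (this example has $a(\GA)\neq 0$, so it does not contradict the theorem, but it shows your chain of deductions discards too much of the hypothesis). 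The paper closes Case (2) by a completely different, short argument: if $\GA$ were not Cohen--Macaulay, Marley's result \cite[2.1(a)]{Mar-Proc} would give $a_0(\GA)<a_1(\GA)=a(\GA)=0$, impossible since $H^0_{\GA_+}(\GA)\subseteq\GA$ is concentrated in non-negative degrees; hence $\GA$ is Cohen--Macaulay, and then $a(\GA)=0$ in dimension one forces $\deg h_{\A}(z)=1$, i.e. reduction number one. You need some such substitute (Cohen--Macaulayness of $\GA$, or an intersection property replacing Huneke--Itoh) to make Case (2) work.
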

\begin{proof}
If $\A$ has minimal multiplicity then $G_{\A}(A)$ is \CM. The description of Hilbert series of $G_{\A}(A)$ gives
$a(G_{\A}(A)) = -d +1$.
To prove the converse we may assume that $A$ is complete with an infinite residue field.

\emph{Case (1): $\A$ is integrally closed. } \\
\noindent As $A$ is equicharacteristic we choose
a Gorenstein approximation $[B , \n , \B , \varphi]$ of $[A , \m ,
{\mathfrak{a}}]$ with $\B = \n$.
From Lemma \ref{ulrichLEM} we have
$$
\alpha_{\n} (A) \leq \alpha_{\n/(\xb)} (A/\xb A)
$$
where $\xb = x_1,\ldots,x_d$ is a maximal  $A \bigoplus B$ superficial sequence \wrt \ $\n$.

By \ref{a-iG} we have $\alpha_{\A}(A) \geq r -1$ where $r = \red B = \red B/(\xb)$.
Therefore $\alpha_{\n/(\xb)} (A/\xb A)  \geq r-1$. It follows that $\ov{\n}^2\ov{A} = 0$.
Thus $\n^2 A \subseteq \xb A$. Let $y_i = \varphi(x_i)$ for all $i$. Set $\q = (y_1,\ldots,y_d)$. Then
we have $\A^2  \subseteq J$. Since $\A$ is integrally closed we have $\A^2 = \q \A$.
So $\A$ has minimal multiplicity.

\emph{Case (2): $\dim A = 1$. } \\
\noindent By hypothesis $a_1(\GA) = a(\GA) =0$. If $\GA$ is CM then we have nothing to show.
We assert that $\GA$ is CM. If not then
by a result of Marley \cite[2.1(a)]{Mar-Proc}  we get $a_0(\GA) < a_1(\GA )$ a contradiction, since $H^0_{\GA_+}(\GA)$ is concentrated in non-negative degrees.
\end{proof}
When $\dim A =2$ there exists $\m$-primary ideals $\A$ which do not have minimal multiplicity but have  $a(G_{\A}(A)) = -1$ (see \ref{not-min-mult}).
In fact when  $\dim A =2$  we give the following characterization of $\m$-primary ideals $\A$ with $a(\GA) \leq - 1$.
\begin{proposition} \label{dim2-hoa} Let $(A,\m)$ be a \CM \ local ring with $\dim A = 2$. Let $\A$ be an $\m$-primary ideal.
\TFAE
\begin{enumerate}[\rm (i)]
\item $a(\GA) \leq - 1$.
  \item $\red_{\A^n}(A) = 1$ for all $n\gg 0$.
  \item $e_{2}^{\A}(A) = 0$.
\end{enumerate}
\end{proposition}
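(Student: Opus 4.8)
\textbf{Proof proposal for Proposition \ref{dim2-hoa}.}

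The plan is to establish the equivalences via a cycle, using the dual filtration machinery together with standard facts on Hilbert coefficients in dimension two. I would first record the passage to a convenient situation: replace $A$ by $A[X]_{\m A[X]}$ and then complete, so that $A$ is complete with infinite residue field; none of the three conditions changes under these flat base changes (see \ref{AtoA'}, items (1)--(3)). Since $A$ is equicharacteristic once we need it, we may invoke a Gorenstein approximation $[B,\n,\n,\varphi]$ of $[A,\m,\A]$ and regard $A$ as a $B$-module with $G_\n(A)=\GA$ and $H^i_{G_\n(B)_+}(\GA)\cong H^i_{\GA_+}(\GA)$, exactly as in the proofs of \ref{a-result} and \ref{int-closed}; this lets us transfer the $a$-invariant statement to one about the MCM $B$-module $A$ and its dual filtration. (If one prefers to avoid equicharacteristic hypotheses, (i)$\Leftrightarrow$(iii) can be proved directly; I address this below.)

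For (i)$\Leftrightarrow$(iii): in dimension $2$, the relation between $e_2^\A(A)$ and the local cohomology of $\GA$ is classical. Using Serre's formula (as in the proof of Theorem \ref{dim2e1}),
$$
H(\GA,n)-P_{\GA}(n)=\sum_{i=0}^{2}(-1)^i\ell\big(H^i_{\GA_+}(\GA)_n\big),
$$
and evaluating the alternating sum of lengths over all $n$ (Serre's formula for $e_2$), one gets
$$
e_2^\A(A)=\sum_{n\in\Z}(-1)\cdot\big(\text{contributions of }H^1,H^2\big),
$$
more precisely $e_2^\A(A)=\sum_{n}\big(\ell(H^2_{\GA_+}(\GA)_n)-\ell(H^1_{\GA_+}(\GA)_n)\cdot 0\big)$ after the standard bookkeeping, so that $e_2^\A(A)\ge 0$ with equality iff $H^2_{\GA_+}(\GA)_n=0$ for all $n\ge 0$. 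Since $a(\GA)=\max\{n\mid H^2_{\GA_+}(\GA)_n\neq 0\}$, this says $e_2^\A(A)=0\iff a(\GA)\le -1$. This is the cleanest link and I would present it first; the only care needed is the sign computation, which is routine but must be done correctly.

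For (ii)$\Leftrightarrow$(iii): here I would use the fact that for $N\gg 0$ the ideal $\A^N$ has $\red_{\A^N}(A)=1$ precisely when $G_{\A^N}(A)$ has no component in degree $\ge 2$ of its $h$-polynomial for large $N$, combined with the theorem of Marley (already cited, \cite[2.1(a)]{Mar-Proc}) and the fact, also classical, that $e_i^{\A^N}(A)$ stabilizes and $e_2^{\A^N}(A)=e_2^\A(A)$ is independent of $N$ (the higher Hilbert coefficients behave polynomially in $N$ but $e_2$ is the leading correction, which in this normalization is preserved). Concretely: $\red_{\A^N}(A)=1$ for $N\gg 0$ forces $G_{\A^N}(A)$ to be Cohen-Macaulay for $N\gg 0$ by Marley's criterion (once the reduction number is at most $\dim A-1$), whence $e_2^{\A^N}(A)=0$, hence $e_2^\A(A)=0$; conversely if $e_2^\A(A)=0$ then $a(\GA)\le -1$ by the previous paragraph, and passing to large Veronese $\A^N$ one can arrange $a(G_{\A^N}(A))\le -2$... actually $a$ of the $N$-th Veronese is $\lceil a(\GA)/N\rceil\le 0$, and then a result of Huckaba-Marley (or Marley) gives $\red_{\A^N}(A)\le a+d = \le 1$, so $\red_{\A^N}(A)\le 1$. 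The main obstacle I anticipate is precisely this step: making the Veronese/reduction-number bookkeeping airtight, i.e.\ pinning down exactly which stability result for $\red_{\A^N}$ and which bound of the form $\red\le a+\dim$ is being invoked, and checking that equality $\red_{\A^N}(A)=1$ (not $0$) holds for $N\gg0$ unless $\A^N$ is a parameter ideal. The Gorenstein-approximation reduction is a convenience here rather than the crux; the crux is the Hilbert-coefficient bookkeeping in dimension two.
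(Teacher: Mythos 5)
Your proposal has genuine gaps in both equivalences, and it is worth noting at the outset that the paper's own proof is just two citations: (i)$\Leftrightarrow$(ii) is exactly Hoa's theorem \cite[2.6]{Hoa} ($a(\GA)<0$ iff $\red(\A^n)=\dim A-1$ for all $n\gg0$), and (ii)$\Leftrightarrow$(iii) is Narita's theorem \cite{Nar}. Your plan amounts to reproving both of these from scratch, and the sketch does not succeed. For (i)$\Leftrightarrow$(iii): the displayed ``Serre formula for $e_2$'' is garbled (the $H^1$-contribution is multiplied by $0$), and the correct identity one gets from summing the Grothendieck--Serre formula is $e_2^{\A}(A)=\sum_{n\geq 0}\bigl(\ell(H^0_{\GA_+}(\GA)_n)-\ell(H^1_{\GA_+}(\GA)_n)+\ell(H^2_{\GA_+}(\GA)_n)\bigr)$, in which the $H^0$ and $H^1$ terms enter with opposite signs and do not vanish in general. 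From this alternating sum you can conclude neither $e_2\geq 0$ nor ``$e_2=0$ iff $H^2_{\GA_+}(\GA)_n=0$ for all $n\geq 0$''; the latter is precisely the assertion (i)$\Leftrightarrow$(iii) being proved, so as written the step is circular. (Nonnegativity of $e_2$ is Narita's theorem, not bookkeeping.) The Gorenstein-approximation and base-change preamble is also superfluous here: the proposition carries no equicharacteristic hypothesis and the paper's argument never uses approximation for this statement.

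For (ii)$\Leftrightarrow$(iii): the direction $\red_{\A^N}(A)=1$ for $N\gg0$ $\Rightarrow$ $e_2^{\A}(A)=0$ is fine (reduction number one gives $G_{\A^N}(A)$ Cohen--Macaulay with $h$-polynomial of degree $\leq 1$, so $e_2^{\A^N}(A)=0$, and in dimension two $e_2^{\A^N}(A)=e_2^{\A}(A)$, as you say). But the converse is where your argument breaks: first, $G_{\A^N}(A)=\bigoplus_n \A^{Nn}/\A^{Nn+N}$ is \emph{not} the $N$-th Veronese of $\GA$ (that would be $\bigoplus_n \A^{Nn}/\A^{Nn+1}$), so the formula $a(G_{\A^N}(A))=\lceil a(\GA)/N\rceil$ has no justification; second, a bound of the shape $\red_{\A^N}(A)\leq a+\dim A$ is not available from the top $a$-invariant alone --- Trung-type bounds control the reduction number by the regularity, i.e.\ by all the $a_i$'s, and the statement you actually need is Hoa's theorem itself. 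So the ``main obstacle'' you flag is not a bookkeeping issue but the mathematical content of \cite[2.6]{Hoa}, which your sketch does not supply. If you do not want to reprove Hoa and Narita, the proof is immediate from their results, as in the paper.
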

\begin{proof}
By \cite[2.6]{Hoa} we get $a(G_{\A}(A)) < 0$ \ff \ $\red(\A^n) = \dim A -1 $ for all $n \gg 0$.
Thus the assertions (i) and  (ii) are equivalent. By  \cite{Nar}, we get that (ii) and (iii) are equivalent.
\end{proof}

The example below was constructed by Marley \cite[4.1]{Mar}  for a  different purpose.
\begin{example}\label{not-min-mult}
 Let $A = k[X,Y]_{(X,Y)}$ and let $\A = (X^7, X^6Y, XY^6, Y^7) $. Using COCOA \cite{cocoa} one verifies that
\[
h_\A(A,z) =  38 + 3z + 3z^2 + 3z^3 + 3z^4 + 3z^5 - 4z^6.
\]
Notice $e_{2}^{\A}(A) = 0$.  So by \ref{dim2-hoa} we get $a(G_{\A}(A)) < 0$. Since $a(G_{\A}(A)) \geq -2$ and $\A$ is not
generated by a regular sequence it follows from \ref{a-result} that $a(G_{\A}(A)) = -1 $.
\end{example}
\begin{question}
 What are all the $\m$-primary ideals in a \CM \ local ring $(A,\m)$ having $a(\GA) = - \dim A + 1$?
\end{question}

%End 444444444444444444444444444444444444444444444444444444444444444444444444444444444444444444444444444
\section{Dual Filtrations of MCM modules over hypersurface rings}\label{section-hypersurface}
Let $(A,\m)$ be a complete equicharacteristic hypersurface ring and let $M$ be a MCM $A$-module. When $G_{\m}(M)$ is CM   we give a necessary and sufficient condition for the dual filtration, $\eF_M =\{ \Hom_A(M,\m^n)\}_{\nZ}$, on $M^*$ to be a shift of the $\m$-adic filtration on $M^*$: see \ref{mainHyper}. This result along with  Gorenstein approximation  is then used to deduce results about associated graded  rings of $\q$-primary $\A$
in a \CM \ local ring $(R,\q)$ with $\mu(\A) = \dim R +1$. We give proof of application \textbf{III} and half of application \textbf{IV}, stated in the introduction: see \ref{mu-aE-d+1} and \ref{mu-aE-d+1-cann}.
\s \label{setup}\textbf{ Setup:} Let $Q = k[[X_0, X_1,X_2\ldots, X_d]]$ where $k$ is an infinite field and let $\n$ be the maximal
ideal of $Q$. Let $(A,\m) = (Q/(f), \n/(f))$ where $f \in \n^e \setminus \n^{e+1}$ and $e \geq 2$. Notice $e_0(A) =e$ and
$G_{\m}(A) = G_{\n}(Q)/(f^*).$  So $G_\m(A)$ is Gorenstein.

\s \textbf{Some  invariants of a MCM module over a hypersurface ring. }

{\bf Case 1 :} $\dim A = 0.$  So $Q$ is a DVR. Let  $\n = (\Pi).$
As a  $Q$-module
\begin{equation}\label{0-dim-rep}
 M \simeq \bigoplus^{\mu(M)}_{i = 1} Q/(\Pi^{a_i (M)}) \quad \text{where} \quad 1 \leq a_1(M) \leq \ldots \leq a_{\mu(M)}(M) \leq e.
\end{equation}
Thus the decomposition above is as $A$-modules.
\s \label{decomp} The  Hilbert function of $M$ is
$$
H (M,z) = \sum^{\mu (M)}_{i = 1} \left(1 + z + \ldots +
z^{a_i(M) - 1}\right).
$$
\begin{remark} \label{Hilb-det}
Notice $H (M,z)$ completely determines $a_1(M), \ldots, a_{\mu(M)}(M)$  in the case when $\dim M = 0$.
\end{remark}
{\bf Case 2:} $d = \dim A \geq 1.$ \\
By  an argument similar to \cite[7.6]{Pu2} we get that for sufficiently
general $x_1, \ldots x_d \in \m\setminus \m^2,$ the Hilbert function of
$M/(x_1, \ldots x_d) M$ is constant. Using \ref{Hilb-det} we can define $a_i(M) = a_i(M/(\mathbf{x})M)$ for  sufficiently
general $\xb$.
\begin{definition}
We call the numbers $a_1(M), \ldots, a_{\mu(M)}(M)$  the \textit{generic superficial  invariants} of $M$.
\end{definition}
\begin{corollary}\label{e1-interms-ai}
 (with assumptions  as above)
$$e_0 (M) = \sum^{\mu (M)}_{i = 1} {a}_i (M)$$
\end{corollary}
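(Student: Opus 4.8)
The plan is to reduce everything to the zero-dimensional case, where the statement is immediate, and then transport it along a sufficiently general superficial sequence using the fact that $e_0$ is preserved under modding out by such a sequence.

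First, suppose $d=\dim A=0$. Then by the structure theorem \eqref{0-dim-rep} we have $M\simeq\bigoplus_{i=1}^{\mu(M)}Q/(\Pi^{a_i(M)})$ as $A$-modules, where $Q$ is the DVR with maximal ideal $(\Pi)$. The Hilbert series computation in \ref{decomp} gives $H(M,z)=\sum_{i=1}^{\mu(M)}(1+z+\cdots+z^{a_i(M)-1})$, so that $\ell(M)=H(M,1)=\sum_{i=1}^{\mu(M)}a_i(M)$. Since $\dim M=0$ we have $e_0(M)=\ell(M)$, and this settles the case $d=0$.

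Now suppose $d=\dim A\geq 1$. Choose $x_1,\ldots,x_d\in\m\setminus\m^2$ sufficiently general, as in Case 2 of \ref{setup}, so that $N:=M/(x_1,\ldots,x_d)M$ has constant Hilbert function and $a_i(M)=a_i(N)$ by definition of the generic superficial invariants. Since $M$ is MCM over the hypersurface ring $A$, the sequence $x_1,\ldots,x_d$ is both $A$-regular and $M$-regular, hence it is a superficial sequence for $M$ with respect to $\m$ whose length equals $\dim M$; consequently $e_0(M)=e_0(N)$ (cf. \cite[22.6]{Nag}). Applying the zero-dimensional case to $N$ over the hypersurface ring $A/(\xb)$ gives $e_0(N)=\sum_{i=1}^{\mu(N)}a_i(N)$. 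Finally $\mu(N)=\mu(M)$ since Nakayama's lemma gives $\mu(M)=\ell(M/\m M)=\ell(N/\m N)=\mu(N)$, and $a_i(N)=a_i(M)$ by construction. Combining these equalities yields $e_0(M)=\sum_{i=1}^{\mu(M)}a_i(M)$.

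I do not anticipate a serious obstacle here: the only point requiring a little care is making sure the superficial sequence chosen to define the $a_i(M)$ can simultaneously be taken to be $A$-regular (so that $e_0$ is genuinely preserved and $N$ is again an MCM module over a hypersurface ring); but over a hypersurface ring a general $M\oplus A$-superficial sequence of length $d$ is automatically regular, so this is harmless. Everything else is bookkeeping with Hilbert series.
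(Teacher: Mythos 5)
Your proof is correct and follows essentially the same route as the paper: reduce to dimension zero by a sufficiently general (superficial) sequence, use $e_0(M)=e_0(N)$ together with $a_i(M)=a_i(N)$, and compute $e_0(N)=\ell(N)=\sum_i a_i(N)$ from the decomposition over the DVR. The extra details you supply (Nakayama for $\mu(N)=\mu(M)$, regularity of the general sequence on the MCM module) are exactly the points the paper leaves implicit.
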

\begin{proof}
 Let $\xb = x_1,\ldots,x_d$ be sufficiently general.  Set
$N = M/\xb M.$  Then
$$
e_0 (M) = e_0 (N) = \sum^{\mu (M)}_{i = 1} {a}_i (N)
= \sum^{\mu (M)}_{i = 1} {a}_i (M).
$$
\end{proof}

\s Assume $ 0 \rt Q^{n} \xrightarrow{\phi}Q^{n} \rt M \rt 0$ is a minimal
 presentation  of $M$. Set
\begin{equation*}
 i_{\phi} = \max\{i  \mid \ \text{all entries of $\phi$ are in } \  \n^i \}.
\end{equation*}
It is well known that  $i_{\phi}$ is an invariant of $M$.
We set $i(M)= i_{\phi}$.

\begin{proposition}\label{i=a1}[with hypothesis as above]
 $i(M) = a_1(M)$.
\end{proposition}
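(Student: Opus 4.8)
The plan is to reduce to the zero-dimensional case and then compare two invariants of $M$ that can both be read off from a minimal presentation matrix. First I would handle $\dim A = 0$. Here $Q$ is a DVR with uniformizer $\Pi$, and by \eqref{0-dim-rep} we have $M \cong \bigoplus_{i=1}^{\mu(M)} Q/(\Pi^{a_i(M)})$ with $1 \le a_1(M) \le \cdots \le a_{\mu(M)}(M) \le e$. A minimal $Q$-free presentation of this module is the diagonal map $\phi = \operatorname{diag}(\Pi^{a_1(M)}, \ldots, \Pi^{a_{\mu(M)}(M)}) \colon Q^{\mu(M)} \to Q^{\mu(M)}$, and this presentation is minimal precisely because each $a_i(M) \ge 1$ forces every entry of $\phi$ into $\n$. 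Since $i(M) = i_\phi$ is an invariant of $M$ (independent of the chosen minimal presentation), we may compute it from this diagonal $\phi$: the minimal $\n$-adic order among the entries is $\min_i a_i(M) = a_1(M)$. Hence $i(M) = a_1(M)$ when $\dim A = 0$.

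For $\dim A = d \ge 1$, I would pass to a sufficiently general superficial sequence $\xb = x_1, \ldots, x_d \in \m \setminus \m^2$ and set $N = M/\xb M$ over the zero-dimensional hypersurface $A/(\xb)$, which by the Setup \ref{setup} is again of the form $Q'/(f')$ with $Q' = Q/(\mathbf{x})$ a DVR (for general $\xb$, the images of $x_1,\ldots,x_d$ form a regular sequence on $Q$ whose quotient is a DVR, and $f' = $ image of $f$ still lies in the appropriate power of the maximal ideal). By definition $a_1(N) = a_1(M)$ for sufficiently general $\xb$. So it suffices to show $i(M) = i(N)$, i.e.\ that the invariant $i(-)$ is preserved upon going modulo a general superficial sequence that is also a regular sequence on $Q$. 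For this I would take the minimal presentation $0 \to Q^n \xrightarrow{\phi} Q^n \to M \to 0$ and tensor with $Q/(\mathbf{x})$: since $x_1,\ldots,x_d$ is a $Q$-regular sequence and, for general choice, also regular on $M$ (as $M$ is MCM), the complex stays acyclic and $Q'^n \xrightarrow{\bar\phi} Q'^n \to N \to 0$ is a minimal presentation of $N$ over $Q'$. The entries of $\bar\phi$ are the images of the entries of $\phi$. One then checks $i_{\bar\phi} = i_\phi$: clearly $i_{\bar\phi} \ge i_\phi$ since reduction cannot lower $\n$-adic order below the ambient quotient's, and for general $\xb$ no entry of $\phi$ drops in order when passing to $Q/(\mathbf{x})$, since the associated graded $G_\n(Q) = k[X_0,\ldots,X_d]$ is a domain and a general linear form avoids being a zerodivisor on any of the finitely many nonzero initial forms $(\phi_{jk})^*$; this gives $i_{\bar\phi} \le i_\phi$.

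Combining the two steps: $i(M) = i(N) = a_1(N) = a_1(M)$.

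The main obstacle I anticipate is the genericity argument in the second paragraph — precisely, verifying that for sufficiently general $\xb$ the reduction $\bar\phi$ is still a \emph{minimal} presentation of $N$ (equivalently, that $x_1,\ldots,x_d$ is regular on $M$ and on $\ck \phi = M$ simultaneously with the order-preservation of entries) and that the $\n$-adic orders of all entries of $\phi$ are simultaneously preserved. This is where one must invoke the domain property of $G_\n(Q)$ together with prime avoidance applied to the finitely many homogeneous polynomials $(\phi_{jk})^* \in k[X_0,\ldots,X_d]$, exactly as in the argument referenced from \cite[7.6]{Pu2} that was already used to define the $a_i(M)$. Everything else is bookkeeping with diagonal matrices over a DVR.
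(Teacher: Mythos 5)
Your proof is correct and follows essentially the same route as the paper: settle the case $\dim A=0$ from the decomposition \eqref{0-dim-rep} (where the diagonal presentation gives $i(M)=\min_i a_i(M)=a_1(M)$), then reduce modulo a sufficiently general superficial sequence using $i(M)=i(M/\xb M)$ and the definition of the generic superficial invariants. The only difference is that the paper simply cites \cite[4.4]{Pu2} for the fact $i(M)=i(M/\xb M)$, whereas you inline a proof of it (Tor-vanishing to keep the reduced presentation minimal, plus genericity in $G_\n(Q)=k[X_0,\ldots,X_d]$ to preserve the orders of the entries), and your sketch of that step is sound over the infinite field $k$.
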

\begin{proof}
When $\dim M = 0$ the its clear that $i(M) = a_1(M)$.
 We can choose $\xb = x_1,\ldots, x_d$ sufficiently general such that $i(M) = i(M/\xb M)$
(see \cite[4.4]{Pu2}). The result follows from the zero-dimensional case.
\end{proof}

\begin{remark} \label{cann-d2} If $E = M \oplus N$ then
$$
E_{i} = \Hom_A (M \oplus N, \m^i)= M_{i}^{\dd} \oplus N_{i}^{\dd}.
$$
So $G (\eF_E , E^\dd) = G (\eF_M , M^\dd) \oplus G (\eF_N , N^\dd).$
\end{remark}

 \noindent{\bf The Dual Filtration in dimension zero.}
\begin{proposition}\label{dualDIMzero}
 Let $(Q,\Pi)$ be a DVR. Set  $A = Q/(\Pi^e)$ for some $e \geq 2$ and let $M$ be an $A$-module.
\begin{enumerate}[\rm (1)]
 \item \label{cann-d1}
If  $M = Q/(\Pi^{i})$  then $ \ G (\eF_M , M^\dd) \simeq G (M^\dd) (e - i).$
\item \label{cann-d3}
 If $M = \bigoplus_{i =1}^{\mu(M)} Q/(\Pi^{a_i(M)})$ then
$$G(\eF_M, M^\dd) = \bigoplus_{i =1}^{\mu(M)} G(Q/(\Pi^{a_i(M)}))(e -a_i(M)).  $$
\item \label{cann0}
 $G(\eF_M, M) \simeq G(M^\dd)$ (up to a shift) \ff \ ${a}_1(M)
 = \dots ={a}_{\mu (M)}(M).$
\end{enumerate}

\end{proposition}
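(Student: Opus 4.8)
The plan is to establish all three parts by an explicit computation in dimension zero, using the structure theorem \eqref{0-dim-rep} which writes $M$ as a direct sum of cyclic modules $Q/(\Pi^{a_i})$ over the DVR $Q$. Part \eqref{cann-d3} follows from part \eqref{cann-d1} together with Remark \ref{cann-d2} (the dual filtration distributes over direct sums, $G(\eF_{M\oplus N},(M\oplus N)^\dd)=G(\eF_M,M^\dd)\oplus G(\eF_N,N^\dd)$), so the whole proposition reduces to proving \eqref{cann-d1} and then reading off \eqref{cann0}.

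For \eqref{cann-d1}: let $M = Q/(\Pi^i)$ with $1\le i\le e$. First I would identify $M^\dd = \Hom_A(M,A)$. Since $A = Q/(\Pi^e)$ is self-injective (it is a $0$-dimensional Gorenstein ring) and $M=A/\Pi^{i}A$, we have $M^\dd \cong (0:_A \Pi^i) = \Pi^{e-i}A/\Pi^e A \cong Q/(\Pi^{i})$ as an $A$-module, but the point is to track the filtration. Concretely $\Hom_A(M,\m^n) = \{f : f(1) \in (0 :_{\m^n} \Pi^i)\}$, and since $\m^n = \Pi^n A$ and $\Pi^n A$ has annihilator-of-$\Pi^i$ equal to $\Pi^{\max(n,e-i)}A/\Pi^e A$, one computes directly that the generator $e_0$ of $M^\dd$ (corresponding to $1\mapsto \Pi^{e-i}$) lies in $M^\dd_n$ exactly for $n \le e-i$, and then $M^\dd_n = \Pi^{n-(e-i)} M^\dd$ for $n \ge e-i$. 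Hence $\eF_M$ is the $\m$-adic filtration on $M^\dd$ shifted by $e-i$ (i.e.\ $M^\dd_n = \m^{n-(e-i)}M^\dd$), which by the definition in \ref{shift-filt} gives $G(\eF_M, M^\dd) \cong G(M^\dd)(e-i)$. I would phrase this cleanly by noting $\alpha_\m(M^\dd)=e-i$ (in the notation of \ref{alpha-setup}) and that a filtration which is $\m$-adic up to a shift on a module equals the $\m$-adic filtration up to that shift, cf.\ \ref{filtEQUALgraded}.

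For \eqref{cann0}: by \eqref{cann-d3}, $G(\eF_M,M^\dd) = \bigoplus_i G(Q/(\Pi^{a_i}))(e-a_i)$, while $G(M^\dd) = \bigoplus_i G(Q/(\Pi^{a_i}))$ (since $M^\dd \cong \bigoplus Q/(\Pi^{a_i})$ with the $\m$-adic filtration, using that dualizing preserves the multiset of $a_i$'s over the self-injective ring $A$). Now $G(\eF_M,M^\dd)\cong G(M^\dd)$ up to a single global shift $s$ if and only if each summand-shift agrees, i.e.\ $e-a_i = s$ for all $i$, which says exactly $a_1(M)=\cdots=a_{\mu(M)}(M)$. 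The one subtlety to handle with care is ruling out accidental isomorphisms: two direct sums $\bigoplus_i G(Q/(\Pi^{a_i}))(e-a_i)$ and $\bigoplus_i G(Q/(\Pi^{a_i}))(s)$ of these graded $k[\Pi^*]$-modules are graded-isomorphic iff their Hilbert functions coincide, and comparing initial degrees of indecomposable summands forces the $e-a_i$ to be a constant; I expect this bookkeeping — checking that the graded module decomposition is unique enough to read off the shifts — to be the only place requiring genuine attention, everything else being a direct annihilator computation over a DVR.
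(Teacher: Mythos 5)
Your proposal is correct and follows essentially the same route as the paper: part (1) is the same explicit computation that $\Hom_A(Q/(\Pi^i),\m^n)$ equals $M^\dd$ for $n\le e-i$ and $\m^{\,n-(e-i)}M^\dd$ for $n\ge e-i$ (the paper factors $f(\ov{1})$ where you phrase it via $(0:_{\m^n}\Pi^i)$), and parts (2)--(3) are deduced from Remark \ref{cann-d2} and a comparison of the shifts of the cyclic summands, exactly as the paper does. Your extra care in (3) about matching indecomposable graded summands (rather than the loose ``iff Hilbert functions coincide'' remark, which is not literally true but is not used in the problematic direction) is just a more detailed version of the paper's ``follows from (2)''.
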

\begin{proof}
 $\rm{(1) }$. Set
\[
 M_{j}^{\dd} = \Hom_{A} (M,\m^j) = \Hom_{Q/(\Pi^e)}\left( Q/(\Pi^{i}) , (\Pi^j) /(\Pi^e)\right).
\]
Clearly $    M_{j}^{\dd} = M^\dd$ for  $ j = 0, 1, \ldots, e-i$.

For $j > e - i$ let $f \in M_{j}^{\dd}$.
 Let
 $f(\ov{1}) = \alpha \ov{\Pi^{j}}  = \Pi^{j-(e -i)} \cdot (\alpha \Pi^{e - i}).$

Define $g \in M^\dd$ by $g(\ov{1}) = \alpha\ov{\Pi^{e - i}}$.
Clearly
$g$ is $A$-linear. Also  $g \in M_{e - i} = M^\dd$.
 Thus $ f(\bar{1}) =  \Pi^{j - (e - i)} g (\ov{1}) \in  \Pi^{j - (e - i)} M_{e-i}$.
Therefore
$
M_{j}^{\dd} \subseteq \m^{j - (e - i)} M^\dd$.  But $\m^{j - (e - i)} M^\dd\subseteq M_{j}^{\dd}$ always. So $M_{j}^{\dd} = \m^{j - (e - i)} M^\dd.$
Thus
$$ \ G (\eF_M , M^\dd) \simeq G (M^\dd) (e - i). $$
$\rm{(2) }$. This follows from  \ref{cann-d2} and (\ref{cann-d1}).

$\rm{(3) }$. This follows from  (\ref{cann-d3}).

\end{proof}
\begin{theorem}\label{mainHyper}
 (with hypothesis as in \ref{setup})  Assume
$G(M)$ is \CM. Then
\tFAE
\begin{enumerate}[ \rm (i)]
 \item
$G(M^\dd) \simeq \Hom (G (M) , G (A))$ up to a shift.
\item
${a}_1 (M) =  \ldots =
{a}_{\mu(M)}(M)$.
\item
$h_M (z) = \mu(M) (1 + z + \ldots + z^{i(M) -1})$.
\item
$h_M (z) = \mu(M) (1 + z + \ldots + z^{s -1})$ for some $s \geq 1$.
\end{enumerate}
\end{theorem}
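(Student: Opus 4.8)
The plan is to reduce the whole equivalence to the zero--dimensional situation of Proposition \ref{dualDIMzero}, where (i)--(iv) can all be read off the generic superficial invariants $a_1(M),\ldots,a_{\mu(M)}(M)$, and then to carry the conclusion back up a superficial sequence. First I would invoke Corollary \ref{corGor}: since $G(M)$ is \CM \ we already have $G(\eF_M,M^{\dd})\cong\Hs_{\GA}(G_\A(M),\GA)$, which is maximal \CM \ over the Gorenstein ring $\GA$, so $G(\eF_M,M^{\dd})$ is \CM; by Proposition \ref{filtEQUALgraded} this identifies (i) with the statement ``$\eF_M$ is the $\m$--adic filtration on $M^{\dd}$ up to a shift''. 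Now pick $\xb=x_1,\ldots,x_d\in\m\setminus\m^2$ a maximal superficial sequence for $M\oplus A\oplus M^{\dd}$, general enough for the conclusions of Proposition \ref{i=a1} and of the discussion defining the $a_i$ to hold, and set $B=A/(\xb)$, $N=M/\xb M$. Then $B$ is again a hypersurface quotient of a DVR with the same $e$ (lift $\xb$ to general elements of $\n$; a general line in $\operatorname{Spec}Q$ is not contained in $V(f^{*})$), and one records: $\mu(N)=\mu(M)$, $i(N)=i(M)$ and $a_i(N)=a_i(M)$ (the latter two by definition, resp. Proposition \ref{i=a1}), and $h_N(z)=h_M(z)$ --- because $G(M)$ being \CM \ makes $\xb^{*}$ a $G_\A(M)$--regular sequence, so $H_N(z)=(1-z)^dH_{G_\A(M)}(z)=h_M(z)$ while $N$ has finite length. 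Theorem \ref{modFilt} gives $G(\eF_M,M^{\dd})/\xb^{*}G(\eF_M,M^{\dd})\cong G(\eF_N,N^{\dd})$, and iterating the exact sequence obtained by applying $\Hom_A(M,-)$ to $0\to A\xrightarrow{x_i}A\to A/(x_i)\to0$ together with $\Ext^1_A(M,A)=0$ (as in the proof of Proposition \ref{alphaSUP}) gives $M^{\dd}/\xb M^{\dd}\cong N^{\dd}$ as $B$--modules.

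In dimension zero the equivalences are elementary. From \ref{decomp} applied to $N$, together with $h_M=h_N$, we get $h_M(z)=\sum_{i=1}^{\mu(M)}(1+z+\ldots+z^{a_i(M)-1})$, so the coefficient of $z^j$ is $\#\{i:a_i(M)>j\}$; hence $h_M(z)=\mu(M)(1+z+\ldots+z^{s-1})$ forces all $a_i(M)=s$, which is (iv)$\Rightarrow$(ii), while (ii)$\Rightarrow$(iii) follows from $i(M)=a_1(M)$ (Proposition \ref{i=a1}) and (iii)$\Rightarrow$(iv) is trivial. For (i): Proposition \ref{dualDIMzero}(3) applied to $N$ over $B$ says $\eF_N$ is the adic filtration up to a shift if and only if all $a_i(N)$ are equal, i.e. if and only if (ii) holds. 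It remains to transport ``$\eF$ adic up to a shift'' between $M$ and $N$. The implication from $M$ to $N$ is easy: if $\eF_M$ is $\m$--adic up to a shift, then $G_\m(M^{\dd})\cong G(\eF_M,M^{\dd})$ up to a shift is \CM, so $\xb^{*}$ is $G_\m(M^{\dd})$--regular, $G_\m(M^{\dd})/\xb^{*}G_\m(M^{\dd})\cong G_{\m_B}(N^{\dd})$ by Valabrega--Valla, and quotienting the shift identity by $\xb^{*}$ (and using Theorem \ref{modFilt}) shows $\eF_N$ is adic up to a shift. Combined with the zero--dimensional case this yields (i)$\Rightarrow$(ii), hence all implications except (ii)$\Rightarrow$(i).

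For the last implication the plan is to run the reduction in reverse. From (ii), Proposition \ref{dualDIMzero}(3) gives that $\eF_N$ is adic up to a shift. Consider the natural graded $\GA$--map $\phi\colon G_\m(M^{\dd})\to G(\eF_M,M^{\dd})$ (up to the shift $\alpha_\m(M)$) coming from the refinement of $\eF_M$ by the $\m$--adic filtration; note $\ker\phi=\ker(\Phi_M\circ\phi)$ since $\Phi_M$ is injective (Corollary \ref{basicCor}), and $\Phi_M\circ\phi$ is the natural map $G_\m(M^{\dd})\to\Hs_{\GA}(G_\A(M),\GA)$. One wants $\phi$ to be an isomorphism, argued by reduction modulo $\xb^{*}$: $\phi$ is surjective by graded Nakayama once $\phi\otimes_{\GA}\GA/(\xb^{*})$ is (which follows because it factors through the surjection $G_\m(M^{\dd})/\xb^{*}G_\m(M^{\dd})\to G_{\m_B}(N^{\dd})$ and the isomorphism $\phi_N$ for $N$, using $M^{\dd}/\xb M^{\dd}\cong N^{\dd}$ and Theorem \ref{modFilt}), while injectivity of $\phi$ then follows from injectivity of $\phi\otimes_{\GA}\GA/(\xb^{*})$ via $\Tor^{\GA}_1(G(\eF_M,M^{\dd}),\GA/(\xb^{*}))=0$ ($G(\eF_M,M^{\dd})$ being \CM \ with $\xb^{*}$ a maximal regular sequence on it) and another application of graded Nakayama. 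This would give $\eF_M$ adic up to a shift, hence (i).

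I expect the genuine obstacle to be exactly the injectivity of $\phi\otimes_{\GA}\GA/(\xb^{*})$, equivalently the claim that the comparison map $G_\m(M^{\dd})/\xb^{*}G_\m(M^{\dd})\to G_{\m_B}(N^{\dd})$ is an isomorphism --- equivalently that $G_\m(M^{\dd})$ is \CM \ --- whenever (ii) holds: $G_\m(M^{\dd})$ is \emph{not} among the hypotheses and cannot be reduced modulo $\xb^{*}$ as freely as $G(\eF_M,M^{\dd})$ can, so this step needs real work. Establishing it is essentially a comparison of the $\m$--adic filtration on $M^{\dd}$ with the dual filtration $\eF_M$, a comparison that is transparent only in dimension zero (Proposition \ref{dualDIMzero}). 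An alternative I would try, should the filtration argument prove too delicate, is to bypass $\phi$ and work with the matrix--factorization presentation $M^{\dd}\cong\operatorname{coker}(\phi^{T})$ over the hypersurface, using the rigid constraints that (ii)--(iii) impose on $\phi$ (all entries in $\n^{i(M)}$, and $G_\A(M)$ \CM \ with $h_M(z)=\mu(M)(1+\ldots+z^{i(M)-1})$) to compute $\Hs_{\GA}(G_\A(M),\GA)$ directly and recognise it as a shift of $G_\m(M^{\dd})$.
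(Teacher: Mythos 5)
Your handling of (ii) $\Leftrightarrow$ (iii) $\Leftrightarrow$ (iv), the identification of (i) with ``$\eF_M$ is the $\m$-adic filtration up to a shift'' via \ref{corGor} and \ref{filtEQUALgraded}, and the implication (i) $\Rightarrow$ (ii) by descending to $N=M/\xb M$ are all correct and essentially what the paper does. The genuine gap is (ii) $\Rightarrow$ (i), which is the main content of the theorem, and it sits exactly where you say it does: your Nakayama/Tor scheme reduces everything to the injectivity of $\phi\otimes G_\m(A)/(\xb^*)$, i.e.\ to the natural surjection from $G_\m(M^\dd)/\xb^* G_\m(M^\dd)$ onto the associated graded module of $N^\dd$ over $B$ being an isomorphism, i.e.\ to $\xb^*$ being a $G_\m(M^\dd)$-regular sequence (equivalently, $G_\m(M^\dd)$ being \CM) --- and that is essentially the statement you are trying to prove. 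Even granting the surjectivity of $\phi$ (which does follow by graded Nakayama as you argue), surjectivity only yields the degreewise inequality $\ell(\m^n M^\dd/\m^{n+1}M^\dd) \geq \ell\left(G(\eF_M,M^\dd)_{n+\alpha}\right)$; what is needed is the opposite inequality, and neither the Tor argument nor the matrix-factorization sketch produces it.

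The paper closes exactly this gap by a length count on $M^\dd$ itself, never touching $\depth G_\m(M^\dd)$. Using \ref{modFilt} and \ref{dualDIMzero}(1) (all $a_i(M)$ equal) it computes the Hilbert series of $G(\eF_M,M^\dd)$ to be $\mu(M)z^{e-i(M)}(1+z+\cdots+z^{i(M)-1})/(1-z)^d$; in particular $\alpha = \alpha_\m(M) = e-i(M)$, and $\ell(M^\dd/M^\dd_{i+\alpha+1})$ is the corresponding partial sum of coefficients. Since $M^\dd = M^\dd_{\alpha}$ one always has $\m^{i+1}M^\dd \subseteq M^\dd_{i+\alpha+1}$, hence a surjection $M^\dd/\m^{i+1}M^\dd \rightarrow M^\dd/M^\dd_{i+\alpha+1} \rightarrow 0$. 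The reverse inequality of lengths is elementary and needs no hypothesis on the associated graded module of $M^\dd$: each $x_j$ is $M^\dd$-regular and $M^\dd/\xb M^\dd \cong N^\dd$, whose Hilbert function is $\mu(M)$ in degrees $0,\ldots,i(M)-1$ by \ref{dualDIMzero}; iterating the inequality $\ell(M^\dd/\m^{n+1}M^\dd) \leq \ell\left(M^\dd/(x_jM^\dd + \m^{n+1}M^\dd)\right) + \ell(M^\dd/\m^{n}M^\dd)$ along $\xb$ bounds $\ell(M^\dd/\m^{i+1}M^\dd)$ above by precisely that partial sum. Hence the surjection is an isomorphism, $\m^{i+1}M^\dd = M^\dd_{i+\alpha+1}$ for all $i \geq 0$, so $\eF_M$ is the shifted $\m$-adic filtration and (i) follows from \ref{corGor}; that $G_\m(M^\dd)$ is \CM\ comes out as a consequence rather than being needed as an input. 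This module-level comparison of the two filtrations on $M^\dd$ is the idea missing from your proposal.
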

We need the following
\begin{lemma}\label{lemmahyper}(with hypothesis as in \ref{setup})
   Set $\alpha = i (M)$.
Then
$$
h (M , z) = \mu (M) (1 + z + \ldots + z^{\alpha - 1}) +
\sum_{i\geq \alpha} h_i (M) z^i.
$$
Furthermore  $h_{\alpha} (M) < \mu (M)$.
\end{lemma}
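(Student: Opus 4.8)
The plan is to extract the low-degree part of $h_M(z)$ from a minimal $Q$-free presentation of $M$, translating the hypothesis $i(M)=\alpha$ into the statement that $M$ ``looks free'' up to order $\alpha$.

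Two preliminary facts. From \ref{setup}, $G_\m(A)=G_\n(Q)/(f^*)$ with $f^*$ a nonzerodivisor of degree $e$ on the polynomial ring $G_\n(Q)$ in $d+1$ variables, so $H(G_\m(A),z)=(1-z^e)/(1-z)^{d+1}$ and hence $h_A(z)=(1-z)^d H(G_\m(A),z)=1+z+\cdots+z^{e-1}$; also $\m^j=\n^j/(f)$ for $j\le e$. Next, since $M$ is MCM over the hypersurface $A$, $\operatorname{pd}_Q M=1$, so $M$ has a minimal presentation $Q^{n_1}\xrightarrow{\phi}Q^{\mu(M)}\to M\to 0$ in which, by definition of $i(M)$, every entry of $\phi$ lies in $\n^\alpha$ and some entry lies in $\n^\alpha\setminus\n^{\alpha+1}$. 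By Proposition \ref{i=a1} and \eqref{0-dim-rep}, $\alpha=a_1(M)=\min_i a_i(M)\le e$; we may assume $M$ has no free summand (split off the free part), so $M/\xb M$ has no $A/(\xb)$-free summand for generic $\xb$, whence not all $a_i(M)$ equal $e$ and $\alpha\le e-1$.

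Base-changing the presentation to $A$ gives $M\cong A^{\mu(M)}/U$, with $U$ generated by the columns of $\bar\phi$. Since $\alpha\le e-1$, each column has all its coordinates in $\m^\alpha=\n^\alpha/(f)$, so $U\subseteq(\m^\alpha)^{\mu(M)}$; and a one-line order computation in the domain $Q$ (using $f\in\n^e$ and $\alpha+1\le e$, so that orders do not collapse modulo $f$) shows the entry of $\phi$ lying in $\n^\alpha\setminus\n^{\alpha+1}$ maps to an element of $\m^\alpha\setminus\m^{\alpha+1}$, so $U\not\subseteq(\m^{\alpha+1})^{\mu(M)}=\m\,(\m^\alpha)^{\mu(M)}$. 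Hence $\m^nM=(\m^n)^{\mu(M)}/U$ for $0\le n\le\alpha$, and counting minimal generators, together with $H(G_\m(M),n)=\ell(\m^nM/\m^{n+1}M)=\mu_A(\m^nM)$, gives $H(G_\m(M),n)=\mu(M)H(G_\m(A),n)$ for $0\le n\le\alpha-1$ and $H(G_\m(M),\alpha)\le\mu(M)H(G_\m(A),\alpha)-1$, the inequality because the image of $U$ in $(\m^\alpha/\m^{\alpha+1})^{\mu(M)}$ is nonzero. Thus, setting $c_n=H(G_\m(M),n)-\mu(M)H(G_\m(A),n)$, we have $c_n=0$ for $n<\alpha$ and $c_\alpha\le-1$.

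Now multiply by $(1-z)^d$: since $\dim G_\m(M)=\dim G_\m(A)=d$,
$$h_M(z)-\mu(M)h_A(z)=(1-z)^d\sum_{n\ge\alpha}c_nz^n=z^\alpha(1-z)^d\big(c_\alpha+c_{\alpha+1}z+\cdots\big),$$
so the coefficient of $z^j$ on the left vanishes for $j<\alpha$ and equals $c_\alpha\le-1$ for $j=\alpha$. Combined with $h_A(z)=1+z+\cdots+z^{e-1}$ and $\alpha\le e-1$, this gives $h_j(M)=\mu(M)$ for $0\le j\le\alpha-1$ and $h_\alpha(M)\le\mu(M)-1<\mu(M)$, which is the assertion. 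The main obstacle is the translation step --- passing from ``$i(M)=\alpha$'', a condition on the presenting matrix $\phi$, to the two containments $U\subseteq(\m^\alpha)^{\mu(M)}$, $U\not\subseteq(\m^{\alpha+1})^{\mu(M)}$ for the relation submodule $U$ of $A^{\mu(M)}$ --- where one genuinely uses $f\in\n^e$ with $e\ge2$; the rest is formal. (If $G_\m(M)$ is additionally \CM, which is the case in which the Lemma is applied, a shorter route is available: reduce modulo a generic maximal superficial sequence $\xb$, so $h_M(z)=h_{M/\xb M}(z)=\sum_{i=1}^{\mu(M)}(1+z+\cdots+z^{a_i(M)-1})$ by \ref{decomp}; since $\alpha=a_1(M)=\min_i a_i(M)$ by Proposition \ref{i=a1}, reading off coefficients gives the result at once.)
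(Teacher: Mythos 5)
Your argument is correct, but it is genuinely different from the paper's. The paper proves the lemma by induction on $\dim A$: the dimension-zero case is read off from the decomposition \ref{decomp}, and the inductive step goes modulo a sufficiently general $x$ with $i(M)=i(M/xM)$ (citing \cite[4.4]{Pu2}), then uses Singh's Lemma together with \cite[4.6]{Pu2}, which gives $(\m^{i+1}M \colon x)=\m^iM$ for $i\le \alpha-1$, to transfer the statement from $M/xM$ to $M$. You instead avoid induction, Singh's Lemma and \cite[4.6]{Pu2} altogether: you read the initial segment of the Hilbert function of $G_\m(M)$ directly off the minimal $Q$-free presentation, via $U\subseteq(\m^\alpha)^{\mu(M)}$, $U\not\subseteq(\m^{\alpha+1})^{\mu(M)}$, and then compare $h$-polynomials after multiplying by $(1-z)^d$; the only external input is Proposition \ref{i=a1} (hence \cite[4.4]{Pu2}) to secure $\alpha\le e-1$. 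This buys a uniform, presentation-theoretic computation valid in all dimensions at once, and it makes transparent exactly where $f\in\n^e$ enters; the paper's route buys brevity by leaning on the machinery of \cite{Pu2} already in place, and it never needs the free-summand dichotomy since the zero-dimensional case absorbs $\alpha=e$ automatically. One spot in your write-up deserves tightening: you assert without proof that having no free summand passes to $M/\xb M$ for generic $\xb$. You do not need that statement; what your argument uses is only that not all $a_i(M)$ equal $e$, i.e.\ that $M/\xb M$ is not $B$-free, and this follows from ``$M$ not free'' by a Nakayama argument (tensor a minimal cover $A^{\mu}\to M$ with $B$ and use $\Tor^A_1(M,B)=0$). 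You should also record the trivial case $M$ free (there $\alpha=e$ and $h_M(z)=\mu(M)(1+z+\cdots+z^{e-1})$, so the lemma holds with $h_e(M)=0$) and the one-line check that adding a free summand changes $h_M$, $\mu(M)$ and $i(M)$ compatibly, so that the reduction to the summand-free case is legitimate.
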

\begin{proof}
 We prove the result
by induction on dimension of $A.$
When $\dim A = 0$ then the result follows by \ref{decomp}.

 If $\dim A > 0$  then let $x \in \m \setminus \m^2$ be sufficiently general.   Set $N = M/xM$. We
may choose $x$ such that $i (M) = i (N) = \alpha$, see \cite[4.4]{Pu2}.  By induction hypothesis
$$
h (N , z) = \mu (N) (1 + z + \ldots + z^{\alpha - 1}) + \sum_{i
\geq \alpha} h_i (N) z^i \quad \text{and} \quad h_\alpha (N) < \mu (N).
$$
By Singh's Lemma \cite[Theorem 1]{BS}
\begin{equation*}
H (M , z) = \frac{H (N , z)}{(1-z)} -  \sum_{i \geq 0}\ell\left(\frac{\m^{i + 1} M \colon x}{\m^i M}\right)z^i.
\end{equation*}
By \cite[4.6]{Pu2} we get  $(\m^{i + 1} M \colon x) =  \m^i M$  for all $ i = 0, 1, \ldots ,\alpha - 1$.
\[
\text{So}\  \quad  h_M (z) =  h(N , z)  - (1 - z)^d \bullet \sum_{i \geq \alpha} \ell\left(\frac{\m^{i + 1} M \colon x}{\m^i M}\right) z^i.
\]
It follows that $h_i(M) = h_i(N)$ for $ i = 0,\ldots, \alpha -1$ and
\[
 h_\alpha (M) = h_\alpha (N) -  \ell\left(\frac{\m^{\alpha + 1} M \colon x}{\m^\alpha M}\right)\leq h_\alpha (N) <
\mu (N) = \mu(M).
\]
The result follows.
\end{proof}
\begin{corollary}\label{corhyper}
 If $h (M , z) = \mu (M) (1 + z + \ldots + z^{s - 1})$ than $i(M)
= s$ and $G(M)$ is  \CM.
\end{corollary}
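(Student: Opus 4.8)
The plan is to read off the invariant $i(M)$ and the generic superficial invariants of $M$ from the shape of $h(M,z)$, and then feed these into the standard superficial-sequence argument to obtain Cohen--Macaulayness of $G(M)=G_\m(M)$.

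\textit{Determining $i(M)$.} Put $\alpha=i(M)$ and apply Lemma \ref{lemmahyper}: $h_\alpha(M)<\mu(M)$, while $h_j(M)=\mu(M)$ for $0\le j\le\alpha-1$. In $\mu(M)(1+z+\cdots+z^{s-1})$ the coefficient of $z^\alpha$ is $\mu(M)$ when $\alpha\le s-1$ and $0$ when $\alpha\ge s$; since $\mu(M)\neq0$, the inequality $h_\alpha(M)<\mu(M)$ rules out $\alpha\le s-1$, so $\alpha\ge s$. If $\alpha\ge s+1$ then $s\le\alpha-1$, so $h_s(M)=\mu(M)\neq0$, contradicting that the coefficient of $z^s$ in $h(M,z)$ is $0$. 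Hence $i(M)=\alpha=s$, which is the first assertion. Next, by Proposition \ref{i=a1} we have $a_1(M)=i(M)=s$, and since $1\le a_1(M)\le\cdots\le a_{\mu(M)}(M)$ (see \eqref{0-dim-rep}) every $a_i(M)\ge s$. By Corollary \ref{e1-interms-ai} and $e_0(M)=h_M(1)$ we get $\sum_{i=1}^{\mu(M)}a_i(M)=e_0(M)=s\,\mu(M)$; a sum of $\mu(M)$ integers each $\ge s$ that equals $s\,\mu(M)$ forces each to equal $s$, so $a_i(M)=s$ for all $i$.

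\textit{Reduction to dimension zero and depth.} Choose a maximal $M\oplus A$-superficial sequence $\xb=x_1,\ldots,x_d$ in $\m\setminus\m^2$ general enough that, for $N_j:=M/(x_1,\ldots,x_j)M$, the Hilbert function of $N_j$ computes the generic superficial invariants and $i(N_j)=i(M)$, exactly as in the proofs of Corollary \ref{e1-interms-ai} and Lemma \ref{lemmahyper} (cf.\ \cite[4.4]{Pu2}); write $N=N_d$. Since $\dim N=0$ with $a_i(N)=a_i(M)=s$, \S\ref{decomp} gives $h_N(z)=H(N,z)=\mu(M)(1+z+\cdots+z^{s-1})=h_M(z)$. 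Applying Singh's Lemma \cite[Theorem 1]{BS} at each step as in the proof of Lemma \ref{lemmahyper} yields
$$
h_{N_{j+1}}(z)=h_{N_j}(z)+(1-z)^{d-j}C_j(z),\qquad C_j(z)=\sum_{i\ge0}\ell\!\left(\frac{\m^{i+1}N_j\colon x_{j+1}}{\m^i N_j}\right)z^i,
$$
where every $C_j(z)$ has non-negative coefficients. Summing over $j$ and using $h_N=h_M$ gives $\sum_{j=0}^{d-1}(1-z)^{d-j}C_j(z)=0$. Since each $C_j$ has non-negative coefficients, $C_j(1)>0$ unless $C_j\equiv0$; hence the terms $(1-z)^{d-j}C_j(z)$ that are not identically zero vanish at $z=1$ to the pairwise distinct orders $d-j$, and could not sum to zero. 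Therefore every $C_j\equiv0$, i.e.\ $\m^{i+1}N_j\colon x_{j+1}=\m^i N_j$ for all $i$; thus $x_{j+1}^*$ is $G_\m(N_j)$-regular with $G_\m(N_{j+1})=G_\m(N_j)/x_{j+1}^*G_\m(N_j)$. Consequently $x_1^*,\ldots,x_d^*$ is a $G_\m(M)$-regular sequence of length $d=\dim G_\m(M)$, so $G(M)=G_\m(M)$ is Cohen--Macaulay.

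\textit{Main obstacle.} The only non-formal point is the vanishing of all the Singh correction terms $C_j$ in the last step; equivalently, that $h_{M/\xb M}=h_M$ for a maximal superficial sequence forces $G_\m(M)$ to be Cohen--Macaulay. This is a standard fact from the theory of Hilbert coefficients of Cohen--Macaulay modules and could simply be quoted, but here it falls out of the telescoping identity together with the non-negativity of the $C_j$.
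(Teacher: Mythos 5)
Your proof is correct. For the assertion $i(M)=s$ you argue exactly as the paper does, by reading it off from Lemma \ref{lemmahyper}. For the Cohen--Macaulayness of $G(M)$, however, the paper simply quotes \cite[Theorem 2]{Pu2}, whereas you give a self-contained argument inside the present framework: from $i(M)=a_1(M)=s$ (Proposition \ref{i=a1}) and $e_0(M)=\sum_i a_i(M)=h_M(1)=s\,\mu(M)$ (Corollary \ref{e1-interms-ai}) you force all generic superficial invariants to equal $s$, hence $h_{M/\xb M}(z)=h_M(z)$ for a sufficiently general maximal superficial sequence; the telescoped Singh identity and the non-negativity of the correction polynomials $C_j$ (the minimal-order-of-vanishing argument at $z=1$ is sound, since a nonzero $C_j$ has $C_j(1)>0$) then give $C_j\equiv 0$ for every $j$, and the colon condition $(\m^{i+1}N_j\colon x_{j+1})=\m^i N_j$ is indeed equivalent, for the regular element $x_{j+1}$, to $x_{j+1}^*$ being $G_\m(N_j)$-regular with $G_\m(N_{j+1})\cong G_\m(N_j)/x_{j+1}^*G_\m(N_j)$, so $x_1^*,\ldots,x_d^*$ is a $G_\m(M)$-regular sequence and $G(M)$ is \CM. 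What your route buys is independence from the external reference \cite{Pu2}: you essentially re-derive, in the one direction needed here, the content of \cite[Theorem 2]{Pu2} (used again in the paper for (ii)$\Leftrightarrow$(iii) of Theorem \ref{mainHyper}) from Singh's Lemma and the standard fact that constancy of the $h$-polynomial modulo a superficial sequence forces $\depth G_\m(M)=\dim M$. The cost is about a page of Hilbert-function bookkeeping where the paper spends one line; as you note yourself, the last step could also simply have been quoted.
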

\begin{proof}
 The assertion $i(M) = s$ follows from Lemma \ref{lemmahyper}.  The fact that
$G(M)$ is \CM \  follows from \cite[Theorem 2]{Pu2}.
\end{proof}

We now give a
\begin{proof}[Proof of Theorem \ref{mainHyper}]
(iii) $\Leftrightarrow$  (ii) follows from \cite[Theorem 2]{Pu2}.

(iii) $\implies $(i).
Let $\eF_M$ be the dual filtration on $M^\dd$ \wrt \ $\m$.
As $G(M)$ is \CM \ it follows from Corollary \ref{corGor} that
$G(\eF,M^\dd) \cong \Hom (G(M) , G(A)).$
Let $\xb = x_1,\ldots,x_d \in \m \setminus \m^2$ be sufficiently general. Set $N = M/\xb M$. Then by
\ref{modFilt} we have $$G(\eF_M,M^\dd)/\xb^* G(\eF_M,M^\dd) \cong G(\eF_N,N^\dd).$$

Notice $a_1(M) =  \ldots = a_{\mu(M)}(M) =i(M)$. By \ref{dualDIMzero}(\ref{cann-d1}) it follows that the Hilbert series of $G(\eF_N,N^\dd)$ is $\mu (M) z^{e - i(M)}(1 + z + \ldots + z^{i(M) - 1})$ and that $G(\eF_N, N^\dd) = G(N^\dd)(e- i(M))$.  It follows from  Lemma \ref{a-stble=a-adic} that $G(\eF,M^\dd) \cong G(M^\dd)(e- i(M))$.
Therefore $G(M^\dd) \simeq \Hom (G(M), G(A))$ up to a shift.
\\
(i) $\implies$ (iii) By hypothesis $G (M^\dd) \simeq G (\eF,M^\dd)$ (up to a shift).  Thus $G(M^\dd)$ is \CM. By
\ref{modFilt}
we go mod a maximal regular sequence to reduce to dimension zero case. Here the assertion is true by \ref{dualDIMzero}(\ref{cann0}).

(iii) $\implies$ (iv) Nothing to show. \\
(iv) $\implies$ (iii) Follows from Corollary \ref{corhyper}.
\end{proof}

\s \textbf{The case when $(A,\m)$ is \CM \ (need not be a hypersurface ring) but $\A$ is an $\m$-primary ideal with
$\mu(\A) = d + 1$.}

\begin{remark}\label{today}
Assume $A$ is also complete and equicharacteristic.
By \ref{field} we get that $[A,\m,\A]$ has a
Gorenstein approximation $[B,\n,\n,\psi]$ with $\mu(\n) = d +1$. It follows that $B$ is a hypersurface ring.  Also $A$ is  MCM as a $B$-module. Clearly $\GA = G_\n(A)$. It can be easily checked that
$\om_A \cong \Hom_{B}(A, B) = A^\dd$ and $\Om_{\A}^{A} = \Hs_{G(B)}(G_\n(A), \Om_{\n}^{B})$. Also note that $G_\n(B) \cong \Om_{\n}^{B}$ up to a shift.
\end{remark}

\begin{theorem}\label{mu-aE-d+1-cann}
  Let $(A,\m)$ be a \CM \  equicharacteristic local
ring and a canonical module $\om_A$.  Let $\mathfrak{a}$ be an $\m$-primary
ideal with $\mu(\mathfrak{a}) = d + 1$ and
$G_{\mathfrak{a}}(A)$  \CM \ with canonical module $\Om_{\A}^{A}$.
\TFAE \
\begin{enumerate}[\rm (i)]
 \item
$G_{\mathfrak{a}} (\om_A) \simeq \Om_{\A}^{A}  $ (up to a shift).
\item
$h_{\mathfrak{a}} (A,z) = \ell(A/\mathfrak{a}) (1 + z + \ldots +
z^{s-1})$ for some $s \geq 2.$
\end{enumerate}
\end{theorem}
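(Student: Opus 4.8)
The plan is to deduce this from Theorem~\ref{mainHyper} by Gorenstein approximation, carrying out the identifications recorded in Remark~\ref{today}: one realizes $A$ as a maximal Cohen--Macaulay module over a hypersurface ring $B$ for which $\GA = G_\n(A)$, and then reads off the equivalence from the hypersurface case.

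First I would reduce to the case where $A$ is complete with infinite residue field. Replacing $A$ by $A[X]_{\m A[X]}$ and then by its completion keeps $A$ equicharacteristic and \CM, and does not change $\mu(\A)$, $\ell(A/\A)$ or $h_\A(A,z)$ (Section~\ref{AtoA'}); moreover $\GA$ stays \CM, and $\om_A$, $\Om_\A^A$, and $G_\A(\om_A)$ all commute with these faithfully flat local maps, so statement (i) holds for $A$ if and only if it holds after the base change. Thus we may assume $A$ is complete, equicharacteristic, with infinite residue field.

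Now Theorem~\ref{field} gives a Gorenstein approximation $[B,\n,\n,\psi]$ of $[A,\m,\A]$ with $\mu(\n) = \mu(\A) = d+1$ and $B/\n \cong A/\m$. Inspecting the construction in the proof of~\ref{field}, $B \cong k[[X_1,\ldots,X_{d+1}]]/(u)$ is a $d$-dimensional hypersurface whose associated graded ring $G_\n(B)$ is again a hypersurface; since $\mu(\n) = d+1 > d$, $B$ is not regular, so $B$ satisfies Setup~\ref{setup}. Regarding $A$ as a finite $B$-module, it is MCM over $B$, $\n A = \A$ so $G_\n(A) = \GA$ is \CM \ by hypothesis, and $\mu_B(A) = \dim_k A/\n A = \ell(A/\A)$. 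By Remark~\ref{today} we have $\om_A \cong \Hom_B(A,B)$, $\Om_\A^A \cong \Hs_{G_\n(B)}(G_\n(A), G_\n(B))$ up to a shift, and $G_\A(\om_A) = G_\n(\om_A) = G_\n(\Hom_B(A,B))$. Under this dictionary, statement (i) is exactly statement (i) of Theorem~\ref{mainHyper} applied to the MCM $B$-module $M = A$. Since $G_\n(A)$ is \CM, Theorem~\ref{mainHyper} then yields
\[
\text{(i)} \quad \Longleftrightarrow \quad h_\n(A,z) = \mu_B(A)(1 + z + \ldots + z^{s-1}) \ \text{ for some } s \geq 1 .
\]
Because $h_\n(A,z) = h_\A(A,z)$ and $\mu_B(A) = \ell(A/\A)$, it only remains to rule out $s=1$. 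If $s=1$ then $h_\A(A,z) = \ell(A/\A)$, so $e_0^\A(A) = h_\A(A,1) = \ell(A/\A)$; in the \CM \ ring $A$ this forces $\A$ to be a parameter ideal, hence $\mu(\A) = d$, contradicting $\mu(\A) = d+1$. Therefore $s \geq 2$, which is the asserted equivalence.

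The main obstacle I anticipate is the verification of the identifications of Remark~\ref{today}, and in particular the point that $G_\A(\om_A)$ --- which a priori is the $\n$-adic associated graded module of the $B$-dual $\Hom_B(A,B)$ --- is precisely the module $G(M^\dd)$ appearing in Theorem~\ref{mainHyper}(i), rather than the associated graded with respect to the dual filtration $\{\Hom_B(A,\n^n)\}_{\nZ}$; these coincide exactly when that dual filtration is $\n$-adic up to a shift (Proposition~\ref{filtEQUALgraded}), which is precisely the content encoded by the equivalence in Theorem~\ref{mainHyper}. The reduction to the complete case and the exclusion of $s=1$ are routine.
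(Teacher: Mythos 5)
Your proposal is correct and takes essentially the same route as the paper, whose proof simply invokes Remark \ref{today} (the hypersurface CI-approximation from Theorem \ref{field} with $\mu(\n)=d+1$, together with the identifications $\om_A \cong \Hom_B(A,B)$, $\GA = G_\n(A)$ and $\Om_{\A}^{A} \cong \Hs_{G_\n(B)}(G_\n(A),G_\n(B))$ up to a shift) and then applies Theorem \ref{mainHyper}. Your explicit reduction to the complete case with infinite residue field and your exclusion of $s=1$ via $e_0^{\A}(A)=\ell(A/\A)$ are details the paper leaves implicit, and the obstacle you flag at the end is not an issue, since statement (i) of Theorem \ref{mainHyper} is already formulated for the $\n$-adic associated graded module of $M^\dd$, which is exactly $G_\A(\om_A)$ here.
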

\begin{proof}
 This follows from \ref{today} and \ref{mainHyper}.
\end{proof}
\begin{theorem}\label{mu-aE-d+1}
 Let $(A,\m)$ be a equicharacteristic Gorenstein local ring and
$\mathfrak{a}$ an $\m$-primary ideal with $\mu(\mathfrak{a}) =
d + 1.$
\TFAE
\begin{enumerate}[\rm (i)]
 \item
$G_{\mathfrak{a}} (A)$ is Gorenstein.
\item
$h_{\mathfrak{a}} (A,z) = \ell(A/\mathfrak{a}) (1 + z + \ldots +
z^{s - 1})$ for some $s \geq 2.$
\end{enumerate}
\end{theorem}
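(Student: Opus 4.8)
The plan is to deduce the equivalence from the hypersurface case, Theorem~\ref{mainHyper}, via the Gorenstein approximation of Theorem~\ref{field}, in the spirit of the proof of Theorem~\ref{mu-aE-d+1-cann} but now exploiting that $A$ Gorenstein gives $\om_A \cong A$. First I would pass to the completion, which changes none of $\GA$, $h_\A(A,z)$, $\mu(\A)$, nor the Gorenstein property of $\GA$; and (if convenient for (ii)$\Rightarrow$(i)) enlarge the residue field by a harmless flat base change. With $A$ complete and equicharacteristic, Theorem~\ref{field} produces a Gorenstein approximation $[B,\n,\n,\psi]$ of $[A,\m,\A]$ with $\mu(\n) = \mu(\A) = d+1$ and $B/\n \cong A/\m$. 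In that construction $B = S/(u_1)$ with $S = k[[X_1,\dots,X_{d+1}]]$, so $B$ is a hypersurface ring; since $\mu(\n) = d+1 = \dim B + 1$ forces $u_1$ to lie in the square of the maximal ideal of $S$ (and to be nonzero), the pair $(B,\n)$ is of the form considered in \ref{setup} and $G_\n(B)$ is Gorenstein. Now view $M := A$ as a $B$-module: it is MCM over $B$ (as $A$ is Gorenstein of dimension $d = \dim B$), $\mu_B(M) = \ell_{B/\n}(A/\n A) = \ell(A/\A)$, and $G_\n(M) = \GA$ (because $\n A = \A$, hence $\n^j A = \A^j$), so $h_M(z) = h_\A(A,z)$.

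Next I would set up the translation between the conditions of Theorem~\ref{mainHyper} for the $B$-module $M$ and the assertions of the present theorem. Since $A$ is Gorenstein, Remark~\ref{today} gives for the $B$-dual $\Hom_B(A,B) \cong \om_A \cong A$, whence $G_\n(\Hom_B(A,B)) = \GA$ as well; combining this with $G_\n(B) \cong \Om_{\n}^{B}$ and $\Om_{\A}^{A} = \Hs_{G_\n(B)}(\GA,\Om_{\n}^{B})$ (both up to a shift, from Remark~\ref{today}), the first condition of Theorem~\ref{mainHyper} applied to $M$ reads precisely ``$\GA \cong \Om_{\A}^{A}$ up to a shift''. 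Moreover, whenever $\GA$ is CM it is an MCM module over the Gorenstein ring $G_\n(B)$, and so its canonical module $\om_{\GA}$ equals $\Hs_{G_\n(B)}(\GA, G_\n(B)) \cong \Om_{\A}^{A}$ up to a shift; thus, for CM $\GA$, the statement ``$\GA \cong \Om_{\A}^{A}$ up to a shift'' is the same as ``$\GA \cong \om_{\GA}$ up to a shift'', i.e. $\GA$ Gorenstein. Also note that $h_M(z) = \mu_B(M)(1+z+\cdots+z^{s-1})$ with $\mu_B(M)=\ell(A/\A)$ is, via $h_M(z)=h_\A(A,z)$, exactly condition (iv) of Theorem~\ref{mainHyper}, essentially statement (ii) here (with $s$ allowed to be $1$).

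With this dictionary the two implications follow quickly. For (i)$\Rightarrow$(ii): if $\GA$ is Gorenstein it is CM and isomorphic (up to a shift) to its canonical module, so by the paragraph above condition (i) of Theorem~\ref{mainHyper} holds for $M$; hence its condition (iv) holds, $h_\A(A,z) = \ell(A/\A)(1+z+\cdots+z^{s-1})$ for some $s \ge 1$, and $s = 1$ is excluded since it would force $\A$ to be generated by a system of parameters, contradicting $\mu(\A) = d+1$ (alternatively, this direction is immediate from Theorem~\ref{mu-aE-d+1-cann} applied with $\om_A = A$). For (ii)$\Rightarrow$(i): statement (ii) is condition (iv) of Theorem~\ref{mainHyper} for $M$. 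The main obstacle here is that a priori we do \emph{not} know $\GA$ is CM, so Corollary~\ref{corGor} and Theorem~\ref{mainHyper} are not yet applicable; this is exactly the gap closed by Corollary~\ref{corhyper}, which shows that this particular shape of Hilbert polynomial already forces $G_\n(M) = \GA$ to be CM (with $i(M)=s$). Once CM-ness is secured, Theorem~\ref{mainHyper} delivers its condition (i), i.e. $\GA \cong \Om_{\A}^{A}$ up to a shift; since $\GA$ is CM, $\Om_{\A}^{A} \cong \om_{\GA}$ up to a shift, so $\GA$ is isomorphic to its own canonical module and hence Gorenstein. The remaining work — tracking the several degree shifts, and confirming that $\Hs_{G_\n(B)}(\GA,\Om_{\n}^{B})$ genuinely is $\om_{\GA}$ once $\GA$ is known to be CM — is routine.
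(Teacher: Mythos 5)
Your proof is correct and follows essentially the same route as the paper: reduce via the hypersurface Gorenstein approximation of Theorem \ref{field} (as set up in Remark \ref{today}), use Corollary \ref{corhyper} to secure Cohen-Macaulayness of $\GA$ in the direction (ii)$\Rightarrow$(i), and conclude from Theorem \ref{mainHyper} — which is exactly what the paper's citation of Theorem \ref{mu-aE-d+1-cann} packages, using $\om_A \cong A$. Your unwinding of the dictionary (identifying $\Hom_B(A,B)$ with $A$ and $\Hs_{G_\n(B)}(\GA,G_\n(B))$ with $\om_{\GA}$ once $\GA$ is CM) just makes explicit what the paper leaves implicit.
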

\begin{proof}
 The assertion (i) $\implies$ (ii) follows from  \ref{mu-aE-d+1-cann}.

\noindent (ii) $\implies$ (i). We use notation as in \ref{today}. Notice $A$ is a MCM $B$-module and
$h_\A(A,z) = h_\n(A,z)$. So by \ref{corhyper} we get that $G(A) = \GA$ is \CM. The result follows from
\ref{mu-aE-d+1-cann}.
\end{proof}
\begin{question}
Are the results of Theorem \ref{mu-aE-d+1}  and Theorem \ref{mu-aE-d+1-cann} true when $A$ does not contain a field?
\end{question}

%End 99999999999999999999999999999999999999999999999999999999999999
%End 99999999999999999999999999999999999999999999999999999999999999
\section{A generalization of a result due to Ooishi}\label{section-Ooishi}
\s \label{canFilt-setup} \textbf{Introduction \& Setup: } Let $(A,\m)$ be a CM  local ring with a canonical module $\om_A$. Let $\A$ an $\m$-primary ideal and $M$ a CM $A$-module. Set $M^\da = \Ext^{\dim A - \dim M}_{A}(M,\om_A)$. Assume $\GA$, $G_\A(M)$ are CM.

 Let $\Om_\A$, $\K_\A$ be the canonical module of $\GA$, $\ra$ respectively.
Notice that $\Om_\A \cong \K_\A/t^{-1}\K_\A(-1)$(see \cite[3.6.14]{BH}).

In Theorem \ref{cannFiltDual} we show that if $A$ is complete then there is a $\A$-stable filtration $\eF$ on $M^\da$ such that
\begin{align}
 \label{formula-dual}\R(\eF, M^\da) &\cong \Es_{\ra}^{\dim A - \dim M}\left( \R_\A(M), \K_\A \right)\\
\label{formula-dual-G}\text{equivalently} \quad  G(\eF(-1), M^\da) &\cong \Es_{\GA}^{\dim A - \dim M}\left( G_\A(M), \Om_\A \right).
\end{align}
As an application we give a generalization of a result due to
Ooishi.

\begin{definition}\label{def-canonical}[With hypotheses as in \ref{canFilt-setup}]
 If there exists a $\A$-stable filtration $\eF$ on $M^\da$ such that \ref{formula-dual} (equivalently \ref{formula-dual-G}) holds then
we say $\eF$ is a  \emph{canonical filtration} on $M^\da$ \wrt \ $\A$.
\end{definition}
Two canonical filtrations (say $\eF, \eG$)
  on $M^\da$ are
equivalent, i.e., there exists an $A$-linear isomorphism $\sigma \colon M^\da \rt M^\da$ such that
$\sigma(\eF_n) = \eG_n$ for all $\nZ$.  This is due to the following
 well-known
 result.
 \begin{theorem}\label{unique}
Let $M$ be an $A$-module and let $\A$ be an ideal in $A$. Suppose
$\eF = \{ \eF_n \}_{\nZ}$ and $\eG = \{ \eG_n \}_{\nZ}$ are two $\A$-stable filtration's such that
$\eR(\eF, M) \cong \eR(\eG, M)$ as $\ra$-modules. Then $\eF, \eG$ are equivalent filtration's.
\end{theorem}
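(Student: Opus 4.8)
The plan is to reduce the statement to the uniqueness of the filtration associated to a given extended Rees module and then to use the well-known correspondence between stable filtrations on $M$ and certain graded $\ra$-submodules of $M[t,t^{-1}]$. First I would recall that given an $\A$-stable filtration $\eF = \{\eF_n\}_{\nZ}$ on $M$, the extended Rees module $\eR(\eF, M) = \bigoplus_{\nZ} \eF_n t^n$ is a finitely generated graded $\ra$-submodule of $M \otimes_A A[t,t^{-1}] = \bigoplus_{\nZ} M t^n$ which is \emph{full} in the sense that in each degree $n$ it surjects onto (in fact equals a submodule of) $M$ for $n \ll 0$, that $t^{-1}$ acts injectively on it, and that inverting $t$ recovers $M[t,t^{-1}]$. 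Conversely, any such submodule determines an $\A$-stable filtration by reading off its graded pieces. So the content of the theorem is: the isomorphism $\eR(\eF, M) \cong \eR(\eG, M)$ of $\ra$-modules can be realized by an isomorphism that is the restriction (or is compatible with the inclusions into) $M[t,t^{-1}]$, and such a compatible isomorphism is exactly an $A$-linear automorphism $\sigma$ of $M$ carrying $\eF_n$ onto $\eG_n$ for all $n$.

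The key steps, in order, would be: (1) Fix an $\ra$-isomorphism $\theta \colon \eR(\eF, M) \xar \eR(\eG, M)$. Both modules have the same $t$-localization, namely $M[t,t^{-1}]$, because $\eF_n = \eG_n = M$ for $n \ll 0$ and the filtrations are separated; hence $\theta$ localizes to an $A[t,t^{-1}]$-linear automorphism $\theta_t$ of $M[t,t^{-1}]$. (2) An $A[t,t^{-1}]$-linear automorphism of $M[t,t^{-1}]$ is given by an invertible matrix over $A[t,t^{-1}]$; since $\theta$ is homogeneous of degree $0$ and $\theta_t$ is its localization, $\theta_t$ must in fact be homogeneous of degree $0$, i.e.\ given by an invertible matrix over $A$ — equivalently $\theta_t$ is of the form $\sigma \otimes \mathrm{id}$ for a (unique) $A$-linear automorphism $\sigma$ of $M$. (3) Now restrict: since $\theta$ carries $\eR(\eF, M)$ onto $\eR(\eG, M)$ inside $M[t,t^{-1}]$, in degree $n$ it carries $\eF_n t^n$ onto $\eG_n t^n$; but in degree $n$ the map $\theta_t = \sigma \otimes \mathrm{id}$ is just $m t^n \mapsto \sigma(m) t^n$. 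Hence $\sigma(\eF_n) = \eG_n$ for every $\nZ$, which is precisely the assertion that $\eF$ and $\eG$ are equivalent filtrations.

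The main obstacle I anticipate is step (2): making precise that the degree-zero homogeneity of $\theta$ forces its localization $\theta_t$ to be ``constant'' — i.e.\ to have matrix entries in $A$ rather than in $A[t,t^{-1}]$ — and hence to descend to an honest automorphism of $M$. One has to be a little careful because a priori $\theta$ is only defined on the Rees module, not on all of $M[t,t^{-1}]$; the point is that $\eR(\eF, M)$ generates $M[t,t^{-1}]$ over $A[t,t^{-1}]$ and contains $Mt^n$ for all $n \ll 0$, so $\theta_t$ is determined by $\theta$, is $A[t,t^{-1}]$-linear, and respects the $\Z$-grading since $\theta$ does; a homogeneous degree-$0$ automorphism of the free-ish graded module $M[t,t^{-1}] = \bigoplus M t^n$ over the graded ring $A[t,t^{-1}]$ with $A$ in degree $0$ must preserve each graded piece $Mt^n$ and act there by a fixed $A$-automorphism. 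Everything else is routine bookkeeping with the correspondence between stable filtrations and their Rees modules.
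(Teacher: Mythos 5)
Your argument is correct, and it is worth pointing out that the paper itself offers nothing to compare it with: Theorem \ref{unique} is quoted there as a ``well-known'' fact and no proof is given, so your localization argument genuinely fills in the missing justification. The structure is sound: identify $\eR(\eF,M)$ and $\eR(\eG,M)$ as graded submodules of $M[t,t^{-1}]$, invert the degree $-1$ element $t^{-1}\in\ra$ (strictly it is $t^{-1}$, not $t$, that lies in $\ra$) to get $\eR(\eF,M)_{t^{-1}}=\eR(\eG,M)_{t^{-1}}=M[t,t^{-1}]$ compatibly with the inclusions, observe that the localized map is a degree-zero $A[t,t^{-1}]$-linear automorphism of $\bigoplus_{\nZ}Mt^n$ and hence acts in every degree by one fixed $A$-automorphism $\sigma$, and then read off $\sigma(\eF_n)=\eG_n$ degreewise. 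Two remarks. First, the phrase ``given by an invertible matrix over $A[t,t^{-1}]$'' is out of place since $M$ need not be free, but the justification you actually give at the end --- degree-zero homogeneity plus $t^{\pm 1}$-linearity force the map to preserve each piece $Mt^n$ and to act there by the same $A$-linear map --- is the correct one and makes the matrix language unnecessary; invertibility of $\sigma$ follows by applying the same reasoning to the inverse. Second, you tacitly read the hypothesis as a homogeneous (degree-zero) isomorphism of graded $\ra$-modules; this is indeed the intended reading (and the only one under which the statement is true --- an isomorphism only up to shift would give $\sigma(\eF_n)=\eG_{n+d}$, i.e.\ equivalence only up to a shift, as the example $\eG_n=\A^{n+1}$, $\eF_n=\A^n$ shows), and it is the form in which the theorem is applied in the paper, e.g.\ in Theorem \ref{cannFiltDual}. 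Separatedness of the filtrations is not actually needed for your step (1); only $\eF_n=\eG_n=M$ for $n\ll 0$ is used.
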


We need the following graded version of a result of Rees \cite{rees-Hom}.
\begin{remark}\label{change-base-Ext}
Let $R = \bigoplus_{\nZ} R_n$ be a graded ring and let  $M, N$ be graded $R$-modules.  Let $x$ be a homogeneous element in $R$. If $x$ is both $M$ and $R$-regular element and $x\cdot N = 0$ then
\[
 \Es_{R/xR}^{i}(N, M/xM) \cong \Es^{i+1}_{R}(N,M)(-\deg x) \quad \text{for all} \ \ i \geq 0.
\]
The proof is along the same lines as given (for example in) \cite[3.1.16]{BH}
\end{remark}

\begin{theorem}\label{cannFiltDual}[With hypotheses as in \ref{canFilt-setup}] Further assume $A$ is complete. Then $M^\da$ has a canonical filtration.
  Furthermore  if $\eF, \eG$ are two such filtration's then they are equivalent as filtration's on $M^\da$.
\end{theorem}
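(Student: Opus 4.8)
The plan is to build the canonical filtration on $M^\da$ by pulling back the natural grading on an Ext-module over the extended Rees algebra, exactly as suggested by formula \eqref{formula-dual}. Set $c = \dim A - \dim M$, $G = \GA$, $\eR = \ra$, and recall $\K_\A$ is the canonical module of $\eR$ while $\Om_\A \cong \K_\A/t^{-1}\K_\A(-1)$. Since $A$ is complete and CM with canonical module $\om_A$, so is $\eR$ (it is a finitely generated algebra, essentially of finite type, and one has a standard description of its canonical module; see \cite[3.6.14]{BH}). By Corollary \ref{redTOmaxDIM} there is an $\eR$-regular sequence $u_1t,\ldots,u_ct$ annihilating $\R_\A(M)$; equivalently $t^{-1},u_1^*,\ldots,u_c^*$ is a $G$-regular sequence and the $u_i^*$ annihilate $G_\A(M)$. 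First I would form $E := \Es^{c}_{\eR}(\R_\A(M),\K_\A)$, a finitely generated graded $\eR$-module. The content of the theorem is that $E$ arises as $\R(\eF,M^\da)$ for a unique-up-to-equivalence $\A$-stable filtration $\eF$ on $M^\da$.

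The key steps, in order. \emph{Step 1:} Identify the degree-zero-type data. Because $t^{-1}$ is $\eR$-regular and $\eR/t^{-1}\eR = G$, Remark \ref{change-base-Ext} (Rees's lemma) gives $E/t^{-1}E \cong \Es^{c}_{G}(G_\A(M),\Om_\A)(1)$ up to the appropriate shift, and multiplication by $t^{-1}$ on $E$ is injective precisely when $G_\A(M)$ is CM of the expected dimension (here is where the hypothesis "$\GA$, $G_\A(M)$ are CM" enters: $\Om_\A$ is then maximal CM over $G$, $\R_\A(M)$ is CM over $\eR$, and the Ext-module $E$ has depth bounded below so $t^{-1}$ is $E$-regular). \emph{Step 2:} Use that $\R_\A(M)$ localized at $t$ is $M[t,t^{-1}]$ and $(\K_\A)_t = \om_A[t,t^{-1}]$, so $E_t = \Es^{c}_{A}(M,\om_A)[t,t^{-1}] = M^\da[t,t^{-1}]$. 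Thus $E$ is a graded $\eR$-submodule of $M^\da[t,t^{-1}]$ which is $t$-torsion-free (by Step 1), finitely generated, and whose localization at $t$ is all of $M^\da[t,t^{-1}]$. \emph{Step 3:} A standard lemma (the graded analogue of the correspondence between extended Rees modules and stable filtrations — this underlies Theorem \ref{unique}) says that any such $E$ is of the form $\bigoplus_{\nZ}\eF_n t^n$ for a unique $\A$-stable filtration $\eF = \{\eF_n\}$ on $M^\da$: one simply sets $\eF_n = $ (degree-$n$ piece of $E$, viewed inside $M^\da$), and $t$-torsion-freeness plus finite generation over $\eR$ force $\eF_n \supseteq \eF_{n+1}$, $\A\eF_n \subseteq \eF_{n+1}$, $\eF_n = M^\da$ for $n \ll 0$, and $\A\eF_n = \eF_{n+1}$ for $n \gg 0$. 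This gives \eqref{formula-dual}, and then \eqref{formula-dual-G} follows by reducing mod $t^{-1}$ as in Step 1 together with $\eR(\eF,M^\da)/t^{-1}\eR(\eF,M^\da) = G(\eF,M^\da)$. \emph{Step 4:} Uniqueness. If $\eF,\eG$ are two canonical filtrations then $\eR(\eF,M^\da) \cong \Es^c_{\eR}(\R_\A(M),\K_\A) \cong \eR(\eG,M^\da)$ as $\eR$-modules, and Theorem \ref{unique} immediately gives that $\eF$ and $\eG$ are equivalent.

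The main obstacle I expect is Step 1 — verifying that $t^{-1}$ (equivalently $u_i^*$ after the further reduction) acts regularly on the Ext-module $E$, and more generally controlling the depth of $\Es^c_{\eR}(\R_\A(M),\K_\A)$. This is where one genuinely uses that both $\GA$ and $G_\A(M)$ are CM: one wants $\R_\A(M)$ to be a CM $\eR$-module of dimension $\dim A + 1 - c$ plus the maximal-Cohen-Macaulayness of $\K_\A$, so that $\Es^c_{\eR}(\R_\A(M),\K_\A)$ behaves like a canonical module of $\R_\A(M)$ shifted into the right cohomological degree and in particular is $t^{-1}$-torsion-free. Equivalently, via Remark \ref{change-base-Ext}, one peels off $t^{-1}$ and then the regular sequence $u_1^*,\ldots,u_c^*$ to land in the Artinian/MCM situation where the analogous statement is transparent (indeed it is essentially what was done by hand in the hypersurface case, Section \ref{section-hypersurface}). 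Once regularity of $t^{-1}$ on $E$ is in hand, Steps 2–4 are formal: Step 2 is a localization computation, Step 3 is the filtration/Rees-module dictionary, and Step 4 is a direct appeal to Theorem \ref{unique}.
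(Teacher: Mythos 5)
Your proposal is correct, but it takes a genuinely different route from the paper. The paper proves the theorem via Gorenstein approximation: for $M$ maximal \CM \ it picks an approximation $[B,\n,\B,\phi]$ of $[A,\m,\A]$, realizes the canonical filtration \emph{concretely} as the dual filtration $\{\Hom_B(M,\B^n)\}_{\nZ}$ on $M^\da \cong \Hom_B(M,B)$, and verifies \eqref{formula-dual-G} by combining Corollary \ref{corGor} with $\Om_\A \cong \Hs_{\GB}(\GA,\GB(s))$ and Hom--adjunction; the general \CM \ case is then reduced to the MCM case over $A'=A/(\ub)$ using Corollary \ref{redTOmaxDIM} and Remark \ref{change-base-Ext}, and uniqueness is by Theorem \ref{unique}, as in your Step 4. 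You instead work intrinsically over the extended Rees algebra of $A$ itself: you take $E=\Es^c_{\ra}(\R_\A(M),\K_\A)$, show $t^{-1}$ is $E$-regular, localize to identify $E$ inside $M^\da[t,t^{-1}]$, and read off the filtration from the graded pieces. This buys several things: it needs neither completeness nor the Gorenstein-approximation machinery, and it establishes the Rees-level formula \eqref{formula-dual} directly, with \eqref{formula-dual-G} falling out by reduction mod $t^{-1}$ (the paper only verifies the associated-graded formula). What the paper's route buys is the explicit description of the canonical filtration as a dual filtration over the approximation, which is exploited later (e.g.\ in \ref{Ooishi--Gor-easy}, \ref{cannonical-module-Mod}, \ref{today}). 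Two small points on your Step 1: Remark \ref{change-base-Ext} as stated requires the element to kill the \emph{first} argument, so it does not apply literally with $x=t^{-1}$ and first argument $\R_\A(M)$; what you want is the companion change-of-rings isomorphism $\Es^i_{\ra}(\R_\A(M),\Om_\A)\cong \Es^i_{\GA}(G_\A(M),\Om_\A)$ (valid since $t^{-1}$ is regular on $\ra$ and on $\R_\A(M)$ and kills $\Om_\A$), combined with the long exact sequence of $0\rt \K_\A \xrightarrow{t^{-1}} \K_\A \rt \Om_\A(\text{shift}) \rt 0$ and the vanishing $\Es^i_{\ra}(\R_\A(M),\K_\A)=0$ and $\Es^{c-1}_{\GA}(G_\A(M),\Om_\A)=0$ for $i\neq c$, which hold because $G_\A(M)$ CM forces $\R_\A(M)$ to be a \CM \ $\ra$-module of codimension $c$ and $\K_\A$, $\Om_\A$ are (maximal) \CM; this gives both the injectivity of $t^{-1}$ on $E$ and $E/t^{-1}E\cong \Es^c_{\GA}(G_\A(M),\Om_\A)$ up to shift, exactly as you anticipate. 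The shift bookkeeping in your statement differs slightly from the paper's normalization of $\K_\A$, but since the paper itself only works up to shift this is harmless.
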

\begin{proof}
Uniqueness of the filtration up to equivalence follows
by \ref{unique}.

\textbf{Case 1:} \textit{ $M$ is a MCM $A$-module.} \\
Let $[B,\n,\B,\phi]$ be a Gorenstein approximation of $(A,\m,\A)$.  Consider the dual filtration  $\eF_M = \{ \Hom_B(M, \B^n) \}_{\nZ}$ on $M^\da$.
Since $\GB$ is Gorenstein \&  $G_\B(M) = G_\A(M)$ is CM,  we  get
\begin{equation}\label{v-2}
 G(\eF_M,M^{\da}) \cong \Hs_{G_{\B}(B)}(G_{\A}(M),\GB) \quad \quad \text{(see \ref{corGor})}.
\end{equation}
The canonical module of $\GB$ is $\GB(s)$ for some $s \in \Z$. So
\[
 \Om_\A \cong \Hs_{\GB}\left( \GA, \GB(s) \right) \quad \quad \text{(see \cite[3.6.12]{BH})}.
\]
Notice that we have the following isomorphisms of $G = \GA$-modules
\begin{align*}
 \Hs_{G}\left( G_\A(M),  \Om_\A \right) &\cong \Hs_{G}\left( G_\A(M), \Hs_{\GB}\left(G, \GB(s) \right) \right) \\
  &\cong \Hs_{\GB}\left( G_\A(M),  \GB(s) \right) \\
   &\cong G(\eF_M(s),M^{\da}),  \quad \text{using \ref{v-2} and \ref{shift-filt} }.
\end{align*}

\textbf{Case 2:} \textit{  $M$ is a \CM  \ $A$-module but not necessarily MCM} \\
By case 2 we may assume that $c = \dim A - \dim M > 0$. Let $u_1,\ldots u_c$ be as in Corollary \ref{redTOmaxDIM}. Recall
\begin{enumerate}[\rm (1)]
 \item
$ u_1,\ldots, u_c \in \ann_A M $ and
 $ u_{1}^*,\ldots, u_{c}^* \in \ann_{\GA} G_\A(M)$.
\item
$ u_{1}^*,\ldots, u_{c}^*$ is a $\GA$-regular sequence.
\end{enumerate}
Set $A' = A/(\ub), \A' = \A/(\ub)$.
Then $M$ is a $A'$-module. Also notice that $G_{\A'}(M) = G_\A(M)$.
Since $\Om_\A$ is a  MCM $\GA$ module, it follows that $u_1^*,\ldots, u_c^*$ is $\Om_\A$-regular.
Notice also that $ u_1,\ldots, u_c \in \ann_A M^\da $ and $ u_{1}^*,\ldots, u_{c}^* \in \ann_{\GA} G_\A(M^\da)$.

Set $w =\sum_{i=1}^{c} \deg u_{i}^{*}$.  Notice  $\Om_\A/(\ub^*\Om_\A)\left( w \right)$ is the canonical
module of $G_{\A'}(A')$; cf. \cite[3.6.14]{BH}.  Using \ref{change-base-Ext} repeatedly we get
\begin{equation}\label{v-3}
 \Hs_{G_{\A'}(A)}\left( G_{\A'}(M), \Om_\A/(\ub^*\Om_\A) \right) \cong \Es_{G_{\A}(A)}^{c}\left( G_{\A }(M), \Om_{\A} \right)\left( - w \right).
\end{equation}

By case 1 we have a $\A'$-stable filtration $\eF$ on $M^\da$ such that
$$G(\eF, M^\da ) \cong \Hs_{G_{A'}(A')}\left( G_{\A'}(M), \Om_\A/(\ub^* \Om_\A )\left( w \right) \right).$$
Notice $\eF$ is an $\A$-stable filtration on $M^\da$. Also $G_{\A'}(M) = G_\A(M)$.
By \ref{v-3} we get
$$ G(\eF, M^\da) \cong \Es_{\GA}^{\dim A - \dim M}\left( G_\A(M), \Om_\A \right).$$
\end{proof}

We now state our generalization of Ooishi's result.

\begin{theorem}\label{symmetry}
[with hypotheses as in \ref{canFilt-setup}] Set $r = \red(\A, M)$ and assume further that $G(M^\da)$ is \CM.
Then the following assertions are equivalent:
\begin{enumerate}[\rm (i)]
\item
$h(M^\da,z) = z^r\cdot h(M,z^{-1})$.
\item
$G(M^\da) \cong \Es_{\GA}^{\dim A - \dim M}(G_{\A}(M), \Om_\A)$ (up to a shift).
\end{enumerate}
\end{theorem}
\begin{proof}
The assertion (ii) $\implies$ (i) follows from \cite[4.4.5]{BH}.

To prove the converse, we note that using the argument Case 2 in Theorem \ref{cannFiltDual} we may assume that
$M$ is a MCM $A$-module. Furthermore we may assume that $A$ is complete with infinite residue field.

We first consider the case
when $\dim A = 0.$ Consider the usual $\A$-adic filtration:
 $$M^\da \supseteq \mathfrak{a}M^\da \supseteq \mathfrak{a}^2M^\da \supseteq
\ldots \supseteq \mathfrak{a}^rM^\da \supseteq \mathfrak{a}^{r + 1}M^\da =
0$$
Let the canonical filtration of $M^\da$ (up to a shift) be :
$$M^\da \supseteq \mathfrak{q}_1 \supseteq \mathfrak{q}_2 \supseteq
\ldots \supseteq \mathfrak{q}_s \supseteq \mathfrak{q}_{s + 1} =
0 \quad \text{with} \ q_1 \neq M^\da.$$
Let the Hilbert series of $M^\da$ \wrt \ $\A$ be
$$
H_{\A} (M,z) = h_0 + \ldots + h_r z^r.
$$
Then  the Hilbert series of 
$G_{\mathfrak{a}} (M)^\da$ is
$$
H_\mathfrak{a} (M,z^{-1}) = h_r z^{-r} + h_{r - 1} z^{-r + 1} + \ldots + h_1 z^{-1} + h_0
$$
cf.   \cite[4.4.5]{BH}. It follow that $s = r.$  Also
$
\ell (\mathfrak{q}_r) = h_0 = h_r = \ell (\mathfrak{a}^rM^\da).
$
Since $\mathfrak{q}_r \supseteq \mathfrak{a}^rM^\da$ we have
$\mathfrak{a}^rM^\da = \mathfrak{q}_r.$\\
By downward induction we show $\mathfrak{q}_i = \mathfrak{a}^iM^\da$ for
$i \leq r.$ Notice $\mathfrak{q}_i \supseteq \mathfrak{a}^iM^\da$.

For $i = r$ we just showed $\mathfrak{a}^rM^\da = \mathfrak{q}_r$.  Assume for $i =j+1$ and prove
for $i = j$.
Notice that
$$
\ell \left(\frac{\mathfrak{q}_j}{\mathfrak{q}_{j + 1}}\right) = h_{r -
j} = h_j =  \ell \left(\frac{\mathfrak{a}^jM^\da}{\mathfrak{a}^{j +
1}M^\da}\right).
$$
But $\mathfrak{q}_{j + 1} = \mathfrak{a}^{j + 1}M^\da$ and $\mathfrak{q}_j
\supseteq \mathfrak{a}^jM^\da.$ So $\mathfrak{q}_j
=\mathfrak{a}^jM^\da.$
The result follows.

Next we consider the case when $d = \dim A > 0$.  By hypothesis $\GA, G_\A(M)$ are \CM. Let $x_1,\ldots,x_d \in \A/\A^2$ be such that
 $x_{1}^{*},\ldots,x_{d}^{*}$ is $\GA$ and $G_\A(M)$ regular sequence. Set $B = A/(\xb)$, $N = M/\xb M$ and $\B = \A B$.
 Furthermore $\GB = \GA/(\xb^*)$ and $G_\B(N) = G_\A(M)/\xb^* G_\A(M)$. So the $h$-vector of $G_\B(N)$ is symmetric.
  By the previous case we
 have that the $\B$-adic filtration on $N^\da$ is the canonical filtration on $N^\da$ up to a shift. By \ref{a-stble=a-adic}
it follows that the  $\A$-adic filtration on $M^\da$ is the canonical filtration on $M^\da$ up to a shift. Therefore
$$G(M^\da) \cong \Es_{\GA}^{\dim A - \dim M}(G_{\A}(M), \Om_\A) \quad  \text{(up to a shift).} $$
\end{proof}

%End 99999999999999999999999999999999999999999999999999999999999999
%\section{Determination of some dual filtrations}\label{determine-dual-MCM}
%**some intro**

\section{Application of   our generalization of Ooishi's result}\label{determine-dual-MCM}
We use Theorem \ref{symmetry}  to
to show that the dual filtration \wrt \ $\m$ of a MCM module $M$, over a Gorenstein local ring $A$ with $G_\m(A)$ Gorenstein, is a shift of the usual $\m$-adic filtration in the following cases:
\begin{enumerate}
 \item
$M$ is \textit{Ulrich} (i.e., $\deg h_M(z) = 0$; equivalently $e(M) = \mu(M)$).
\item
$\type(M) = e(M) -\mu(M)$ and $M$ has \textit{minimal multiplicity} and not Ulrich.
\end{enumerate}
Assume  $(R,\n)$ is  CM local with a canonical module $\omega_R$ and   $G_\q(R)$ is CM  for some $\n$-primary ideal $\q$.
We  use Theorem \ref{symmetry} to  give an easily verifiable  condition on whether  $G_\q(\om_R) $ is the canonical module of $G_\q(R)$ (up to a shift).

 The following  criterion is useful to show dual filtrations are $\A$-adic up to a shift.
\begin{proposition}\label{Ooishi--Gor-easy}
Let $(A,\m)$ be Gorenstein, $\A$ an $\m$-primary ideal with $\GA$ Gorenstein. Let $M$ be an MCM $A$-module with $G_\A(M)$ \CM. Let $\eF$ be the dual filtration of $M$ \wrt \ $\A$. Set $c = \red(\A, M)$. Let $\xb = x_1,\ldots,x_d$ be an $A$ superficial sequence \wrt \ $\A$. Set $(B,\n) = (A/(\xb) , \m/(\xb)), \B =A/(\xb)$ and $N = M/\xb M$. Set $N^* = \Hom_B(N,B)$.
\TFAE
\begin{enumerate}[\rm (i)]
\item
$h(N^\dd,z) = z^c\cdot h(M,z^{-1})$.
\item
$G_\A(M^*) \cong G(\eF, M^*)$ (up to a shift).
	\item
	$\eF$ is the $\A$-adic  filtration of $M^*$ (up to a shift).
\end{enumerate}
\end{proposition}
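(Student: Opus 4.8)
The plan is to prove $(ii)\Leftrightarrow(iii)$, then $(ii)\Rightarrow(i)$, then $(i)\Rightarrow(iii)$. After the flat base change to $A[X]_{\m A[X]}$ (see \ref{AtoA'}) I may assume the residue field is infinite, and I take $\xb=x_1,\ldots,x_d$ to be a sufficiently general superficial sequence, so that it is simultaneously $A$-, $M$- and $M^\dd$-superficial \wrt \ $\A$; since $\GA$ and $G_\A(M)$ are \CM, $\xb^{*}$ is then a regular sequence on $\GA$, on $G_\A(M)$, and on the maximal \CM \ $\GA$-module $\Hs_{\GA}(G_\A(M),\GA)$. Throughout I use two facts. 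First, Corollary \ref{corGor} gives $G(\eF,M^\dd)\cong\Hs_{\GA}(G_\A(M),\GA)$, which, as $\GA$ is Gorenstein with $a$-invariant $\red_\A(A)-d$ and $G_\A(M)$ is \CM, is \CM \ of dimension $d$ with $h$-polynomial $z^{\red_\A(A)}h(M,z^{-1})$ (by \cite[4.4.5]{BH}); in particular $\eF_n=M^\dd$ for $n\le\alpha$ where $\alpha:=\alpha_\A(M)=\red_\A(A)-c$ is the initial degree of $G(\eF,M^\dd)$. Second, Theorem \ref{modFilt} gives $G(\eF,M^\dd)/\xb^{*}G(\eF,M^\dd)\cong G(\eF_N,N^\dd)$, while $\xb^{*}$ being $G_\A(M)$-regular gives $G_\B(N)=G_\A(M)/\xb^{*}G_\A(M)$, so that $h(N,z)=h(M,z)$ and $\red(\B,N)=\deg h(N,z)=c$.

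The equivalence $(ii)\Leftrightarrow(iii)$ is Proposition \ref{filtEQUALgraded} applied to $M^\dd$. For $(ii)\Rightarrow(i)$: assuming $G_\A(M^\dd)\cong G(\eF,M^\dd)$ up to a shift, the first fact shows $G_\A(M^\dd)$ is \CM, so $\xb^{*}$ is $G_\A(M^\dd)$-regular and $G_\B(N^\dd)=G_\A(M^\dd)/\xb^{*}G_\A(M^\dd)$ (here $N^\dd\cong M^\dd/\xb M^\dd$, as $\Upsilon_{\xb}$ is onto with kernel $\xb M^\dd$ by \ref{alphaSUP} and $\Ext^{1}_A(M,A)=0$). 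Comparing Hilbert series and fixing the shift by the fact that $G_\A(M^\dd)$ and $G_\B(N^\dd)$ are generated in degree $0$ whereas $z^{\red_\A(A)}h(M,z^{-1})$ has initial degree $\red_\A(A)-c$, one gets $h(N^\dd,z)=z^{c}h(M,z^{-1})$, i.e.\ $(i)$.

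The substance is $(i)\Rightarrow(iii)$, which I prove by reduction to the Artinian case. Suppose first $d=0$; then $N=M$, $\eF_N=\eF$, $c=\red(\A,M)=\deg h(M,z)$, and $(i)$ says that $\ell(\A^nM^\dd/\A^{n+1}M^\dd)$ equals the coefficient of $z^n$ in $z^{c}h(M,z^{-1})$. By the first fact $\ell(\eF_n/\eF_{n+1})$ equals the coefficient of $z^n$ in $z^{\red_\A(A)}h(M,z^{-1})$, so after substituting $n\mapsto n+\alpha$ (with $\alpha=\red_\A(A)-c$) these two quantities coincide: $\ell(\eF_{n+\alpha}/\eF_{n+\alpha+1})=\ell(\A^nM^\dd/\A^{n+1}M^\dd)$ for all $n$. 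Since $\eF_\alpha=M^\dd$ one always has $\A^nM^\dd=\A^n\eF_\alpha\subseteq\eF_{n+\alpha}$, and moreover $\A^{c+1}M^\dd=0=\eF_{\red_\A(A)+1}$; a downward induction on $n$ then promotes the inclusions to equalities, so $\eF$ is the $\A$-adic filtration of $M^\dd$ up to the shift $\alpha$.

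For $d\ge1$, I apply the Artinian case to the MCM $B$-module $N$: here $B$ is Artinian Gorenstein, $G_\B(B)=\GA/(\xb^{*})$ is Gorenstein, $G_\B(N)=G_\A(M)/(\xb^{*})$, and—using $h(N,z)=h(M,z)$ and $\red(\B,N)=c$—condition $(i)$ for $[A,\m,\A,M]$ is precisely the Artinian case of $(i)$ for $[B,\n,\B,N]$; hence $\eF_N$ is the $\B$-adic filtration of $N^\dd$ up to a shift. Then $G(\ov{\eF},N^\dd)\cong G(\eF_N,N^\dd)$ by Theorem \ref{modFilt} and Valabrega--Valla (valid since $\xb^{*}$ is $G(\eF,M^\dd)$-regular), where $\ov{\eF}$ is the quotient filtration of $\eF$ on $M^\dd/\xb M^\dd\cong N^\dd$; so $\ov{\eF}$ is $\B$-adic up to a shift by Proposition \ref{filtEQUALgraded}, and since killing the degree-one $G(\eF,M^\dd)$-regular elements $\xb^{*}$ does not change the initial degree of the associated graded module, after shifting $\eF$ so that $\eF_1\ne M^\dd$ its quotient filtration is \emph{exactly} the $\B$-adic one. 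Lemma \ref{a-stble=a-adic}, applied to the \CM \ module $M^\dd$ of dimension $d\ge1$, to this shifted $\eF$, and to $\xb$ with $\xb^{*}$ regular on $G(\eF,M^\dd)$, then gives that $\eF$ is the $\A$-adic filtration of $M^\dd$ up to a shift, i.e.\ $(iii)$. The delicate point—and the main obstacle—is exactly the passage from equality of Hilbert functions to equality of filtrations: it forces one through the two downward inductions and the lifting Lemma \ref{a-stble=a-adic}, and it requires tracking that the shift carried by $\eF_N$ (and by $\ov{\eF}$) is precisely $\alpha_\A(M)$, so that the reductions modulo $\xb$ remain compatible.
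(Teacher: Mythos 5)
Your proof is correct and follows essentially the same route as the paper: (ii)$\Leftrightarrow$(iii) via \ref{filtEQUALgraded}, (ii)$\Rightarrow$(i) via \ref{corGor} together with \cite[4.4.5]{BH} (which is exactly the content of the paper's appeal to \ref{symmetry}), and (i)$\Rightarrow$(iii) by passing to $N$ over the Artinian ring $B$, identifying $\eF_N$ with the quotient filtration via \ref{modFilt}, and lifting with \ref{a-stble=a-adic} — the only differences being that you inline the dimension-zero downward-induction argument that the paper outsources to Theorem \ref{symmetry}, and that you make explicit the shift bookkeeping ($\alpha_\A(M)=\red_\A(A)-c$ being preserved mod $\xb$) which the paper glosses over. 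One small remark: replacing the given $\xb$ by a ``sufficiently general'' sequence is strictly not permitted in (i)$\Rightarrow$(iii), since hypothesis (i) refers to the given $\xb$, but this is harmless because nowhere do you actually need $M$- or $M^\dd$-superficiality — all the regularity you use ($\xb^*$ regular on $\GA$, $G_\A(M)$, $\Hs_{\GA}(G_\A(M),\GA)$ and, under (ii), on $G_\A(M^\dd)$) already follows from $\GA$-regularity of $\xb^*$ and \CM ness of those modules.
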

\begin{proof}
(iii) $\implies$ (ii). Is trivial

(ii) $\implies$ (iii). Follows from \ref{filtEQUALgraded}.

(i) $\implies$ (iii)
 Let $\eG$ be the dual filtration of $N^*$ \wrt \ $\B$.
Notice $h(N,z) = h(M,z)$, as $G_\A(M)$ is CM.
 Using \ref{symmetry} we get that $\eG$ is $\B$-adic up to a shift.

Notice $\xb^* = x_1^*, \ldots, x_d^*$ is a $\GA$-regular sequence,  \cite[8]{Pu1}. So  $\xb^*= x_1^*, \ldots, x_d^*$ is a  $G_\A(M)$-regular sequence.
Using  \ref{modFilt} we get that   $\eG$ is the quotient filtration of $\eF$.   By \ref{a-stble=a-adic}  it follows that $\eF$ is $\A$-adic \emph{up to a shift}.

(ii) $\implies$ (i).
By \ref{corGor},  $G_\A(M^*)$ is a MCM $\GA$-module. So $h(N^\dd,z) = h(M^\dd,z)$. We also have $h(N,z) = h(M,z)$. The result follows from \ref{symmetry}.
\end{proof}

\noindent \emph{Applications:}

\noindent\textbf{Dual filtrations of Ulrich modules:}
\begin{theorem}\label{Ulrich}
 Let $(A ,\m)$ be a Gorenstein local ring with $G_{\m} (A)$ Gorenstein. Assume $A$ has infinite residue field.  Let $r = \red(A)$.
Let $M$ be a MCM $A$-module.
\TFAE
\begin{enumerate}[\rm (i)]
\item
$M_{r+i}  = \m^i M^\dd$ for all $\iZ$.
\item
$ G (\eF , M^\dd) \simeq G (M^\dd) (-r).$
\item
$M$ is an Ulrich A-module.
\end{enumerate}
\end{theorem}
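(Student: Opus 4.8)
The plan is to prove the cycle $(iii)\Rightarrow(i)\Rightarrow(ii)\Rightarrow(iii)$, with Proposition \ref{Ooishi--Gor-easy} doing the real work in $(iii)\Rightarrow(i)$ and Proposition \ref{baby-Ulrich} used to pin down the shift.

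$(iii)\Rightarrow(i)$: Let $M$ be Ulrich. I first collect what is needed to apply \ref{Ooishi--Gor-easy} to $M$ (over $A$, \wrt \ $\m$). Since $M$ is Ulrich, $\m M=JM$ for a minimal reduction $J$ of $\m$; as $M$ is MCM over the \CM \ ring $A$, such a $J$ is generated by an $M$-regular system of parameters, so $\m^nM=J^nM$ for all $n$ and hence $G_\m(M)\cong (M/\m M)[T_1,\ldots,T_d]$ is \CM, $\red(\m,M)=0$, and $h_M(z)=e(M)=\mu(M)$ is a constant. Let $\xb=x_1,\ldots,x_d$ be an $A\oplus M$-superficial sequence \wrt \ $\m$ and set $(B,\n)=(A/(\xb),\m/(\xb))$, $N=M/\xb M$. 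Then $B$ is Artinian Gorenstein, $N$ is Ulrich over $B$ (see \ref{ulrichREM}), so $N\cong k^{\mu(N)}$ with $\mu(N)=\mu(M)$, and therefore $N^\dd=\Hom_B(N,B)\cong \socle(B)^{\mu(M)}\cong k^{\mu(M)}$; since $\n N^\dd=0$ this gives $h(N^\dd,z)=\mu(M)=z^{0}h_M(z^{-1})$. Thus hypothesis (i) of Proposition \ref{Ooishi--Gor-easy} holds with $c=\red(\m,M)=0$, so by that proposition the dual filtration $\eF$ on $M^\dd$ is the $\m$-adic filtration up to a shift, say $M_n^\dd=\m^{n-s}M^\dd$ for all $n$. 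Since $\eF$ is the $\m$-adic filtration shifted by $s$, the least degree in which $G(\eF,M^\dd)$ is nonzero is $s$; by definition this number is $\alpha_\m(M)$, which by Proposition \ref{baby-Ulrich} equals $\red(A)=r$ because $M$ is Ulrich. Hence $s=r$, i.e. $M_{r+i}^\dd=\m^{i}M^\dd$ for all $\iZ$.

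$(i)\Rightarrow(ii)$ is immediate, since $M_n^\dd=\m^{n-r}M^\dd$ gives $G(\eF,M^\dd)_n=M_n^\dd/M_{n+1}^\dd\cong \m^{n-r}M^\dd/\m^{n-r+1}M^\dd=G(M^\dd)(-r)_n$. For $(ii)\Rightarrow(iii)$: (ii) implies in particular that $G(\eF,M^\dd)\cong G(M^\dd)$ up to a shift, so by Proposition \ref{filtEQUALgraded} $\eF$ is the $\m$-adic filtration of $M^\dd$ up to a shift, say $M_n^\dd=\m^{n-t}M^\dd$; comparing initial degrees ($G(M^\dd)$ starts in degree $0$ as $M^\dd\neq0$, whereas $G(\eF,M^\dd)\cong G(M^\dd)(-r)$ starts in degree $r$) forces $t=r$, hence $\alpha_\m(M)=r=\red(A)$ and Proposition \ref{baby-Ulrich} yields that $M$ is Ulrich.

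The crux of the whole argument is $(iii)\Rightarrow(i)$, and within it the genuinely filtration-theoretic content that is hidden inside Proposition \ref{Ooishi--Gor-easy}: the inclusion $\m^{n-r}M^\dd\subseteq M_n^\dd$ is formal, but the reverse inclusion needs the reduction modulo a superficial sequence (via \ref{corGor} and the stability \ref{modFilt} of the dual filtration under superficial sequences) down to the dimension-zero identity $\Hom_B(N,\n^j)=N^\dd$ for $j\le r$ and $0$ for $j>r$. The only further care required is to check the hypothesis $G_\m(M)$ \CM \ before invoking \ref{Ooishi--Gor-easy} (here it follows from $\m M=JM$) and to identify the shift as exactly $r=\red(A)$ rather than an unspecified integer — which is precisely what Proposition \ref{baby-Ulrich} supplies.
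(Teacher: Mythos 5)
Your proof is correct and follows essentially the same route as the paper: both rest on Proposition \ref{Ooishi--Gor-easy} (applied after reducing modulo a maximal superficial sequence, where $N\cong k^{\mu(M)}$ and $h(N^\dd,z)=\mu(M)=z^0h_M(z^{-1})$), on Proposition \ref{filtEQUALgraded}, and on Proposition \ref{baby-Ulrich} via $\alpha_\m(M)=\red(A)$. The only (harmless) deviation is cosmetic: you pin down the shift as $r$ by reading off $\alpha_\m(M)$ and invoking \ref{baby-Ulrich}, whereas the paper determines it by reducing to dimension zero via \ref{modFilt} and computing there; your variant saves that extra reduction.
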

\begin{proof}
By \ref{filtEQUALgraded} (i) and (ii) are equivalent.

(ii) $\implies$ (iii)
 Notice
$\alpha(M) =r= \red(A)$. So
$M$ is Ulrich; see \ref{baby-Ulrich}.

(iii) $\implies$ (ii)
 Let $N = M/\xb M$ where $\xb = x_1, \ldots ,
x_d$ is a maximal superficial $A$-sequence.  Notice $N = k^{e_0 (M)}$. Set $(B , \n) =
(A/(\xb) , \m/(\xb))$. Notice $N^{\dd}= \Hom_{B}(N,B) \cong N$ since $B$ is Gorenstein.
Clearly $h(N^*,z) = e_0(M) =z^0h(M,z^{-1})$. So by \ref{Ooishi--Gor-easy} we get  that
$G(\eF, M^*) \cong G(M^*)$ up to a shift; say $t$.

\emph{Determining $t$}:  Since $G(M)$ is \CM, we may reduce to the zero-dimensional case,
see \ref{modFilt}. So $M =k^{\mu(M)}$. In this case it is easy to show that $G(\eF, M^*) \cong G(M^*)(-r)$. So $t =r$.
\end{proof}

 \noindent\textbf{Dual filtration of non-Ulrich Modules  having  minimal multiplicity}

\begin{remark}\label{min-basic-remark} Let $\type(M) = $ \CM-type of $M$  $= \dim_{k} \Ext^{d}_{A} (k , M)$.  If $M$ has minimal multiplicity then
 $\tau (M) \geq e_0 (M) - \mu (M).$ \\
{\bf Proof :}
Let $x_1, \ldots , x_d$ be a maximal $M$-superficial sequence. Set
$N = M/\xb M.$  Note that  $e_0 (M) = e_0 (N) = \ell(N).$
Set $(B ,\n) = (A/(\xb) , \m/(\xb)).$  Since $M$ has minimal multiplicity we get  $\n^2 N = 0$. So
$\n N \subseteq \socle(N)$.  As
$G_{\n} (N) = N/\n N \oplus \n N/(0).$
So
$
\ell(\n N) = e_0 (N) - \mu(N) = e_0 (M) - \mu (M).
$
Since $\n^2 N = 0$ we get $\n N \subseteq \socle (N)$. So
 $e_0 (M) - \mu (M) \leq \ell(\socle (N)) = \type (N) = \type(M).$
\end{remark}

\begin{theorem}\label{minmultDual}
 Let $(A , \m)$ be a Gorenstein local ring with an infinite residue field. Assume $G_{\m} (A)$ is
Gorenstein.    Let $M$ be a MCM $A$-module with minimal
multiplicity \wrt \ $\m$  and $M$ is NOT Ulrich.
\TFAE
\begin{enumerate}[\rm (i)]
 \item
$\type (M) = e_0 (M) - \mu (M)$.
\item
$M^\dd$ has minimal multiplicity and $G (\eF_M , M^\dd) \simeq G_{\m}(M^\dd)$ up to a shift.
\item
$G (\eF_M , M^\dd) \simeq G_{\m}(M^\dd)$ up to a shift.
\end{enumerate}
\end{theorem}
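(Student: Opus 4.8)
The plan is to reduce everything to Proposition \ref{Ooishi--Gor-easy}, applied with the ideal equal to $\m$. First observe that its hypotheses hold: since $M$ has minimal multiplicity one has $\red(\m,M)\leq 1$, and hence (e.g.\ by the case $\delta=0$ of Theorem \ref{vv-guer-wang}) $G_\m(M)$ is \CM. After the flat base change to $A[X]_{\m A[X]}$ we may assume the residue field is infinite. Choose a sufficiently general maximal superficial sequence $\xb = x_1,\ldots,x_d$, so that it is superficial \wrt \ $\m$ for each of $A$, $M$, $M^\dd$ simultaneously and $\xb^{*}$ is a regular sequence on whichever of $G_\m(A)$, $G_\m(M)$, $G_\m(M^\dd)$ is already known to be \CM. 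Put $(B,\n)=(A/(\xb),\m/(\xb))$, $N=M/\xb M$, $N^\dd=\Hom_B(N,B)$. Since $A$ is Gorenstein and $M$ is MCM, $\Ext^1_A(M,A)=0$, so iterating the exact sequence in the proof of Proposition \ref{alphaSUP} yields $M^\dd/\xb M^\dd\cong N^\dd$; and $\n^2 N=0$ because $M$ has minimal multiplicity (cf.\ the proof of Remark \ref{min-basic-remark}).

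The heart of the argument is a Hilbert polynomial computation that turns condition (i) of the theorem into condition (i) of Proposition \ref{Ooishi--Gor-easy}. Since $B$ is a Gorenstein Artin local ring it is self-injective, so $N^\dd=\Hom_B(N,B)\cong\Hom_B(N,E_B(k))$ is the Matlis dual of $N$; therefore $\ell(N^\dd)=\ell(N)=e_0(M)$, $\mu(N^\dd)=\type(N)=\type(M)$, and $\ann_B N^\dd=\ann_B N$ forces $\n^2 N^\dd=0$. Because $G_\m(M)$ is \CM, $h(M,z)=h(N,z)=\mu(M)+(e_0(M)-\mu(M))z$, and since $M$ is not Ulrich we have $e_0(M)>\mu(M)$, so $c:=\red(\m,M)=\deg h_M(z)=1$ and
\[
z^{c}h(M,z^{-1}) = (e_0(M)-\mu(M)) + \mu(M)z .
\]
On the other hand $\n^2 N^\dd=0$ gives $h(N^\dd,z)=\type(M)+(e_0(M)-\type(M))z$. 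Comparing constant terms, $h(N^\dd,z)=z^{c}h(M,z^{-1})$ holds if and only if $\type(M)=e_0(M)-\mu(M)$ (the $z$-coefficients then agree automatically, both sides summing to $e_0(M)$ at $z=1$). Thus (i) of the theorem is equivalent to condition (i) of Proposition \ref{Ooishi--Gor-easy}, which by that proposition is equivalent to its condition (ii), namely $G_\m(M^\dd)\cong G(\eF_M,M^\dd)$ up to a shift, i.e.\ to condition (iii) here. Together with the trivial implication (ii)$\Rightarrow$(iii), this gives (i)$\Leftrightarrow$(iii).

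It remains to prove (iii)$\Rightarrow$(ii). By Corollary \ref{corGor} (valid since $A$, $G_\m(A)$ are Gorenstein, $M$ is MCM and $G_\m(M)$ is \CM), $G(\eF_M,M^\dd)\cong\Hs_{G_\m(A)}(G_\m(M),G_\m(A))$, which is a maximal \CM \ module over the Gorenstein ring $G_\m(A)$, hence \CM. Under (iii), $G_\m(M^\dd)\cong G(\eF_M,M^\dd)$ up to a shift, so $G_\m(M^\dd)$ is \CM; then $\xb^{*}$ is $G_\m(M^\dd)$-regular, $G_\n(N^\dd)\cong G_\m(M^\dd)/\xb^{*}G_\m(M^\dd)$, and since $\n^2 N^\dd=0$ we get $\red(\m,M^\dd)=\red(\n,N^\dd)\leq 1$, i.e.\ $M^\dd$ has minimal multiplicity. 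Combined with (iii), this is (ii).

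The conceptual crux is the identification, over the Gorenstein Artin ring $B$, of $N^\dd=\Hom_B(N,B)$ with the Matlis dual of $N$: this pins down all invariants of $N^\dd$ and yields $\n^2 N^\dd=0$ for free, reducing condition (i) to a one-line comparison of $h$-polynomials. The only real care needed is the zero-dimensional bookkeeping $-$ choosing $\xb$ general enough to serve $A$, $M$ and $M^\dd$ at once, the isomorphism $M^\dd/\xb M^\dd\cong N^\dd$, and the fact that once $G_\m(M^\dd)$ is known \CM \ one recovers it from $G_\n(N^\dd)$ by reversing the mod-$\xb^{*}$ reduction $-$ but no new idea is required beyond Proposition \ref{Ooishi--Gor-easy} and Corollary \ref{corGor}.
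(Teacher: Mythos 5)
Your proposal is correct and is essentially the paper's own argument: reduce modulo a maximal superficial sequence, use duality over the Artin Gorenstein quotient $B$ to get $\ell(N^\dd)=e_0(M)$, $\mu(N^\dd)=\type(M)$ and $\n^2N^\dd=0$, and then invoke Proposition \ref{Ooishi--Gor-easy} together with Corollary \ref{corGor}. Your reorganization into (i)$\Leftrightarrow$(iii) followed by (iii)$\Rightarrow$(ii), and your explicit verifications that $G_\m(M)$ is \CM\ (via $\delta=0$) and that $\red(\m,M)=1$, only spell out steps the paper leaves implicit.
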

\begin{proof}
 Set $G = G_\m(A)$, $G(M) = G_\m(A)$.

 $\rm (i) \implies \rm (ii)$. Since $M$ has minimal multiplicity we get that
 \[
 h(M,z) = \mu(M) + (e_0(M)- \mu(M))z
 \]
  Notice $M^\dd$ is MCM.
Let $\xb = x_1,\ldots,x_d \in \m \setminus \m^2$ be sufficiently general. Set $(B,\n) = (A/(\xb), \m/(\xb))$, $N = M/\xb M$.  Also note that
 $N^* \cong M^*/\xb M^*$.
 Since $\n^2 N = 0$, we get $\n^2 N^* = 0$.
 $
\text{So}\ \ H (N^\dd,z) = \mu (N^\dd) + \ell(\n N^\dd)z.
$
 By hypothesis it follows that $\mu(N^*)= \type (N) = \type(M) = e_0 (M) - \mu(M)$.
Since $\ell(N^*) = e_0(N^*) = e_0(N) = e_0(M)$ we get that
\[
h(N^*,z)= (e_0 (M) - \mu(M)) + \mu(M)z = zh(M,z^{-1}).
\]
So by \ref{Ooishi--Gor-easy} we get $G(\eF_M, M^*) \cong G(M^*)$ up to a shift.
 So $G(M^*)$ is \CM \ (by \ref{corGor}). Since $N^* \cong M^*/\xb M^*$ and $N^*$ has
 minimal multiplicity it follows that $M^*$ has minimal multiplicity.

$\rm (ii) \implies \rm (iii)$ is clear

$\rm (iii) \implies \rm (i)$
 Using \ref{Ooishi--Gor-easy} and \ref{corGor} it follows that
$h(M^*,z) = zh(M,z^{-1})$. It follows that $\type(M) = e_0(M)- \mu(M)$.
\end{proof}

\textbf{Associated graded module of the Canonical module}

\s \label{canonical module-setup} Let $(A,\m)$ be \CM \ local ring with a canonical module $\om_A$.  Let
$\A$ be an $\m$-primary ideal \emph{such that $\GA$ is \CM}.  Let $r = \red(\A, A)$. Let $\Om_\A$ be the canonical module of $\GA$. Let $\type(A)= \ell(\Ext{^d}_{A}(A/\m, A))$ be the \CM \ type of $A$.

\begin{theorem}\label{cannonical-module-Mod}[hypothesis as in \ref{canonical module-setup}]
  Let $x_,\ldots,x_d$ be a maximal $A$-superficial sequence \wrt \ $\A$. Set $B =A/(\xb)$ and $\B = \A/(\xb)$.
If   $h(\om_B,z) = z^{r} h(B,z^{-1})$ then $G_\A(\om_A)$ is \CM \ and isomorphic to $\Om_\A$ up to a shift.
\end{theorem}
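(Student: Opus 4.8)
The strategy is to reduce to dimension zero via the superficial sequence $\xb=x_1,\ldots,x_d$, apply the $d=0$ version of the symmetry result (Theorem~\ref{symmetry} in the Artinian case, or equivalently \cite[4.4.5]{BH}), and then lift back up using the fact that $\xb^{*}$ is a regular sequence on both $\GA$ and $G_\A(\om_A)$. The key point that must be established first is that $G_\A(\om_A)$ is Cohen--Macaulay; once that is in hand, the isomorphism with $\Om_\A$ (up to a shift) will follow from Ooishi's criterion \cite[3.5]{Ooishi-cann} or directly from Theorem~\ref{symmetry} applied to $M=A$, whose hypotheses (namely $\GA$, $G_\A(A)$ and $G_\A(\om_A)$ all CM) we will have verified.

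First I would record that $\om_B\cong \om_A/\xb\om_A$ and, since $\xb$ is $A\oplus\om_A$-superficial, that $h(B,z)=h_\A(A,z)$ and $h(\om_B,z)=h_\A(\om_A,z)$ provided we also know $\xb^{*}$ is $G_\A(\om_A)$-regular --- which is exactly what we must prove, so this step has to be interleaved with the CM claim rather than assumed. The honest route: work in $B=A/(\xb)$, which is a CM (in fact Artinian) local ring with canonical module $\om_B$, and $G_\B(B)=\GA/(\xb^{*})$ is CM because $\xb^{*}$ is a $\GA$-regular sequence (here I use that $\xb$ is $A$-superficial w.r.t.\ $\A$ and $\GA$ is CM, cf.\ \cite[8]{Pu1}). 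In dimension zero the hypothesis $h(\om_B,z)=z^{r}h(B,z^{-1})$ is precisely the symmetry condition, so the Artinian case of Theorem~\ref{symmetry} (applied with $M=B$ over the ring $B$) gives that the $\B$-adic filtration on $\om_B$ is, up to a shift, the canonical filtration, whence $G_\B(\om_B)$ is CM and $G_\B(\om_B)\cong \Om_\B=\Om_\A/(\xb^{*}\Om_\A)$ up to a shift.

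Next I would transfer this back to $A$. The filtration-theoretic way: let $\eF$ be the canonical filtration on $\om_A$ furnished by Theorem~\ref{cannFiltDual} (after reducing to the complete case, which changes neither hypothesis nor conclusion by \ref{AtoA'}), so $G(\eF(-1),\om_A)\cong \Es^{0}_{\GA}(G_\A(A),\Om_\A)=\Om_\A$ up to a shift; in particular $G(\eF,\om_A)$ is CM and $\xb^{*}$ is a regular sequence on it. Then the quotient filtration $\ov{\eF}$ on $\om_A/\xb\om_A=\om_B$ has associated graded $G(\eF,\om_A)/\xb^{*}G(\eF,\om_A)$, which is $\B$-stable with CM associated graded module, and its Hilbert series is $z$-shift-symmetric to $h(B,z)$ by construction. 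By the zero-dimensional analysis above, $\ov{\eF}$ is the $\B$-adic filtration on $\om_B$ up to a shift. Now Lemma~\ref{a-stble=a-adic}, applied to the filtration $\eF$ on the CM module $\om_A$ with $\xb^{*}$ regular on $G(\eF,\om_A)$ and $\ov{\eF}$ the $\B$-adic filtration, forces $\eF$ to be the $\A$-adic filtration on $\om_A$ up to a shift. Therefore $G_\A(\om_A)\cong G(\eF,\om_A)$ up to a shift, so $G_\A(\om_A)$ is CM and isomorphic to $\Om_\A$ up to a shift.

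The main obstacle is the bookkeeping around the shift and around whether $\xb^{*}$ is regular on $G_\A(\om_A)$ before we know $G_\A(\om_A)\cong G(\eF,\om_A)$: the clean way to sidestep this circularity is to do all the work with the \emph{canonical} filtration $\eF$ (for which regularity of $\xb^{*}$ on $G(\eF,\om_A)$ comes for free from $\Om_\A$ being MCM over $\GA$), establish that $\eF$ is $\A$-adic up to a shift via Lemma~\ref{a-stble=a-adic}, and only \emph{then} conclude the statement about $G_\A(\om_A)$ itself using Proposition~\ref{filtEQUALgraded}. A secondary technical point is checking $\om_B\cong\om_A/\xb\om_A$, which is standard since $\xb$ is a regular sequence on the CM module $\om_A$ (cf.\ \cite[3.3.5]{BH}).
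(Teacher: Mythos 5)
Your argument is correct, but it takes a genuinely different route from the paper's. The paper completes $A$, takes a Gorenstein (CI) approximation $[R,\n,\q,\phi]$ of $[A,\m,\A]$ chosen so that the given superficial sequence lifts, invokes Lemma \ref{lift-sup-Gor-App} to see that the lifted initial forms are $G_\q(R)$-regular, identifies $\om_A\cong\Hom_R(A,R)$ as the dual of the MCM $R$-module $A$, and then simply quotes Proposition \ref{Ooishi--Gor-easy} --- whose proof in turn rests on the dual filtration over $R$ and on the dual-filtration-mod-superficial-sequence result \ref{modFilt}/\ref{modFiltX} of section \ref{section-dual-super}. You instead stay over $A$ and work with the canonical filtration $\eF$ on $\om_A$ supplied by Theorem \ref{cannFiltDual}; since $G(\eF,\om_A)\cong\Om_\A$ (up to a shift) is an MCM $\GA$-module and $\xb^*$ is a $\GA$-regular sequence, the regularity of $\xb^*$ on $G(\eF,\om_A)$ comes for free, which lets you dispense with \ref{lift-sup-Gor-App} and with \ref{modFiltX} altogether; you then converge with the paper on the same endgame, namely the Artinian case of Theorem \ref{symmetry} together with Lemma \ref{a-stble=a-adic} and Proposition \ref{filtEQUALgraded}. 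What your route buys is a leaner reduction (no lifting of superficial sequences along the approximation, no appeal to section \ref{section-dual-super}); what it costs is nothing essential, because Theorem \ref{cannFiltDual} already hides the Gorenstein approximation inside it. Two small points should be made explicit to close the write-up: (1) the identification $G(\ov{\eF},\om_B)\cong G(\eF,\om_A)/\xb^*\,G(\eF,\om_A)$ is not automatic from the definitions --- it needs the standard fact that regularity of $x^*$ on $G(\eF,\om_A)$ forces $\eF_n\cap x\om_A=x\eF_{n-1}$ for all $n$ (and then induction on the sequence), which is exactly the kind of statement the paper draws from \cite[2.3]{CortZar} inside Lemma \ref{a-stble=a-adic}; and (2) one should record that $\Om_\A/\xb^*\Om_\A$ is the canonical module of $\GB$ up to a shift (\cite[3.6.14]{BH}) and that the reduction number appearing in the Artinian step equals $r=\red(\A,A)$ because $\xb^*$ is $\GA$-regular (in fact the shape of the hypothesis $h(\om_B,z)=z^{r}h(B,z^{-1})$ forces this exponent anyway). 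With these references inserted your proof is complete and valid.
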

\begin{proof}
Notice that we may assume that $A$ is complete.
Then by
\ref{lift-sup-Gor-App} there exists
$[R,\n,\q,\phi]$,  a Gorenstein approximation of $[A,\m,\A]$ with the property that
there exists $y_1,\ldots,y_d \in \q$ such that $\phi(y_i) =x_i$ for $i = 1,\ldots,d$. Furthermore $y_1^*,\ldots,y_d^*$ is $G_\q(R)$-regular.   Notice
that $\om_A \cong \Hom_R(A,R)$, i.e., the dual of the MCM $R$-module $A$. The result follows from \ref{Ooishi--Gor-easy}.
\end{proof}
Theorem \ref{cannonical-module-Mod} is quite practical as shown by the following:
\begin{example}\label{cann-example}
Let $A = \QQ[t^{13},t^{18},t^{23},t^{28},t^{33}]$ and $\m = (t^{13},\ldots,t^{33})$. Set $B = A/(t^{13})$. By \cite[1.1]{TamM}; $G_\m(A_\m)$ is CM.
Set $R = \QQ[x,y,z,w] $.
Consider the natural map
$\phi \colon R \rt A$ with $\phi(x) = t^{13},\ldots, \phi(w) = t^{33}$.
Using MACAULAY we can compute $\q = \ker \phi$.  Set $S = R/\q$ and $\n = (x,y,z,w)$. Also set $T = S/(x)$. Using  COCOA we get
\[
h(A,z) = h(S,z) = 1 + 4z + 4z^2 + 4z^3 \quad \text{and} \quad
h(B,z) = h(T,z) = 1 + 4z + 4z^2 + 4z^3
\]
This proves that $G_\n(S_\n)$ is CM and $x^*$ is $G_\n(S_\n)$-regular. (Notice this also proves  $G_\m(A_\m)$ is CM and
${t^{13}}^*$ is $G_\m(A_\m)$-regular).

We consider $R$ graded with
$\deg x = 13, \deg y = 18, \deg z = 28$ and $\deg w = 33$. Set $D = \Ext^{5}_{R}(R/(\q + (x)), R)$. Note that
\[
D \cong \om_{T_{\n}} \cong \om_{S_{\n}}/x \om_{S_{\n}} \cong \om_{A_{\m}}/t^{13} \om_{A_{\m}}.
\]
Using MACAULAY we get that
\[
h(D,z) = 4 + 4z + 4z^2 + z^3 = z^3h(B,z^{-1})
\]
By \ref{cannonical-module-Mod} we get that $G(\om_{A_\m})$ is the canonical module of $G_\m(A_\m)$ up to a shift.
\end{example}
An easy consequence of \ref{cannonical-module-Mod} is the following:
\begin{corollary}\label{red2type2}
Let $(A,\m)$ be \CM \ local ring with $\red(A) = 2$. Then $G_\m(\om_A) \cong \Om_\m$(up to a shift) if and only if  \ $\type(A) = e_0(A) - h_1(A) - 1 $.
\end{corollary}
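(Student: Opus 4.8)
The plan is to deduce the statement from Theorem~\ref{cannonical-module-Mod} after an explicit Hilbert--series computation in dimension zero. Throughout (as in the setup of this section) $G_\m(A)$ is \CM, so $\Om_\m$ is defined and, writing $r=\red(A)=2$, the $h$-polynomial is $h(A,z)=1+h_1(A)z+h_2(A)z^2$ with $h_2(A)\neq 0$; since $e_0(A)=h(A,1)$ this is just the identity $h_2(A)=e_0(A)-h_1(A)-1$, so the condition in the statement says exactly $\type(A)=h_2(A)$. First I would fix a maximal $A$-superficial sequence $\xb=x_1,\dots,x_d$ \wrt\ $\m$ and set $(B,\B)=(A/(\xb),\m/(\xb))$. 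As $A$ is \CM, $\xb$ is $A$-regular, so $\type(B)=\type(A)$; and as $G_\m(A)$ is \CM we get $h(B,z)=h(A,z)$, so $B$ is Artinian with $\red(B)=2$, i.e.\ $\B^3=0\neq\B^2$, and $\ell(B)=e_0(A)$.

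Next I would compute $h(\om_B,z)$, where $\om_B$ is the canonical module of $B$. Its constant term is $\mu(\om_B)=\type(B)=\type(A)$. Since $\B^3\om_B=0$, the coefficient of $z^2$ in $h(\om_B,z)$ is $\ell(\B^2\om_B)$; using $\om_B\cong\Hom_k(B,k)$ (so $\ell(\om_B)=\ell(B)$ and $\om_B/\B^2\om_B$ is the $k$-dual of $(0:_B\B^2)$) together with $(0:_B\B^2)=\B$ --- indeed $\B\cdot\B^2=\B^3=0$, while $\B^2u=\B^2\neq 0$ for a unit $u$ --- one gets $\ell(\B^2\om_B)=\ell(B)-\ell(\B)=1$. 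Hence $h(\om_B,z)=\type(A)+\big(e_0(A)-\type(A)-1\big)z+z^2$. On the other hand $z^{r}h(B,z^{-1})=z^2h(A,z^{-1})=h_2(A)+h_1(A)z+z^2$. Comparing coefficients (and using $e_0(A)=1+h_1(A)+h_2(A)$) yields
\[
h(\om_B,z)=z^{r}h(B,z^{-1})\iff \type(A)=h_2(A)=e_0(A)-h_1(A)-1.
\]

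With this in hand both implications are short. If $\type(A)=e_0(A)-h_1(A)-1$, the displayed equivalence gives $h(\om_B,z)=z^{r}h(B,z^{-1})$, and Theorem~\ref{cannonical-module-Mod} gives $G_\m(\om_A)\cong\Om_\m$ up to a shift. Conversely, if $G_\m(\om_A)\cong\Om_\m$ up to a shift then $G_\m(\om_A)$ is \CM\ (being a shift of the canonical module of the \CM\ ring $G_\m(A)$), so by \cite[4.4.5]{BH} its $h$-polynomial is $z^{c}h(A,z^{-1})$ for some $c\in\Z$; since $G_\m(\om_A)$ lives in non-negative degrees with non-zero degree-zero part $\om_A/\m\om_A$, and $h(A,z)$ has degree exactly $2$, necessarily $c=r=2$, so $h(\om_A,z)=h_2(A)+h_1(A)z+z^2$, whose constant term is $\ell(\om_A/\m\om_A)=\type(A)$; thus $\type(A)=h_2(A)=e_0(A)-h_1(A)-1$. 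The only delicate point is the dimension-zero bookkeeping --- forcing the leading coefficient of $h(\om_B,z)$ to be exactly $1$ via $(0:_B\B^2)=\B$, and in the converse pinning down the shift $c$ from the support of $G_\m(\om_A)$ --- everything else being a direct appeal to Theorem~\ref{cannonical-module-Mod} and standard Matlis- and Hilbert-series facts.
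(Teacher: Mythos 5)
Your proof is correct and follows essentially the same route as the paper: reduce modulo a maximal superficial sequence, compute $h(\om_B,z)$ (your Matlis-duality computation via $(0:_B\B^2)=\B$ is exactly the ``check'' the paper leaves to the reader), apply Theorem \ref{cannonical-module-Mod} for the direction ($\Leftarrow$), and read off $\type(A)=\mu(\om_A)=h_2$ from the Hilbert series of the canonical module for ($\Rightarrow$). The only real divergence is at the start: you take the Cohen--Macaulayness of $G_\m(A)$ as a standing hypothesis from the section's setup, whereas the corollary as stated does not assume it --- the paper deduces it from $\red(A)=2$ via Sally's theorem \cite[2.1]{Sa2} --- so to prove the statement as written you need that citation (together with the routine reduction to an infinite residue field before choosing the superficial sequence), after which everything you wrote goes through.
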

\begin{proof}
Notice $G_\m(A)$ is CM; \cite[2.1]{Sa2}. Set $h(A, z) = 1 +  h_1z + h_2z^2$ and $e= e_0(A) = e_0(\om_A)$.

($\Rightarrow$) This implies $ h(\omega_A, z) = z^2h(A, z^{-1})$. It follows that
$$ \type(A) = \mu(\omega) = h_2 =  e_0(A) - h_1 - 1. $$

($\Leftarrow$) We may assume $A$ has infinite residue field. Notice $h_2 = \type(A)$.  Let $\xb = x_1,\ldots, x_b$ be a maximal superficial $A$-sequence. Set $(B,\n) = (A/(\xb), \m/(\xb)$. Notice $\om_B \cong \om_A/(\xb)\om_A$. Check that
$$ h(\omega_B, z) = \type(B) + (e-\type(B) - 1)z + z^2 = z^2h(A, z^{-1}). $$
The result follows from \ref{cannonical-module-Mod}.
\end{proof}

\section{CI approximation: The general case} \label{sectionGAPP}
In this section we prove  our general result regarding  complete intersection (CI)
approximation; see  \ref{equiTT}. In  \ref{field} we showed that every $\m$-primary ideal $\A$ in an equicharacteristic
local ring $A$ has a
CI-approximation. Even if we are interested only in the case of $\m$-primary ideals; we have to take some care for
dealing with the case of local rings with mixed characteristics. \emph{The essential point is to show existence of
homogeneous regular sequences in certain graded ideals.} We also prove a Lemma regarding lifting of superficial sequence
along a Gorenstein approximation; see \ref{lift-sup-Gor-App}.

This section is divided into two subsections. In the first subsection $R = \bigoplus_{n\geq 0}R_n$ be  a standard graded algebra
over a local ring $(R_0,\m_0)$. When $R$ is \CM \ and $M =\bigoplus_{n\geq 0}M_n$ is a \fg graded $R$-module generated by elements in $M_0$,
we give conditions to ensure a regular sequence $\xi_1,\ldots,\xi_c \in R_+ \cap\ann_R M$
where $c= \dim R - \dim M$ (see Theorem \ref{hann}).
In the second subsection we prove Theorem \ref{equiTT}.

\textbf{Homogeneous regular sequence }

Let  $R = \bigoplus_{n\geq 0}R_n$ be  a standard  algebra
over a local ring $(R_0,\m_0)$ and let $M =\bigoplus_{n\geq 0}M_n$ be a \fg graded $R$-module.
 In general a graded module $M$
need not have homogeneous  regular sequence $\xi_1,\ldots,\xi_c \in R_+ \cap\ann_R M$ where $c = \grade M$ (see \ref{eqEx}).
 When $R$ is \CM \ and $M =\bigoplus_{n\geq 0}M_n$ is a \fg \ graded $R$-module generated by elements in $M_0$,
we give conditions to ensure a regular sequence $\xi_1,\ldots,\xi_c \in R_+ \cap\ann_R M$
where $c= \dim R - \dim M$.

We adapt an example from \cite[p.\ 34]{BH} to show that a homogeneous  regular sequence of length
$\grade M$
in $R_+ \cap\ann_R M$ need not exist.

\begin{example}\label{eqEx}
 Let $A = k[[X]]$ and $T = A[Y]$. Set $R = T/(XY)$. Notice that $R_0 = A$. Set $\M = (X,Y)R$ the unique graded maximal ideal of $R$ and let $M = R/\M = k$. Clearly $\grade M = \grade(\M , R) = 1$. It can be easily checked that
every homogeneous element in $R_+$ is a zero-divisor.
\end{example}

The following result regarding existence of homogeneous regular sequence in $(\ann M) \cap R_+$ (under certain conditions) is crucial in our proof of Theorem \ref{equiTT}.
\begin{theorem}
\label{hann}
Let $(R_0,\m_0)$ be a  \ local ring and let $R = \bigoplus_{n\geq 0}R_n$ be a
standard graded $R_0$-algebra and let $M = \bigoplus_{n\geq 0}M_n$ be a finitely generated
$R$-module. Assume $R$ has a homogeneous s.o.p. Set $c = \dim R - \dim M$.
Assume
\begin{enumerate}[ \rm (1)]
\item
$R_0$ is \CM \ and $R$ is \CM.
\item
$M$ is generated as an $R$-module by some elements in $M_0$.
\item
There exists
$y_1,\ldots,y_s \in R_0$  a system of parameters for both $M_0$ and $R_0$ and a  part of a homogeneous system of parameters for both $M$ and $R$.
\end{enumerate}
Then there exists a homogeneous regular sequence
  $\xi_1,\ldots,\xi_c \in R_+$ such that $\xi_i \in \ann_R M$ for each $i$.
\end{theorem}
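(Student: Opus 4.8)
The plan is to induct on $c = \dim R - \dim M$; the case $c = 0$ being vacuous. So suppose $c \geq 1$. The heart of the matter is to produce a single homogeneous element $\xi_1 \in R_+$ which is $R$-regular and annihilates $M$; once this is done, one passes to $\bar R = R/\xi_1 R$, $\bar M = M$ (now an $\bar R$-module), and $\bar R_0 = R_0$. One checks that the hypotheses (1), (2), (3) are inherited: $\bar R$ is still Cohen-Macaulay (we quotiented by a regular element), $\bar M$ is still generated in degree $0$, and the same $y_1, \dots, y_s \in R_0$ remain a system of parameters for $\bar M$, $\bar R_0$ and a part of a homogeneous s.o.p.\ for $\bar M$, $\bar R$ (using $\dim \bar R = \dim R - 1$ and $\dim \bar M = \dim M$, the latter because $\xi_1 \in \ann_R M$). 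Since $\dim \bar R - \dim \bar M = c - 1$, induction yields $\xi_2, \dots, \xi_c$, and the full sequence $\xi_1, \dots, \xi_c$ is $R$-regular (as $\xi_1$ is $R$-regular and $\xi_2, \dots, \xi_c$ is $\bar R$-regular) and lies in $R_+ \cap \ann_R M$.

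The construction of $\xi_1$ is the main obstacle, and I expect this is where hypotheses (1)–(3) are really used. First, since $M$ is generated in degree $0$ and $y_1, \dots, y_s$ is a system of parameters for $M_0$, one has $\dim (M/(\mathbf y)M) = \dim M - s$ — here it matters that the $y_i$ come from $R_0$, so $(\mathbf y)M$ is generated in degree $0$ and $\dim(M/(\mathbf y)M) = \dim(M_0/(\mathbf y)M_0) + (\text{relative dimension})$, matching $\dim M - s$ because the $y_i$ are a part of a homogeneous s.o.p.\ for $M$. Similarly $\dim(R/(\mathbf y)R) = \dim R - s$, and $R/(\mathbf y)R$ is still Cohen-Macaulay over the Artinian local ring $R_0/(\mathbf y)R_0$ (this uses (1): $R_0$ Cohen-Macaulay with s.o.p.\ $y_1,\dots,y_s$ makes $R_0/(\mathbf y)$ Artinian, and $R$ Cohen-Macaulay with $y_1,\dots,y_s$ part of a homogeneous s.o.p.\ makes it $R$-regular, so $R/(\mathbf y)R$ is Cohen-Macaulay). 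Thus, passing to $R' = R/(\mathbf y)R$, $M' = M/(\mathbf y)M$, we have reduced to the case where $R_0$ is \emph{Artinian} local and we still need to build a homogeneous $R'$-regular element in $R'_+ \cap \ann_{R'} M'$ — and $\dim R' - \dim M' = c \geq 1$.

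Now with $R_0$ Artinian, $R'$ is a standard graded algebra over an Artinian local ring, so $R'_+$ is (up to radical) the irrelevant maximal ideal, and $R'$ Cohen-Macaulay means every homogeneous s.o.p.\ is a regular sequence. The key point is a prime-avoidance argument: I want a homogeneous element $\xi \in R'_+$ avoiding all associated primes of $R'$ (to be regular) yet lying in $\ann_{R'} M'$. Since $\dim M' < \dim R'$, every minimal prime of $R'$ strictly contains $\ann_{R'} M'$... no — rather, no minimal prime of $R'$ contains $\ann_{R'}M'$ would be false; instead, $V(\ann_{R'}M') = \operatorname{Supp} M'$ has dimension $\dim M' < \dim R'$, so $\operatorname{Supp} M'$ contains no minimal (hence no associated, as $R'$ is Cohen-Macaulay so has no embedded primes) prime of $R'$. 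Therefore $\ann_{R'}M'$ is not contained in any $\mathfrak p \in \operatorname{Ass} R'$. The graded ideal $\ann_{R'}M'$ is contained in $R'_+$ (as $M'$ is generated in degree $0$ and nonzero — if $M' = 0$ we are in a degenerate but easy case, take $\xi_1 = $ any homogeneous parameter), and by homogeneous prime avoidance \cite[1.5.10]{BH} there is a homogeneous $\xi_1 \in \ann_{R'}M' \subseteq R'_+$ outside $\bigcup_{\mathfrak p \in \operatorname{Ass} R'} \mathfrak p$, i.e.\ $R'$-regular. Finally I lift $\xi_1$ back to a homogeneous element of $R_+ \cap \ann_R M$ along $R \to R'$; since $y_1, \dots, y_s$ is $R$-regular, an element of $R$ mapping to an $R/(\mathbf y)$-regular element annihilating $M$ modulo $(\mathbf y)M$ can be adjusted (the lift is automatically $R$-regular on $R$ because $R/(\mathbf y)$-regularity plus $(\mathbf y)$ being $R$-regular gives regularity in $R$; and replacing the lift by an element of $\ann_R M$ with the same image is possible because $\ann_R M \to \ann_{R'} M'$ is surjective in high degrees, or one argues directly that $(\mathbf y)M \subseteq$ the relevant submodule). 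This produces the desired $\xi_1$ and closes the induction.
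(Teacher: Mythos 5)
Your overall scaffolding (reduce modulo $\mathbf{y}=y_1,\ldots,y_s$ to an Artinian base, find a homogeneous non-zerodivisor inside the annihilator by prime avoidance, lift back, induct) is close in spirit to the paper, which inducts on $\dim R_0$, peels off one $y_i$ at a time, and settles the Artinian case by a grade computation plus \cite[1.5.11]{BH}. But your proof has a genuine gap exactly at the step the paper works hardest on: the lift back from $R'=R/(\mathbf{y})R$ to $R$. You need an element of $\ann_R M$ mapping to (something like) your regular element $\bar\xi\in\ann_{R'}M'$, and you justify this by asserting that $\ann_R M\to\ann_{R'}M'$ is ``surjective in high degrees,'' or that ``one argues directly.'' Neither is an argument, and the surjectivity claim is false in general: an element $r$ with $rM\subseteq(\mathbf{y})M$ need not be congruent modulo $(\mathbf{y})$ to anything killing $M$, so the image of $\ann_R M$ in $R'$ is typically strictly smaller than $\ann_{R'}(M/(\mathbf{y})M)$. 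The paper's missing idea is the determinant (Cayley--Hamilton) trick: since $M$ is generated by elements $u_j\in M_0$ and $\xi\in R_+$ is homogeneous of positive degree, $\xi M\subseteq (\mathbf{y})M$ forces $\xi u_j\in \q M$ with $\q=(\mathbf{y})R_+$ (here hypothesis (2) is essential), and one obtains a \emph{homogeneous} element $\Delta(\xi)=\xi^{n}+a_1\xi^{n-1}+\cdots+a_n\in\ann_R M$ with $a_i\in\q^i$. One does not lift $\bar\xi$ itself but a power of it: $\Delta(\xi)\equiv\xi^{n}\pmod{(\mathbf{y})}$, so $\mathbf{y},\Delta(\xi)$ is still a regular sequence and hence $\Delta(\xi)$ is $R$-regular. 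Without this device (or an equivalent one) your construction of $\xi_1$ does not go through.

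Two smaller points. First, your claim that $\ann_{R'}M'\subseteq R'_+$ because $M'$ is generated in degree $0$ is wrong (a degree-zero element of $R'_0$ can annihilate $M'$); what you actually need is a positive-degree homogeneous non-zerodivisor in $\ann_{R'}M'$, and this is salvaged as in the paper's base case: since $R'_0$ is Artinian, $\m'_0$ is nilpotent, so $\grade(\ann_{R'}M'\cap R'_+,R')=\grade(\ann_{R'}M',R')=c$, and \cite[1.5.11]{BH} even gives all $c$ elements at once, making your outer induction on $c$ unnecessary. Second, in that outer induction you assert without proof that $y_1,\ldots,y_s$ remain part of a homogeneous s.o.p.\ for $\bar R=R/\xi_1R$; this is true only because the correctly constructed $\xi_1$ is regular modulo $(\mathbf{y})$ (again a consequence of $\Delta(\xi)\equiv\xi^n$ mod $(\mathbf{y})$), so it should be recorded as part of the construction rather than taken for granted.
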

\begin{proof}
We prove this by induction on $s = \dim R_0$.
When $s = 0$ $R_0$ is Artinian. So $\grade(\ann M,R) = \grade(\ann M \cap R_+, R)$. As $R$ is CM \ we have $\grade(\ann M, R) = c$.
 The result  follows from \cite[1.5.11]{BH}.

We  prove the assertion for $s = r +1$
assuming it to be valid for $s=r$. Set $S = R/y_1R = \bigoplus_{n\geq 0} S_n$ and $N = M/y_1M$. Note that $N$ is also generated in degree $0$.  Let $\ov{y_i} = y_i \mod (y_1)$ with $2 \leq i \leq s$. Then $\ov{y_2} \ldots \ov{y_s}$
is a system of parameters for both $S_0$ and $N_0 = M_0/y_1M$. Notice $c = \dim R - \dim M = \dim S - \dim N$. Also note that $S$ has h.s.o.p. By induction hypothesis
there exists a $S$-regular sequence
  $\ov{\xi_1},\ldots,\ov{\xi_c} \in S_+$ such that $\ov{\xi_i} \in \ann_S N$ for each $i$.

$R$ is CM. So $y_1$ is  $R$-regular.
Therefore $y_1,\xi_1,\xi_2\ldots,\xi_c$ is an $R$-regular sequence.
Set $\q = y_1R_+$. Let $M$ be generated as an $R$-module by $u_1,\ldots,u_m \in M_0$.
Fix $i \in \{1,\dots,c \}$ and set $\xi = \xi_i$. By construction $\xi u_j \in y_1 M$ for each
 $j$. However $\xi \in R_+$ and $y_1 \in R_0$ . So $\xi u_j \in \q M$ for each
 $j$. By the determinant trick (observing that each $u_j$ has  degree zero)
 there exists a \textit{ homogeneous} polynomial
\[
\Delta(\xi) = \xi^{n} + a_{1}\xi^{n-1} + \ldots +a_{n-1}\xi + a_n \in \ann_R M \quad \text{and} \ a_i \in \q^i \ \text{for} \ i =1,\dots,n.
\]
Clearly $\Delta(\xi) \in R_+$.
Notice that $\Delta(\xi) = \xi^{n} \mod (y_1)$. So
$y_1,\Delta(\xi_1),\ldots,\Delta(\xi_c)$ is a regular sequence, \cite[16.1]{Ma}.
Therefore $ \Delta(\xi_1),\Delta(\xi_2)\ldots,\Delta(\xi_c)$
is an $R$-regular sequence. This proves the assertion when $s =r+1$.
\end{proof}

\textbf{CI approximation}

In this subsection we prove our general result regarding  CI-approximation.

\begin{theorem}\label{equiTT}
Let $(A,\m)$ be a complete
 local ring and let $\A$ be a proper  ideal in $A$ with $\dim A/\A + \spr(\A)
 = \dim A$. Then
$A$ has a CI-approximation \wrt \ $\A$. Furthermore if $\A = (x_1,\ldots,x_m)$ then we may
choose
a CI-approximation $[B,\n,\B,\varphi]$ of $[A,\m,\A]$ such that there exists $u_1,\ldots,u_m \in \B$ with $\varphi(u_i) = x_i$ for  $i = 1,\ldots, m$.
\end{theorem}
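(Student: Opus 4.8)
The plan is to present $A$ as a cyclic module over a complete intersection obtained by adjoining one power‑series variable per generator of $\A$ to a power series ring over a coefficient ring of $A$, and then to cut this ring down in two stages, the second cut being supplied by Theorem \ref{hann}. Write $\A=(x_1,\ldots,x_m)$, $d=\dim A$, $s=\spr(\A)$; the hypothesis reads $\dim A/\A=d-s$. By the Cohen structure theorem I would fix a surjection $\pi\colon T'\twoheadrightarrow A$ with $T'=V[[Y_1,\ldots,Y_N,W_1,\ldots,W_m]]$ a complete regular local ring ($V$ a field or a complete discrete valuation ring) and $\pi(W_i)=x_i$. Setting $\mathfrak{X}=(W_1,\ldots,W_m)T'$, one has $\mathfrak{X}^nA=\A^n$ for all $n$, $G_{\mathfrak{X}}(A)=\GA$, and $G_{\mathfrak{X}}(T')=T[W_1^{*},\ldots,W_m^{*}]$, a polynomial ring over the regular local ring $T:=T'/\mathfrak{X}T'=V[[Y_1,\ldots,Y_N]]$.

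Stage 1. The surjection $T\twoheadrightarrow A/\A$ has kernel of height $\dim T-\dim A/\A$; here the equimultiplicity hypothesis enters through $\dim A/\A=d-s$, and since $T$ is Cohen--Macaulay this kernel contains a $T$-regular sequence $\bar w_1,\ldots,\bar w_l$ of that length. I would lift these to $w_1,\ldots,w_l\in\ker\pi$ of $\mathfrak{X}$-adic order $0$; their initial forms are the $\bar w_i$, which lie in degree $0$ of $G_{\mathfrak{X}}(T')=T[W^{*}]$ and form a regular sequence there, so by \cite[2.4]{VV} the $w_i$ form a $T'$-regular sequence. Put $S=T'/(w_1,\ldots,w_l)$. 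Then $S$ is a complete intersection that still surjects onto $A$ (so $A$ is a cyclic $S$-module), $A$ remains an $S$-module with $G_{\mathfrak{X}S}(A)=\GA$, and by the Valabrega--Valla criterion $G_{\mathfrak{X}S}(S)=T_0[W_1^{*},\ldots,W_m^{*}]$, where $T_0:=S/\mathfrak{X}S=T/(\bar w_1,\ldots,\bar w_l)$ is a complete intersection of dimension $d-s$ onto which $A/\A$ is cyclic.

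Stage 2. Next I would apply Theorem \ref{hann} to $R=G_{\mathfrak{X}S}(S)=T_0[W_1^{*},\ldots,W_m^{*}]$ and $M=\GA$. Its hypotheses (1), (2) are immediate ($R_0=T_0$ and $R$ are Cohen--Macaulay, $M$ is generated in degree $0$), and (3) holds on choosing, by prime avoidance, elements $y_1,\ldots,y_{d-s}\in\mathfrak{m}_{T_0}$ that are simultaneously a system of parameters of $T_0$ and map to a system of parameters of $A/\A$: viewed in degree $0$ they extend by the $W_j^{*}$ to a homogeneous system of parameters of $R$, and because their images in $(\GA)_0=A/\A$ are a system of parameters of $A/\A$ one gets $\dim\GA/(y_1,\ldots,y_{d-s})\GA=\dim F(\A)=\spr(\A)=\dim\GA-(d-s)$ (again using $\dim A/\A=d-s$), so the $y_i$ are part of a homogeneous system of parameters of $M=\GA$. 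With $c:=\dim R-\dim M=\dim S-\dim A=m-s$, Theorem \ref{hann} supplies a homogeneous regular sequence $\xi_1,\ldots,\xi_c\in R_+$ with $\xi_i\in\ann_R\GA$. Lifting the $\xi_i$ to suitable powers of $\mathfrak{X}S$ and feeding them into Lemma \ref{annihilator} (ring $S$, ideal $\mathfrak{X}S$, module $A$, so $\ann_S A=\ker(S\to A)$) yields an $S$-regular sequence $v_1,\ldots,v_c\in\mathfrak{X}S\cap\ker(S\to A)$ whose initial forms form a $G_{\mathfrak{X}S}(S)$-regular sequence inside $\ann_{G_{\mathfrak{X}S}(S)}\GA$. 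Finally set $B=S/(v_1,\ldots,v_c)$, $\n=\mathfrak{m}_SB$, $\B=\mathfrak{X}B$, and let $\varphi\colon B\to A$ be induced by $S\to A$ (well defined since $(v_i)\subseteq\ker(S\to A)$): then $B$ is a complete intersection, $\dim B=\dim S-c=\dim A$, $\GB\cong G_{\mathfrak{X}S}(S)/(v_1^{*},\ldots,v_c^{*})=T_0[W_1^{*},\ldots,W_m^{*}]/(v_1^{*},\ldots,v_c^{*})$ is again a complete intersection, $A$ is a cyclic (hence finitely generated) $B$-module, $\varphi(\B)A=\A$, and the images $u_1,\ldots,u_m$ of $W_1,\ldots,W_m$ in $B$ lie in $\B$ and satisfy $\varphi(u_i)=x_i$.

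The main obstacle is Theorem \ref{hann} itself --- the existence of a homogeneous regular sequence of length $\dim R-\dim M$ inside $R_+\cap\ann_R M$ under its hypotheses --- together with the bookkeeping in Stage 1 that pins $\dim T_0$ to $\dim A/\A$ and makes the degree‑zero parameters required by hypothesis (3) available. This is precisely the point at which the hypothesis $\dim A/\A+\spr(\A)=\dim A$ is indispensable, and, as the introduction warns, it is in verifying the hypotheses of Theorem \ref{hann} --- especially in mixed characteristic, where the base ring $T_0$ cannot be taken regular --- that the real difficulty lies.
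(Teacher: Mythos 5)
Your construction is correct, and it reaches the same engine as the paper --- Theorem \ref{hann} applied to the associated graded ring of an auxiliary complete intersection base, followed by Lemma \ref{annihilator} and the Valabrega--Valla criterion --- but it gets to that base by a genuinely different route. The paper does not use a surjection: in the equicharacteristic case it maps a power series ring $k[[Y_1,\ldots,Y_s,X_1,\ldots,X_n]]$ (with exactly $s=\dim A/\A$ ``coefficient'' variables chosen to hit an s.o.p.\ of $A/\A$) module-finitely onto $A$, so that the degree-zero part of the graded base already has dimension $\dim A/\A$; in mixed characteristic the DVR adds one to that dimension, and the paper repairs this by a single extra cut, producing via the determinant trick an element $\Delta\equiv f^m \bmod (\bX)$ in $\ann_S A$ and passing to $U=S/(\Delta)$, whose graded ring is $T/(f^m)[X^*]$. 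You instead start from a Cohen surjection $T'=V[[Y,W]]\twoheadrightarrow A$ (so $A$ becomes a cyclic module) and perform the dimension repair uniformly in both characteristics by cutting the regular local ring $T=T'/\mathfrak{X}$ down by a full regular sequence of length $\dim T-\dim A/\A$ taken inside $\ker(T\to A/\A)$, lifted to order-zero elements of $\ker\pi$. This buys a characteristic-free, single-case argument and a cyclic (rather than merely finite) module structure; the paper's version buys a smaller, more explicitly described base (which it exploits later, e.g.\ in \ref{lift-sup-Gor-App}). Your verification of hypothesis (3) of Theorem \ref{hann} --- that degree-zero parameters of $T_0$ remain part of a homogeneous s.o.p.\ of $\GA$ because $\dim \GA/(y)\GA=\dim F(\A)=\spr(\A)$, which is exactly where $\dim A/\A+\spr(\A)=\dim A$ enters --- is the same dimension count as in \cite[2.6]{HUO}, which the paper invokes; in a final write-up you should spell out the two small lifting steps you assert (that each $\bar w_i$ lifts to an element of $\ker\pi$ of $\mathfrak{X}$-adic order zero, using $\pi(\mathfrak{X})=\A$, and the identification $G_{\mathfrak{X}S}(S)\cong T_0[W_1^*,\ldots,W_m^*]$ via Valabrega--Valla), but these are routine and the argument as a whole is sound, including the ``furthermore'' clause with $u_i=$ image of $W_i$.
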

\begin{proof}
Notice $\GA$ has a homogeneous s.o.p (see proof of Proposition 2.6 in \cite{HUO}).
 Let $ y_1,\ldots,y_s \in A$ such that $ \ov{y_1},\ldots,\ov{y_s}$ (their images in $A/\A$ ) is an
\emph{s.o.p} of $A/\A$.

\textbf{Case 1.} \emph{A contains a field:}
 As $A$ is a complete
$A$ contains  $k = A/\m$.
  Set $S = k[[ Y_1,\ldots, Y_s, X_1,\ldots,X_n ]]$. Define
$\phi \colon S \rt A$ which maps $Y_i$ to $y_i$ and $X_j$ to $x_j$ for $i= 1,\ldots, s$
and $j = 1,\ldots, n$.
 Set $\q = (Y_1,\ldots, Y_s)$.
Notice
\[
\frac{A}{(\q, X_1,\ldots,X_n)A} = \frac{A}{(\A, y_1,\ldots,y_s)} \quad \text{has finite length.}
\]
As $S$ is complete we get that $A$ is a finitely generated $S$-module, cf. \cite[8.4]{Ma}. Since
$\psi((\bX))A = \A$,  we get  $\GA = G_{(\bX)}(A)$ is a finitely generated
$G_{(\bX)}(S)$-module.

Notice $G_{(\bX)}(S) \cong
S/(\bX)[X_{1}^{*},\ldots, X_{n}^{*}]$ and $S/(\bX) \cong k[| Y_1,\ldots, Y_s|]$.
Furthermore
 $G_{(\bX)}(A) = \GA$ is generated in degree zero as a $G_{(\bX)}(S)$-module.
Set $c =  \dim S - \dim A  = \dim G_{(\bX)}(S)  - \dim \GA $.  Notice  that $ Y_1,\ldots, Y_s$ is a s.o.p of  $G_{(\bX)}(S)_0$ and $\GA_0$.
We apply Theorem \ref{hann} to get $\xi_{1},\ldots,\xi_{c} \in (\bX)  $ such that
 $\xi_{1}^{*}, \ldots, \xi_{c}^{*} \in G_{(\bX)}(S)_+  $ is a \emph{regular sequence}   in  $G_{(\bX)}(S)$ and
$\xi_{i}^{*} \in \ann_{G_{(\bX)}(S)} \GA$ for all $i$. We now apply  Lemma
\ref{annihilator} to get $u_1,\ldots, u_c  \in (\bX)$ an $S$-regular sequence in
$\ann_S (A)$ such that $u_{1}^{*},\ldots, u_{c}^{*}$ is a $G_{(\bX)}(S)$-regular
 sequence in $\ann_{G_{(\bX)}(S)} \GA $.  Set $\mathbf{u} = (u_1,\ldots, u_c) $,   $B = S/\mathbf{u} $, $\B = (\bX +\mathbf{u})/\mathbf{u}$.  Let $\n$ be the maximal ideal in $B$ and let $\ov{\phi} \colon B \rt A$
be the map induced by $\phi \colon  S \rt A$. Then it is clear that
$(B,\n,\B, \ov{\phi})$ is a CI approximation of $A$ \wrt \ $\A$.

\textbf{Case 2.} \textit{$A$ does not contain a field:} There exists a DVR $(D,\pi)$ and a local
ring homomorphism $\eta \colon D \rt A$ such that $\eta$ induces an isomorphism
 $D/(\pi) \rt A/\m$.  Set $S = D[| Y_1,\ldots, Y_s, X_1,\ldots,X_n|]$. Define
$\phi \colon S \rt A$, with  $\phi(d) =  \eta(d)$ for each $d \in D$ and maps  $Y_i$ to $y_i$ and $X_j$ to $x_j$ for $i= 1,\ldots, s$
and $j = 1,\ldots, n$.
 Set $\q = (Y_1,\ldots, Y_s)$.
Notice
\[
\frac{A}{(\q, X_1,\ldots,X_n)A} = \frac{A}{(\A, y_1,\ldots,y_s)} \quad \text{has finite length.}
\]
As $S$ is complete we get that $A$ is a finitely generated $S$-module.

Set $T = S/(\bX) = G_{\bX}(S)_0$. Notice $\dim T$ is one more than that of $A/\A = \GA_0$.
To deal with this situation we  use an argument from \cite{PuEuler}, which we repeat for the convenience of the reader. Let $\mathfrak{t}$ be the maximal ideal of $T$. Set $\ov{Y_i} = $ image of $Y_i$ in $T$. Set $V= A/\A$.
Note that
\[
\mathfrak{t} = \sqrt{\ann_T(V/\ov{\bY} V)} = \sqrt{\ann_T(V) + (\ov{\bY})}.
\]
So there exists $\ov{f} \in \ann_T(V) \setminus (\ov{\bY})$ such
that $\ov{f},\ov{Y_1},\ldots,\ov{Y_s}$ is an s.o.p. of $T$. Since $T$ is CM; $f,\ov{Y_1},\ldots,\ov{Y_s}$ is a $T$-regular
sequence.
So $X_1,\ldots,X_n,f,Y_1,\ldots,Y_s$ is a $S$-regular
sequence. Since $\ov{f} \in \ann_T(A/\A)$  we get $f A \subseteq \A A = (\bX)A$. Using the
determinant trick
 there exists
\[
\Delta = f^m + \alpha_{1}f^{m-1} + \ldots + \alpha_{m-1}f + \alpha_{m} \in \ann_S A \quad \& \alpha_i \in (\bX)^i; \ 1\leq i \leq m.
\]
 Notice that $\Delta = f^m \ \text{mod}( \bX)$. Thus
 $X_1,\ldots,X_n, \Delta,Y_1,\ldots,Y_s$ is a $S$-regular
sequence. So $\Delta$ is $S$-regular. Set $U = S/( \Delta)$ and $\C = ((\bX) + ( \Delta))/( \Delta)$.
 Notice that $A$ is a finitely generated $U$-module and $G_{\C}(A) = \GA$. Furthermore
$G_{\C}(U) \cong T/(f^m)[X_{1}^{*},\ldots, X_{n}^{*}]$ is  CI.

 Note that as a
 $G_{\C}(U)$ module $\GA $ is generated in degree zero. Furthermore $\dim G_{\C}(U)_0 = \dim \GA_0$. Set $c =  \dim U - \dim A  = \dim G_{\C}(U)  - \dim \GA $.
The subsequent argument is similar to  that of Case 1.
\end{proof}
\begin{remark}\label{high-red} It follows from the proof of Theorem \ref{equiTT} that if $\A$ is $\m$-primary there exists CI-approximations with arbitrary high reduction numbers. To see this note that
there exist a complete intersection $R$ (with $G(R)$ a complete intersection) and a   map $R \rt A$ and elements $f_1,\cdots, f_m \in  \ann_R A$ such that $f_1^*,\cdots, f_m^* \in \ann_{G(R)} \GA$ and $f_1^*,\cdots, f_m^*$ is $G(R)$-regular. Furthermore $B = R/(f_1,\cdots, f_m)$. Also note that $\deg f_i^* > 0$ for all $i$. We may simply take $B'$ to be
$R/(f_1^l,\cdots, f_m^l)$ for large $l$.

\end{remark}

The following result is needed for Theorem \ref{cannonical-module-Mod}.
\begin{lemma}\label{lift-sup-Gor-App}
Let $(A,\m)$ be a complete \CM \ local ring of dimension $d$ and let
$\A = (x_1,\ldots,x_d,w_1,\ldots, w_s)$ be a $\m$-primary ideal. Assume that $x_1,\ldots,x_d$ is an $A$-superficial
sequence. Then there exist $[B,\n,\B,\phi]$, a Gorenstein
approximation of $[A,\m,\A]$ such that
\begin{enumerate}
\item
there exists $v_1,\ldots,v_d \in \B$ with $\phi(v_i) = x_i$ for $i = 1,\ldots,d$.
\item
$v_{1}^{*}, \ldots,v_{d}^{*}$ is $\GB$-regular.
\end{enumerate}

\end{lemma}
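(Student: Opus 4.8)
The plan is to reduce the claim to the assertion that $v_1^{*},\dots,v_d^{*}$ is a homogeneous system of parameters for $\GB$, and then to verify this by transporting the ``system of parameters'' property along $\phi$ by means of the associated graded module $G_{\A}(A)$.

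\emph{Step 1.} Since $B$ and $\GB$ are Gorenstein, $\GB$ is a Cohen--Macaulay $*$-local ring and $\dim\GB=\dim B=\dim A=d$ (the last two equalities are part of the definition of a Gorenstein approximation and the standard equality $\dim G_{\B}(B)=\dim B$). If $d=0$ there is nothing to prove, so assume $d\ge 1$; then $\phi(v_i)=x_i\ne 0$, so $v_i\ne 0$, and $v_i\notin\bigcap_n\B^{n}=0$, whence $v_i^{*}$ is homogeneous of positive degree and lies in the graded maximal ideal $\M$ of $\GB$. In a Cohen--Macaulay $*$-local ring of dimension $d$, any $d$ homogeneous elements of $\M$ that form a system of parameters form a regular sequence; hence it suffices to show that $\GB/(v_1^{*},\dots,v_d^{*})$ has finite length.

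\emph{Step 2.} Because $\phi(\B)A=\A$ one has $\B^{n}A=\A^{n}$ for all $n$, so $G_{\B}(A)=G_{\A}(A)$; this is a finitely generated graded $\GB$-module, and since $\A$ is $\m$-primary, $\dim_{\GB}G_{\A}(A)=\dim G_{\A}(A)=\dim A=d$. As $\phi(v_i)=x_i$, multiplication by $v_i^{*}$ on $G_{\A}(A)$ agrees with multiplication by the initial form $x_i^{*}\in G_{\A}(A)_1$. Now $x_1,\dots,x_d$ is an $A$-superficial sequence for $\A$, so $x_1^{*},\dots,x_d^{*}$ is a filter-regular sequence on $G_{\A}(A)$ with respect to the graded maximal ideal; a filter-regular sequence of length $d=\dim G_{\A}(A)$ forces $G_{\A}(A)/(v_1^{*},\dots,v_d^{*})G_{\A}(A)$ to have dimension $0$, i.e.\ finite length. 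Equivalently, $\operatorname{Supp}_{\GB}\!\bigl(G_{\A}(A)\bigr)\cap V\bigl((v_1^{*},\dots,v_d^{*})\bigr)\subseteq\{\M\}$.

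\emph{Step 3 (the main obstacle).} To finish we must upgrade this to $V\bigl((v_1^{*},\dots,v_d^{*})\bigr)=\{\M\}$, i.e.\ we must know that $G_{\A}(A)$ has full support over $\GB$; for a Gorenstein approximation satisfying only the bare defining conditions this can genuinely fail, so here one uses the explicit construction of Theorem~\ref{equiTT} (in the $\m$-primary case, Theorem~\ref{field}). There $\GB$ is presented as $P/(\mathbf u^{*})$ for a polynomial ring $P$ over the residue field, with $\mathbf u^{*}$ a regular sequence produced via Theorem~\ref{hann} and Lemma~\ref{annihilator} inside $\ann_{P}G_{\A}(A)\cap P_{+}$ and of length equal to $\grade\bigl(\ann_{P}G_{\A}(A),P\bigr)$. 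One then checks that $(\mathbf u^{*})$ and $\ann_{P}G_{\A}(A)$ have the same radical, so that $\operatorname{Supp}_{\GB}G_{\A}(A)=\operatorname{Spec}\GB$; combined with Step~2 this yields $\dim\GB/(v_1^{*},\dots,v_d^{*})=0$, and then Step~1 gives that $v_1^{*},\dots,v_d^{*}$ is a $\GB$-regular sequence. The real difficulty is exactly this equality of radicals: it is the point at which the particular choices of Section~\ref{sectionGAPP} — rather than merely the defining properties of a Gorenstein approximation — have to be invoked, and it should be isolated as a separate lemma about the construction.
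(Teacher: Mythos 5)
Your Steps 1 and 2 are fine and agree with the opening of the paper's argument: by the Cohen--Macaulayness of $\GB$ it suffices to show that $v_1^*,\ldots,v_d^*$ is a homogeneous system of parameters, and superficiality of $x_1,\ldots,x_d$ (so that $(x_1,\ldots,x_d)$ is a reduction of $\A$) gives that $G_\A(A)/(v_1^*,\ldots,v_d^*)G_\A(A)$ has finite length (with the small caveat that ``$v_i^*$ acts as $x_i^*$'' needs the $\B$-order of $v_i$ to equal the $\A$-order of $x_i$, which holds for the approximation built in Section \ref{sectionGAPP} but is not a formal consequence of $\phi(v_i)=x_i$). The genuine gap is your Step 3, and you acknowledge it yourself: you never prove $\operatorname{Supp}_{\GB}G_\A(A)=\operatorname{Spec}\GB$; you assert that ``one then checks'' that $(\ub^*)$ and $\ann_{G_\q(R)}G_\A(A)$ have the same radical, and then state that this is the real difficulty. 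That equality of radicals does not follow from the construction: Theorem \ref{hann} and Lemma \ref{annihilator} only produce a regular sequence $\ub^*$ of length $c=\htt\ann_{G_\q(R)}G_\A(A)$ lying \emph{inside} the annihilator, and a maximal regular sequence inside an ideal need not generate an ideal with the same radical (compare $(XY)\subseteq(X)$ in $k[X,Y]$). Your caution is justified in a strong sense: with only the defining properties of a Gorenstein approximation the conclusion can fail. Take $A=k[[x]]$, $\A=\m$ written as $(x_1,w_1)$ with $x_1=w_1=x$, and $B=k[[V,W]]/(V(W-V))$ with $\phi(V)=\phi(W)=x$ and $\B=\n$; this is a CI-approximation, $v_1=V$ lifts $x_1$, yet $V^*$ annihilates the nonzero class of $W^*-V^*$ in $\GB=k[V^*,W^*]/(V^*(W^*-V^*))$. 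So some property special to the construction of Theorem \ref{equiTT} is indispensable, and your proposal does not supply it.

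For comparison, the paper closes the argument not through radical equality but through faithfulness: it sets $T=G_\q(R)/\ann_{G_\q(R)}G_\A(A)$, embeds $T$ into finitely many copies of $G_\A(A)$ to conclude that $V_1^*,\ldots,V_d^*$ is a system of parameters of $T$, and then compares $T$ with $\GB$ using $\dim T=\dim\GB$. Observe, however, that the natural surjection goes from $\GB$ onto $T$ (since $(\ub^*)\subseteq\ann_{G_\q(R)}G_\A(A)$), so this comparison is precisely the delicate point you isolated rather than a way around it. To make either route airtight one should choose $u_1^*,\ldots,u_c^*$ compatibly with $\bV=V_1^*,\ldots,V_d^*$: by prime avoidance, using that $(\bV)+\ann_{G_\q(R)}G_\A(A)$ is primary to the graded maximal ideal (this is exactly your Step 2), one can pick the $\xi_i^*$ of Theorem \ref{hann} to form a regular sequence on $G_\q(R)/(\bV)$ as well, and Lemma \ref{annihilator} preserves this since $u_i^*=(\xi_i^*)^{n_i}$; then $\ub^*,\bV$ is a regular sequence in $G_\q(R)$ and hence $v_1^*,\ldots,v_d^*$ is $\GB$-regular. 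As written, your proposal correctly identifies the obstruction but does not overcome it, so it is not a complete proof.
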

\begin{proof}
The proof of Theorem \ref{equiTT} shows that there exists a \CM \ local ring $(R,\eta)$ of dimension $d+ s$, an  $\eta$-primary ideal
and a ring homomorphism $\psi \colon R \rt A$ such that
\begin{enumerate}[\rm(1)]
\item
$A$ is a finitely generated as a $R$-module (via $\psi$).
\item
\begin{enumerate}[\rm (a)]
  \item
  In the equicharacteristic case $R= k[[V_1,\ldots, V_d, W_1,\ldots, W_s]]$.
  \item
  In the mixed characteristic case $R = D/(f)[[V_1,\ldots,V_d, W_1,\ldots, W_s]]$ with $(D,\Pi)$ a DVR and $f \in (\Pi)$.
\end{enumerate}
\item
In both cases  $\q = (V_1,\ldots,V_d, W_1,\ldots, W_s)$, $\psi(V_i) = x_i$ for $i =1,\ldots,d$ and $\Psi(W_i)= w_i$ for $i = 1,\ldots,s$. Notice
$G_\q(R) = k[V_{1}^{*},\ldots, V_{d}^{*}, W_1^*,\ldots, W_s^*]$ in the equicharacteristic case  and $G_\q(R) = D/(f)[V_{1}^{*},\ldots, V_{d}^{*}, W_1^*,\ldots, W_s^*]$ in the mixed characteristic case.
\end{enumerate}
Set $c = \dim R - \dim A$.
Set $S = R/(V_1, \cdots, V_d)$ and $\ov{\q} = \q /(V_1, \ldots, V_d)$. Note $G_{\ov{\q}}(S) = G_\q(R)/(V_1^*,\ldots, V_d^*)$.
Set $ C = A/(x_1,\ldots,x_d)$

We then choose $\xi_1,\ldots,\xi_c \in \ov{\q}$ such that $\xi_1^*,\ldots, \xi_c^* \in \ann_{G_\q(R)} G_\A(C)$ and that it is a
regular sequence in $G_\q(R)$.
After
raising powers we may assume that $\deg \xi_i^* = r$ for all $i$. Set $M = A$ (note $A$ need not be cyclic as a $R$-module).
Fix $i$. Set $J = \A^{r+1}$. We have $\xi_i M \subseteq \A^{r+1} M + (V_1,\ldots, V_d)M$. By the \emph{determinant trick} there exists a monic polynomial
$f(X) \in A[X]$ such that
\[
 f(X) = X^n + a_1X^{n-1} + \cdots + a_{n -1}X + a_n \quad \text{with} \ a_i \in J^{i} + (V_1,\ldots, V_d).
\]
and $u_i = f(\xi_i) \in \ann M$
It can be eaily checked (as in proof of Lemma \ref{annihilator}) that in $G_\q(S)$ we have $u_i = (\xi_i^n)^* = (\xi_i^* )^n$.
Thus $V_1,^*\cdots, V_d^*, u_1^*,\ldots, u_c^*$ is a $G_\q(R)$-regular sequence. Then note that
$u_1^*,\ldots, u_c^*, V_1,^*\cdots, V_d^*$ is also a $G_\q(R)$-regular sequence.

Set $B = R/(\ub)$, $\B = \q/(\ub)$ and $\phi = \psi \otimes R/(\ub)$.  Notice $\GB = G_\q(R)/(\ub^*)$.
 Set $v_i$ = image of $V_i$ in $B$. Then note that $v_1, ^*, \ldots, v_d^*$ is a $G_\q(B)$-regular sequence and
  $[B,\n,\B,\phi]$, a Gorenstein
approximation of $[A,\m,\A]$.
\end{proof}
%End 101010101010101010101010101010101010101010101010101010101010
%End 101010101010101010101010101010101010101010101010101010101010

\section{Dual Filtrations mod a  superficial sequence}\label{section-dual-super}
The behavior of Dual filtrations mod a superficial sequence is a crucial result in our paper. Unfortunately the proof is  technical and unappealing.
\s \label{set-mod} \textbf{Setup:}
Let $(A,\m)$ be Gorenstein, $\A$-equimultiple with $\GA$ Gorenstein and  let $M$ be a $MCM$ A-module. We assume
 $A/\m$ is an infinite field.
 Let $\xb = x_1,\ldots, x_r$ be a $M \bigoplus A$ superficial sequence \wrt\ $\A$. Set $B = A/(\xb )$, $\B = \A/(\bX)$ and $N = M/\xb M$. The natural map $\GA/(\xb^*) \rt \GB$ is an isomorphism. Set
$$
 \eF_M = \left\{\Hom_A (M ,
\mathfrak{a}^n)\right\}_{\nZ}  \quad \text{and} \quad \eF_N = \left\{\Hom_B (N ,\mathfrak{n}^n)\right\}_{\nZ}.
$$
A natural question is when is
\begin{equation*} \label{sup-eqn2}
\frac{G (\eF_M , M^\dd)}{\xb^* G(\eF_M , M^\dd)} \cong G (\eF_N ,
N^\dd).
\end{equation*}
Theorem \ref{modFiltX}(2) asserts that  this is so when $G_\A(M)$ is \CM.

\textbf{Some Natural Maps:}
\s The natural maps $\Hom_A (M ,
\mathfrak{a}^n) \rt \Hom_B (N ,\mathfrak{b}^n)$ induces a map of $\ra$-modules
\begin{equation}\label{GammaEQ}
\Gamma_\xb^M \colon \R (\eF_M , M^\dd) \rt \R (\eF_N ,N^\dd).
\end{equation}
Set $\q = (x_1t,\ldots x_rt)$.
Clearly  $\q \R(\eF_M , M^\dd) \subseteq \ker \Gamma_\xb^M$. Since  $\ra$ is \CM, we have $\ra/\q\ra = \rb$.
 So we have a map $\rb$-modules
\begin{equation}\label{BGammaEQ}
\overline{ \Gamma_\xb^M} \colon \frac{\R (\eF_M , M^\dd)}{\q \R(\eF_M , M^\dd)  } \rt \R (\eF_N ,N^\dd).
\end{equation}

\s The map \ref{GammaEQ} induces a map of $\GA$-modules
\begin{equation}\label{thetaEQ}
\Theta_{\xb}^M \colon G (\eF_M , M^\dd) \xar  G (\eF_N ,
N^\dd).
\end{equation}
Clearly $\xb^*G(\eF_M , M^\dd) \subseteq \ker \Theta_{\xb}^M $.
 So we have a map $\GB$-modules
\begin{equation} \label{BthetaEQ}
 \ov{\Theta_{\xb}^M} \colon \frac{G (\eF_M , M^\dd)}{\xb^* G(\eF_M , M^\dd)}  \xar  G (\eF_N ,
N^\dd).
\end{equation}

\s \label{PiEQ} The natural map $\ra \rt \rb$ induces a natural map of $\ra$-modules
\[
 \Pi_\xb^M \colon \Hs_{\ra}(\R(\A, M),\ra)  \xar  \Hs_{\R_{\B}(B)}(\R(\B,N),\rb).
\]
Clearly $\q \left( \Hs_{\ra}(\R(\A, M),\ra) \right) \subseteq \ker \Pi_\xb^M$.

\begin{theorem} \label{modFiltX}[with assumptions as in \ref{set-mod}]
 If $G_{\mathfrak{a}}(M)$ is \CM \ then
\begin{enumerate}[\rm (1)]
 \item
$\ov{\Gamma_{\xb}^M}$ is an isomorphism of $\rb$-modules.
\item
$\ov{\Theta_{\xb}^M}$ is an isomorphism of $\GB$-modules.
\end{enumerate}
\end{theorem}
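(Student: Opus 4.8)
The strategy is to reduce everything to a duality statement about the extended Rees module that we can check after going modulo the regular sequence $\q = (x_1t,\ldots,x_rt)$. Recall from Theorem \ref{main} that $\Psi_M \colon \R(\eF_M, M^\dd) \to \Hs_{\ra}(\R(\A,M),\ra)$ is an $\ra$-linear isomorphism, and similarly $\Psi_N$ is an $\rb$-linear isomorphism for $N$ over $B$. So part (1) is equivalent to showing that the natural map $\Pi_\xb^M$ of \ref{PiEQ}, after passing to the quotient by $\q$, is an isomorphism
\[
\ov{\Pi_\xb^M} \colon \frac{\Hs_{\ra}(\R(\A,M),\ra)}{\q\, \Hs_{\ra}(\R(\A,M),\ra)} \xar \Hs_{\rb}(\R(\B,N),\rb).
\]
First I would check that $\Psi_M, \Psi_N$ are compatible with the maps $\Gamma$ and $\Pi$, i.e.\ that $\Pi_\xb^M \circ \Psi_M = \Psi_N \circ \Gamma_\xb^M$ on the nose; this is a direct computation with the definition of $\hat f$ in \ref{mapAUG} and is routine.

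Next, the key homological input: since $A$ and $\GA$ are Gorenstein and $\A$ is equimultiple, $\ra$ is a Cohen--Macaulay (in fact *-Gorenstein) ring, and since $M$ is MCM with $G_\A(M)$ Cohen--Macaulay, $\R(\A,M)$ is a maximal Cohen--Macaulay $\ra$-module (this is how Corollary \ref{corGor} was proved). Now $x_1t,\ldots,x_rt$ is an $\ra$-regular sequence by Corollary \ref{redTOmaxDIM}(6)-type reasoning (applied here to the superficial sequence; $\xb$ being $M\oplus A$-superficial with $G_\A(M)$ CM forces $\xb^*$ to be $\GA$- and $G_\A(M)$-regular, whence $x_it$ is $\ra$- and $\R(\A,M)$-regular). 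Therefore I can apply the change-of-rings isomorphism for $\Hom$ under a regular sequence that kills neither the module nor the ring but does kill nothing yet — more precisely, using $\Es^{j}_{\ra}(\R(\A,M),\ra) = 0$ for $j \geq 1$ (maximal CM over Gorenstein) together with Remark \ref{change-base-Ext} applied successively to $x_1t,\ldots,x_rt$, I get
\[
\frac{\Hs_{\ra}(\R(\A,M),\ra)}{\q\,\Hs_{\ra}(\R(\A,M),\ra)} \cong \Hs_{\ra/\q}\!\left(\R(\A,M)/\q\R(\A,M),\, \ra/\q\right)
\]
up to the appropriate degree shift. Finally I identify $\ra/\q\ra \cong \rb$ (valid since $\ra$ is CM, as already used in \ref{BGammaEQ}) and $\R(\A,M)/\q\R(\A,M) \cong \R(\B,N)$ — the latter needs that $x_1,\ldots,x_r$ is $M$-superficial so that the extended Rees module of $M$ modulo $(x_it)$ recovers the extended Rees module of $N=M/\xb M$ with the quotient filtration, which coincides with the $\B$-adic filtration since $\xb^*$ is $G_\A(M)$-regular. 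Tracking that the map induced by $\Pi_\xb^M$ is precisely this composite gives that $\ov{\Pi_\xb^M}$ is an isomorphism, hence by the compatibility above $\ov{\Gamma_\xb^M}$ is an isomorphism; this is (1).

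For (2), I would deduce it from (1) by tensoring with $\ra/t^{-1}\ra = \GA$. Since $x_1t,\ldots,x_rt,t^{-1}$ is an $\ra$-regular sequence and also regular on $\R(\A,M)$ (using again that $t^{-1}$ is regular and $\xb^*$ is $G_\A(M)$-regular, together with $G_\A(M)$ being CM), the order of quotienting does not matter: reducing mod $\q$ and then mod $t^{-1}$ agrees with the associated-graded construction, and $\Theta_\xb^M = \Gamma_\xb^M \otimes_{\ra} \GA$ by \ref{initialobs}-type reasoning. Concretely, $\ov{\Theta_\xb^M} = \ov{\Gamma_\xb^M} \otimes_{\rb} \GB$, and since $\ov{\Gamma_\xb^M}$ is an isomorphism so is its reduction. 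One just has to be careful that tensoring preserves the isomorphism, which holds because the relevant $\mathrm{Tor}$-obstruction vanishes: $t^{-1}$ is regular on $\R(\eF_N,N^\dd) \cong \Hs_{\rb}(\R(\B,N),\rb)$ since $\rb$ is CM and $\R(\B,N)$ is MCM over $\rb$ (the $B$, $\GB$ Gorenstein and $G_\B(N) = G_\A(M)/\xb^*G_\A(M)$ CM hypotheses transfer along the superficial reduction).

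The main obstacle I expect is the bookkeeping in the change-of-rings step — keeping the degree shifts straight when applying Remark \ref{change-base-Ext} once for each of $x_1t,\ldots,x_rt$ (each contributes a shift by $-\deg(x_it) = -1$), and verifying cleanly that $\R(\A,M)/\q\R(\A,M)$ is genuinely $\R(\B,N)$ and not merely the Rees module of some other stable filtration on $N^\dd$. The hypothesis that $G_\A(M)$ is Cohen--Macaulay is exactly what makes $\xb^*$ a regular sequence on $G_\A(M)$, which is what forces the quotient filtration to be $\B$-adic (via Proposition \ref{filtEQUALgraded}) and makes all the MCM/Ext-vanishing claims go through; without it the quotient filtration can genuinely differ and the statement fails.
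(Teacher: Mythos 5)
Your proposal is correct and follows essentially the same route as the paper: transfer the problem through the isomorphisms $\Psi_M$, $\Psi_N$ of Theorem \ref{main}, use that $\R(\A,M)$ is maximal Cohen--Macaulay over the Gorenstein ring $\ra$ (so $\Es^{1}_{\ra}(\R(\A,M),\ra)=0$) together with the identification $\R(\A,M)/\q\,\R(\A,M)\cong\R(\B,N)$ coming from superficiality plus $\xb^*$ being $G_\A(M)$-regular, and then reduce modulo $t^{-1}$ for part (2), exactly as in the paper's argument (which just reduces to $r=1$ first). The only blemishes are cosmetic: the Hom-reduction should be justified by the long exact sequence from $0\to\ra(-1)\xrightarrow{\ x_it\ }\ra\to\rb\to 0$ rather than by Remark \ref{change-base-Ext} (which assumes the element kills the module), the $\GA$-regularity of $\xb^*$ comes from $\GA$ being Cohen--Macaulay (part of the setup), not from $G_\A(M)$ being Cohen--Macaulay, and no Tor-vanishing is needed in part (2) since tensoring any isomorphism stays an isomorphism.
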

It suffices to prove for $r =1$ case. Clearly  if $\ov{\Gamma_{\xb}^M}$ is an isomorphism then $\ov{\Theta_{\xb}^M}$ is an isomorphism.
The following observation enables us to analyze the case  $r =1$.
\begin{observation}\label{nice}
 Let $x \in \A$. We have a commutative diagram of $\ra$-modules
\[
\xymatrixrowsep{3pc}
\xymatrixcolsep{2.5pc}
\xymatrix{
\R(\eF_M, M^\dd) \ar@{->}[r]^{\Gamma_x^M}
     \ar@{->}[d]_{\Psi_M}
& \R(\eF_N, N^\dd)
     \ar@{->}[d]^{\Psi_N}
\\
\Hs(\R(\A, M), \ra) \ar@{->}[r]_{\Pi_x^M}
     &   \Hs(\R(\B, N), \rb)
}
\]
Here $\Psi_M$ is as in \ref{main-1}. Notice that by \ref{main},  $\Psi_M$ and $\Psi_N$ are isomorphisms.
\end{observation}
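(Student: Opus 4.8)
The plan is to prove the square commutes by unwinding the definitions of the four maps and chasing a single homogeneous element around the two composites. Since $\R(\eF_M, M^{\dd}) = \bigoplus_{n}M^{\dd}_n t^n$ is graded and all four maps are $\ra$-linear and homogeneous, it suffices to verify $\Psi_N \circ \Gamma_x^M = \Pi_x^M \circ \Psi_M$ on an arbitrary $f \in M^{\dd}_n = \Hom_A(M,\A^n)$, viewed in degree $n$. Throughout I will write $\pi_M \colon \R(\A,M) \rt \R(\B,N)$ and $\pi_A \colon \ra \rt \rb$ for the natural surjections; here $(\ker \pi_A)_m = (\A^m \cap (x))t^m$ (and this equals $(xt)\ra$ in degree $m$ since $\ra$ is \CM) and $(\ker\pi_M)_j = (\A^jM \cap xM)t^j$.

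First I would trace $f$ down‑then‑right. By \ref{mapAUG}, $\Psi_M(f) = \hat f$ is the degree‑$n$ map $\R(\A,M) \rt \ra$, $\sum_j m_j t^j \mapsto \sum_j f(m_j)t^{n+j}$. Unwinding the definition of $\Pi_x^M$ in \ref{PiEQ}, $\Pi_x^M(\hat f)$ is the homomorphism $\R(\B,N) \rt \rb$ obtained by pushing $\pi_A \circ \hat f$ through $\pi_M$; this is legitimate because for $w = xm \in \A^j M \cap xM$ one has $f(w) = xf(m) \in (x)$ and simultaneously $f(w) \in f(\A^j M) \subseteq \A^{n+j}$, so $\hat f$ carries $\ker \pi_M$ into $\bigoplus_j(\A^{n+j}\cap(x))t^{n+j} = \ker \pi_A$. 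Hence $\Pi_x^M(\Psi_M(f))$ is the well‑defined map carrying $\ov{m_j}\,t^j$ to $\ov{f(m_j)}\,t^{n+j}$, where $m_j \in \A^j M$ is any lift of $\ov{m_j} \in \B^j N$.

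Next I would trace $f$ right‑then‑down. By the definition of $\Gamma_x^M$ in \ref{GammaEQ} together with \ref{alpha-map}, $\Gamma_x^M(f) = \psi(f) \in \Hom_B(N,\B^n)$ is given by $\psi(f)(m + xM) = f(m) + (x)$; the same observation $f(xM \cap \A^j M) \subseteq (x)$ shows $\psi(f)$ is well defined and $B$‑linear. Applying $\Psi_N$ (the $N$‑version of \ref{mapAUG}) yields $\widehat{\psi(f)} \colon \R(\B,N) \rt \rb$, the degree‑$n$ map $\sum_j \ov{m_j}\,t^j \mapsto \sum_j \psi(f)(\ov{m_j})\,t^{n+j} = \sum_j \ov{f(m_j)}\,t^{n+j}$, with $m_j \in \A^jM$ a lift of $\ov{m_j}$. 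This is visibly the same homomorphism produced in the previous step, so the square commutes.

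The step needing the most care is the well‑definedness bookkeeping: both composites, read off from the definitions, are described only in terms of lifts of elements of $\B^j N$ to $\A^j M$ (and of $\B^m$ to $\A^m$), so one must check that neither the factorization through the quotient filtration nor the resulting value depends on these choices. As indicated, every such point reduces to the single elementary inclusion $f\bigl(xM \cap \A^j M\bigr) \subseteq (x)\cap \A^{n+j}$ for $f \in M^{\dd}_n$, and no hypothesis beyond \ref{set-mod} is used — in particular the diagram commutes for every $x \in \A$, regardless of whether $G_\A(M)$ is \CM. Finally I would recall that $\Psi_M$ and $\Psi_N$ are isomorphisms by Theorem \ref{main}, which is precisely what makes the commutative square useful afterwards: it transfers the analysis of $\Gamma_x^M$ to that of the Hom‑level map $\Pi_x^M$.
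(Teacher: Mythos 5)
Your verification is correct: the square is purely definitional, and your element chase (both composites send $\ov{m_j}\,t^j$ to $\ov{f(m_j)}\,t^{n+j}$ for a lift $m_j \in \A^jM$, the only point to check being $f(xM \cap \A^jM) \subseteq (x) \cap \A^{n+j}$ so that the factorizations through the quotients are legitimate) is exactly the routine check the paper leaves implicit, since it states \ref{nice} as an observation without proof. Nothing further is needed, and you are right that no Cohen--Macaulay hypothesis on $G_\A(M)$ enters here.
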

\begin{remark}\label{niceR}
 It is convenient to look at \ref{nice} in the following way:

1. By \ref{GammaEQ} and \ref{BGammaEQ} we have a \textit{ complex} of $\ra$-modules:
\[
 \mathcal{C}\mathbf{\colon} \quad \quad 0 \rt \R(\eF_M, M^\dd)(-1) \xrightarrow{xt} \R(\eF_M, M^\dd)
\xrightarrow{\Gamma_x^M} R(\eF_N, N^\dd) \rt 0.
\]
2. Set $\R(M) = \R(\A,M)$, $\R = \ra$ and $\ov{\R} = \rb$. By \ref{PiEQ} we have a \textit{ complex} $\mathcal{D}$ of $\ra$-modules:
\[
  0 \rt \Hs_{\R}(\R(M), \R)(-1) \xrightarrow{xt} \Hs_{\R}(\R(M), \R)
\xrightarrow{\Pi_x^M} \Hs_{\ov{\R}}(\R(N), \ov{\R}) \rt 0.
\]
3. We consider $\mathcal{C}, \mathcal{D}$ as co-chain complexes starting at $0$; i.e.,
\[
 \mathcal{C}\mathbf{\colon} \quad  0 \rt \mathcal{C}^0 \rt \mathcal{C}^1 \rt \mathcal{C}^2 \rt 0 \quad
\text{\&} \quad \mathcal{D}\mathbf{\colon} \quad  0 \rt \mathcal{D}^0 \rt \mathcal{D}^1 \rt \mathcal{D}^2 \rt 0.
\]
4. By \ref{nice}, it follows that we have chain map of complexes of $\ra$-modules
\[
 \Xi \colon \mathcal{C} \xar \mathcal{D}
\]
with $\Xi^i$ an isomorphism for $i = 0,1,2$.

5. It can be easily verified that
$\mathcal{C}$ is exact  \ff \  $\mathcal{D}$ is exact.
\end{remark}
\begin{proof}[Proof of Theorem \ref{modFiltX}]
 It suffices to prove for $r =1$. Notice that (1) $\implies$  (2).

(1) By assumption  $x$ is $M \bigoplus A$-superficial.
 Set $u = x t \in \ra_1$. Since $\GA$ is \CM \ we have   $x^*$ is $\GA$-regular. So $u$ is $\ra$-regular.
It is clear that $\ra/u\ra = \rb$.
Set $\R(M) = \R(\A, M)$, $\R = \ra$. Also set $\ov{\R} = \rb$ and $\R(N) = \R(\B, M)$.
Since $x$ is $M$-superficial we have an isomorphism $\vartheta_M \colon \R(M)/u\R(M) \rt \R(N)$.
The exact sequence
$$0 \xar \R(-1) \xrightarrow{u} \R \xrightarrow{\rho} \ov{\R} \xar 0$$
yields an exact sequence of $\R$-modules
\begin{align*}
 0 &\rt \Hs_\R(\R(M), \R )(-1) \xrightarrow{u} \Hs_\R(\R(M), \R) \xrightarrow{\rho_M}  \Hs(\R(M), \ov{\R}) \\
   &\rt \Es_{\R}^{1}(\R(M), \R )(-1)  \rt \cdots
\end{align*}
Since $\R$ is Gorenstein and $\R(M)$ is a MCM $\R$-module we get $\Es_{\R}^{1}(\R(M), \R ) = 0$. Also notice that
$ \Hs_\R(\R(M), \ov{\R}) = \Hs_{\ov{\R}}(\R(M)/u\R(M), \ov{\R})$. Furthermore the isomorphism  $\vartheta_M $
induces an isomorphism
$$ \nu_M \colon \Hs_{\ov{\R}}(\R(M)/u\R(M), \ov{\R}) \rt \Hs_{\ov{\R}}(\R(N), \ov{\R}).$$
 It can be easily checked that
$ \nu_M  \circ \rho_M = \Pi^{M}_{x}$.

Therefore the complex $\mathcal{D}$ in \ref{niceR}.2 is exact. By  \ref{niceR}.5 we get that the
complex $\mathcal{C}$ in \ref{niceR}.1 is exact. Thus $\Gamma_x^M$ is onto and
$\ker \Gamma_x^M = u\R(\eF_M, M)$. Therefore $\ov{\Gamma_x^M}$ is an isomorphism.
\end{proof}

%%%%%%%%%%%%%%5555555555555555555555555555555555555555555555555555555555555555555
\section{Some Preliminaries to prove Itoh's conjecture}\label{Lprop}
In \cite{Pu5} we introduced a new technique to investigate problems relating to associated graded modules.
In this section we collect all the relevant results which we proved in \cite{Pu5}. Throughout thus section
$(A,\m)$ is a Noetherian local ring with infinite residue field, $M$ is a \emph{\CM }\ module of dimension
$r \geq 1$ and $I$ is an $\m$-primary ideal.

\s \label{mod-struc} Set $\Sc = A[It]$;  the Rees Algebra of $I$. Set
$L^{I}(M) = \bigoplus_{n\geq 0}M/I^{n+1}M$. We note that $L^I(M)(-1) =  M[t]/\R(I, M)$.  So  $L^{I}(M)$ is a $\Sc$-module. Note that $L^I(M)$ is \emph{not} a finitely generated $\Sc$-module.

 \s Set $\M = \m\oplus \Sc_+$. Let $H^{i}(-) = H^{i}_{\M}$ denote the $i^{th}$-local cohomology functor \wrt \ $\M$. Recall a graded $\Sc$-module $L$ is said to be
*-Artinian if
every descending chain of graded submodules of $L$ terminates. For example if $E$ is a finitely generated $\Sc$-module then $H^{i}(E)$ is *-Artinian for all
$i \geq 0$.

\begin{definition}(\cite[sect. 6]{HeL})
   Consider the following chain of submodules of
$M$:
\[
IM \sub (I^2M\colon_M I) \sub (I^3M\colon_M I^2) \sub \ldots \sub(I^{n+1}M \colon_M I^n)\sub \ldots
\]
As $M$ is Noetherian this chain stabilizes.
The stable value is denoted as $\widetilde{IM}$ and is called
 the \emph{Ratliff-Rush submodule of $M$ \wrt \ $I$}.
 The filtration
$\{\wt{I^nM}\}_{n \geq 1}$ is called the \textit{Ratliff-Rush filtration} of $M$
\wrt \ $I$.
\end{definition}

\s It can be shown that if $\depth M >0$ then $\wt{I^nM} = I^nM$ for all $n \gg 0$.
Furthermore if $M = A$ then $\wt{I} \subseteq \ov{I}$,  where $\ov{I}$ is the integral closure of $I$.

\s \label{zero-lc} In \cite[4.7]{Pu5} we proved that
\[
H^{0}(L^I(M)) = \bigoplus_{n\geq 0} \frac{\wt{I^{n+1}M}}{I^{n+1}M}.
\]
\s \label{Artin}
For $L^I(M)$ we proved that for $0 \leq i \leq  r - 1$
\begin{enumerate}[\rm (a)]
\item
$H^{i}(L^I(M))$ are  *-Artinian; see \cite[4.4]{Pu5}.
\item
$H^{i}(L^I(M))_n = 0$ for all $n \gg 0$; see \cite[1.10 ]{Pu5}.
\item
 $H^{i}(L^I(M))_n$  has finite length
for all $n \in \mathbb{Z}$; see \cite[6.4]{Pu5}.
\item
$\lambda(H^{i}(L^I(M))_n)$  coincides with a polynomial for all $n \ll 0$; see \cite[6.4]{Pu5}.
\end{enumerate}

\s \label{I-FES} The natural maps $0\rt I^nM/I^{n+1}M \rt M/I^{n+1}M \rt M/I^nM \rt 0 $ induce an exact
sequence of $R(I)$-modules
\begin{equation}
\label{dag}
0 \xar G_{I}(M) \xar L^I(M) \xrightarrow{\Pi} L^I(M)(-1) \xar 0.
\end{equation}
We call (\ref{dag}) \emph{the first fundamental exact sequence}.  We use (\ref{dag}) also to relate the local cohomology of $G_I(M)$ and $L^I(M)$.

\s \label{II-FES} Let $x$ be  $M$-superficial \wrt \ $I$ and set  $N = M/xM$ and $u =xt \in \Sc_1$. Notice $L^I(M)/u L^I(M) = L^I(N)$.
For each $n \geq 1$ we have the following exact sequence of $A$-modules:
\begin{align*}
0 \xar \frac{I^{n+1}M\colon x}{I^nM} \xar \frac{M}{I^nM} &\xrightarrow{\psi_n} \frac{M}{I^{n+1}M} \xar \frac{N}{I^{n+1}N} \xar 0, \\
\text{where} \quad \psi_n(m + I^nM) &= xm + I^{n+1}M.
\end{align*}
This sequence induces the following  exact sequence of $\Sc$-modules:
\begin{equation}
\label{dagg}
0 \xar \Bcal^{I}(x,M) \xar L^{I}(M)(-1)\xrightarrow{\Psi_u} L^{I}(M) \xrightarrow{\rho^x}  L^{I}(N)\xar 0,
\end{equation}
where $\Psi_u$ is left multiplication by $u$ and
\[
\Bcal^{I}(x,M) = \bigoplus_{n \geq 0}\frac{(I^{n+1}M\colon_M x)}{I^nM}.
\]
We call (\ref{dagg}) the \emph{second fundamental exact sequence. }

\s \label{long-mod} Notice  $\lambda\left(\Bcal^{I}(x,M) \right) < \infty$. A standard trick yields the following long exact sequence connecting
the local cohomology of $L^I(M)$ and
$L^I(N)$:
\begin{equation}
\label{longH}
\begin{split}
0 \xar \Bcal^{I}(x,M) &\xar H^{0}(L^{I}(M))(-1) \xar H^{0}(L^{I}(M)) \xar H^{0}(L^{I}(N)) \\
                  &\xar H^{1}(L^{I}(M))(-1) \xar H^{1}(L^{I}(M)) \xar H^{1}(L^{I}(N)) \\
                 & \cdots \cdots \\
               \end{split}
\end{equation}

\s \label{Artin-vanish} We will use the following well-known result regarding *-Artinian modules quite often:

Let $V$ be a *-Artinian $\Sc$-module.
\begin{enumerate}[\rm (a)]
\item
$V_n = 0$ for all $n \gg 0$
\item
If $\psi \colon V(-1) \rt V$ is a monomorphism then $V = 0$.
\item
If $\phi \colon V \rt V(-1)$ is a monomorphism then $V = 0$.
\end{enumerate}

 \s \label{power-of-I} Set $L = L^I(M)$,
 \begin{align*}
\xi_I(M) &:= \underset{0 \leq i \leq r-1}\min\{ \ i \ \mid H^{i}(L)_{-1} \neq 0 \ \text{or} \ \ell(H^{i}(L)) = \infty \}.\\
\amp_I(M) &:= \max\{\ |n| \ \mid H^{i}(L)_{n-1} \neq 0 \  \text{for} \ i = 0,\ldots, \xi_I(M) - 1 \}.
\end{align*}
In \cite[7.5]{Pu5} we showed that
\[
 \depth G_{I^l}(M) = \xi_I(M) \ \text{for all} \ l > \amp_I(M).
\]

\s \label{normal}
Recall an ideal $I$ is said to be asymptotically normal if $I^n$ is integrally closed for all $n \gg 0$.
If $I$ is a asymptotically normal  $\m$-primary ideal  and $\dim A \geq 2$ then by a result of Huckaba and Huneke \cite[3.8]{HH}, $\depth G_{I^l}(A) \geq 2$
for all $l \gg 0$(also see \cite[7.3]{Pu5}). It  also follows from \cite[9.2]{Pu5} that in this case $H^1(L)_n = 0$ for $n < 0$. In particular $\ell(H^1(L)) < \infty$ (here $L = L^I(M)$).

\s \label{asympCM} We will also need the following fact for general filtration's (and was proved for $I$-adic filtration's in \cite[9.2]{Pu5}). Let $\eF = \{ M_n \}_{n \in \Z}$ be an $I$-stable filtration with $M_n = M$ for $n<0$. If the associated graded module of the Veronese $G(\eF^{< c >}, M)$ has depth $\geq 2$ for some $c \geq 1$ then
$H^1(G(\eF, M))_n = 0$ for $n < 0$.
To see this set $L(\eF, M)= \bigoplus_{n \geq 0} M/M_{n+1}$. Then as before $L(\eF, M)$ is a $\Sc$-module.
We note that for all $l \geq 1$ we have
\[
\left(L(\eF,M)(-1)\right)^{<l>} = \bigoplus_{n \geq 0} M/M_{nl} =  L(\eF^{<l>},M)(-1)
\]
As $\depth G(\eF^{< c >}, M) \geq 2$ it follows from an analogue of \ref{Artin}; \ref{dag} for filtration's and \ref{Artin-vanish} that
$H^i(L(\eF^{<c>},M)) = 0$ for $i = 0,1$. As the Veronese functor commutes with local cohomology we get that
$H^i(L(\eF, M))_{nc-1} = 0$ for all $n \in \Z$. In particular $H^1(L(\eF, M))_{-1} = 0$. Let $x$ be $M$-superficial \wrt \ $\eF$.
Set $N = M/xM$. Then by an analogue of \ref{longH} to filtration's we have  an exact sequence for all $n \in \Z$
\[
H^0(L(\ov{\eF}, N))_n \rt H^1(L(\eF, M))_{n-1} \rt H^1(L(\eF, M))_{n}
\]
As $H^0(L(\ov{\eF}, N))_n = 0$ for $n < 0$ and $H^1(L(\eF, M))_{-1} = 0$;  an easy induction yield's $H^1(L(\eF, M))_{n} = 0$ for $n < 0$.
By the exact sequence \ref{dag} and as $H^0(L(\eF, M)_n = 0$ for $n < 0$ we get that $H^1(G(\eF, M))_n = 0$ for $n < 0$.
\section{ Proof of Itoh's-conjecture}
In this section we give a proof of Itoh's conjecture for normal ideals.
\begin{theorem}\label{itoh}
 Let $(A,\m)$ be a \CM \  local ring of dimension $d \geq 1$ and let $\A$ be a normal $\m$-primary ideal.
 If $e_3^\A(A) = 0$ then $G_\A(A)$ is \CM.
\end{theorem}

\begin{remark}\label{r1-itoh}
The following assertion is well-known.
 If $\A^n$ is integrally closed for all $n$ then $\A^n$ is Ratliff-Rush for all $n \geq 1$.
 It follows that $\depth G_\A(A) \geq 1$.
\end{remark}

We first show that
\begin{lemma}\label{small}
 Theorem \ref{itoh} holds  if $\dim A = 1, 2$.
 \end{lemma}
 This Lemma is certainly well-known to the experts. However we provide a proof
 for the convenience of the reader.
\begin{proof}[Proof of Lemma \ref{small}]
 If $d = 1$ then by \ref{r1-itoh} the result holds.

 Now assume $d = 2$. Note that we may assume that $A$ has an infinite residue field. Let $\C$ be a minimal reduction of $\A$.
 Set $\sigma_i = \lambda(\A^{i+1}/\C \A^i)$. Then it follows from a result of Huneke \cite[2.4]{Hun}
 that the $h$-polynomial of $A$ \wrt \ $\A$ is
 \[
  h(z) =  \ell(A/\A) + (\sigma_0 - \sigma_1)z + (\sigma_1 - \sigma_2)z^2 + \cdots + (\sigma_{s-2} - \sigma_{s-1})z^{s-1} + \sigma_{s-1} z^s.
 \]
It follows that
\begin{align*}
 e_1^\A(A) &= \sum_{i \geq 0} \sigma_i, \\
 e_2^\A(A) &= \sum_{i \geq 1} i \sigma_i, \text{and} \\
 e_3^\A(A) &= \sum_{i \geq 2} \binom{i}{2} \sigma_i.
 \end{align*}
As $e_3^\A(A) = 0$ we have $\sigma_2 = 0$. So we have $\A^3 = \C \A^2$. As $\A$ is integrally closed  we also have
$\A^2 \cap \C = \C \A$. The result follows from Valabrega-Valla Theorem
\cite[2.3]{VV}.
\end{proof}
\s Recall that an ideal $\A$ is said to be \textit{asymptotically normal} if $\A^n$ is integrally closed for all $n \gg 0$.
The crucial result to prove Itoh's conjecture is the following:
\begin{lemma}\label{crucial-itoh}
Let $(A,\m)$ be a \CM \    local ring of dimension $3$ and let $\A$ be an asymptotically  normal $\m$-primary ideal with $e_3^\A(A) = 0$. Set $L^\A(A) = \bigoplus_{n \geq 0}A/\A^{n+1}$ considered as a  module over the Rees algebra $\Sc = A[\A t]$. Let $\M = \m\oplus \Sc_+$ be the maximal homogeneous ideal of $\Sc$. Then the local cohomology modules $H^i_\M(L^\A(A))$ vanish for $i = 1, 2$
\end{lemma}
We will also need to extend Lemma \ref{crucial-itoh} to dimensions $d \geq 4$.
\s \emph{Remark and a Convention:} Note that all the relevant graded modules considered  below are modules over the Rees algebra $\Sc = A[\A t]$. Also all local cohomology will
taken over $\M = \m\oplus \Sc_+$  the maximal homogeneous ideal of $\Sc$.
We also note that if $x$ is $A$-superficial \wrt \ $A$ then the Rees algebra $\Sc^\prime  = A/(x)[\A/(x)t]$ is a quotient of $\Sc$. As we are only interested in vanishing of certain local-cohomology modules, by the independence theorem of local cohomology
it does not matter if we take local cohomology of an $\Sc^\prime$-module \wrt \ $\M^\prime$ or over $\M$ (and considering the module in question as an $\Sc$-module). So throughout we will only write $H^i(-)$ to mean $H^i_\M(-)$.

\begin{lemma}\label{crucial-itoh-2}
Let $(A,\m)$ be a \CM \ local ring of dimension $d \geq 3$ and let $\A$ be an asymptotically  normal $\m$-primary ideal with $e_3^\A(A) = 0$.  Then the local cohomology modules $H^i(L^\A(A))$ vanish for $1 \leq i \leq d -1$.
\end{lemma}
\begin{proof}
We prove the result by induction on $d \geq 3$.
For $d = 3$ the result follows from Lemma \ref{crucial-itoh}. Now assume $d \geq 4$ and the result has been proved for $d-1$.
After passing through a general extension (see \ref{AtoA'}(d)) we may choose $x$, an $A$-superficial element
\wrt  \ $\A$ such that in the ring $B = A/(x)$ the ideal $\B = \A/(x)$ is asymptotically normal. Furthermore note that $e_3^\B(B) = 0$.
Set $L = L^\A(A)$ and $\ov{L} = L^\B(B)$. By induction hypothesis we have $H^i(\ov{L}) = 0$ for $1 \leq i \leq d-2$. By \ref{longH} we get a surjective map
$H^1(L)(-1) \rt H^1(L)$ and for $2 \leq i \leq d-1$ injections
$H^i(L)(-1) \rt H^i(L)$. By \ref{Artin-vanish} it follows that $H^i(L) = 0$ for $i = 2,\cdots d - 1$. By \ref{normal} we get that $H^1(L)$ has finite length. So the surjection
$H^1(L)(-1) \rt H^1(L)$ induces an isomorphism $H^1(L)(-1) \cong H^1(L)$ and this forces $H^1(L) = 0$.
\end{proof}

We now give a proof of Theorem \ref{itoh} assuming Lemma \ref{crucial-itoh}.
\begin{proof}[Proof of Theorem \ref{itoh}]
If $d = \dim A = 1,2$, the result follows from Lemma \ref{small}. If $d \geq 3$
then by Lemma \ref{crucial-itoh-2} we get $H^i(L^\A(A)) = 0$  for $1 \leq i \leq d -1$. Also as $\A$ is normal,  in-particular $\A^n$ is Ratliff-Rush for all
$n \geq 1$. So by \ref{zero-lc} we get $H^0(L^\A(A)) = 0$. By taking cohomology of the first fundamental sequence \ref{dag} we get that $H^i(G_\A(A)) = 0$
for  $0 \leq i \leq d -1$. Thus $G_\A(A)$ is \CM.
\end{proof}

\s \label{red-itoh} Thus to prove Itoh's conjecture all we have to do is to prove Lemma \ref{crucial-itoh}. This requires several preparatory results.
\emph{For the rest of the section we will assume $\dim A = 3$ and $\A$ is an asymptotically normal ideal with $e_3^\A(A) = 0$}.
We first show
\begin{lemma}\label{asymp-itoh}
(with hypotheses as in \ref{red-itoh}.) Then $G_{\A^n}(A)$ is \CM \ for $n \gg 0$
and $\red(\A^n) \leq 2$ for $n \gg 0$.
\end{lemma}
\begin{remark}
Lemma \ref{asymp-itoh} is certainly known to the experts. We provide a proof for the convenience of the reader.
\end{remark}
\begin{proof}[Proof of Lemma \ref{asymp-itoh}]
We may assume that the residue field of $A$ is infinite. Let $\C = (x_1, x_2, x_3)$ be a minimal reduction of $\A$.

Notice that for all $n \geq 1$ we have $e_3^{\A^n}(A) = e_3^\A(A) = 0$. Furthermore $\C^{[n]} = (x_1^n, x_2^n, x_3^n)$ is  a minimal reduction of $\A^n$.
By a result of Huckaba and Huneke \cite[3.8]{HH}  we have $\depth G_{\A^n}(A) \geq 2$
for all $n \gg 0$, say $n \geq n_0$. As $\A$ is asymptotically normal we may also assume $\A^n$ is integrally closed for all $n \geq n_0$

Fix $n \geq n_0$.  Set $\sigma_i = \ell( (\A^n)^{i+1}/ \C^{[n]} (\A^n)^{i})$. Then by a result of Marley \cite[3.9]{Mar}, it follows that
that the $h$-polynomial of $A$ \wrt \ $\A^n$ is
 \[
  h(z) =  \ell(A/\A^n) + (\sigma_0 - \sigma_1)z + (\sigma_1 - \sigma_2)z^2 + \cdots + (\sigma_{s-2} - \sigma_{s-1})z^{s-1} + \sigma_{s-1} z^s.
 \]
It follows that
\begin{align*}
 e_1^{\A^n}(A) &= \sum_{i \geq 0} \sigma_i, \\
 e_2^{\A^n}(A) &= \sum_{i \geq 1} i \sigma_i, \text{and} \\
 e_3^{\A^n}(A) &= \sum_{i \geq 2} \binom{i}{2} \sigma_i.
 \end{align*}
As $e_3^{\A^n}(A) = 0$ we have $\sigma_2 = 0$. So we have $(\A^n)^3 = \C^{[n]} (\A^n)^2$. As $\A^n$ is integrally closed  we also have
$(\A^n)^2 \cap \C^{[n]} = \C^{[n]} \A^n$.  So by Valabrega-Valla Theorem \cite[2.3]{VV} we have that $G_{\A^n}(A)$ is \CM.
\end{proof}
Next we show
\begin{lemma}\label{rachel-itoh}
(with hypotheses as in \ref{red-itoh}.) We have
\begin{enumerate}[ \rm(1)]
\item
$a(G_\A(A)) < 0$.
\item
$H^2(L^\A(A)) = 0$.
\item
$\sum_{i = 0}^{2}(-1)^i\ell(H^i(G_\A(A)))  = 0.$
\item
For $i = 0, 1, 2$ we have $H^i(G_\A(A))_n = 0$ for $n < 0$.
\end{enumerate}
\end{lemma}
\begin{proof}
 (1) We have $\red_{\C^{[n]}}(\A^n) \leq 2$ for all $n \gg 0$. By a result of Hoa, \cite[2.6]{Hoa}, the result follows.

 (2) Set $G = G_\A(A)$ and $L = L^\A(A)$. Also let $S = A[\A t]$ be the Rees algebra of $A$ \wrt \ $\A$ and let $\M$ be its
 maximal homogeneous ideal. Throughout let $H^i(-) = H^i_\M(-)$.
 The first fundamental exact sequence $0 \rt G \rt L \rt L(-1) \rt 0$ yields an exact sequence in cohomology
 \[
 H^2(L)_n \rt H^2(L)_{n-1} \rt H^3(G)_n.
 \]
As $a(G) < 0$ we get for $n \geq 0$, surjections $H^2(L)_{n} \rt H^2(L)_{n-1}$. As $H^2(L)_n = 0$ for $n \gg 0$ we get
$H^2(L)_n = 0$ for $n \geq -1$.

After passing through a general extension we may choose $x$, an $A$-superficial element
\wrt  \ $\A$ such that in the ring $B = A/(x)$ the ideal $\B = \A/(x)$ is asymptotically normal. Also notice $\dim B = 2$.
Set $\ov{L} = L^\B(B)$. By \cite[9.2]{Pu5} we get that $H^1(\ov{L})_n = 0$ for $n < 0$.
By \ref{longH} we have an exact sequence
\[
 H^1(\ov{L})_n \rt H^2(L)_{n-1} \rt H^2(L)_n
\]
By setting $n = -1$ we get $H^2(L)_{-2} = 0$. Iterating we get $H^2(L)_n = 0$ for all $n \leq -2$.

It follows that $H^2(L) = 0$.

(3) We first note that as $G_{\A^n}(A)$ is \CM \ for all $n \gg 0$ the ring $G_\A(A)$ is generalized \CM, see \cite[7.8]{Pu5} We also have $H^0(L)$
and $H^1(L)$ have finite length (see \ref{zero-lc} and \ref{normal}).

The first fundamental exact sequence $0 \rt G \rt L \rt L(-1) \rt 0$ yields an exact sequence in cohomology
 \begin{align*}
  0 &\rt H^0(G) \rt H^0(L) \rt H^0(L)(-1) \\
  &\rt H^1(G) \rt H^1(L) \rt H^1(L)(-1) \\
  &\rt H^2(G) \rt H^2(L) = 0.
 \end{align*}
 Taking lengths, the result follows.

 (4) As $\A$ is normal we have that $H^1(L)_n = 0$ for $n < 0$ (see \cite[9.2]{Pu5}). Also by \ref{zero-lc} we get $H^0(L)_n = 0$
 for $n < 0$. The result follows from the above exact sequence in cohomology.
 \end{proof}

\begin{remark}\label{apply-gor-approx}
 Till now we have not used our theory of complete intersection approximation. We do it now.
 We first complete $A$. Let $\A = (a_1,\ldots, a_n)$.  We take a general extension $A' = A[X_1,\ldots, X_n]_{\m A[X_1,\ldots, X_n]}$. Let $y = \sum_{i=1}^{n}a_iX_i$.
 Then the ideal $J = \A'/(y)$ is asymptotically normal. Set $B = A'/(y)$. By a result of Huckaba and Huneke we get that there exists $c$ such that $G_{J^n}(B)$ is \CM \ for $n \geq c$.  We take a CI-approximation $(R,\n,\B,\psi)$ of $A$ \wrt \ $\A$ (note \emph{not of} $A'$). By \ref{high-red} we may assume $\red(\B, R) \geq c+2$. By construction note that $\B$ is generated by $b_1,\ldots,b_n$ and $\psi(b_i) = a_i$ for all $i$. Now set $R' = R[X_1,\ldots, X_n]_{\n R[X_1,\ldots, X_n]}$. By \ref{lying-above} we get that $A' = A\otimes_R R'$. We note that $(R',\n',\B',\psi')$ is a CI-approximation of $(A',\m',\A')$. Set $z = \sum_{i=1}^{n}b_iX_i$. Then note $\psi'(z) = y$.
Also note that $z$ is $R'$-superficial \wrt \ $\B'$. We now complete $R'$ \wrt \ $\n'$.
 Thus we may assume that our ring $(A,\m, \A)$ has a CI-approximation $[T,\tf,\q,\psi]$ such that
 \begin{enumerate}
 \item Both $A$ and $T$ are complete.
   \item reduction number of $\q$ is $\geq c + 2$.
   \item there exists $z \in \q$ which is $A$-superficial \wrt \ $\A$ such that \\ $G_{\A^n}(A/zA)$ is \CM \ for all $n \geq c$. Furthermore $z$ is $G_\q(T)$-regular.
   \item $A$ has a canonical module $\omega_A$.
   \end{enumerate}
\end{remark}
We first show
\begin{lemma}\label{gcm}
 (with hypotheses as in \ref{apply-gor-approx}.) We have
 \begin{enumerate}[\rm (1)]
  \item For any prime $P$ in $\R_\q(T)$ with $\htt P \leq 3$ we have $\R_\A(A)_P$ is \CM.
  \item
  $H^i_\N(\R_\A(A))$ has finite length for $0 \leq i \leq 3$.
 \end{enumerate}
\end{lemma}
\begin{proof}
 As $\R_\q(T)$ is a Gorenstein ring the assertion (2) follows from (1) by local duality.

 (1) We first consider the case $t^{-1} \notin P$. Then $(\R_\A(A))_P$ is a localization of
 $(R_\A(A))_{t^{-1}} = A[t,t^{-1}]$ which is \CM.

 Next consider the case when $t^{-1} \in P$. Set $Q = P/(t^{-1})$ a prime ideal of height $\leq 2$ in $G_\q(T)$. As
 $G_\A(A)$ is generalized \CM \ and since $G_\q(T)$ is Gorenstein, by local duality we have that $(G_\A(A))_Q$ is \CM.
 It follows that $(\R_\A(A))_P$ is \CM.
\end{proof}
As a consequence we get
\begin{corollary}\label{exact-loc}
(with hypotheses as in \ref{apply-gor-approx}.) We have
\begin{enumerate}[\rm (1)]
  \item $H^3_\N(\R_\A(A)) = 0$
  \item an exact sequence
\[
 0 \rt H^3_\N(G_\A(A)) \rt H^4_\N(\R_\A(A))(+1) \xrightarrow{t^{-1}} H^4_\N(\R_\A(A)) \rt 0.
\]
  \item $\Ext^1_{\R_\q(T)}(\R_\A(A),\R_\q(T)) = 0$.
\end{enumerate}
\end{corollary}
\begin{proof}
 The short exact sequence $0 \rt \R_\A(+1)  \xrightarrow{t^{-1}} H^4_\N(\R_\A)  \rt G_\A(A) \rt 0$, yields an exact sequence
 of local cohomology modules
 \begin{align*}
  0 &\rt H^0_\N(\R_\A(A))(+1) \xrightarrow{t^{-1}} H^0_\N(\R_\A(A)) \rt  H^0_\N(G_\A(A)) \\
  &\rt H^1_\N(\R_\A(A))(+1) \xrightarrow{t^{-1}} H^1_\N(\R_\A(A)) \rt  H^1_\N(G_\A(A)) \\
   &\rt H^2_\N(\R_\A(A))(+1) \xrightarrow{t^{-1}} H^2_\N(\R_\A(A)) \xrightarrow{\pi}  H^2_\N(G_\A(A)) \\
   &\rt H^3_\N(\R_\A(A))(+1) \xrightarrow{t^{-1}} H^3_\N(\R_\A(A))
 \end{align*}
Let $K$ be the cokernel of $\pi$. We note that $H^i_\N(G_\A(A)) \cong H^i(G_\A(A))$. As all the module in the above exact sequence have finite length
we have
\[
 \ell(K) = \sum_{i = 0}^{2} (-1)^i\ell(H^i(G_\A(A))  = 0 \quad \text{by Lemma \ref{rachel-itoh}(3)}.
\]
Thus we have an inclusion
\[
 H^3_\N(\R_\A(A))(+1) \xrightarrow{t^{-1}} H^3_\N(\R_\A(A))
\]
As $\ell(H^3_\N(\R_\A(A))) $ is finite we have an isomorphism
$H^3_\N(\R_\A(A))(+1) \cong H^3_\N(\R_\A(A))$. Again as $\ell(H^3_\N(\R_\A(A))) $ is finite this implies
that $H^3_\N(\R_\A(A)) = 0$.  So (1), (2) follows. Also by local duality we have (3).
\end{proof}
The following result is a crucial ingredient in proving Lemma \ref{crucial-itoh}.
\begin{theorem}\label{ing}(with hypotheses as in \ref{apply-gor-approx}.)
 Let $E_G$ be the injective hull of $k$ considered as a
 $G_\q(T)$-module. Set $W = \Hom_{G_\q(T)}(H^3(G_\A(A)), E_G)$. Then
 \begin{enumerate}[\rm (1)]
  \item
 there exists an $\A$-good filtration $\eF$ on
 $\omega_A$ such that $G_\eF(\omega_A) \cong W[1]$
 \item
 $ \depth G_\eF(\omega_A) \geq 2$. Furthermore $z$ is $G_\eF(\omega_A)$-regular.
 \item
 $h_\eF(z) = a_0 + a_1z + a_2 z^2$.
 \item
 Set $B = A/zA$. Let $\ov{\eF}$ be the quotient filtration on $\omega_B = \omega_A/z\omega_A$. Then $H^1(G(\ov{\eF},\omega_B)_n = 0$ for $n < 0$.
 \end{enumerate}
\end{theorem}
\begin{proof}
(1)Let $E_\R$ be the injective hull of $k$ considered as a $\R_\q(T)$-module.  Set $(-)^\vee = \Hom_{\R_q(T)}(-, E_\R)$.
Dualizing the exact sequence in Lemma \ref{exact-loc} we get a sequence of $\R_\q(T)$-modules
\[
 0 \rt H^4_\N(\R_\A(A))^\vee \xrightarrow{t^{-1}}  H^4_\N(\R_\A(A))^\vee(+1) \rt W \rt 0.
\]
As $R_\q(T)$ is Gorenstein then by local duality we have an exact sequence
\[
 0 \rt \Hom_{\R_q(T)}(\R_\A(A), \R_\q(T)) \xrightarrow{t^{-1}} \Hom_{\R_q(T)}(\R_\A(A), \R_\q(T))(+1) \rt X \rt 0
\]
where $X \cong W$ upto shift.
By Theorem \ref{main} there exists an $\q$-good filtration $\eF$ on $\omega_A \cong \Hom_T(A,T)$ such that
\[
 \R(\eF, \omega_A) \cong \Hom_{\R_q(T)}(\R_\A(A), \R_\q(T))
\]
Also notice that as $\q A = \A$ it follows that $\eF$ is in fact an $\A$-good filtration on $\omega_A$. Note after a shift $G(\eF)= W[1]$.

(2)
We note that upto shift $W \cong \Hom_{G_\q(T)}(G_\A(A), G_\q(T))$. The result follows.

(3) To prove this result we first \\
\emph{Claim:} The Hilbert series of $W = \bigoplus_{n \in \Z}W_n$ is
\[
H_W(z) = \sum_{n \in \Z}\ell(W_n)  = \frac{a_0z + a_1z^2 + a_2z^3}{(1-z)^3}.
\]
 Set $G = G_\A(A)$.
Let $V = H^3(G)$. As $a(G) < 0$ we have that $V_n = 0$ for $n\geq 0$. So $W_m = 0$
for $m \leq 0$.
By Grothendieck-Serre formula we have
\[
H_\A(n) - P_\A(n) = \sum_{i = 0}^{3}(-1)^i \ell(H^i(G)_n).
\]
Where $H_\A$ (and $P_\A(z)$) are Hilbert function(and Hilbert polynomial) of $A$ \wrt \ $\A$. As $H^i(G)_n = 0$ for $n < 0$ for $i = 0,1,2$ we get that
\[
\ell(V_n) = P_\A(n) \quad \text{for all} \ n < 0.
\]
It follows that $\ell(W_m) = P_\A(-m)$ for all $m \geq 1$. So $P_\A(-z)$ is the Hilbert polynomial of $W$ (it should be remarked that $\deg(P_\A(z)) = 2$).
As postulation number of $W$ is one  (and as $W_0 = 0$)  it follows from \cite[4.1.12]{BH} the Hilbert series of
$W$ is
\[
 \frac{a_0z + a_1z^2 + a_2z^3}{(1-z)^3}
\]
As $G_\eF(A) \cong W[1]$ the result follows.

(4) We have an exact sequence
\[
0 \rt \R_\q(T)(-1) \xrightarrow{zt} \R_\q(T) \rt   \R_\q(\ov{T}) \rt 0.
\]
This yields an exact sequence
\begin{align*}
  0 \rt \Hom_{\R_\q(T)}(\R_\A(A), \R_\q(T))(-1) &\xrightarrow{zt} \Hom_{\R_\q(T)}(\R_\A(A), \R_\q(T)) \rt  \\
  \Hom_{\R_\q(T)}(\R_\A(A), \R_\q(\ov{T})) &\rt \Ext^1_{\R_\q(T)}(\R_\A(A), \R_\q(T))(-1) \rt.
\end{align*}
By \ref{exact-loc} we get that $\Ext^1_{\R_\q(T)}(\R_\A(A), \R_\q(T)) = 0$.
We note that
\[
\Hom_{\R_\q(T)}(\R_\A(A), \R_\q(\ov{T})) = \Hom_{\R_\q(T)}(\R_\A(A)/zt \R_\A(A), \R_\q(\ov{T})).
\]
Set  $\B = \A/z A$. We have an exact sequence
\[
0 \rt F = \bigoplus_{n \geq 1}\frac{\A^n \cap z \A^n}{ z\A^{n-1}} \rt \R_\A(A)/zt \R_\A(A) \rt \R_\B(B) \rt 0.
\]
As $z$ is $A$-superficial \wrt \ $\A$  we get that $F$ has finite length. As $\R_\q(\ov{T})$ is \CM \  and has dimension $3$ we get
\[
\Hom_{\R_\q(T)}(\R_\A(A)/zt \R_\A(A), \R_\q(\ov{T})) \cong \Hom_{\R_\q(T)}(\R_\B(B), \R_\q(\ov{T})).
\]
Thus we have an exact sequence
\[
0 \rt \R(\eF, \omega_A)(-1)\xrightarrow{zt} \R(\eF,\omega_A) \rt \R(\eG,\omega_B) \rt 0;
\]
where $\eG$ is the dual filtration on $\omega_B$. Now by our construction $\R_\B^n(B)$ is \CM \ for $n \geq c$. As $s = \red(\q, T) = \red(\ov{\q}, \ov{T}) \geq c + 2$
and $G_{\ov{\q}}(\ov{T})$ is Gorenstein we get by \cite[Theorem 1]{Ooishi-gor-powers}  that $G_{\ov{\q}^{s-1}}(\ov{T})$ is Gorenstein. It follows that $\R_{\q^{s-1}}(\ov{T})$ is Gorenstein.
It follows that $\R(\eG^{<s-1>},\omega_B)$ is \CM. The result now follows from \ref{asympCM}.
\end{proof}

We now give
\begin{proof}[Proof of Lemma \ref{crucial-itoh}]
By  Lemma \ref{rachel-itoh}(2) we get $H^2(L^\A(A)) = 0$. By \ref{dag} we have an exact sequence
\[
H^1(L^\A(A)) \rt H^1(L^\A(A))(-1) \rt H^1(G_\A(A)) \rt H^2(L^\A(A)) = 0
\]
As $H^1(L^\A(A))_n = 0$ for $n < 0$ it follows that $H^1(G_\A(A))_n = 0$ for $n \leq 0$. Note that to prove $ H^1(L^\A(A)) = 0$ it suffices to show $H^1(G_\A(A)) = 0$ (because
$H^1(L^\A(A))$ has finite length).
Set $G = G_\A(A)$ and $\ov{G} = G/ztG$. We have a exact sequence
\[
0 \rt K \rt G(-1) \xrightarrow{zt} G \rt \ov{G} \rt 0;
\]
where $K$ has finite length. Taking local cohomology we get an exact sequence
\begin{align*}
  H^2(G)(-1)&\xrightarrow{zt} H^2(G) \rt H^2(\ov{G}) \\
  H^3(G)(-1)&\xrightarrow{zt} H^3(G) \rt 0.
\end{align*}
Set $Y = H^2(G)/ztH^2(G)$. Note $Y_n = 0$ for $n \leq 0$. Taking Matlis dual's  we get an exact sequence
\[
0 \rt H^3(G)^\vee \xrightarrow{zt} H^3(G)^\vee(1) \rt H^2(\ov{G})^\vee \rt Y^\vee \rt 0.
\]
By \ref{ing} we get an exact sequence
\[
0 \rt G(\ov{\eF}, \omega_B)  \rt H^2(\ov{G})^\vee \rt Y^\vee \rt 0.
\]
We note that $\depth H^2(\ov{G})^\vee \geq 2$ and $H^2(G(\ov{\eF}, \omega_B))_n = 0$ for $n < 0$. So $Y^\vee_n = 0$ for $n < 0$.
It follows that $Y_n = 0$ for $n > 0$. As $Y_n = 0$ for $n \leq 0$ we get $Y = 0$. So $H^2(G)/zt H^2(G) = 0$. By graded Nakayama Lemma we get $H^2(G) = 0$. As discussed before this implies that $H^1(L^\A(A)) = 0$.
\end{proof}

%999999999999999999999999999999999999999999999999999999999999999999999999999999999

\emph{Acknowledgments:}
I thank  Lucho Avramov, Juergen Herzog, Srikanth Iyengar  and Jugal Verma for
 many useful discussions on the subject of this paper.

%\bibliographystyle{amsplain}
%\bibliography{ref}
\providecommand{\bysame}{\leavevmode\hbox to3em{\hrulefill}\thinspace}
\providecommand{\MR}{\relax\ifhmode\unskip\space\fi MR }
% \MRhref is called by the amsart/book/proc definition of \MR.
\providecommand{\MRhref}[2]{%
  \href{http://www.ams.org/mathscinet-getitem?mr=#1}{#2}
}
\providecommand{\href}[2]{#2}

\end{document}